\numberwithin{equation}{section}
\def\epsilon{\varepsilon}
\newcommand{\wt}{\widetilde}
\def\alb#1\ale{\begin{align*}#1\end{align*}}
\newcommand{\eqb}{\begin{equation}}
\newcommand{\eqe}{\end{equation}}
\newcommand{\BB}{\mathbbm}
\newcommand{\op}{\operatorname}
\newcommand{\ep}{\varepsilon}
\newcommand{\wh}{\widehat}
\newcommand{\bdy}{\partial}
\newtheorem{theorem}{Theorem}[section]
\newtheorem{lemma}[theorem]{Lemma}
\newtheorem{proposition}[theorem]{Proposition}
\newtheorem*{proposition*}{Proposition}
\newtheorem{corollary}[theorem]{Corollary}
\newtheorem{remark}[theorem]{Remark}
\newtheorem{definition}[theorem]{Definition}
\newtheorem*{definitions*}{Definitions}
\newtheorem*{example*}{\bf Example}
\theoremstyle{remark}
\newtheorem{prob}[theorem]{\bf Problem}
\numberwithin{equation}{section}
\title{Tightness of exponential metrics for log-correlated Gaussian fields in arbitrary dimension}
\author{Jian Ding\thanks{Peking University}\qquad Ewain Gwynne\thanks{University of Chicago}\qquad Zijie Zhuang\thanks{University of Pennsylvania}}
\begin{document}


\newcommand{\KK}{{\hyperref[K-condition1]{\mathfrak K}}}
\newcommand{\rr}{{\hyperref[K-condition2]{\mathfrak r_0}}}

\newcommand{\mathscrL}{{\hyperref[eq:def-rescaled-lattice]{\mathscr{L}}}}
 
\newcommand{\mathscrE}{{\hyperref[def:e-m-n]{\mathscr{E}}}}
\newcommand{\mathscrG}{{\hyperref[def:e-n-n]{\mathscr{G}}}}
\newcommand{\mathscrF}{{\hyperref[eq:def-f-m-n]{\mathscr{F}}}}

\newcommand{\alphaC}{{\hyperref[eq:def-lambda]{\alpha}}}
\newcommand{\lambdaC}{{\hyperref[eq:def-lambda]{\lambda}}}
\newcommand{\RC}{{\hyperref[eq:def-lambda]{R}}}
\newcommand{\mm}{{\hyperref[eq:def:mm]{m}}}
\newcommand{\mathscrY}{{\hyperref[eq:def-mathscrY]{\mathscr{Y}}}}
\newcommand{\qq}{{\hyperref[eq:def-qj]{q}}}
\newcommand{\mathcalJ}{{\hyperref[eq:def-j12]{\mathcal{J}}}}
\newcommand{\mathcalX}{{\hyperref[prop:cover]{\mathcal{X}}}}
\newcommand{\mathcalU}{{\hyperref[eq:def-mathcal-U]{\mathcal{U}}}}
\newcommand{\mathcalXp}{{\hyperref[eq:def-mathcalXp]{\mathcal{X}'}}}
\newcommand{\mathcalW}{{\hyperref[eq:lem5.9-1]{\mathcal{W}}}}

\maketitle

\begin{abstract}
We prove the tightness of a natural approximation scheme for an analog of the Liouville quantum gravity metric on $\mathbb R^d$ for arbitrary $d\geq 2$. More precisely, let $\{h_n\}_{n\geq 1}$ be a suitable sequence of Gaussian random functions which approximates a log-correlated Gaussian field on $\mathbb R^d$. Consider the family of random metrics on $\mathbb R^d$ obtained by weighting the lengths of paths by $e^{\xi h_n}$, where $\xi > 0$ is a parameter. We prove that if $\xi$ belongs to the subcritical phase (which is defined by the condition that the distance exponent $Q(\xi)$ is greater than $\sqrt{2d}$), then after appropriate re-scaling, these metrics are tight and that every subsequential limit is a metric on $\mathbb R^d$ which induces the Euclidean topology. We include a substantial list of open problems.
\end{abstract}

\tableofcontents

\bigskip
\noindent
\textbf{Acknowledgments.} We thank Karim Adiprasito, Timothy Budd, Hugo Falconet, Josh Pfeffer, Scott Sheffield, and Xin Sun for helpful discussions. We also thank the anonymous referees for their careful and insightful comments, which helped improve the presentation of this work. J.D.\ is partially supported by NSFC Key Program Project No.\ 12231002. E.G.\ was partially supported by a Clay research fellowship and by NSF grant DMS-2245832. Z.Z.\ is partially supported by NSF grant DMS-1953848.
\bigskip

\section{Introduction}

There has been an enormous amount of research in the past several decades concerning random geometry in two dimensions. Some of the major topics in this subject include Schramm-Loewner evolution, conformal field theory, statistical mechanics models on planar lattices, random planar maps, Liouville quantum gravity, and random geometries related to the KPZ universality class. We will not attempt to survey this vast literature here, but see, e.g.,~\cite{sheffield-icm,gwynne-ams-survey,bp-lqg-notes,bn-sle-notes,pw-gff-notes,ghs-mating-survey,vargas-dozz-notes,legall-sphere-survey,ganguly-dl-survey} for some recent expository articles.  
However, most of the results in this area have not been extended to higher dimensions. One reason for this is that conformal invariance (or covariance) plays a central role in many of the results in two dimensions, and there are no non-trivial conformal maps in higher dimensions. Another reason is that many of the arguments in the two-dimensional case rely on topological properties which are not true in higher dimensions, e.g., the Jordan curve theorem.

In this paper, we consider the problem of constructing an analog of the \textit{Liouville quantum gravity} (LQG) metric on $\mathbb R^d$, for arbitrary $d\geq 2$. Heuristically speaking, LQG is the random geometry described by the random Riemannian metric tensor
\begin{equation} \label{eqn-lqg-tensor}
e^{\gamma h} (dx^2 + dy^2) 
\end{equation} 
where $\gamma \in (0,2]$ is a parameter, $dx^2 + dy^2$ is the Euclidean metric tensor, and $h$ is a variant of the \textit{Gaussian free field} (GFF) on $\mathbb R^2$ (or more generally on a Riemann surface). See, e.g.,~\cite{shef-gff,bp-lqg-notes,pw-gff-notes} for an introduction to the GFF. The definition~\eqref{eqn-lqg-tensor} does not make literal sense since $h$ is a generalized function (distribution) instead of a true function, so its exponential cannot be defined pointwise. Nevertheless, one can define various objects associated with~\eqref{eqn-lqg-tensor} by replacing $h$ with a sequence of continuous functions that approximates $h$, and then taking an appropriate limit. 

Perhaps the easiest object to construct in this way is the LQG area measure, which is a limit of regularized versions of $e^{\gamma h} \,dx\,dy$ (where $dx\,dy$ denotes Lebesgue measure). The construction of this measure is a special case of the theory of \textit{Gaussian multiplicative chaos} (GMC), which allows one to make sense of random measures of the form $e^{\alpha h(x)} \,d\sigma(x)$ for $\alpha > 0$, whenever $h$ is a log-correlated Gaussian field on a domain $U\subset \mathbb R^d$ (for arbitrary $d\geq 1$) and $\sigma$ is an appropriate deterministic base measure on $U$. See~\cite{shef-kpz,rhodes-vargas-review,bp-lqg-notes} for more on Gaussian multiplicative chaos and the LQG area measure.

Recent works have also constructed the Riemannian distance function associated with~\eqref{eqn-lqg-tensor}, i.e., the \textit{LQG metric}. This is a random metric $D_h$ on $\mathbb R^2$ constructed as follows. For $\ep > 0$, let $h_\ep$ be the convolution of the Gaussian free field with the heat kernel $p_{\ep^2/2}(z) = \frac{1}{ \pi \ep^2} e^{-|z|^2/\ep^2}$. Also let $\xi = \xi(\gamma) = \gamma/d_\gamma$, where $d_\gamma$ is the so-called LQG dimension exponent~\cite{dg-lqg-dim}. Then, let
\begin{equation} \label{eqn-lqg-metric-approx}
D_h^\ep(z,w) := \inf_{P : z\to w} \int_0^1 e^{\xi h_\ep(P(t))} |P'(t)| \,dt ,\quad\forall z,w\in \mathbb R^2,
\end{equation}
where the infimum is over all piecewise continuously differentiable paths $P:[0,1] \to\mathbb{R}^2$ from $z$ to $w$. 
The papers~\cite{dddf-lfpp,gm-uniqueness} prove that there exist normalizing constants $\{\mathfrak a_\ep\}_{\ep > 0}$ such that $\mathfrak a_\ep^{-1} D_h^\ep$ converges in probability to a limiting metric with respect to the topology of uniform convergence on compact subsets of $\mathbb R^2\times\mathbb R^2$ (the convergence in probability was recently improved to a.s.\ convergence in~\cite{devlin-lfpp-as}). In particular, it was shown in~\cite{dddf-lfpp} that the approximating metrics are tight, and in~\cite{gm-uniqueness} (building on~\cite{local-metrics,gm-confluence,lqg-metric-estimates}) that the subsequential limit is unique. The proofs in these papers are much more difficult than the proofs in the construction of the LQG area measure. Intuitively, this is because the minimizing path in~\eqref{eqn-lqg-metric-approx} depends on $\ep$. See~\cite{ddg-metric-survey} for a survey of known results about the LQG metric. 

In light of the theory of Gaussian multiplicative chaos, it is natural to wonder whether there is an analogous theory of exponential metrics associated with log-correlated Gaussian fields on $\mathbb R^d$ for arbitrary\footnote{Note that when $d=1$, the metric induced by $e^{\xi h}$ is simply given by the one-dimensional GMC measure, as any path in $\mathbb R$ is an interval.} 
$d\geq 2$, which generalizes the LQG metric. The construction of such a theory is listed as Problem 7.19 in \cite{gm-uniqueness}. 

This paper carries out the first major step toward such a theory: namely, we prove the tightness of a natural approximation scheme similar to~\eqref{eqn-lqg-metric-approx} for log-correlated Gaussian fields on $\mathbb R^d$ (in the full subcritical phase of $\xi$ values). That is, we carry out the higher-dimensional analog of~\cite{dddf-lfpp}. See Theorem~\ref{thm:tightness} below for a precise statement. We expect that it will be challenging, but possible to prove that the subsequential limit is unique (and characterized by a list of axioms similar to the ones that characterize the LQG metric in dimension two~\cite{gm-uniqueness}) by adapting the arguments in the two-dimensional case~\cite{gm-uniqueness,dg-uniqueness}. Indeed, these arguments do not use two-dimensionality in as fundamental a way as the proof of tightness in~\cite{dddf-lfpp}. See Problem~\ref{prob:uniqueness} for further discussion.

More speculatively, our limiting metric might have connections to other higher-dimensional extensions of objects related to LQG, e.g., Liouville conformal field theory in even dimensions~\cite{cercle-higher-dimension, dhks-even-dim}, the higher-dimensional analogs of the Brownian map considered in~\cite{ml-iterated-folding}, uniform samples from various classes of triangulations of higher-dimensional spheres (see, e.g.,~\cite{bz-locally-constructible,dj-3-manifolds}), higher-dimensional analogs of random planar maps constructed from trees~\cite{BC23,budd-lionni-3-spheres}, and random graphs in $\mathbb R^d$ arising from sphere packings (see, e.g.,~\cite{bc-sphere-packing,bg-rw-sphere-packing}). See Subsection~\ref{subsec:potential-relation} for more details.

The problem of constructing natural random Riemannian metrics in dimension $d\geq 3$ is also of substantial interest in theoretical physics in the context of quantum gravity (see, e.g., the books~\cite{gh-quantum-gravity,adj-quantum-geometry,rovelli-quantum-geometry}). We refer to the introductions of~\cite{BC23,budd-lionni-3-spheres} for additional relevant discussion and references.

The proofs in this paper are by necessity substantially different from those in the two-dimensional case~\cite{dddf-lfpp}. In particular, we do not have an a priori \textit{Russo-Seymour-Welsh} (RSW) type estimate (which in the two-dimensional case comes from a conformal invariance argument), and various path-joining arguments in~\cite{dddf-lfpp} do not work in higher dimensions. For these reasons, we use a fundamentally novel approach to proving tightness which bypasses any direct use of RSW estimates as well as the Efron-Stein inequality. See Subsection~\ref{subsec:outline} for details.

The results of this paper open up a number of interesting questions about random metrics on $\mathbb R^d$. See Section~\ref{sec:open-problem}  for a discussion of some open problems.

\subsection{Definitions and main result}
\label{subsec:intro-1}
We now introduce some notation and state the main result of the paper. We consider the space $\mathbb{R}^d$ with $d \geq 2$ and define the box 
\begin{equation} \label{eq:box-def}
B_r(x) := x + (-r,r)^d ,\quad \forall x \in \mathbb{R}^d, \quad \forall r>0 . 
\end{equation}
Fix a smooth function $\KK :\mathbb{R}^d \rightarrow [0,\infty)$ and $\rr >0$ such that
\begin{enumerate}
    \item \label{K-condition1} $\KK$ is radially symmetric, meaning that $\KK(x) = \KK(y)$ for any $x,y \in \mathbb{R}^d$ with the same Euclidean norm.
    \item \label{K-condition2} $\KK$ is supported\footnote{
    We expect that our arguments can be adapted to the case when $\KK$ is not compactly supported but has sufficiently rapid decay at $\infty$. This would require some added technicalities similar to the ones encountered in~\cite{dddf-lfpp}. However, the choice of $\KK$ in this paper is in some sense unimportant since, regardless of the choice of $\KK$, the fields we consider are closely related to the canonical log-correlated Gaussian field on $\mathbb{R}^d$ considered in~\cite{lgf-survey, fgf-survey} (see Remark~\ref{remark-log-correlated}). }
    in the box $B_{\rr}(0)$.
    \item \label{K-condition3} $\KK$ is normalized such that $\int_{\mathbb{R}^d} \KK(x)^2 dx = 1$.
\end{enumerate} 

We also let $W$ be a space-time white noise on $\mathbb{R}^d$. That is, $W$ is the Gaussian random generalized function on $\mathbb{R}^d \times (0,\infty)$ such that for any $f \in L^2 (\mathbb{R}^d \times (0,\infty))$, the formal integral $\int_{\mathbb{R}^d} \int_0^\infty f(y,t) W(dy,dt)$ is centered Gaussian with variance $\|f\|_{L^2}^2$. 

We consider a log-correlated Gaussian field $h$ and its approximation $h_n$, defined as follows:
\begin{equation} \label{eq:field-def}
\begin{aligned}
   h(x) &= \int_{\mathbb{R}^d} \int_0^1 \KK\big(\frac{y-x}{t}\big) t^{-\frac{d+1}{2}} W(dy,dt) \quad \mbox{and} \\
    h_n(x) &= \int_{\mathbb{R}^d} \int_{2^{-n}}^1 \KK\big(\frac{y-x}{t}\big) t^{-\frac{d+1}{2}} W(dy,dt)
\end{aligned}
\end{equation}
for $x \in \mathbb{R}^d$ and integer $n \geq 1$. From the definition of $W$, we see that $h$ and $h_n$ are centered Gaussian processes with covariance kernels
\begin{align}  \label{eq:covariance}
\mathrm{Cov}(h(x_1),h(x_2)) &= \int_0^1 \frac{1}{t} (\KK*\KK)\left( \frac{x_1-x_2}{t} \right)  \,dt \quad \mbox{and} \notag\\
\mathrm{Cov}(h_n(x_1),h_n(x_2))&= \int_{2^{-n}}^1 \frac{1}{t} (\KK*\KK)\left( \frac{x_1-x_2}{t} \right)  \,dt  \,,
\end{align}
where $\KK*\KK$ denotes the convolution. Using the representation~\eqref{eq:field-def} and the fact that $W$ is a random tempered distribution (see e.g.\ Section 2.3 of \cite{fgf-survey}), one can verify that each $h_n$ has a modification which is a smooth function (see also Proposition 2.1 of \cite{df-lqg-metric}). We henceforth assume that each $h_n$ has been replaced by such a modification. Furthermore, from~\eqref{eq:covariance} we get $\op{Var} h_n(x) = n \log 2$ for each $x\in\mathbb R^d$. The process $h$ is interpreted as a random generalized function, and is closely related to the log-correlated Gaussian field on $\mathbb R^d$ considered in~\cite{lgf-survey, fgf-survey} (see Remark~\ref{remark-log-correlated}).

Analogously\footnote{As explained in~\cite{dddf-lfpp} (see also~\cite[Section 2.1]{cg-support-thm}), in the two-dimensional case, the convolution of the planar Gaussian free field with the heat kernel (at an appropriate $n$-dependent time) has the same law as the field $h_n$ of~\eqref{eq:field-def} with $\KK(x) = \sqrt{\frac{2}{\pi}} e^{-|x|^2}$, up to adding a random continuous function. Hence~\eqref{eq:metric-def} is directly analogous to~\eqref{eqn-lqg-metric-approx}. To avoid unnecessary technical work, in this paper we require that $\KK$ is compactly supported, but we expect that our results can be fairly easily extended to the case where $\KK$ is not compactly supported but has sufficiently fast decay at $\infty$.}
to~\eqref{eqn-lqg-metric-approx}, for a parameter $\xi>0$, we define the exponential metric associated with $h_n$ as follows:
\begin{equation} \label{eq:metric-def}
    D_n(z,w) := \inf_{P : z\to w} \int_0^1 e^{\xi h_n(P(t))} |P'(t)| dt\,, \quad \forall z,w\in\mathbb R^d,
\end{equation}
where the infimum is taken over all piecewise continuously differentiable paths $P:[0,1] \rightarrow \mathbb{R}^d$ joining $z,w$. This can be interpreted as an approximation of the random metric formally given by reweighting the Euclidean lengths of paths by $e^{\xi h}$. We will be interested in (subsequential) limits of the renormalized metrics $\lambda_n^{-1} D_n$, where the normalizing constant $\lambda_n$\footnote{For technical reasons, we first work with this particular choice of normalizing constant. However, in the end, we can choose any reasonable normalizing constant, such as the median of $D_n(0,e_1)$ or $D_n(\partial B_1(0), \partial B_2(0))$.} is defined as: 
\begin{equation}
\label{eq:intro-lambda}
    \lambda_n := \mbox{median of } D_n(0,e_1; B_2(0))\, ,
\end{equation}
where $D_n(0,e_1; B_2(0))$ denotes the minimal $D_n$-length of paths joining $0$ and $e_1 := (1,0,\ldots,0)$ inside the box $B_2(0)$. 

In Section~\ref{sec:exponent}, we will prove the following.

\begin{proposition} \label{prop:exponent-intro}
For each $\xi  >0$, there exists $Q = Q(\xi) \in \mathbb{R}$ such that
\begin{equation}
\label{eq:exponent-relation-lambda}
    \lambda_n = 2^{-(1-\xi Q)n + o(n)} \quad \mbox{as } n \rightarrow \infty\,.
\end{equation}
Furthermore, $\xi \mapsto Q(\xi)$ is a continuous, non-increasing function and we have
\begin{equation}\label{eq:Q-bounds}
\frac{1}{\xi} - \sqrt{2d} \leq Q(\xi) \leq \frac{1}{\xi} + \sqrt{2} \,,\quad \forall \xi > 0 .
\end{equation}
\end{proposition}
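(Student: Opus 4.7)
The engine of the proof is the exact scale-invariance of the fields $\{h_n\}$: a change of variables in the white-noise representation~\eqref{eq:field-def} shows that for any $m,n \geq 1$, the rescaled increment $z \mapsto h_{m+n}(2^{-m}z) - h_m(2^{-m}z)$ has the same law as $h_n(z)$ and is independent of the coarse-scale field $z \mapsto h_m(2^{-m}z)$. Inside any box of side $2^{-m}$ this gives a clean decomposition of $h_{m+n}$ as $h_m$ plus an independent rescaled copy of $h_n$.

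The first step is to turn this into an approximate submultiplicativity $\lambda_{m+n} \leq C(\xi,d)\,\lambda_m\lambda_n$. I would tile $B_2(0)$ by cubes of side $2^{-m}$ and construct a $D_{m+n}$-path from $0$ to $e_1$ by first choosing a sequence of neighbouring cubes (a coarse route) and then, inside each cube, using an optimal $D_{m+n}$-crossing. By the scaling, each intra-cube crossing is, in law, equal to $2^{-m} e^{\xi h_m(z_B)}$ times an independent copy of a $D_n$-type distance, where $z_B$ is the cube centre. A moment/quantile argument exploiting the conditional independence of these copies allows one to replace each by a constant multiple of $\lambda_n$ with high probability; summing over cubes along the coarse route then gives $D_{m+n}(0,e_1;B_2(0)) \lesssim \lambda_n \cdot D_m(0,e_1;B_2(0))$, and hence $\lambda_{m+n} \leq C\lambda_m\lambda_n$. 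I expect this step to be the main obstacle: unlike in the two-dimensional setting there is no a priori RSW-type regularity for the local distances, so extra care is needed to ensure that atypically ``bad'' cubes are sparse enough not to disrupt the coarse optimisation.

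The bounds~\eqref{eq:Q-bounds} will come from two one-path estimates. For $Q \geq 1/\xi - \sqrt{2d}$, I use the trivial inequality $D_n(0,e_1;B_2(0)) \geq e^{\xi\min_{B_2(0)} h_n}$ together with the standard fact that $\min_{B_2(0)} h_n \geq -(\sqrt{2d}+o(1))\,n\log 2$ for a $d$-dimensional log-correlated field. For $Q \leq 1/\xi + \sqrt{2}$, I take the straight-line trial path $t \mapsto t e_1$ to obtain $D_n(0,e_1;B_2(0)) \leq e^{\xi\max_{[0,e_1]} h_n}$; the restriction of $h_n$ to this one-dimensional segment is a one-dimensional log-correlated field with variance $n\log 2$, whose maximum over a unit segment is concentrated near $\sqrt{2}\,n\log 2$.

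Submultiplicativity together with the two-sided bound on $\lambda_n$ lets Fekete's lemma applied to the subadditive sequence $\log(C\lambda_n)$ produce the finite limit $\lim_n -\log_2 \lambda_n/n$, which defines $Q(\xi)$ via $\lambda_n = 2^{-(1-\xi Q)n+o(n)}$ and automatically lies in $[1/\xi-\sqrt{2d},\, 1/\xi+\sqrt{2}]$. Continuity of $\xi \mapsto Q(\xi)$ follows from the continuity of each $\lambda_n(\xi)$ in $\xi$ together with the fact that the constant $C$ and the $o(n)$ error terms can be chosen locally uniformly in $\xi$, making the Fekete convergence locally uniform. For monotonicity, I would apply the H\"older-type inequality $\int e^{\xi_1 h_n(P)}|P'|\,dt \leq \bigl(\int e^{\xi_2 h_n(P)}|P'|\,dt\bigr)^{\xi_1/\xi_2} L(P)^{1-\xi_1/\xi_2}$ for $\xi_1<\xi_2$ to a near-optimal $D_n(\xi_2)$-path, combined with an a priori sub-exponential upper bound on the Euclidean length of such paths (which can, for example, be extracted from the coarse path constructed in the submultiplicativity argument taken to be essentially straight).
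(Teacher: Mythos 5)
Your overall architecture (approximate submultiplicativity from a coarse route plus rescaled local crossings, two-sided bounds from the minimum of the field and from the straight-line path, then a subadditive-limit argument) is the same as the paper's, and your derivation of \eqref{eq:Q-bounds} matches Lemma~\ref{lem:Q-lower}. However, two of your steps fail as written. The most serious is monotonicity. Your H\"older inequality applied to a near-optimal path $P$ for $\xi_2$ gives $D_n^{(\xi_1)} \le \big(D_n^{(\xi_2)}\big)^{\xi_1/\xi_2} L(P)^{1-\xi_1/\xi_2}$ for $\xi_1<\xi_2$, and inserting $\lambda_n^{(\xi)} = 2^{-(1-\xi Q(\xi))n+o(n)}$ shows that, even granting $L(P)=2^{o(n)}$, the $2^{-n}$ prefactors transform with the power $\xi_1/\xi_2$ and you only obtain $Q(\xi_1)-1/\xi_1 \le Q(\xi_2)-1/\xi_2$, i.e.\ monotonicity of $\xi\mapsto Q(\xi)-1/\xi$, not of $Q$. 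Moreover the hypothesis $L(P)=2^{o(n)}$ is itself unjustified: the only a priori control is $\inf_{B_2(0)}h_n \gtrsim -\sqrt{2d}\,n\log 2$, which bounds the Euclidean length of a near-geodesic only by $2^{(\xi Q+\xi\sqrt{2d}-1)n+o(n)}$, exponentially large in general, and the paths produced in your submultiplicativity construction are controlled in metric length, not Euclidean length. The paper's Lemma~\ref{lem:Q-lower} avoids the length factor altogether by passing to the discrete optimization $\mathcal{D}_n^{(\xi)}=\min\sum_i e^{\xi h_n(x_i)}$ over lattice paths at scale $2^{-n}$, for which subadditivity of $t\mapsto t^{\xi_1/\xi_2}$ gives $\big(\mathcal{D}_n^{(\xi_2)}\big)^{\xi_1/\xi_2}\le \mathcal{D}_n^{(\xi_1)}$ with no extra factor (the number of lattice sites, unlike a Euclidean length, never enters), and $\mathcal{D}_n^{(\xi)}=2^{n\xi Q(\xi)+o(n)}$.

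Your continuity argument is also circular: the limit $1-\xi Q(\xi)$ arises as an infimum-type limit of the continuous functions $-\log_2\lambda_n(\xi)/n$, which is a priori only upper semicontinuous, and choosing the submultiplicativity constant locally uniform in $\xi$ does not by itself make the convergence locally uniform --- that uniformity of the $o(n)$ is precisely what must be proved. The direct fix (essentially what the paper does, in discrete form) is a fixed-$n$ comparison: $|\log D_n^{(\tilde\xi)}(0,e_1;B_2(0))-\log D_n^{(\xi)}(0,e_1;B_2(0))|\le|\tilde\xi-\xi|\sup_{B_2(0)}|h_n|\le C|\tilde\xi-\xi|\,n$ with high probability, which yields $|\xi Q(\xi)-\tilde\xi Q(\tilde\xi)|\le C|\xi-\tilde\xi|$ and hence continuity. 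Finally, the submultiplicativity step you flag as the main obstacle is indeed where the work lies, and a constant-factor bound $\lambda_{m+n}\le C\lambda_m\lambda_n$ is more than the construction delivers: since crossing distances of nearby cubes are only finite-range dependent and the coarse route may visit many cubes, the paper works with a quantile $a_n^{(p)}$ for fixed $p$ close to $1$, reroutes around bad cubes via a finite-range-dependence percolation argument on an auxiliary finer lattice (scale $2^{-m-k}$ with $k=\lfloor(\log m)^2\rfloor$), and pays field-regularity and quantile-versus-median costs of size $e^{Cn^{2/3}}$ (Lemmas~\ref{lem:compare-median-p} and~\ref{lem:prop3.1-1}); the resulting bound $\lambda_n\le e^{Cn^{2/3}}\lambda_m\lambda_{n-m}$ then requires a generalized subadditivity lemma rather than plain Fekete.
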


The proof of Proposition~\ref{prop:exponent-intro} is via a subadditivity argument. Just like in the two-dimensional case, we do not know the value of $Q(\xi)$ explicitly (see Problems~\ref{prob:positive-Q} and~\ref{prob:special}). Analogously to the two-dimensional case (see~\cite[Equation (1.4)]{dg-supercritical-lfpp}), we define the \textit{critical value}
\begin{equation} \label{eqn-critical}
\xi_{\mathrm{crit}} := \sup\left\{ \xi  >0 : Q(\xi) > \sqrt{2d} \right\} .
\end{equation} 
See Remark~\ref{remark-critical} for some discussion of why this value is critical.
We note that $\xi < \xi_{\mathrm{crit}}$ if and only if $Q(\xi) > \sqrt{2d}$. The lower bound $Q(\xi) \geq \frac{1}{\xi} - \sqrt{2d}$ from~\eqref{eq:Q-bounds} implies that $\xi_{\mathrm{crit}} \geq \frac{1}{2\sqrt{2d}}$, and the upper bound $Q(\xi) \leq \frac{1}{\xi} + \sqrt{2}$ implies that $\xi_{\rm crit} \leq \frac{1}{\sqrt{2d} - \sqrt{2}} < \infty$. The main result of the paper is the tightness of our approximating metrics in the full subcritical phase.

\begin{theorem}
\label{thm:tightness}
When $\xi < \xi_{\mathrm{crit}}$, equivalently $Q(\xi) > \sqrt{2d}$, the sequence of metrics $\{\lambda_n^{-1} D_n(\cdot,\cdot)\}_{n \geq 1}$ is tight with respect to the topology of uniform convergence on compact subsets of $\mathbb{R}^d \times \mathbb{R}^d$. Furthermore, each possible subsequential limit (in distribution) is a metric on $\mathbb{R}^d$ which induces the Euclidean topology.
\end{theorem}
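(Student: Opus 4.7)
The plan is to deduce tightness via an Arzelà-Ascoli argument by establishing, uniformly in $n$, two-sided control on $\lambda_n^{-1} D_n$ at every dyadic scale. The key structural input is the multiscale white-noise representation \eqref{eq:field-def}: for any $k\leq n$, the restriction of $h_n$ to a box of side $2^{-k}$ splits as an independent coarse part (from scales in $[2^{-k},1]$) plus a fine part which, after rescaling, has the law of $h_{n-k}$ on the unit box. Together with the definition \eqref{eq:intro-lambda} of $\lambda_n$, this lets one translate a single-scale estimate into an estimate at every scale, weighted by the locally constant coarse field; the exponent $Q(\xi)$ from Proposition~\ref{prop:exponent-intro} controls how this weight scales.

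For the upper direction, I would apply Gaussian concentration (Borell-TIS) to $\log D_n(z,w;B_r(x))$, which is a Lipschitz function of $h_n$ in the Cameron-Martin sense because fixing an almost-optimal path shows that the length changes by at most $e^{\xi\|\delta h\|_\infty}$. Combined with a path-concatenation bound and subadditivity, this yields moment estimates of the form $\mathbb{E}\bigl[\bigl(\lambda_n^{-1} D_n(z,w)\bigr)^p\bigr] \leq C_p\, |z-w|^{\alpha p}$ for some $\alpha>0$ whose size depends on the slack $Q(\xi)-\sqrt{2d}$. Kolmogorov-Chentsov then gives uniform-in-$n$ Hölder continuity of $\lambda_n^{-1} D_n$ on compact sets.

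The difficult direction is the lower bound, ensuring that $\lambda_n^{-1} D_n$ does not degenerate to zero between distinct points and that the subsequential limit induces the Euclidean topology. What is needed is a Russo-Seymour-Welsh-style statement: with probability bounded below uniformly in $n$, the $D_n$-distance between the inner and outer boundaries of a dyadic annulus of scale $2^{-k}$ is at least of order $\lambda_n \cdot 2^{-k(1-\xi Q)}$. My approach would be a multiscale bootstrap: if $p_k$ denotes this probability at scale $2^{-k}$ and $p_k \geq p^*$ for some $p^*>0$, one uses the approximate independence of the scale-$2^{-(k+1)}$ increments of $h_n$ across disjoint sub-annuli (inherited from the white-noise representation) to construct many nearly-independent trials of the event at the next scale, and then argues that the probability of at least one success dominates $p^*$, closing the induction.

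The hard part is unquestionably the RSW estimate of the previous paragraph. In two dimensions, \cite{dddf-lfpp} produces it from three ingredients: conformal invariance (turning a one-arm estimate into crossings of arbitrary aspect ratios), planar topology (any crossing of a box must meet any loop surrounding it, by Jordan's theorem), and an Efron-Stein variance bound. All three inputs break down when $d\geq 3$: there are no nontrivial conformal maps, one-dimensional paths do not separate $\mathbb{R}^d$, so the topological forcing of intersection between geodesics and ``barriers'' disappears, and the Efron-Stein step becomes much weaker once the dimension of resampling regions grows. A genuinely new idea is needed --- for instance, replacing surrounding loops with surrounding hypersurfaces and running a percolation-style gluing argument on a dyadic tiling of such hypersurfaces, or using Gaussian comparison at each scale to force any candidate fast path to pay a nontrivial cost somewhere along its trajectory.
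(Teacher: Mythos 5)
There is a genuine gap, and it is exactly where you locate the difficulty: your lower-bound input (the RSW-type statement that crossing distances of dyadic shells are at least of order $\lambda_n 2^{-k(1-\xi Q)}$ with probability bounded below) is never actually proved. You end by saying ``a genuinely new idea is needed'' and list possible directions, which is an acknowledgment that the key step is missing rather than an argument. Moreover, the multiscale bootstrap you sketch points the wrong way: knowing that at least one of many nearly independent sub-shells has an expensive crossing does not force a crossing of the larger shell to be expensive, because a near-geodesic is free to avoid that particular sub-shell; to force a cost one needs control over essentially \emph{all} boxes a path could traverse (a minimum over exponentially many boxes, not a maximum over independent trials), so the ``at least one success'' union/independence argument does not close the induction. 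The paper bypasses RSW entirely: a percolation argument with finite range of dependence on a coarse lattice gives super-exponential lower bounds for shell-crossing distances (Lemma~\ref{lem:super-exponential-2}); positive association (FKG) shows the crossing distance normalized by $a_n^{(q)}$ is bounded away from $0$ with \emph{positive} probability (Lemmas~\ref{lem:non-degenerate-pt-pt}--\ref{lem:non-degenerate-cross}); and a zero-one law based on locality of the field (Lemma~\ref{lem:zero-one}) upgrades this to probability one, which is what yields that subsequential limits are metrics inducing the Euclidean topology and that $\lambda_n \asymp a_n^{(q)}$.

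Your upper-bound step is also not justified as stated. Gaussian concentration (Lemma~\ref{lem:gauss-concentration}) controls $\log D_n$ only at scale $\sqrt{n}$, so quantiles are pinned down only up to $e^{C\sqrt{n}}$ factors, and Proposition~\ref{prop:exponent-intro} carries $2^{o(n)}$ errors; uniform-in-$n$ moment bounds $\mathbb{E}[(\lambda_n^{-1}D_n(z,w))^p]\le C_p|z-w|^{\alpha p}$ do not follow from Borell--TIS plus subadditivity, and without them the Kolmogorov--Chentsov step collapses. The paper's substitute is the chaining argument of Section~\ref{sec:bound-distance} (again built on finite-range percolation), which bounds diameters by $\sum_m m^{2d}2^{-(1-\xi Q+\eta)m}a_{n-m}^{(q)}$, combined with the technical heart of the paper, the scale comparison of Section~\ref{sec:compare} (Proposition~\ref{prop:compare}, Corollary~\ref{cor:an-compare}): a coarse-graining into nice/bad boxes and a rerouting of paths around the bad regions shows $a_{n-k}^{(q)}\le C2^{(1-\xi Q+\epsilon)k}a_n^{(q')}$ \emph{up to constants}, which is what turns the chaining bound into tightness with a constant-order normalization. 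Neither of these ingredients has a counterpart in your proposal, so the argument as written does not yield the theorem.
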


At an intuitive level, the subcritical condition is what makes a hierarchical construction possible. After decomposing the field into coarse- and fine-scale increments, one can use the coarse field to locate a backbone for a near-geodesic and then use the finer fields to determine its geometry at successively smaller scales. The inequality $Q(\xi)>\sqrt{2d}$ ensures that extreme fine-scale fluctuations are sufficiently sparse that the path can detour around the exceptional regions at a summable cost. Thus, the fine field refines the local geometry without repeatedly reorganizing the path at macroscopic scales. The chaining and multiscale comparison arguments below make this intuition quantitative.

\begin{remark}  \label{remark-critical} 
When $Q(\xi) < \sqrt{2d}$, the metrics $\lambda_n^{-1} D_n$ are not tight with respect to the topology of uniform convergence on compact subsets of $\mathbb R^d \times \mathbb R^d$. So, our result is optimal modulo the critical case when $Q(\xi) = \sqrt{2d}$. 
Indeed, the maximum of $h_n$ on a fixed bounded open set $U\subset\mathbb R^d$ grows like $(\sqrt{2d} +o(1)) (\log 2)  n$ as $n\to\infty$; see e.g.\ \cite{Mad15}. 
From this and the continuity properties of $h_n$ (Claim (2) of Lemma~\ref{lem:field-estimate}), we see that for any fixed $\ep > 0$, with high probability as $n \to \infty$, there exists $z\in U$ such that $h_n(w) \geq (\sqrt{2d} - \ep) (\log 2) n$ for all $w \in B_{2^{-n}}(z)$. For this choice of $z$, the definition of $D_n$ implies that the $D_n$-distance from $z$ to $\partial B_{2^{-n}}(z)$ is at least $2^{  [ (\sqrt{2d} - \ep)\xi - 1 ] n}$. By~\eqref{eq:exponent-relation-lambda}, 
\[
\lambda_n^{-1} D_n(z,\partial B_{2^{-n}}(z)) \geq 2^{( \sqrt{2d} - Q  - \ep     + o(1) ) \xi  n} .
\]
If $Q(\xi)  <\sqrt{2d}$, then for a small enough choice of $\ep$, this goes to $\infty$ as $n\to\infty$, which means that $\lambda_n^{-1} D_n$ cannot be tight with respect to the local uniform topology.  

In the two-dimensional case, it was shown in \cite{dg-supercritical-lfpp,dg-uniqueness} that the re-scaled approximating metrics converge with respect to the topology on lower semicontinuous functions for all $\xi  >0$ (including when $Q(\xi) \leq 2$). 
However, when $Q(\xi)<2$, the limiting metric does not induce the Euclidean topology on $\mathbb R^2$. Rather, there are uncountably many ``singular points" which lie at infinite distance from every other point. It is plausible that similar statements are true for general $d\geq 2$, but we do not address this in the present paper. See Problem~\ref{prob:supercritical}. 
\end{remark}

\begin{remark}  \label{remark-log-correlated}
The field $h$ in~\eqref{eq:field-def} is closely related to the log-correlated Gaussian field on $\mathbb R^d$ considered in \cite{lgf-survey, fgf-survey}. Indeed, define the random generalized function $h^\infty$ in the same manner as $h$ in~\eqref{eq:field-def}, but with $t$ integrated over $(0,\infty)$ instead of over $(0,1)$. Then, a short computation shows that for any choice of the kernel $\KK$ above, one can make sense of $h^\infty$ as a random generalized function viewed modulo additive constant\footnote{That is, $\int_{\mathbb R^d} h^\infty(x) g(x)\,dx$ makes sense whenever $g$ is smooth and compactly supported with $\int_{\mathbb R^d} g(x)\,dx = 0$.} and that $h^\infty$ agrees in law, modulo additive constant, with the log-correlated Gaussian field from \cite{lgf-survey, fgf-survey}. Furthermore, $h^\infty - h$ has a modification which is a continuous function, viewed modulo additive constant.  
This was discussed in~\cite[Section 4.1.1]{lgf-survey} and explained in detail in the two-dimensional case in~\cite[Appendix B]{afs-metric-ball} (the same proof works for any dimension). 
Due to the continuity of $h^\infty-h$, one can easily deduce from Theorem~\ref{thm:tightness} that a natural approximation scheme for the exponential metric associated with $h^\infty$ is also tight. 
\end{remark}

\subsection{Related models}
\label{subsec:potential-relation}

Since the construction of the LQG metric in~\cite{dddf-lfpp,gm-uniqueness}, there have been several additional works which prove tightness and/or uniqueness for various random fractal metrics. Examples include the supercritical LQG metric~\cite{dg-supercritical-lfpp,dg-uniqueness} (as mentioned in Remark~\ref{remark-critical}), the conformal loop ensemble chemical distance~\cite{miller-cle-metric}, and the limit of long-range percolation on $\mathbb Z^d$~\cite{baumler-long-range-perc,dfh-long-range-perc}. We also mention the \textit{directed landscape}, a random directed metric on $\mathbb R^2$ related to the KPZ universality class~\cite{dov-dl}.

An important feature of LQG is its relation with two-dimensional \textit{Liouville conformal field theory} (LCFT) rigorously constructed in \cite{dkrv-lqg-sphere} and follow-up works. The framework of LCFT can produce exact solvability results for the area and length measures associated with LQG surfaces when the underlying field is well chosen. Recently, two-dimensional LCFT has been extended to even dimensions $d\geq 4$ in the papers \cite{cercle-higher-dimension, dhks-even-dim}. Both of these works construct a log-correlated Gaussian field on a $d$-dimensional manifold whose law is re-weighted according to the so-called Liouville action. In other words, these works carry out analogs of \cite{dkrv-lqg-sphere} on certain $d$-manifolds. It should be possible to use the results of the present paper to associate a random metric with the fields considered in \cite{cercle-higher-dimension, dhks-even-dim}, at least as a subsequential limit. As in the two-dimensional case, the exponent $Q$ of Proposition~\ref{prop:exponent-intro} should correspond to the background charge in~\cite{cercle-higher-dimension} (which is also called $Q$).
 
In two dimensions, LQG is conjectured to describe the scaling limit of random planar maps. In particular, the LQG metric is believed to describe the scaling limit of the random planar maps equipped with their graph distance in, e.g., the Gromov-Hausdorff sense. This convergence has been rigorously established for uniform random planar maps toward LQG with $\gamma=\sqrt{8/3}$ ($\xi = 1/\sqrt 6$), but is open for other values of $\gamma$. More precisely, it was shown in~\cite{legall-uniqueness,miermont-brownian-map} that uniform random planar maps converge to a random metric space called the \textit{Brownian map}, and in~\cite{lqg-tbm1,lqg-tbm2} that the Brownian map is equivalent to $\sqrt{8/3}$-LQG, as a metric space. See also~\cite{hs-cardy-embedding} for a stronger topology of convergence and Section 2.4 of \cite{ddg-metric-survey} for further discussions.

It would be extremely interesting to find a natural discrete random geometry in dimension $d\geq 3$ whose scaling limit is described by one of the exponential random metrics considered in this paper (or some minor variant thereof). 

In analogy with the case of uniform triangulations in two dimensions (which converge to $\sqrt{8/3}$-LQG), a natural discrete model to consider is uniform triangulations of the $d$-dimensional sphere, with $n \in \BB N$ total $d$-simplices. 
Such triangulations appear to be very difficult to analyze. 
For example, it is a well-known open problem to determine whether the number of triangulations of the three-sphere with $n$ total tetrahedra grows exponentially or superexponentially~\cite{dj-3-manifolds,gromov-spaces}. 
Moreover, simulations suggest that uniform triangulations of the three-sphere may not have interesting scaling limits when viewed as metric spaces; see, e.g.,~\cite{bk-3d-simplicial,av-3d-simplicial,abkv-3d-vacuum,ckr-3d-entropy,hty-3d-phases,hin-3d-simulation}. 
We refer to the introductions of~\cite{dj-3-manifolds,bz-locally-constructible,budd-lionni-3-spheres} and the references therein for further discussion. 

On the other hand, there are natural restricted classes of triangulations of $d$-spheres which appear to be more tractable, and whose cardinality can be shown to grow exponentially in $n$. Examples include locally constructible, constructible, shellable, and vertex-decomposable triangulations~\cite{dj-3-manifolds,bz-locally-constructible}. 
One could ask whether a uniform sample from any of these restricted classes converges in the Gromov-Hausdorff sense to the exponential metric associated with a log-correlated Gaussian field (or a field which locally looks like a log-correlated Gaussian field). 

Another potentially interesting class of discrete models are random graphs which can be represented as the tangency graph of a sphere packing in $\mathbb{R}^d$ (or the $d$-sphere). Unlike for $d=2$, there is not a simple criterion for when a graph can be represented by a sphere packing in $\mathbb{R}^d$ for $d\geq 3$. In fact, for several values of $d$, it is known that the problem of determining whether a given graph admits a sphere packing representation in $\mathbb{R}^d$ is NP hard~\cite{hk-disks-and-balls}. In dimension two, circle packings and their links to random conformal geometry are fairly well-understood (see, e.g., the survey~\cite{nachmias-circle-packing}). In higher dimensions the theory is much less well-developed and likely to be much more difficult. But, a few results can be found, e.g., in~\cite{cr-rigidity,bc-sphere-packing,lee-conformal-growth,bg-rw-sphere-packing}. One could look for a natural model of random sphere packings in $\mathbb{R}^d$ whose scaling limit is described by the exponential of a log-correlated Gaussian field. 

One can also consider graphs which admit other types of embeddings in $\mathbb{R}^d$, e.g., those which can be realized as a $d$-dimensional Delaunay triangulation, or those which can be represented as a $d$-dimensional orthogonal tiling graph in the sense of~\cite{bg-rw-sphere-packing} (which includes sphere-packable and Delaunay realizable graphs as special cases). One \textit{a priori} reason to expect that some types of orthogonal tiling graphs might be related to exponential metrics of log-correlated Gaussian fields is that orthogonal tiling graphs satisfy, in some sense, a discrete analog of conformal flatness~\cite[Section 1.4]{bg-rw-sphere-packing}. 
 
In another direction, connections between $\gamma$-LQG and random planar maps for general $\gamma\in (0,2)$ have been obtained using the framework of mating-of-trees theory \cite{wedges}; see the survey \cite{ghs-mating-survey}. The recent work \cite{BC23} presents an analog of mating-of-trees constructions in three dimensions. In a similar vein, the paper~\cite{budd-lionni-3-spheres} introduces a model of random triangulations of the three-sphere, decorated by a pair of trees, which is combinatorially tractable and has interesting geometric features. 
It is natural to wonder whether there are any mating-of-trees type constructions related to exponential metrics of log-correlated Gaussian fields in dimension $d\geq 3$. 

Recently, an analog of the Brownian map in dimension $d\geq 3$ was proposed in \cite{ml-iterated-folding}.  
In light of the aforementioned relationship between the Brownian map and $\sqrt{8/3}$-LQG, it is also natural to wonder whether this random metric space has any relation to the exponential metrics of log-correlated Gaussian fields.

\subsection{Outline}
\label{subsec:outline} 
Here, we outline the proof strategy of Theorem~\ref{thm:tightness} and describe the content of each subsequent section.

\subsubsection{Comparison to the two-dimensional case}

First, let us highlight the main differences between the method in this paper and those used in earlier works \cite{ding-dunlap-lqg-fpp, ding-dunlap-lgd, df-lqg-metric, dddf-lfpp, dg-supercritical-lfpp} to establish the tightness of approximations of exponential metrics for log-correlated fields in two dimensions. All the results in two dimensions rely crucially on RSW estimates, which give up-to-constants comparisons between quantiles of $D_n$-crossing lengths of rectangles in the ``easy direction" and the ``hard direction"; see e.g.\ Section 3 of \cite{dddf-lfpp}. The arguments to prove these estimates are based on either approximate conformal invariance or on forcing paths to cross each other, neither of which works in higher dimensions. For this reason, we will use a fundamentally different approach to prove tightness which bypasses any direct use of RSW estimates.

The first difference in our approach as compared to the two-dimensional case is that we initially use the median of the point-to-point distance, namely $\lambda_n$ from \eqref{eq:intro-lambda}, as the normalizing constant. In contrast, previous works use the median of the left-to-right crossing distance within a box as their normalizing constant (although these two medians are eventually proved to be equivalent up to constants). The point-to-point distance is typically larger than the left-to-right crossing distance, which makes it easier to upper-bound other types of distances in terms of $\lambda_n$. We choose to work with the internal point-to-point distance inside a box to ensure long-range independence, which allows us to apply percolation arguments. To implement this, we will actually work with the $q$-quantile of $D_n(0,e_1;B_2(0))$ for $q$ close to one, but not depending on $n$, in most parts of the proof.

The second difference arises from the lack of upper tail estimates in dimension $d \geq 3$. In previous works (see Sections 4 and 5 of \cite{dddf-lfpp} and Section 3 of \cite{dg-supercritical-lfpp}), upper tail estimates for the left-to-right crossing distance were derived using RSW estimates, percolation arguments, and the Efron-Stein inequality. Although the RSW argument is not applicable in our case, we can still hope to use percolation arguments to obtain an upper tail estimate. For integers $1 \leq k \leq n$, we can divide a box into $2^k$ sub-boxes and use the scaling property of $h_n$ (Lemma~\ref{lem:basic-h}) and percolation arguments to deduce an upper tail estimate for the $D_n$-distance across a hypercubic shell\footnote{A hypercubic shell is the region between two concentric boxes, which is the $d$-dimensional analog of a square annulus.}---for example, $D_n(\partial B_1(0), \partial B_2(0))$---in terms of $\lambda_{n-k}$ (in practice, we will use a large quantile rather than the median). A chaining argument similar to the two-dimensional case also allows us to upper-bound the $D_n$-diameter of a box in terms of  $(\lambda_{n - k})_{1 \leq k \leq n}$; see Proposition~\ref{prop:chain}. It turns out that a specific comparison bound between $\lambda_n$ and $\lambda_{n-k}$ for all integers $1 \leq k \leq n$, as detailed in Proposition~\ref{prop:compare}, is sufficient. This in turn ensures the tightness of the metric. Deriving this comparison is one of the novelties of the paper and is detailed in Section~\ref{sec:compare}. In that section, we will actually derive a comparison between the metrics $D_n$ and $D_{n-k}$, which may also be of independent interest.

The third difference arises from the lack of lower tail estimates in dimension $d \geq 3$ and from our choice of the normalizing constant. In previous works (see Section 4 of~\cite{dddf-lfpp}), a lower tail estimate for the left-to-right crossing distance follows from the RSW argument, which implies that each subsequential limit is a metric rather than a pseudo-metric. In our case, we will use a different and novel approach to demonstrate this. It is easy to show that for each subsequential limit, the distance across a hypercubic shell is positive with non-zero probability. The key idea is to boost this probability to one using a zero-one law argument (Lemma~\ref{lem:zero-one}). Intuitively, if the subsequential limit were a measurable function of $h$, then the zero-one law would follow directly from Kolmogorov's zero-one law, since the event that the distance is zero is a tail event of $h$. In our case, however, we do not \textit{a priori} know whether the subsequential limit is measurable with respect to $h$, so a more delicate argument is required. We refer to the proof of Lemma~\ref{lem:zero-one} for details.

\subsubsection{Detailed outline}
Next, we describe our strategy in more detail and outline the content of each section. More detailed overviews can be found at the beginning of the corresponding sections and subsections.

For the tightness argument, the reader can keep in mind two main quantitative goals. The first is the multiscale diameter estimate in Proposition~\ref{prop:chain}, which bounds the diameter of a box by a weighted sum of the quantiles $a_{n-m}^{(q)}$ over all Euclidean scales. The second is the comparison estimate of Section~\ref{sec:compare}, which relates these quantiles at different scales. Combining the two estimates turns the multiscale sum into a constant-order multiple of $a_n^{(q)}$, which is the key upper bound needed for tightness.

In Section~\ref{sec:prelim}, we provide preliminaries and fix some notation. Let $W(dx,dt)$ be the space-time white noise. Throughout the paper, we will work with the approximation of the log-correlated Gaussian field 
\[
h_{m,n}(x) = \int_{\mathbb{R}^d} \int_{2^{-n}}^{2^{-m}} \KK(\frac{x-y}{t}) t^{-\frac{d+1}{2}} W(dy, dt)\,,
\]
for integers $n > m \geq 0$ (note that $h_{0,n} = h_n$, as defined in~\eqref{eq:field-def}). Some basic properties and estimates of $h_{m,n}$ are provided in Subsection~\ref{subsec:est-log-field}. In Subsection~\ref{subsec:def-lfpp}, we define $D_{m,n}$ as the exponential metrics associated with $h_{m,n}$ and establish some basic properties of these metrics, including a Gaussian concentration bound (Lemma~\ref{lem:gauss-concentration}). Subsection~\ref{subsec:percolation} collects basic arguments about percolation with finite range of dependence, which will play a crucial role in Sections~\ref{sec:exponent} and \ref{sec:bound-distance}.

In Section~\ref{sec:exponent}, we will prove the existence of an exponent $Q$ satisfying~\eqref{eq:exponent-relation-lambda}. This follows from the subadditivity inequality: $\lambda_n \leq e^{C n^{2/3}} \lambda_m \lambda_{n-m}$ for integers $n > m \geq 1$, and the proof is similar to Proposition 2.5 of \cite{dg-supercritical-lfpp}. To establish this inequality, we construct a path connecting $0$ and $e_1$ of typical $D_n$-length in two steps. The first step is to construct a path on $2^{-m} \mathbb{Z}^d$ whose $D_m$-length can be upper-bounded. The second step is to locally modify this path so that its $D_n$-length can be controlled using a percolation argument on a refined lattice. 
In Lemma~\ref{lem:Q-lower}, we establish basic properties of $Q(\xi)$, which then completes the proof of Proposition~\ref{prop:exponent-intro}.

In Section~\ref{sec:bound-distance}, we upper-bound different types of distances. First, we present a chaining argument in Subsection~\ref{subsec:chaining}, similar to those in Section 6.3 of \cite{ding-dunlap-lqg-fpp} and Section 6.1 of \cite{df-lqg-metric}. Specifically, we construct paths of typical length at multiple scales to connect any two points in a box, which yields an upper bound for the $D_n$-diameter of the box in terms of large quantiles of $D_{n-m}(0,e_1;B_2(0))$ for $0 \leq m \leq n$. This result will be used subsequently in two places. First, we use it in Subsections~\ref{subsec:bound-diameter} and \ref{subsec:cross} to show that the medians of the box diameter and the distance across a hypercubic shell both satisfy the relation in \eqref{eq:exponent-relation-lambda}. Second, it will be used to prove the tightness of the metric in Subsection~\ref{subsec:tightness}, once a comparison between quantiles of $D_n(0,e_1;B_2(0))$ for different values of $n$ has been achieved. In Subsection~\ref{subsec:super-exponential}, we prove super-exponential concentration bounds for distances across and around hypercubic shells, which will be used in Section~\ref{sec:compare}.

In Section~\ref{sec:compare}, we compare the metrics $D_n$ and $D_{n+k}$ for integers $n \geq k \geq 1$; see Proposition~\ref{prop:compare} for the main result. We briefly describe the strategy here and refer to Subsection~\ref{subsec:sec5-strategy} for a more detailed outline. The comparison is based on controlling the behavior of the field $h_{n,n+k}$ (note that $D_{n+k}$ is obtained from $D_n$ by adding $h_{n,n+k}$ to the field). In most parts of the space, $h_{n,n+k}$ behaves well, and $D_{n+k}$ and $D_n$ satisfy the desired bound in Proposition~\ref{prop:compare}. However, there are places where $h_{n,n+k}$ does not behave well, and \textit{a priori}, it is possible that a $D_n$- or $D_{n+k}$-geodesic spends most of its time in these problematic regions. Our main effort is to control the impact of these regions on the metrics, which we do in two steps. This is in a similar spirit to the role played by Efron-Stein inequality in~\cite{dddf-lfpp, dg-supercritical-lfpp} to bound the variance of the metric, but our argument is more quantitative and applies under weaker \textit{a priori} concentration bounds. In the first step, we use a coarse-graining argument to show that, with high probability, the problematic regions can be covered by boxes at different scales. Importantly, all these boxes satisfy the condition that the $D_n$-distance around the hypercubic shell enclosing the box can be upper-bounded by the $D_n$-distance across a larger hypercubic shell. In the second step, we use this condition to show that the ill-behaved field within these boxes has a minor impact on the metric $D_n$. Specifically, paths can be modified to avoid these boxes, and their $D_n$-length increases by no more than a constant factor. Moreover, for paths entirely contained in the domain where $h_{n,n+k}$ behaves well, we can adjust the paths so that their $D_n$-length and $D_{n+k}$-length satisfy the desired comparison bound. This leads to the desired comparison between $D_n$ and $D_{n+k}$.

In Section~\ref{sec:final-proof}, we complete the proof of Theorem~\ref{thm:tightness}. The proof consists of two parts. In Subsection~\ref{subsec:tightness}, we combine results from the chaining argument in Subsection~\ref{subsec:chaining} and the comparison of quantiles from Section~\ref{sec:compare} to show the tightness of $D_n$ when normalized by the $q$-quantile of $D_n(0,e_1;B_2(0))$, where $q$ is close to one but independent of $n$. In Subsection~\ref{subsec:non-degenerate}, we show that each possible subsequential limit is a metric rather than a pseuodo-metric. From the definition of quantiles and the positive association (FKG) for $h_n$, we first show that the distance across a hypercubic shell is bounded away from zero with positive probability. We then use the locality property of the metric to prove a zero-one law (Lemma~\ref{lem:zero-one}), which allows us to increase this probability to one. Applying the result to countably many hypercubic shells shows that each subsequential limit is a metric. This in turn yields an up-to-constants comparison between the median $\lambda_n$ and the $q$-quantile of $D_n(0,e_1;B_2(0))$, which implies tightness when we normalize by $\lambda_n$ instead of the $q$-quantile. 

In Section~\ref{sec:open-problem}, we list several open problems related to the exponential metrics. Appendix~\ref{appendix:index} contains a list of notation used in the paper.

\section{Preliminaries}
\label{sec:prelim}
\subsection{Basic notation}

\subsubsection*{Numbers}

We write $\mathbb{N} = \{1,2,\ldots \}$. Without specific mention, the logarithm in this paper will be taken with respect to the base $e$. For $a \in \mathbb{R}$, we use $\lfloor a \rfloor$ to represent the largest integer not greater than $a$. For a random variable $X$, we use ${\rm Med}(X)$ to represent its median.

\subsubsection*{Metrics}
Let $(X,D)$ be a metric space. For a curve $P:[a,b] \rightarrow X$, the $D$-length of $P$ is defined as
\begin{equation*}
{\rm len}(P;D):=\sup_T \sum_{i=1}^{n} D(P(t_i), P(t_{i-1}))
\end{equation*}
where the supremum is taken over all partitions $T: a=t_0 < t_1<\ldots <t_n = b$ of $[a,b]$. The $D$-length of a curve may be infinite.

For a curve $P:[a,b] \rightarrow X$ and a set $Y \subset X$, consider the pre-image $P^{-1}(Y) \subset [a,b]$. Write the interior of $P^{-1}(Y)$ as the disjoint union of countably many open intervals $\{(a_i,b_i)\}_{i \geq 1}$. We define the restriction of $P$ to $Y$ as $P|_Y := \cup_{i \geq 1} P([a_i,b_i])$, which is the union of a family of curves, and its length is defined as 
\begin{equation}
\label{eq:restrict-length}
    {\rm len}(P|_Y; D) := \sum_{i \geq 1} {\rm len}(P([a_i,b_i]) ; D)\,.
\end{equation}
Note that $P|_Y$ and $P(P^{-1}(Y))$ are the same up to the inclusion of end points (of intervals in $P^{-1}(Y)$) or single points (i.e., each interval containing them is not a subset of $P^{-1}(Y)$). For the sets $Y$ that we will consider in this paper, the lengths of $P|_Y$ and $P(P^{-1}(Y))$ will be the same.

For $Y \subset X$, the internal metric of $D$ on $Y$ is defined as
\begin{equation}
    \label{def:int-metric}
    D(x,y; Y) := \inf_{P \subset Y}{\rm len}(P; D), \quad \forall x,y \in Y
\end{equation}
where the infimum is taken over all paths $P$ in $Y$ from $x$ to $y$. Then $D(\cdot, \cdot; Y)$ is a metric on $Y$, allowing the distance between two points to be infinite.

We say $D$ is a \textit{length metric} if for all $x,y \in X$ and $\delta>0$, there exists a curve with $D$-length at most $D(x,y) + \delta$ connecting $x$ and $y$. We say $D$ is a \textit{geodesic metric} if for all $x,y \in X$, there exists a curve with $D$-length precisely $D(x,y)$ connecting $x$ and $y$.

\subsubsection*{Subsets of Euclidean space}

In this paper, we consider the space $\mathbb{R}^d$ where $d \geq 2$ is a fixed dimension. For $z \in \mathbb{R}^d$, we write $z = (z_1,\ldots,z_d)$ for its coordinates. We use the notation $|\cdot|_1$, $|\cdot|_2$, and $|\cdot|_\infty$ to represent the $l^1$-, $l^2$-, and $l^\infty$-norms, respectively. We use $\mathfrak d_1$, $\mathfrak d_2$, and $\mathfrak d_\infty$ to denote the distances associated with these norms. Without specific mention, the distance that we use is the $l^\infty$-distance. For a set $A \subset \mathbb{R}^d$ and $r>0$, we define the $l^\infty$-neighborhood
\begin{equation*}
    B_r(A):= \{z \in \mathbb{R}^d: \mathfrak d_\infty(z,A) <r \}\,.
\end{equation*}
As in~\eqref{eq:box-def}, for $z \in \mathbb{R}^d$, we write $B_r(z) = B_r(\{z \}) = z + (-r,r)^d$ for the open box centered at $z$ with side-length $2r$. We call a domain $A \subset \mathbb{R}^d$ a \textit{hypercubic shell} if $A = B_{r_1}(x) \backslash B_{r_2}(x)$ for some $x \in \mathbb{R}^d$ and $r_1>r_2>0$.

We extend the notation of $|\cdot|_\infty$, $|\cdot|_1$, $\mathfrak d_\infty$, and $\mathfrak d_1$ to the integer lattice $\mathbb{Z}^d$. For $x \in \mathbb{Z}^d$ and an integer $n \geq 0$, we define $B_n(x)$ as the box centered at $x$ with side-length $2n$. Namely, 
\begin{equation*}
   B_n(x) := \{z \in \mathbb{Z}^d: |z-x|_\infty \leq n\}\,.
\end{equation*}
We will clarify in the context whether we are considering $x$ as a point in $\mathbb{R}^d$ or as a vertex in $\mathbb{Z}^d$. For an integer $n \geq 0$, define the set
\begin{equation}
\label{eq:def-rescaled-lattice}
    \mathscrL_n := 2^{-n}\mathbb{Z}^d \cap B_2(0)\,.
\end{equation}
In general, we consider $\mathscrL_n$ as a subset of $\mathbb{R}^d$. However, when analyzing ($*$-)paths or ($*$-)clusters on the rescaled lattice $2^{-n} \mathbb{Z}^d$, as defined in Subsection~\ref{subsec:percolation}, we view $\mathscrL_n$ as a subset of $2^{-n} \mathbb{Z}^d$. In this paper, we also consider the graph distance on the rescaled lattice $\epsilon \mathbb{Z}^d$ which is defined as $1/\epsilon$ times the $l^\infty$-distance when considering $\epsilon \mathbb{Z}^d$ as a subset of $\mathbb{R}^d$.

\subsubsection*{Convention about constants}
Constants like $c, c', C, C'$ may change from place to place, while constants with subscripts like $c_1,C_1$ remain fixed throughout the article. All constants may implicitly rely on the dimension $d$, the kernel $\KK$, $\rr$, and $\xi$. The dependence on additional variables will be indicated at the first occurrence of each constant.

\subsection{Approximation of a log-correlated Gaussian field}
\label{subsec:est-log-field}

In this subsection, we establish some basic properties of the Gaussian random functions $h_n$ introduced in Subsection~\ref{subsec:intro-1}. Let us fix a convolution kernel $\KK: \mathbb{R}^d \rightarrow [0,\infty)$ and a constant $\rr>0$ which satisfy the conditions~\ref{K-condition1}, \ref{K-condition2}, \ref{K-condition3} in Subsection~\ref{subsec:intro-1}. 
Let $W$ be a white noise on $\mathbb R^d \times (0,\infty)$ and we define $h_n$ and $h$ as in~\eqref{eq:field-def}. We will also have occasion to consider the following additional functions.

\begin{definition}
    \label{def:log-field}
For integers $n \geq m \geq 0$ and $x\in \mathbb R^d$, we define
    \begin{equation*}
    h_{m,n}(x) := h_n(x) - h_m(x)  = \int_{\mathbb{R}^d} \int_{2^{-n}}^{2^{-m}} \KK\big(\frac{y-x}{t}\big) t^{-\frac{d+1}{2}} W(dy,dt) \,.
    \end{equation*} 
    Note that $h_{0,n}(x) = h_n(x)$. 
\end{definition}

The following properties of $h_{m,n}$ follow directly from its definition and the conditions on $\KK$. We omit the proof here.
\begin{lemma}
    \label{lem:basic-h}
    For integers $n > m \geq 0$, we have
    \begin{enumerate}[(1).]
        \item $h_{m,n}$ is smooth.
        \item The law of $h_{m,n}$ is invariant under translation and rotation of $\mathbb{R}^d$. 
        \item For any $U,V \subset \mathbb{R}^d$ with $\mathfrak d_\infty(U,V) \geq 2\rr \cdot 2^{-m}$, the fields $h_{m,n}|_U$ and $h_{m,n}|_V$, which are obtained by restricting $h_{m,n}$ to the domains $U$ and $V$, are independent.
        \item The fields satisfy the scaling property: $(h_{m,n}(x))_{x \in \mathbb{R}^d} \overset{d}{=} (h_{0,n-m}(x 2^m  ))_{x \in \mathbb{R}^d} $.
    \end{enumerate}
\end{lemma}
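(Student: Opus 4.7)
The plan is to verify the four claims directly from the white-noise representation, treating them in turn. For (1), smoothness, the key observation is that the integrand $(y,t) \mapsto \KK\bigl((y-x)/t\bigr) t^{-(d+1)/2}$ is smooth in $x$, compactly supported in $y$ (since $\KK$ is supported in $B_{\rr}(0)$), and involves only $t \in [2^{-n}, 2^{-m}]$, bounded away from $0$. Hence each spatial derivative $\partial^\alpha_x \KK((y-x)/t) t^{-(d+1)/2}$ is uniformly $L^2$ on compacts in $(y,t)$, so one may differentiate under the white-noise integral (equivalently, pair $W$ with the smooth test function obtained by differentiating the kernel). A standard Kolmogorov-type argument then yields a smooth modification; alternatively, one invokes Proposition~2.1 of \cite{df-lqg-metric} cited in the excerpt.

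For (2), translation invariance follows from the change of variables $y \mapsto y + a$ in $W(dy,dt)$, which preserves white noise in law; the integrand $\KK((y-x-a)/t)$ becomes $\KK((y-x)/t)$ after the substitution. Rotation invariance uses the same argument applied to the change $y \mapsto Ry$, combined crucially with the radial symmetry of $\KK$ (condition~\ref{K-condition1}), which gives $\KK(R(y-x)/t) = \KK((y-x)/t)$. Property (4), the scaling relation, follows from the substitution $y = 2^{-m} y'$, $t = 2^{-m} t'$: the Jacobian contributes a factor $2^{-m(d+1)}$ to $dy\,dt$, white noise scales as $W(2^{-m} dy', 2^{-m} dt') \eqD 2^{-m(d+1)/2} W'(dy', dt')$ for an independent copy $W'$, the $t^{-(d+1)/2}$ factor contributes $2^{m(d+1)/2}$, and the limits $2^{-n}, 2^{-m}$ become $2^{-(n-m)}, 1$; all scale factors cancel, leaving $h_{0,n-m}(2^m x)$.

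For (3), independence, the crucial input is the compact support of $\KK$ in $B_{\rr}(0)$ together with the $t$-cutoff $t \leq 2^{-m}$. For any $x$, the integrand $\KK((y-x)/t)$ vanishes unless $|y-x|_\infty \leq \rr \cdot t \leq \rr \cdot 2^{-m}$, so $h_{m,n}(x)$ is measurable with respect to the restriction of $W$ to $B_{\rr 2^{-m}}(x) \times [2^{-n}, 2^{-m}]$. Consequently $h_{m,n}|_U$ and $h_{m,n}|_V$ are measurable with respect to $W$ restricted to the $(\rr 2^{-m})$-neighborhoods of $U$ and $V$ respectively. When $\mathfrak d_\infty(U,V) \geq 2\rr \cdot 2^{-m}$ these two neighborhoods are disjoint, and the independence follows from the independence of white noise on disjoint sets.

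None of these steps presents a real obstacle; the only mild care required is the justification that $W$ can be evaluated on the (random, $x$-indexed) family of test functions and that doing so commutes with differentiation and change of variables, which is standard once one works with $W$ as a random tempered distribution in the sense of \cite{fgf-survey}.
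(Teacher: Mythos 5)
Your proof is correct: all four claims are verified by the standard direct arguments from the white-noise representation (differentiation under the integral with $t$ bounded below for smoothness, invariance of white noise plus radial symmetry of $\KK$ for translation/rotation invariance, compact support of $\KK$ giving locality of the kernel for independence, and the change of variables $y=2^{-m}y'$, $t=2^{-m}t'$ for scaling), which is exactly the routine verification the paper omits with the remark that the lemma ``follows directly from its definition and the conditions on $\KK$.'' The only cosmetic point is that in claim (4) the rescaled noise $W'(A):=2^{m(d+1)/2}W(2^{-m}A)$ is a deterministic function of $W$ rather than an ``independent copy,'' but since you only use that it has the law of a white noise, the distributional identity you conclude is unaffected.
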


We collect some basic estimates about the field in the following lemma.
\begin{lemma}\label{lem:field-estimate}
    \begin{enumerate}[(1).]
        \item For any integers $n >m \geq 0$ and $x \in \mathbb{R}^d$, ${\rm Var}(h_{m,n}(x)) = (n-m) \log 2 $.
        \item There exists some constant $C>0$ such that for all $n \geq 1$ and $t>0$:
        \begin{equation}
        \label{eq:lem2.3-claim2}
        \mathbb{P}\Big[ \sup_{x \in 2^{-n}B_1(0)} |\nabla h_n(x)|_\infty \geq 2^n t \Big] \leq Ce^{-\frac{t^2}{C}}.
        \end{equation}
        \item There exists some constant $C>0$ such that for all $n \geq 1$, we have
        \begin{equation*}
        \mathbb{E}\Big[\sup_{x \in B_1(0)} h_n(x)\Big] \leq n \sqrt{2d} \log 2 + C \sqrt{n}\,.
        \end{equation*}
        \item (Borell-TIS inequality) For all $u>0$ and integer $n \geq 1$, we have
        \begin{equation}
        \label{eq:borell-tis}
        \mathbb{P}\Big[ \sup_{x \in B_1(0)} h_n(x) \geq \mathbb{E}\sup_{x \in B_1(0)} h_n(x) + u \Big]\leq \exp \big(-\frac{u^2}{2 \log 2 \cdot n}\big)\,.
        \end{equation}
        \item There exists some constant $C>0$ such that for all $u>0$ and integer $n \geq 1$, we have
        \begin{equation*}
        \mathbb{P}\Big[\sup_{x \in 2^{-n} B_1(0)} h_n(x) > u \Big] \leq C \exp \big( -\frac{(u - u^{2/3})^2}{2 \log 2 \cdot n} \big) + C \exp \big( -\frac{u^{4/3}}{C} \big) \,.
        \end{equation*}
    \end{enumerate}
\end{lemma}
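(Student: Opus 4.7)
All five claims reduce to covariance computations combined with standard Gaussian concentration. My plan is to handle them in the order (1), (4), (2), (3), (5), since Claims~(3) and~(5) both build on the gradient bound in Claim~(2).

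Claim~(1) is immediate from the It\^o isometry: the substitution $u=(y-x)/t$ together with the normalization $\int_{\mathbb R^d}\KK(u)^2\,du = 1$ gives $\operatorname{Var}(h_{m,n}(x)) = \int_{2^{-n}}^{2^{-m}} t^{-1}\,dt = (n-m)\log 2$. Claim~(4) is then the Borell--TIS inequality applied to the continuous centered Gaussian process $(h_n(x))_{x\in B_1(0)}$, whose pointwise variance equals $n\log 2$ by Claim~(1).

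The core estimate is Claim~(2). Differentiating $\KK((y-x)/t)$ in $x_i$ introduces a factor $t^{-1}$, so the same variance computation gives $\operatorname{Var}(\partial_i h_n(x)) = \|\partial_i\KK\|_2^2\cdot\tfrac{1}{2}(2^{2n}-1) = O(2^{2n})$ uniformly in $x$. I would then consider the rescaled process $g_i(y) := 2^{-n}\partial_i h_n(2^{-n}y)$ for $y\in B_1(0)$. A direct substitution in the covariance formula shows that its covariance kernel converges as $n\to\infty$ to a smooth limit and is bounded in $C^1$ uniformly in $n$, using that $\KK$ is smooth and compactly supported. Dudley's entropy bound then yields $\mathbb E\sup_{y\in B_1(0)}|g_i(y)| = O(1)$ uniformly in $n$, and Borell--TIS applied to $g_i$ produces the desired Gaussian tail; a union bound over $i=1,\dots,d$ finishes Claim~(2).

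Claims~(3) and~(5) are then routine applications of Claim~(2). For Claim~(3), take $\mathcal M_n := 2^{-n}\mathbb Z^d \cap B_1(0)$, so $|\mathcal M_n| \leq C\,2^{nd}$; since each $h_n(x)$ is $N(0,n\log 2)$, the Gaussian max inequality gives $\mathbb E\max_{\mathcal M_n} h_n \leq n\sqrt{2d}\log 2 + O(1)$, and the continuum-vs-mesh gap is bounded by $C\cdot 2^{-n}$ times the maximum over the $O(2^{nd})$ cells of $\sup|\nabla h_n|_\infty$, which by Claim~(2) contributes $O(\sqrt n)$ in expectation. For Claim~(5), I would write
\[
\sup_{x\in 2^{-n}B_1(0)} h_n(x) \leq h_n(0) + C\cdot 2^{-n}\sup_{x\in 2^{-n}B_1(0)}|\nabla h_n(x)|_\infty,
\]
bound the first term using the $N(0,n\log 2)$ Gaussian tail by $\exp(-(u-s)^2/(2n\log 2))$ and the second by $C\exp(-s^2/C)$ from Claim~(2), and optimize $s=u^{2/3}$. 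The one nontrivial ingredient is the uniform-in-$n$ bound on $\mathbb E\sup|g_i|$ in Claim~(2), which reduces to uniform regularity of the rescaled covariance kernel, a routine consequence of the properties of $\KK$.
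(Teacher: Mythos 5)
Your proposal is correct; the only place where it genuinely diverges from the paper is Claim~(2). The paper proves the gradient bound by a multiscale decomposition: it writes $\nabla h_n$ as the sum of the single-scale increments $\nabla h_{k,k+1}$, uses the exact scaling property $(h_{k,k+1}(x))_{x}\overset{d}{=}(h_{0,1}(2^k x))_x$ to reduce each increment to the fixed field $h_1$, bounds that one field by Fernique's theorem, and then distributes the deviation $2^n t$ over scales via $\sum_k 2^k 2^{(n-k)/2}\leq 4\cdot 2^n$ and a union bound. You instead rescale the full gradient field at once, setting $g_i(y)=2^{-n}\partial_i h_n(2^{-n}y)$, and observe that its covariance $\int_1^{2^n}s^{-3}F\big((y_1-y_2)/s\big)\,ds$ (with $F$ the correlation of $\partial_i\KK$) is uniformly smooth in $n$, so Dudley plus Borell--TIS give a Gaussian tail with $n$-independent constants; this is a valid alternative, trading the paper's soft scale-by-scale argument (which only needs Fernique at a single scale and no explicit covariance computation) for a direct but slightly more computational verification of uniform kernel regularity. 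Your treatment of Claim~(3) also differs cosmetically -- you bound the expectation of the discrete maximum directly by the Gaussian maximal inequality $\sigma\sqrt{2\log N}$ and add the expected mesh error $O(\sqrt n)$ from Claim~(2), whereas the paper derives a tail bound and integrates it -- but the discretization-plus-gradient idea is the same, and Claims~(1), (4), (5) coincide with the paper's arguments (white-noise isometry, Borell--TIS, and the split $h_n(0)+d2^{-n}\sup|\nabla h_n|_\infty$ with the threshold $u^{2/3}$).
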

\begin{proof}
    We first prove Claim (1). Using the property of white noise and the identity $\int_{\mathbb{R}^d} \KK(x)^2 dx =1$ from condition~\ref{K-condition3}, we obtain
    \begin{equation*}
    \begin{aligned}
        &\quad {\rm Var}(h_{m,n}(x)) \\
        &= \mathbb{E} \bigg[\int_{\mathbb{R}^d} \int_{2^{-n}}^{2^{-m}} \KK\big(\frac{y-x}{t} \big) t^{-\frac{d+1}{2}} W(dy,dt)\\
        &\qquad \qquad \qquad \qquad \qquad \qquad \qquad  \times \int_{\mathbb{R}^d} \int_{2^{-n}}^{2^{-m}} \KK\big(\frac{y'-x}{t'} \big) t'^{-\frac{d+1}{2}} W(dy',dt') \bigg]\\
        &=\int_{\mathbb{R}^d} \int_{2^{-n}}^{2^{-m}} \KK\big(\frac{y-x}{t} \big)^2 t^{-d-1} dydt=\int_{2^{-n}}^{2^{-m}} t^{-1} dt = (n-m) \log 2  \,.
    \end{aligned}
    \end{equation*}
    
    We now prove Claim (2). Using the smoothness of $\KK$ and Fernique's theorem (see e.g.~\cite{fernique-criterion}), we have a tail estimate for $h_1$. That is, there exists a constant $A>0$ such that for all $t>0$:
    \begin{equation}
        \label{eq:lem2.3-tail1}
    \mathbb{P}\Big[ \sup_{x \in B_1(0)} |\nabla h_1(x)|_\infty \geq t \Big] \leq Ae^{-\frac{t^2}{A}}.
    \end{equation}
    By Claim (4) in Lemma~\ref{lem:basic-h}, for any integer $k$, we have $(h_{k,k+1}(x))_{x \in 2^{-k} B_1(0)} \overset{d}{=} (h_{0,1}(x 2^k))_{x \in 2^{-k}B_1(0)}$. Therefore,
    \begin{equation*}
    \sup_{x\in 2^{-k} B_1(0)} |\nabla h_{k,k+1}(x)|_\infty \overset{d}{=} 2^k  \sup_{y \in B_1(0)}|\nabla h_{0,1}(y )|_\infty\,.
    \end{equation*}
    Combining this with \eqref{eq:lem2.3-tail1}, we obtain that for all integer $k \geq 0$ and $t>0$:
    \begin{equation}
    \label{eq:lem2.3-tail2}
    \mathbb{P} \Big[\sup_{x\in 2^{-k} B_1(0)} |\nabla h_{k,k+1}(x)|_\infty \geq 2^k t \Big] \leq Ae^{-\frac{t^2}{A}}.
    \end{equation}
    Let us first prove \eqref{eq:lem2.3-claim2} in the case where $t \geq 1$. Using the facts 
    \begin{equation*}
    \sup_{x \in 2^{-n}B_1(0)}|\nabla h_n(x)|_\infty \leq \sum_{k=0}^{n-1} \sup_{x \in 2^{-k}B_1(0)}|\nabla h_{k,k+1}(x)|_\infty
    \end{equation*}
    and $\sum_{k=0}^{n-1} 2^k \cdot 2^{\frac{n-k}{2}} \leq 4\cdot 2^n$, we obtain that for all integer $n\geq 1$:
    \begin{equation*}
    \begin{aligned}
        &\mathbb{P}\Big[ \sup_{x \in 2^{-n}B_1(0)} |\nabla h_n(x)|_\infty \geq 2^n t \Big] \\
        &\qquad \leq \mathbb{P}\Big[ \sum_{k=0}^{n-1} \sup_{x \in 2^{-k}B_1(0)}|\nabla h_{k,k+1}(x)|_\infty \geq \sum_{k=0}^{n-1} 2^k \cdot 2^{\frac{n-k}{2}} (t/4)\Big] \\
        &\qquad \leq \sum_{k=0}^{n-1} \mathbb{P}\Big[ \sup_{x \in 2^{-k}B_1(0)}|\nabla h_{k,k+1}(x)|_\infty \geq 2^k \cdot 2^{\frac{n-k}{2}} (t/4)\Big].
    \end{aligned}
    \end{equation*}
    Using \eqref{eq:lem2.3-tail2} and the fact that $t \geq 1$, we can choose a constant $C>0$ depending only on $A$ such that
    \begin{equation*}
    \begin{aligned}
         \mathbb{P}\Big[ \sup_{x \in 2^{-n}B_1(0)} |\nabla h_n(x)|_\infty \geq 2^n t \Big] \leq \sum_{k=0}^{n-1} A e^{-\frac{2^{n-k}(t/4)^2}{A}} \leq C e^{-\frac{t^2}{C}}.
    \end{aligned}
    \end{equation*}
    This result can be extended to all $t>0$ by enlarging the value of $C$, thereby proving Claim (2).

    Next, we prove Claim (3). Using the fact
    \begin{equation*}
    \sup_{x \in B_1(0)}h_n(x)\leq \sup_{x \in B_1(0) \cap 2^{-n}\mathbb{Z}^d} h_n(x) + 2^{-n} d \sup_{\substack{y \in B_{2^{-n}}(x)\\x \in B_1(0) \cap 2^{-n}\mathbb{Z}^d}} |\nabla h_n(y)|_\infty\,,
    \end{equation*}
    we obtain that for all integer $n\geq 1$ and $s >0 $
    \begin{equation*}
    \begin{aligned}
    &\mathbb{P} \Big[\sup_{x \in B_1(0)}h_n(x) \geq n \sqrt{2d} \log 2  + s\sqrt{n} \Big] \\
    &\qquad\leq \mathbb{P} \Big[\sup_{x \in B_1(0) \cap 2^{-n}\mathbb{Z}^d} h_n(x) \geq n \sqrt{2d} \log 2+ s\sqrt{n}/2 \Big]  \\
    &\qquad \qquad + \mathbb{P} \Big[2^{-n} d\sup_{\substack{y \in B_{2^{-n}}(x)\\x \in B_1(0) \cap 2^{-n}\mathbb{Z}^d}} |\nabla h_n(y)|_\infty \geq s\sqrt{n}/2 \Big].
    \end{aligned}
    \end{equation*}
    Using Claims (1) and (2), translation invariance of $h_n$, and the fact that $|B_1(0) \cap 2^{-n}\mathbb{Z}^d| \leq C 2^{dn}$, we have
    \begin{equation}
    \label{eq:lem2.3-claim4}
    \begin{aligned}
    &\mathbb{P} \Big[\sup_{x \in B_1(0)}h_n(x) \geq n \sqrt{2d} \log 2  + s\sqrt{n} \Big] \\
    &\qquad \leq C 2^{dn} \exp\Big(-\frac{(n \sqrt{2d} \log 2  + s\sqrt{n}/2)^2}{2\log 2 \cdot n} \Big) + C 2^{dn} \exp \Big(- \frac{s^2n}{C} \Big)\,,
    \end{aligned}
    \end{equation}
    where we enlarged the value of $C$. When $s$ is large enough (independent of $n$), the right-hand side is smaller than $Ce^{-s^2/C}$. By integrating \eqref{eq:lem2.3-claim4} with respect to $s$, we obtain Claim (3).

    Claim (4) follows from the Borell-TIS inequality (see~\cite{borell-tis1, TIS76}, and also~\cite[Theorem 2.1.1]{adler-taylor-fields}) and the fact that ${\rm Var}(h_n(x)) = n \log 2 $ as stated in Claim (1). 
    
    Finally, we prove Claim (5). Using the fact
    \begin{equation*}
        \sup_{x \in 2^{-n}B_1(0)} h_n(x) \leq h_n(0) + d2^{-n} \sup_{x \in 2^{-n}B_1(0)} |\nabla h_n(x)|_\infty\,,
    \end{equation*} 
    we obtain
    \begin{align*}
        &\mathbb{P}\Big[\sup_{x \in 2^{-n} B_1(0)} h_n(x) > u \Big] \\
        &\qquad \leq  \mathbb{P}\big[h_n(0) > u - u^{2/3}\big] + \mathbb{P}\Big[ \sup_{x \in 2^{-n}B_1(0)} 2^{-n}|\nabla h_n(x)|_\infty > u^{2/3}/d \Big]\,.
    \end{align*}
    Applying Claims (1) and (2) gives the desired result.
\end{proof}

\subsection{Definition of the exponential metric}
\label{subsec:def-lfpp}

In this subsection, we introduce the exponential metric associated with $h_{m,n}$, which is the main focus of this paper. We also establish some of its basic properties.

\begin{definition}
\label{def:LFPP}
    Fix $\xi>0$. For integers $n \geq m \geq 0$, we define the exponential metric associated with the field $h_{m,n}$ from Definition~\ref{def:log-field} as follows:
    \begin{equation*}
    D_{m,n}(z,w):=\inf_{P: z\to w} \int_0^1 e^{\xi h_{m,n}(P(t))}|P'(t)|dt\,,
    \end{equation*}
    where the infimum is taken over all piecewise continuously differentiable paths $P:[0,1] \rightarrow \mathbb{R}^d$ joining $z,w$. For an open set $U \subset \mathbb{R}^d$, we define the internal metric $D_{m,n}(\cdot,\cdot;U)$ as described in \eqref{def:int-metric}. When $m =0$, the metric $D_{0,n}$ is the same as the metric $D_n$ introduced in~\eqref{eq:metric-def}. When $m=n$, $D_{m,n}$ is equivalent to the Euclidean metric.
\end{definition}

The following lemma is a direct consequence of Claims (2) and (4) in Lemma~\ref{lem:basic-h}. We omit the proof here.
\begin{lemma}
    \label{lem:basic-LFPP}
    For integers $n \geq m \geq 0$ and any open set $U \subset \mathbb{R}^d$ (including $U = \mathbb{R}^d$), we have
    \begin{enumerate}
        \item The law of $D_{m,n}(\cdot,\cdot; U)$ is invariant under translation and rotation of $\mathbb{R}^d$. 
        \item The law of $D_{m,n}(\cdot,\cdot;U)$ satisfies the scaling property: 
        \begin{equation*}
            (D_{m,n}(x,y; U))_{x,y \in U} \overset{d}{=} 2^{-m}(D_{n-m}(2^mx,2^my, 2^mU))_{x,y \in U}\,.
        \end{equation*}
    \end{enumerate}
\end{lemma}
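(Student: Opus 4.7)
Both assertions reduce to straightforward changes of variables in the defining infimum, combined with the distributional invariances of $h_{m,n}$ recorded in Lemma~\ref{lem:basic-h}. Since $h_{m,n}$ is a.s.\ smooth (Lemma~\ref{lem:basic-h}(1)), the equalities in law of Lemma~\ref{lem:basic-h}(2),(4) hold as identities of random smooth functions on $\mathbb{R}^d$; this is enough to substitute them inside path integrals and infima and preserve the joint law over $(x,y)\in U\times U$.

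For claim~(1), let $T:\mathbb{R}^d\to\mathbb{R}^d$ be a translation or rotation, and reparametrise admissible paths by $P\mapsto T\circ P$. Because $T$ is a Euclidean isometry, $|(T\circ P)'(t)|=|P'(t)|$, and the constraint $P:z\to w$ in $U$ is equivalent to $T\circ P:T(z)\to T(w)$ in $T(U)$; substituting in Definition~\ref{def:LFPP} yields
\begin{equation*}
D_{m,n}(T(z),T(w);T(U))=\inf_{P:z\to w,\,P\subset U}\int_0^1 e^{\xi h_{m,n}(T(P(t)))}\,|P'(t)|\,dt.
\end{equation*}
By Lemma~\ref{lem:basic-h}(2), $(h_{m,n}(T(x)))_{x\in\mathbb{R}^d}\overset{d}{=}(h_{m,n}(x))_{x\in\mathbb{R}^d}$ as random smooth functions, so the right-hand side has the same joint law as $D_{m,n}(\cdot,\cdot;U)$.

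For claim~(2), the analogous computation uses the dyadic rescaling $Q(t):=2^m P(t)$. This sends paths $P:z\to w$ in $U$ bijectively to paths $Q:2^m z\to 2^m w$ in $2^m U$, with $|P'(t)|=2^{-m}|Q'(t)|$, so
\begin{equation*}
D_{m,n}(z,w;U)=2^{-m}\inf_{Q:2^m z\to 2^m w,\,Q\subset 2^m U}\int_0^1 e^{\xi h_{m,n}(2^{-m}Q(t))}\,|Q'(t)|\,dt.
\end{equation*}
Applying Lemma~\ref{lem:basic-h}(4) with $x=2^{-m}y$ gives $(h_{m,n}(2^{-m}y))_{y\in\mathbb{R}^d}\overset{d}{=}(h_{0,n-m}(y))_{y\in\mathbb{R}^d}$; substituting this identity on the right-hand side of the previous display yields the claimed equality in law $(D_{m,n}(x,y;U))_{x,y\in U}\overset{d}{=}2^{-m}(D_{n-m}(2^m x,2^m y;2^m U))_{x,y\in U}$. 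There is no genuine obstacle here; the only point to verify carefully is that the metric is a measurable functional of the entire random field (a path integral followed by an infimum over admissible paths), so that joint equalities in law of the underlying fields transfer directly to joint equalities in law of the metrics.
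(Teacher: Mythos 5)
Your argument is correct and is exactly the route the paper intends: the paper omits the proof, stating that the lemma is a direct consequence of Claims (2) and (4) of Lemma~\ref{lem:basic-h}, and your change-of-variables computations (isometry $T\circ P$ for claim (1), dyadic rescaling $Q=2^mP$ for claim (2)) simply make that reduction explicit.
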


As a corollary of Claim (3) in Lemma~\ref{lem:basic-h}, we have that the internal metrics of $D_{m,n}$ are independent within two domains located far from each other.

\begin{lemma}
    \label{lem:dist-independent}
    For integers $n > m \geq 0$ and any open sets $U,V \subset \mathbb{R}^d$ with $\mathfrak d_\infty(U,V) \geq 2\rr \cdot 2^{-m}$, the internal metrics $D_{m,n}(\cdot,\cdot; U)$ and $D_{m,n}(\cdot,\cdot; V)$ are independent.
\end{lemma}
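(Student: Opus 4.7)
The plan is to derive this lemma from Claim (3) of Lemma~\ref{lem:basic-h} by a direct measurability argument; no genuine new ingredient is needed.

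First I would observe that, by the definition~\eqref{def:int-metric} of the internal metric, for any fixed $x,y \in U$ the value $D_{m,n}(x,y; U)$ is an infimum of integrals of the form $\int_0^1 e^{\xi h_{m,n}(P(t))}|P'(t)|\,dt$ taken over piecewise continuously differentiable paths $P$ whose image is contained in $U$. Each such integral depends on $h_{m,n}$ only through its values at points of $U$, so $D_{m,n}(\cdot,\cdot;U)$ is a deterministic functional of the restricted field $h_{m,n}|_U$. Since $h_{m,n}$ has a smooth modification, the infimum can be realized by restricting to a countable dense family of, say, polygonal paths with rational vertices in $U$, so $D_{m,n}(\cdot,\cdot;U)$ is measurable with respect to the $\sigma$-algebra generated by $h_{m,n}|_U$. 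The identical reasoning applied to $V$ gives that $D_{m,n}(\cdot,\cdot;V)$ is measurable with respect to $\sigma(h_{m,n}|_V)$.

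Second, the hypothesis $\mathfrak d_\infty(U,V) \geq 2\rr \cdot 2^{-m}$ is exactly the separation condition in Claim (3) of Lemma~\ref{lem:basic-h}, so $h_{m,n}|_U$ and $h_{m,n}|_V$ are independent. Combining this with the measurability observation from the previous paragraph yields that $D_{m,n}(\cdot,\cdot;U)$ and $D_{m,n}(\cdot,\cdot;V)$ are independent, which is the claim. The only potential wrinkle is the reduction of the infimum over uncountably many paths in~\eqref{def:int-metric} to a countable subfamily, but this is standard since the integrand is continuous in $P$ with respect to uniform convergence of $C^1$ approximants; I do not anticipate any real obstacle.
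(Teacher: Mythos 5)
Your proposal is correct and follows essentially the same route as the paper: the internal metric $D_{m,n}(\cdot,\cdot;U)$ is a (measurable) functional of $h_{m,n}|_U$, likewise for $V$, and the separation hypothesis lets one invoke Claim (3) of Lemma~\ref{lem:basic-h} to conclude independence. The paper states this in two lines without elaborating the measurability reduction, which your countable-family-of-paths remark merely makes explicit.
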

\begin{proof}
    The internal metric $D_{m,n}(\cdot,\cdot; U)$ is determined by $h_{m,n}|_U$, and the internal metric $D_{m,n}(\cdot,\cdot; V)$ is determined by $h_{m,n}|_V$. By Claim (3) in Lemma~\ref{lem:basic-h}, we obtain the result.
\end{proof}

Next, we prove a concentration bound for the exponential metric. The proof is similar to that of \cite[Lemma 23]{dddf-lfpp}.
\begin{lemma}
    \label{lem:gauss-concentration}
    For any open subset $U \subset \mathbb{R}^d$ (including $U = \mathbb{R}^d$), any two disjoint compact subsets $K_1,K_2 \subset U$ that are path-connected in $U$, integers $n > m \geq 0$, and $t>0$, the following concentration bound holds:
    \begin{equation}
    \label{eq:lem2.7}
    \mathbb{P} \big[|\log D_{m,n}(K_1,K_2;U) - \mathbb{E}\log D_{m,n}(K_1,K_2;U)| >t \big] \leq 2e^{-\frac{t^2}{2\xi^2 \log 2 \cdot (n-m)}}.
    \end{equation}
\end{lemma}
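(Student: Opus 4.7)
The plan is to view $F(W) := \log D_{m,n}(K_1,K_2;U)$ as a functional of the underlying space-time white noise $W$ and apply the Gaussian concentration inequality for Lipschitz functionals (in the sense of the Cameron--Martin norm on white-noise space). Concretely, I would argue that $F$ is Lipschitz with constant $\xi\sqrt{(n-m)\log 2}$ with respect to the $L^2$-norm on $\mathbb R^d \times (2^{-n},2^{-m})$, and then invoke the standard Borell-type inequality to obtain~\eqref{eq:lem2.7}.

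The Lipschitz bound comes from a direct perturbation computation. For $f \in L^2(\mathbb R^d \times (2^{-n},2^{-m}))$, shifting the white noise $W \mapsto W + f$ shifts $h_{m,n}$ by the deterministic function
\[
g(x) = \int_{\mathbb R^d}\int_{2^{-n}}^{2^{-m}} \KK\Big(\tfrac{y-x}{t}\Big) t^{-\frac{d+1}{2}} f(y,t)\,dy\,dt.
\]
By Cauchy--Schwarz and the computation already used in the proof of Claim~(1) of Lemma~\ref{lem:field-estimate},
\[
|g(x)|^2 \leq \|f\|_{L^2}^2 \int_{\mathbb R^d}\int_{2^{-n}}^{2^{-m}} \KK\Big(\tfrac{y-x}{t}\Big)^2 t^{-(d+1)}\,dy\,dt = \|f\|_{L^2}^2 \cdot (n-m)\log 2 ,
\]
so $\sup_x |g(x)| \leq \|f\|_{L^2}\sqrt{(n-m)\log 2}$. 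From the definition of $D_{m,n}(\cdot,\cdot;U)$ in Definition~\ref{def:LFPP}, multiplying the field by $e^{\xi g}$ along any path changes the path integral by at most a factor of $e^{\xi \sup|g|}$, so taking the infimum over paths (which is the same set for both fields) yields
\[
\big|\log D_{m,n}^{W+f}(K_1,K_2;U) - \log D_{m,n}^{W}(K_1,K_2;U)\big| \leq \xi \sup_x |g(x)| \leq \xi\sqrt{(n-m)\log 2}\,\|f\|_{L^2}.
\]
This is exactly the Lipschitz bound we need.

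Before applying the Gaussian concentration inequality, I would briefly check that $F$ takes finite real values, so that $\mathbb E F$ is well defined. The lower bound $D_{m,n}(K_1,K_2;U) \geq e^{\xi \inf h_{m,n}} \,\mathfrak d_\infty(K_1,K_2) > 0$ a.s.\ follows from compactness of $K_1\cup K_2$ and smoothness of $h_{m,n}$ (Lemma~\ref{lem:basic-h}); the upper bound follows by fixing any piecewise smooth path in $U$ joining $K_1$ to $K_2$ (which exists since $K_1,K_2$ are connected in $U$) and using continuity of $h_{m,n}$ on its image. Then the standard Gaussian concentration inequality for Lipschitz functionals on white-noise space (obtained, e.g., by finite-dimensional approximation of $W$ and applying the classical Borell--Sudakov--Tsirelson inequality) gives
\[
\mathbb P\big[|F - \mathbb E F| > t\big] \leq 2\exp\Big(-\frac{t^2}{2\xi^2(n-m)\log 2}\Big),
\]
which is~\eqref{eq:lem2.7}.

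The only mildly technical point I anticipate is the passage from finite-dimensional Gaussian concentration to white-noise concentration, but this is entirely standard once the Lipschitz constant has been identified; the substantive step is the perturbation estimate above, which is essentially the same computation that gives $\mathrm{Var}(h_{m,n}(x)) = (n-m)\log 2$.
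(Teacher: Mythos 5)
Your proposal is correct, but it takes a genuinely different route from the paper. The paper reduces to a finite-dimensional statement: it replaces $h_{m,n}$ by a field $h_{m,n}^{(k)}$ that is piecewise constant on dyadic boxes, observes that $\log D_{m,n}^{(k)}(K_1,K_2;U)$ is $\xi$-Lipschitz in the $l^\infty$-norm of the finitely many field values, factors the Gaussian vector through a matrix whose rows have $l^2$-norm $\sqrt{(n-m)\log 2}$, applies the finite-dimensional Gaussian concentration inequality, and then passes to the limit first in the discretization scale $k$ (using a gradient bound on $h_{m,n}$) and then in a truncation $U\cap B_N(0)$ to handle unbounded $U$. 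You instead work directly at the level of the white noise: the Cameron--Martin shift $W\mapsto W+f$ perturbs $h_{m,n}$ by a function $g$ with $\sup_x|g(x)|\le \|f\|_{L^2}\sqrt{(n-m)\log 2}$ (the same computation as $\operatorname{Var}h_{m,n}(x)=(n-m)\log 2$), which makes $\log D_{m,n}(K_1,K_2;U)$ Lipschitz with constant $\xi\sqrt{(n-m)\log 2}$ in the Cameron--Martin norm, and then the infinite-dimensional Borell-type inequality gives exactly \eqref{eq:lem2.7}. Your route is shorter and avoids both limiting procedures as well as any control of $\nabla h_{m,n}$; its price is exactly the technical point you flag, namely justifying concentration on the abstract Wiener space (measurability of the functional and the a.s.\ validity of the shift identity for the chosen modification), which the paper sidesteps by staying within the finite-dimensional Borell--TIS framework it already cites. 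Two small repairs: in your finiteness check, the lower bound should use the infimum of $h_{m,n}$ over a \emph{bounded} set (e.g.\ a bounded neighborhood of $K_1$ that any connecting path must cross, as in the paper's $U\cap B_M(0)$), since $\inf_{x\in U}h_{m,n}(x)=-\infty$ a.s.\ when $U$ is unbounded; and strictly speaking integrability of $\log D_{m,n}(K_1,K_2;U)$, needed for concentration around the mean, should be deduced either as in the paper or from a.s.\ finiteness plus concentration around the median.
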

\begin{proof}
    We first show that $|\mathbb{E}\log D_{m,n}(K_1,K_2;U)|<\infty$. Let us begin with the upper bound. By the assumption, there exists a large constant $N$ such that $K_1$ and $K_2$ are connected by a path of Euclidean length at most $N$ in $U \cap B_N(0)$. Therefore, 
    \begin{equation*}
    \mathbb{E}\log D_{m,n}(K_1,K_2;U) \leq \mathbb{E} \log \big( N e^{\xi \sup_{x \in U \cap B_N(0)}h_{m,n}(x)} \big) <\infty\,.
    \end{equation*}
    The last inequality follows from the Gaussian tail of $\sup_{x \in U \cap B_N(0)}h_{m,n}(x)$, as indicated by Claim (2) in Lemma~\ref{lem:field-estimate}. Furthermore, there exists a large constant $M$ such that any path connecting $K_1$ and $K_2$ must have a Euclidean length of at least $\frac{1}{M}$ within $U \cap B_M(0)$. Therefore,
    \begin{equation*}
    \mathbb{E}\log D_{m,n}(K_1,K_2;U) \geq \mathbb{E} \log \big( \frac{1}{M} e^{\xi \inf_{x \in U \cap B_M(0)}h_{m,n}(x)} \big) >-\infty\,.
    \end{equation*}
    Combining the above two inequalities yields $|\mathbb{E}\log D_{m,n}(K_1,K_2;U)|<\infty$.

    We now prove \eqref{eq:lem2.7} first for a bounded open set $U$. For integer $k \geq 1$, let $D_{m,n}^{(k)}$ be the exponential metric associated with $h_{m,n}^{(k)}$, where $h_{m,n}^{(k)}$ is piecewise constant and takes the value $h_{m,n}(x)$ on each dyadic box $B_{2^{-k}}(x)$ for $x \in \mathbb{R}^d \cap 2^{-k + 1} \mathbb{Z}^d$. Then, $\sup_{x \in U}|h_{m,n}(x) - h_{m,n}^{(k)}(x)| \leq d2^{-k}\sup_{x \in B_1(U)} | \nabla h_{m,n}(x)|_\infty$. This, combined with Definition~\ref{def:LFPP}, implies that
    \begin{equation*}
    e^{-\xi d2^{-k} \sup_{x \in B_1(U)} | \nabla h_{m,n}(x)|_\infty} \leq \frac{D_{m,n}(K_1,K_2;U)}{D_{m,n}^{(k)}(K_1,K_2;U)} \leq e^{\xi d2^{-k} \sup_{x \in B_1(U)} | \nabla h_{m,n}(x)|_\infty}.
    \end{equation*}
    Together with the fact that $\mathbb{E}[\sup_{x \in B_1(U)} | \nabla h_{m,n}(x)|_\infty] <\infty$ (because it has a Gaussian tail, as indicated by Claim (2) in Lemma~\ref{lem:field-estimate}), we obtain
    \begin{equation}
    \label{eq:lem2.7-1}
    \begin{aligned}
    &\lim_{k \rightarrow \infty}D_{m,n}^{(k)}(K_1,K_2;U) = D_{m,n}(K_1,K_2;U) \quad \mbox{and} \\
    &\lim_{k \rightarrow \infty}\mathbb{E} \log D_{m,n}^{(k)}(K_1,K_2;U) = \mathbb{E} \log  D_{m,n}(K_1,K_2;U)\,.
    \end{aligned}
    \end{equation}
    By definition, $\log D_{m,n}^{(k)}(K_1,K_2;U)$ is $\xi$-Lipschitz as a function of \begin{equation*}
        (Y_1,\ldots,Y_p) := (h_{m,n}^{(k)}(x))_{x \in B_1(U) \cap 2^{-k + 1} \mathbb{Z}^d}
    \end{equation*} in terms of the $l^\infty$-norm. In addition, there exists a $p \times p$ matrix $A$ such that $(Y_1,\ldots, Y_p)^\intercal \overset{d}{=} A(X_1,\ldots,X_p)^\intercal$, where $X_1,\ldots, X_p$ are i.i.d.\ standard Gaussian random variables. By Claim (1) in Lemma~\ref{lem:field-estimate}, the $l^2$-norm of each row of $A$ equals to $\sqrt{\operatorname{Var} h_{m,n}^{(k)}(x)} = \sqrt{(n-m) \log 2 }$. Therefore, $\log D_{m,n}^{(k)}(K_1,K_2;U)$, as a function of $(X_1,\ldots,X_p)$, is $\xi\sqrt{ (n-m) \log 2 } $-Lipschitz in terms of the $l^2$-norm. By the Gaussian concentration inequality (see~\cite{borell-tis1, TIS76}, and also~\cite[Lemma 2.1.6]{adler-taylor-fields}), we have for all $t>0$:
    \begin{equation}
    \label{eq:lem2.7-2}
    \mathbb{P} \big[|\log D_{m,n}^{(k)}(K_1,K_2;U) - \mathbb{E}\log D_{m,n}^{(k)}(K_1,K_2;U)| >t \big] \leq 2e^{-\frac{t^2}{2\xi^2 \log 2 \cdot (n-m)}}.
    \end{equation}
    By sending $k$ to infinity and combining with \eqref{eq:lem2.7-1}, we obtain the desired lemma in the case where $U$ is bounded. 
    
    We can extend the result to arbitrary $U$ by considering the truncation $U \cap B_N(0)$ for integers $N \geq 1$. Since $D_{m,n}(K_1,K_2; U \cap B_N(0))$ decreases to $D_{m,n}(K_1,K_2; U)$ as $N$ goes to infinity, we have
    \begin{equation*}
    \lim_{N \rightarrow \infty}D_{m,n}(K_1,K_2;U \cap B_N(0)) = D_{m,n}(K_1,K_2;U) \quad \mbox{and}
    \end{equation*}
    \begin{equation*}
    \lim_{N \rightarrow \infty}
    \mathbb{E} \log D_{m,n}(K_1,K_2;U \cap B_N(0)) = \mathbb{E} \log D_{m,n}(K_1,K_2;U)\,.
    \end{equation*}
    Note that the inequality \eqref{eq:lem2.7-2} holds for $D_{m,n}(K_1,K_2;U \cap B_N(0))$ as long as $K_1$ and $K_2$ are path-connected in $U \cap B_N(0)$ which holds for all sufficiently large $N$. Therefore, applying \eqref{eq:lem2.7-2} with $U \cap B_N(0)$ in place of $U$ and then sending $N$ to infinity gives the desired lemma.
\end{proof}

\subsection{Percolation with finite range of dependence}
\label{subsec:percolation}

In this subsection, we consider the integer lattice $\mathbb{Z}^d$ with $d \geq 2$ and establish some results about percolation with finite range of dependence. The definitions and results from this subsection can be naturally adapted to the rescaled lattice $2^{-n} \mathbb{Z}^d$ for any integer $n \geq 1$. These results will play an important role in Sections~\ref{sec:exponent} and \ref{sec:bound-distance}.

Let $M \geq 1$ be an integer, and consider a probability measure $\mu$ on the configuration $\omega \in \{0,1\}^{\mathbb{Z}^d}$. We say that $\mu$ is \textit{$M$-dependent} if for any two subsets $U,V\subset \mathbb{Z}^d$ with $\mathfrak d_\infty(U,V) >M$, the restrictions $\omega|_U$ and $\omega|_V$ are independent. A vertex $x$ is called \textit{open} if $\omega(x) = 1$, and \textit{closed} if $\omega(x) = 0$. A \textit{path} (resp.\ \textit{$*$-path}) is a sequence of vertices $x_1,\ldots, x_n$ such that $|x_i - x_{i+1}|_1 = 1$ (resp.\ $|x_i-x_{i+1}|_\infty = 1$) for any $1\leq i \leq {n-1}$. A path is called open if all the vertices contained in it are open, and closed if all the vertices contained in it are closed. Similarly, we can define an open $*$-path and a closed $*$-path. For a subset $U \subset \mathbb{Z}^d$, we use $\partial U := \{x \in U: \exists y \in \mathbb{Z}^d \backslash U \mbox{ such that }   xy \in E \}$ to denote its interior boundary, where $E$ is the edge set of $\mathbb{Z}^d$.

We begin with a lemma about the exponential decay of the probability of long closed $*$-paths when $M$ is fixed and all the vertices have a probability close to one of being open. The proof follows from an elementary path-counting argument. 

\begin{lemma}
\label{lem:exponential-decay}
    Fix an integer $M \geq 1$. There exist two constants $c_1\in (0,1)$ and $C>0$ depending only on $M$ such that for any $M$-dependent measure $\mu$ satisfying $\inf_{x \in \mathbb{Z}^d} \mu[w(x) = 1] >c_1$, we have
    \begin{equation*}
    \mu \big[\mbox{There exists a closed }{\rm *}\mbox{-path connecting } 0 \mbox{ and } \partial B_N(0) \big] \leq Ce^{-N/C},\forall N \geq 1.
    \end{equation*}
\end{lemma}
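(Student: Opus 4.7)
The plan is a standard Peierls-type path-counting argument, exploiting the $M$-dependence to extract an independent subset of vertices from any long $*$-path. First, a purely geometric observation: since each step of a $*$-path changes each coordinate by at most $1$, any $*$-path from $0$ to $\partial B_N(0)$ must contain at least $N+1$ vertices. Hence it suffices to show that the probability that some closed $*$-path of a fixed length $n$ starts at $0$ decays exponentially in $n$, and then sum over $n \geq N+1$.

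Next, I would bound the count and the probability separately. The number of $*$-paths of length $n$ starting at $0$ is at most $3^{dn}$, since each step offers at most $3^d - 1$ choices of next vertex. For a fixed such path $\gamma = (x_1, \ldots, x_n)$ with $x_1 = 0$, I extract a well-separated subsequence greedily: set $i_1 := 1$ and $i_{j+1} := \min\{i > i_j : |x_i - x_{i_j}|_\infty > M\}$. Since consecutive vertices of $\gamma$ differ by exactly $1$ in $\ell_\infty$-distance, we get $i_{j+1} - i_j \leq M+1$, so the extracted subsequence has length $k \geq \lceil n/(M+1) \rceil$. By construction, the vertices $x_{i_1}, \ldots, x_{i_k}$ are pairwise at $\ell_\infty$-distance strictly greater than $M$, so the $M$-dependence hypothesis makes their statuses mutually independent. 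The probability that all of them are closed is therefore at most $(1-c_1)^{n/(M+1)}$.

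A union bound then yields
\begin{equation*}
\mu\bigl[\text{some closed $*$-path of length $n$ starts at } 0\bigr] \leq \bigl(3^d (1-c_1)^{1/(M+1)}\bigr)^n.
\end{equation*}
Choosing $c_1 \in (0,1)$ close enough to $1$ so that $3^d(1-c_1)^{1/(M+1)} \leq 1/2$ (a choice depending only on $d$ and $M$), and summing the geometric series over $n \geq N+1$, gives the desired bound $Ce^{-N/C}$. There is no genuine obstacle here; the only point requiring care is that the greedy extraction must be phrased in terms of $\ell_\infty$-distance between vertices (not graph distance along $\gamma$), in order to align with the hypothesis of $M$-dependence as formulated via $\mathfrak{d}_\infty$.
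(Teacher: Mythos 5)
There are two genuine gaps, both in the extraction step. First, your greedy rule $i_{j+1} := \min\{i > i_j : |x_i - x_{i_j}|_\infty > M\}$ only separates \emph{consecutive} extracted vertices; it does not make them pairwise separated. A $*$-path may leave the $M$-ball around $x_{i_j}$ and later come back to within distance $M$ of it (without revisiting it), so $x_{i_{j+2}}$ can be $M$-close to $x_{i_j}$, and the claimed mutual independence of the statuses fails. The paper avoids exactly this by defining $i_j$ as one more than the \emph{last} index $k$ with $|x_k - x_{i_{j-1}}|_\infty \leq M$, as in \eqref{eq:lem2.8-saw}; with that choice every subsequent point of the path, hence every later extracted point, is at $\ell_\infty$-distance $> M$ from $x_{i_{j-1}}$, which gives genuine pairwise separation.

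Second, the inequality $i_{j+1} - i_j \leq M+1$ is false: nothing prevents the path from lingering inside the $M$-ball around $x_{i_j}$ for arbitrarily many steps (e.g.\ oscillating between two adjacent vertices), so the number of extracted points is not bounded below by $n/(M+1)$ and your per-path bound $(1-c_1)^{n/(M+1)}$ is unjustified. In fact your intermediate reduction cannot work in any form: $*$-paths here are not self-avoiding, so if $0$ and $e_1$ are both closed there exist closed $*$-paths of every length $n$ starting at $0$, and the probability of that event does not decay in $n$; summing over $n \geq N+1$ therefore gives nothing. The quantity that controls the number of extracted points is the $\ell_\infty$-displacement, not the path length: consecutive extracted points differ by at most $M+1$ and the last one lies within distance $M$ of $\partial B_N(0)$, whence $m \geq N/(M+1)$. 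The paper then union-bounds over the extracted sequences themselves (at most $(2M+3)^{d(m-1)}$ of them, since $y_1=0$ and consecutive jumps are bounded by $M+1$), each carrying probability at most $(1-p)^m$ by $M$-dependence, and sums over $m \geq N/(M+1)$. If you insist on a path-counting route you would first have to pass to self-avoiding paths by loop erasure, and even then the ``min'' extraction does not yield pairwise separation, so you would still need the paper's modified extraction.
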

\begin{proof}
    Let $p \in (0,1)$ be a constant to be chosen. Assume that $\mu$ is an $M$-dependent measure with $\inf_{x \in \mathbb{Z}^d} \mu[\omega(x) = 1]>p$. Let $ x_1,x_2,\ldots,x_n$ be any $*$-path connecting $0$ and $\partial B_N(0)$. Then, we have 
    \begin{equation*}
     x_1 = 0 \,, \quad x_n \in \partial B_N(0)\,, \quad \mbox{and} \quad |x_i - x_{i+1}|_\infty = 1 \quad \forall 1 \leq i \leq n-1\,.
    \end{equation*}
    We consider a subset of this path defined inductively as follows: first, take $i_1=1$, and for $j \geq 2$ define
    \begin{equation}
    \label{eq:lem2.8-saw}
    i_j := \max \{ i_{j-1} \leq k \leq n: |x_k - x_{i_{j-1}}|_\infty \leq M \} +1\,. 
    \end{equation}
    We stop the induction when $i_j = n+1$. Let us consider the obtained sequence $(y_1, \ldots, y_m) := (x_{i_1}, \ldots, x_{i_m})$. Then, $y_1 = x_{i_1} = 0$. Moreover, we have
    \begin{equation}
    \label{eq:lem2.8-saw2}
    \begin{aligned}
    &\min_{1 \leq i < j \leq m }|y_i - y_j|_\infty > M\,, \quad \max_{1 \leq j \leq m-1} |y_j - y_{j+1}|_\infty \leq M+1\,,  \\
    &\quad \mbox{and} \quad \mathfrak d_\infty(y_m,\partial B_N(0)) \leq M\,.
    \end{aligned}
    \end{equation}
    The first property follows directly from \eqref{eq:lem2.8-saw}. The second property holds because, by \eqref{eq:lem2.8-saw}, $|y_j - y_{j+1}|_\infty = |x_{i_j} - x_{i_{j+1}}|_\infty \leq |x_{i_j} - x_{i_{j+1}-1}|_\infty + 1 \leq M+1$. The last property holds because when the iteration stops, we have $|y_m - x_n|_\infty \leq M$.
    
    We now upper-bound the probability that there exists a closed sequence satisfying \eqref{eq:lem2.8-saw2}. First, we have $m \geq \frac{N}{M+1}$, which follows from the following inequality:
    \begin{equation*}
    \begin{aligned}
    N-1 = \mathfrak d_\infty(0,\partial B_N(0)) &\leq \sum_{i=1}^{m-1} |y_i - y_{i+1}|_\infty + \mathfrak d_\infty(y_m,\partial B_N(0)) \\
    &\leq (M+1)(m-1)+M.
    \end{aligned}
    \end{equation*}
    For fixed $m$, we know that the number of sequences satisfying \eqref{eq:lem2.8-saw2} is at most $(2M+3)^{d(m-1)}$ since $y_1 = 0$ and $|y_i - y_{i+1}|_\infty \leq M+1$. Furthermore, for a fixed choice of the sequence, the probability that all the vertices contained in it are closed is at most $(1-p)^m$ by the $M$-dependent property of $\mu$ and the fact that $\sup_{x \in \mathbb{Z}^d} \mu[\omega(x) = 0] < 1-p$. Therefore, when $p$ is close enough to one, we have
    \begin{equation*}
    \begin{aligned}
        &\quad \mu \big[\mbox{There exists a closed }{\rm *}\mbox{-path connecting } 0 \mbox{ and } \partial B_N(0) \big] \\
        &< \sum_{m \geq N/(M+1)} (2M+3)^{d(m-1)} \times (1-p)^m \leq Ce^{-N/C} \quad \forall N \geq 1\,. \qedhere
    \end{aligned}
    \end{equation*}
\end{proof}

Next, we give two corollaries of Lemma~\ref{lem:exponential-decay}. An open (resp.\ closed) \textit{cluster} is a connected component of open (resp.\ closed) vertices. Similarly, we define the open (resp.\ closed) \textit{$*$-cluster} which is a connected component of open (resp.\ closed) vertices where two vertices $x,y$ are considered to be neighboring each other if $|x-y|_\infty =1$. We define the \textit{diameter} of a cluster or $*$-cluster with respect to the $l^\infty$-distance on $\mathbb{Z}^d$.

\begin{lemma}
\label{lem:percolation-cluster}
    For an integer $M \geq 1$ and $c_1 = c_1(M)$ as defined in Lemma~\ref{lem:exponential-decay}, let $\mu$ be an $M$-dependent measure satisfying $\inf_{x \in \mathbb{Z}^d} \mu[w(x) = 1] >c_1$. Then, for all integers $K,N \geq 1$,
    \begin{equation*}
    \mu \big[ \mbox{All closed }{\rm *}\mbox{-clusters in }B_N(0) \mbox{ have diameter at most K}  \big] \geq 1-  C N^d e^{-K/C} ,
    \end{equation*} 
    where the constant $C$ depends on $M$, but is independent of $\mu$.
\end{lemma}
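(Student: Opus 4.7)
The plan is to reduce the statement to Lemma~\ref{lem:exponential-decay} via a union bound. Suppose a closed $*$-cluster $\mathcal{C}$ meets $B_N(0)$ and has diameter at least $K$ (measured in $l^\infty$). Then by definition there exist $y,z \in \mathcal{C}$ with $|y-z|_\infty \geq K$, and since $\mathcal{C}$ is $*$-connected there is a closed $*$-path $\pi$ from $y$ to $z$ contained in $\mathcal{C}$. The endpoint $z$ lies outside $B_{K-1}(y)$, so $\pi$ must contain a vertex on the interior boundary $\partial B_K(y)$ (or beyond). We may further arrange that $y$ lies in $B_N(0)\cap\mathbb Z^d$, since $\mathcal{C}\cap B_N(0)\neq\emptyset$ and we can choose $y$ to be any vertex of $\mathcal{C}$ in $B_N(0)$ (if the part of $\mathcal{C}$ inside $B_N(0)$ itself has small diameter, just shift so that $y$ is in $B_N(0)$ and repeat the argument relative to $y$; the diameter of $\mathcal{C}$ is witnessed by a vertex within distance $K$ of some vertex of $B_N(0)$).

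Next I would apply Lemma~\ref{lem:exponential-decay} starting from $y$ rather than from $0$. The hypotheses ``$M$-dependent'' and ``$\inf_{x}\mu[\omega(x)=1]>c_1$'' are manifestly invariant under translation of the base point, and the proof of Lemma~\ref{lem:exponential-decay} is a pure path-counting argument whose bounds do not depend on where the path begins. Consequently, for every $y\in\mathbb Z^d$,
\begin{equation*}
\mu\big[\text{there exists a closed $*$-path from $y$ to $\partial B_K(y)$}\big] \leq C e^{-K/C},
\end{equation*}
with the same constant $C$ as in Lemma~\ref{lem:exponential-decay}.

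Finally I would take a union bound over $y\in B_N(0)\cap\mathbb Z^d$. The number of such vertices is at most $(2N+1)^d\leq C'N^d$, so
\begin{equation*}
\mu\big[\exists\, y\in B_N(0)\cap\mathbb Z^d \text{ with a closed $*$-path from $y$ to }\partial B_K(y)\big] \leq C''N^d e^{-K/C}.
\end{equation*}
On the complementary event, no closed $*$-cluster intersecting $B_N(0)$ can have diameter $\geq K$, by the reduction in the first paragraph. Adjusting the constant $C$ then gives the claimed bound $CN^d e^{-K/C}$.

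There is no substantive obstacle here; the entire content of the lemma is to pass from the one-point estimate of Lemma~\ref{lem:exponential-decay} to a uniform statement on $B_N(0)$. The only minor care needed is checking that Lemma~\ref{lem:exponential-decay} genuinely applies with an arbitrary base point (which it does because its hypotheses and proof are translation-insensitive) and that the volume factor $N^d$ is absorbed into the stated bound without overwhelming the exponential gain from the large diameter $K$.
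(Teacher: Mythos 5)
Your proof is correct and follows essentially the same route as the paper: reduce a large-diameter closed $*$-cluster to the event that some vertex $x\in B_N(0)$ is joined to $\partial B_{K+1}(x)$ (or $\partial B_{\lceil K/2\rceil}(x)$, which only changes constants) by a closed $*$-path, apply Lemma~\ref{lem:exponential-decay} with translated base point, and take a union bound over the $O(N^d)$ vertices of $B_N(0)$. The only difference is that the paper phrases the reduction a bit more directly, but the content is identical.
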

\begin{proof}
    If there exists a closed $*$-cluster in $B_N(0)$ with diameter at least $K+1$, then we can find a vertex $x \in B_N(0)$ such that $x$ is connected to $\partial B_{K+1}(x)$ with a closed $*$-path. Summing over all the possible choices of $x$ and applying Lemma~\ref{lem:exponential-decay} with $K+1$ in place of $N$, we obtain the desired result.
\end{proof}

\begin{lemma}
\label{lem:connect-inf}
    For any integer $M\geq 1$ and $\epsilon>0$, there exists a constant $c_2 = c_2(M,\epsilon) \in (0,1)$ such that for any $M$-dependent measure $\mu$ satisfying $\inf_{x \in \mathbb{Z}^d}\mu[w(x) = 1] >c_2$, we have
    \begin{equation*}
    \mu \big[\mbox{There exists an infinite open cluster containing } 0 \big] \geq 1-\epsilon\,.
    \end{equation*}
\end{lemma}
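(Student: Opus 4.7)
The plan is to reduce the problem to dimension two, then apply a classical planar duality argument combined with a quantitative sharpening of Lemma~\ref{lem:exponential-decay}.

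\textbf{Step 1 (Reduction to $d=2$).} Consider the coordinate plane $L := \mathbb{Z}^2 \times \{0\}^{d-2} \subset \mathbb{Z}^d$. The $l^\infty$-distance in $\mathbb{Z}^d$ between points of $L$ equals the intrinsic $l^\infty$-distance in $L \cong \mathbb{Z}^2$, so the restriction $\omega|_L$ is an $M$-dependent configuration on $\mathbb{Z}^2$ with marginals $>c_2$. Any infinite open cluster of the restricted configuration (with its intrinsic $l^1$-graph structure) is contained in an infinite open cluster of the original configuration on $\mathbb{Z}^d$ containing $0$. Thus it suffices to prove the lemma for $d=2$.

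\textbf{Step 2 (Planar duality and union bound).} Working in $\mathbb{Z}^2$, I use the classical fact that if the open $l^1$-cluster $C$ of $0$ is finite, then its outer vertex boundary $\partial^{\mathrm{ext}} C = \{y \notin C : |y - x|_1 = 1 \text{ for some } x \in C\}$ consists of closed vertices, is $*$-connected, and contains a $*$-circuit enclosing $0$. Hence
\begin{equation*}
\{0 \not\leftrightarrow \infty\} \subseteq \{\omega(0) = 0\} \cup \{\exists \text{ closed } *\text{-circuit around } 0\}.
\end{equation*}
Any closed $*$-circuit around $0$ of $l^\infty$-diameter $D$ contains two vertices at mutual distance $D$; since the circuit encloses $0$, at least one of them, say $x$, satisfies $|x|_\infty \leq D$, and the sub-$*$-path from $x$ to the other vertex is a closed $*$-path connecting $x$ to $\partial B_D(x)$. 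A union bound therefore gives
\begin{equation*}
\mu[\exists \text{ closed } *\text{-circuit around } 0] \leq \sum_{D \geq 2} (2D+1)^2 \sup_{|x|_\infty \leq D} \mu[\exists \text{ closed } *\text{-path from $x$ to } \partial B_D(x)].
\end{equation*}

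\textbf{Step 3 (Sharpened decay and main obstacle).} The main obstacle is that Lemma~\ref{lem:exponential-decay} as stated only delivers a decay rate $1/C$ depending on $M$, which is not strong enough to absorb the $(2D+1)^2$ factor and make the sum small. However, a direct inspection of its proof shows that the decay rate can be made arbitrarily large by taking $c_2$ close to $1$: the key sum $\sum_{m \geq N/(M+1)} (2M+3)^{2(m-1)}(1-c_2)^m$ has common ratio $(2M+3)^2(1-c_2)$, which tends to $0$ as $c_2 \to 1$, so the decay rate $\log\bigl[((2M+3)^2(1-c_2))^{-1}\bigr]/(M+1)$ can be chosen as large as desired. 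Choosing $c_2 = c_2(M,\epsilon)$ sufficiently close to $1$, the right-hand side of the union bound becomes $\leq \epsilon/2$, while simultaneously $\mu[\omega(0)=0] \leq 1-c_2 < \epsilon/2$. Combining the two bounds yields $\mu[0 \not\leftrightarrow \infty] < \epsilon$, completing the proof.
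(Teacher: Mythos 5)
Your proof is correct, but it takes a genuinely different route from the paper's. The paper stays in $\mathbb{Z}^d$: it forces the whole box $B_N(0)$ to be open (its event $\mathcal{K}_1$), notes that if this open box were not connected to infinity there would be a closed $*$-cluster enclosing $B_N(0)$ (its event $\mathcal{K}_2$), and bounds $\mu[\mathcal{K}_2]$ by applying Lemma~\ref{lem:exponential-decay} as a black box at the points $(m,0,\ldots,0)$, $m \geq N+1$, of a fixed half-line that the enclosing cluster must hit; it then chooses $N$ large first and the open-density close to $1$ second, so no sharpening of the decay rate is needed because the union bound there is only one-dimensional. You instead restrict to a two-dimensional coordinate slice (a valid reduction: intrinsic and ambient $l^\infty$-distances agree on the slice, so $M$-dependence and the marginal bound are preserved, and an infinite planar cluster through $0$ sits inside the ambient cluster of $0$), use the classical planar circuit duality around the single vertex $0$, and absorb the two-dimensional entropy of circuit diameters and anchor points by re-opening the proof of Lemma~\ref{lem:exponential-decay} and observing that its exponential rate can be made arbitrarily fast as $c_2 \to 1$ — a quantitative point your Step 3 justifies correctly. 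What each approach buys: yours needs only $\omega(0)=1$ rather than an open box and replaces the $d$-dimensional enclosing-cluster duality by the more elementary and classical planar circuit lemma, at the price of using Lemma~\ref{lem:exponential-decay} in a sharpened, non-black-box form; the paper's anchoring of the enclosing cluster to a ray lets it use the lemma exactly as stated. One small imprecision on your side: for a finite cluster with holes the full exterior vertex boundary need not be $*$-connected; the correct (and classical) statement, which is all you actually use, is that the boundary vertices adjacent to the unbounded component of the complement form a closed $*$-circuit surrounding $0$.
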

\begin{proof}
    Recall from Lemma~\ref{lem:exponential-decay} the constant $c_1$, which depends on $M$. Let $p \in (c_1,1)$ be a constant to be chosen. Let $\mu$ be an $M$-dependent measure satisfying $\inf_{x \in \mathbb{Z}^d}\mu[\omega(x) = 1]>p$. Let $N$ be a large integer to be chosen. Define the events
    \begin{equation*}
    \begin{aligned}
    \mathcal{K}_1 &:= \{ \mbox{All vertices in }B_N(0)\mbox{ are open}\}\,, \\
    \mathcal{K}_2 &:= \{ \mbox{There exists a closed }{\rm *}\mbox{-cluster enclosing }B_N(0) \}\,.
    \end{aligned}
    \end{equation*}
    By duality, we know that on the event $\mathcal{K}_1 \backslash \mathcal{K}_2$, all vertices in $B_N(0)$ are open and are connected to infinity by an open path. Hence, we only need to show that \begin{equation}
    \label{eq:lem2.10-1}
    \mu[\mathcal{K}_1 \backslash \mathcal{K}_2] \geq 1-\epsilon.
    \end{equation}

    First, we lower-bound $\mu[\mathcal{K}_1]$. Using the assumption that $\inf_{x \in \mathbb{Z}^d}\mu[\omega(x) = 1]>p$, we obtain
    \begin{equation}
    \label{eq:lem2.10-2}
    \mu[\mathcal{K}_1] \geq 1-\sum_{x \in B_N(0)} \mu[\omega(x) = 0] \geq 1- (2N+1)^d (1-p)\,.
    \end{equation}

    Next, we upper-bound $\mu[\mathcal{K}_2]$. If the event $\mathcal{K}_2$ happens, then the closed $*$-cluster must intersect the set $\{x\in \mathbb{Z}^d: x_1 \geq N+1, x_2 = x_3 =\ldots =x_d =0\}$. Let $x := (m,0,\ldots,0)$ be an intersection point where $m \geq N+1$. Then there exists a closed $*$-path from $x$ to $\partial B_{m}(x)$. Thus, applying Lemma~\ref{lem:exponential-decay} with $m$ in place of $N$, we obtain
    \begin{equation}
    \label{eq:lem2.10-3}
    \begin{aligned}
    \mu[\mathcal{K}_2] 
    &\leq \sum_{m \geq N+1} \mu \big[\mbox{There exists a closed }{\rm *}\mbox{-path from } x \mbox{ to } \partial B_m(x) \big] \\ 	&\leq \sum_{m \geq N+1} Ce^{-m/C}\leq Ce^{-N/C}.
    \end{aligned}
    \end{equation}
    Combining \eqref{eq:lem2.10-2} and \eqref{eq:lem2.10-3}, and first taking $N$ to be large and then taking $p$ close to $1$, yields \eqref{eq:lem2.10-1}. In particular, the choice of $p$ depends only on $M$ and $\epsilon$. This concludes the lemma.   
\end{proof}

\section{Existence of an exponent}
\label{sec:exponent}

In this section, we first prove the existence of an exponent $Q = Q(\xi) \in \mathbb{R}$ such that \eqref{eq:exponent-relation-lambda} holds (Proposition~\ref{prop:exponent}). This exponent governs the internal $D_n$-distance between two points in a box as $n$ grows. Furthermore, Lemma~\ref{lem:distance-any-point} extends this relation to any pair of points, and Lemma~\ref{lem:Q-lower} establishes some basic properties of $Q(\xi)$. Combining these results gives Proposition~\ref{prop:exponent-intro}.

We first introduce some notation. For each $1 \leq i \leq d$, let
\begin{equation}
\label{eq:def-ei}
e_i := \mbox{the }i \mbox{-th standard basis vector in } \mathbb{R}^d.
\end{equation}
That is, $e_i$ is a $\{0,1\}$-valued vector in $\mathbb{R}^d$ where only the $i$-th coordinate is equal to $1$. For integer $n \geq 1$ and $p \in (0,1)$, let $a_n^{(p)}$ denote the $p$-th quantile of the internal distance $D_n(0,e_1; B_2(0))$. Namely,
\begin{equation}
    \label{def:quantile}
     a_n^{(p)} := \inf \{l >0 : \mathbb{P} [D_n(0,e_1; B_2(0)) \leq l] >p \}\,.
\end{equation}
Since $D_n(0,e_1; B_2(0))$ is a continuous random variable, we have $\mathbb{P} [D_n(0,e_1; B_2(0)) \leq a_n^{(p)}] = p$. When $p = 1/2$, the number $\lambda_n$ defined in~\eqref{eq:intro-lambda} satisfies $\lambda_n = a_n^{(1/2)} = {\rm Med}(D_n(0,e_1; B_2(0)))$.

\begin{proposition}
\label{prop:exponent}
    There exists an exponent $Q = Q(\xi) \in \mathbb{R}$ such that
    \begin{equation}
    \label{eq:exponent}
    \lambda_n = 2^{-(1-\xi Q)n+o(n)} \quad \mbox{as }n\rightarrow \infty\,.
    \end{equation}
\end{proposition}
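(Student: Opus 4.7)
The overarching plan is to prove the approximate subadditivity
\begin{equation*}
\lambda_n \leq e^{Cn^{2/3}} \lambda_m \lambda_{n-m} \qquad \text{for all integers } n > m \geq 1,
\end{equation*}
and then apply a variant of Fekete's lemma with an $o(n)$ error term to the sequence $u_n := -\log \lambda_n$ to extract the limit. The subadditivity is obtained by constructing a path from $0$ to $e_1$ inside $B_2(0)$ realising the upper bound on its $D_n$-length. The construction exploits the independent decomposition $h_n = h_m + h_{m,n}$ together with the scaling identity $D_{m,n}(x,y;U) \eqD 2^{-m} D_{n-m}(2^mx,2^my;2^m U)$ from Lemma~\ref{lem:basic-LFPP}, which lets us convert small-scale fluctuations of $h_{m,n}$ into unit-scale $D_{n-m}$-distances.

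The path is built in two layers. The coarse layer is a ``skeleton'' polygonal path on the rescaled lattice $\mathscrL_m$ connecting $0$ to $e_1$ whose $D_m$-length is bounded by a large quantile (some $p$-quantile with $p$ close to $1$, independent of $n$) of $D_m(0,e_1;B_2(0))$; by Lemma~\ref{lem:gauss-concentration} this quantile is at most $e^{O(\sqrt m)}\lambda_m$. The fine layer refines each lattice edge of the skeleton by a near-optimal path for $D_{m,n}$ inside a box of side $O(2^{-m})$; by the scaling identity its typical length is $2^{-m}$ times a $p$-quantile of $D_{n-m}$ on $B_2(0)$, hence at most $2^{-m}e^{O(\sqrt{n-m})}\lambda_{n-m}$. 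Since $h_m$ oscillates by at most $O(2^{-m}\sup|\nabla h_m|)$ on each small box (Lemma~\ref{lem:field-estimate}(2)), the factor $e^{\xi h_m}$ can be pulled out of each fine contribution, and multiplying the two layers produces $\lambda_m\lambda_{n-m}$ with the various $e^{O(\sqrt m)}$, $e^{O(\sqrt{n-m})}$ losses absorbed into $e^{Cn^{2/3}}$. Availability of a good refinement at each coarse edge is handled by the percolation toolkit of Subsection~\ref{subsec:percolation}: I declare a lattice box \emph{good} if an interior $D_{m,n}$-path in it has length at most a constant multiple of $2^{-m}a_{n-m}^{(p)}$, and by Lemma~\ref{lem:dist-independent} this is an $M$-dependent Bernoulli field whose marginal can be pushed arbitrarily close to $1$ by choosing $p$ close to $1$. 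Lemmas~\ref{lem:exponential-decay}, \ref{lem:percolation-cluster} and \ref{lem:connect-inf} then ensure that closed $\ast$-clusters of bad boxes have exponentially small diameter, so the skeleton can be rerouted around each bad cluster at cost controlled by the maximum of $h_m$ on a small enlargement (bounded via Lemma~\ref{lem:field-estimate}(3,5)); summed over all clusters these detours contribute only $e^{o(n)}$.

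With the subadditivity in hand, the standard approximately-subadditive Fekete lemma produces convergence of $u_n/n$ to a limit $L\in[-\infty,+\infty]$. Finiteness of $L$ follows from crude two-sided bounds: the Borell--TIS estimate applied to the straight-line path from $0$ to $e_1$ gives $\lambda_n\leq e^{Cn}$, while the pointwise lower bound $\inf_{B_2(0)} h_n\geq -O(n)$ gives $\lambda_n\geq 2^{-n}e^{-Cn}$; writing $L=(1-\xi Q)\log 2$ defines the required $Q\in\mathbb{R}$ and yields $\lambda_n=2^{-(1-\xi Q)n+o(n)}$. The main obstacle I anticipate is the detour step: because no RSW estimate or $D_n$-diameter bound is available at this point in the paper, the cost of rerouting the skeleton around a bad cluster has to be absorbed purely through the percolation smallness of bad clusters and Gaussian concentration of $h_m$, with the non-trivial verification that the compounded contributions contribute only $e^{o(n)}$ rather than polluting the product $\lambda_m\lambda_{n-m}$.
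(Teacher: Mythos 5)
Your overall architecture matches the paper's: approximate subadditivity $\lambda_n \leq e^{Cn^{2/3}}\lambda_m\lambda_{n-m}$ proved via a two-layer path construction (coarse skeleton from a discretized $D_m$-geodesic on $\mathscrL_m$, fine refinement controlled by a high quantile $a_{n-m}^{(p)}$ and an $M$-dependent percolation argument), followed by an approximate Fekete lemma and crude two-sided bounds to get a finite exponent; this is exactly the structure of Lemma~\ref{lem:prop3.1-1} and its use in the paper. However, there is a genuine gap precisely at the step you flag as the main obstacle, namely the rerouting around bad clusters, and the paper's proof contains an idea you are missing. You run the percolation at the coarse scale $2^{-m}$ itself (a box of $\mathscrL_m$ is good if the relevant $D_{m,n}$-crossings are at most a constant times $2^{-m}a_{n-m}^{(p)}$). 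With $M$-dependent percolation on the $\sim 2^{dm}$ boxes of $\mathscrL_m$, Lemma~\ref{lem:percolation-cluster} only guarantees that bad $*$-clusters have diameter $O(m)$ in lattice units, i.e.\ Euclidean diameter of order $\mathrm{poly}(m)\,2^{-m}$. A detour around such a cluster therefore leaves the $2^{-m}$-neighborhood of the skeleton, and its $D_n$-cost is weighted by $e^{\xi h_m}$ at detour locations that are not skeleton vertices. Your proposed control of this weight by ``the maximum of $h_m$ on a small enlargement (Lemma~\ref{lem:field-estimate}(3,5))'' does not work: the absolute maximum of $h_m$ on any macroscopic region is of order $\sqrt{2d}\,m\log 2$, which would pollute the product by a factor $2^{\xi\sqrt{2d}\,m+o(m)}$ rather than $e^{o(n)}$; and the alternative of comparing $h_m$ on the detour to its value at the nearest skeleton vertex via the gradient event $2^{-m}\sup|\nabla h_m|\leq n^{2/3}$ gives a fluctuation of order $\mathrm{poly}(m)\,n^{2/3}$ over a distance $\mathrm{poly}(m)\,2^{-m}$, which is not $o(n)$ when $m\asymp n$. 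So, as written, the ``compounded contributions contribute only $e^{o(n)}$'' claim is unsubstantiated, and the crude tools you invoke cannot substantiate it.

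The paper resolves this with an auxiliary, slightly finer scale: it sets $k=\lfloor(\log m)^2\rfloor$ and runs the percolation on $\mathscrL_{m+k}$, declaring a vertex open if nearest-neighbor $D_{m+k,n}$-crossings are at most $2^{-m-k}a_{n-m-k}^{(p)}$. Then closed $*$-clusters have diameter at most $2^{k-2}$ lattice units, i.e.\ Euclidean diameter at most $2^{-m-2}$, so every detour stays inside a single box $B_{2^{-m+1}}(x_i)$ around a skeleton vertex; there the simple gradient event bounds the oscillation of $h_m$ by $O(n^{2/3})$, and the extra field $h_{m,m+k}$ costs only $e^{O(\sqrt{k(m+k)})}=e^{o(n)}$. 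Finally Lemma~\ref{lem:compare(n)(n-m)} converts $a_{n-m-k}^{(p)}$ back to $a_{n-m}^{(p)}$ at cost $e^{C\sqrt{kn}}=e^{o(n)}$. Your scheme could conceivably be patched at scale $m$ by proving a uniform mesoscopic fluctuation estimate of the form $\sup_{B_{\mathrm{poly}(m)2^{-m}}(x)}|h_m-h_m(x)|=O(n^{2/3})$ simultaneously for all $x\in\mathscrL_m$ (via a multiscale decomposition of $h_m$ and a union bound), but no such estimate appears in your argument or in the paper's stated lemmas, so as it stands the subadditivity step is not established.
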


The proof of Proposition~\ref{prop:exponent} is via a subadditivity argument. We will use Lemmas~\ref{lem:compare-median-p} and \ref{lem:prop3.1-1} below. The former directly follows from the concentration bound in Lemma~\ref{lem:gauss-concentration}. The latter employs a percolation argument from Subsection~\ref{subsec:percolation} and follows an approach similar to that of \cite[Lemma 2.9]{dg-supercritical-lfpp}.

\begin{lemma}
    \label{lem:compare-median-p}
    For fixed $p_1,p_2 \in (0,1)$, there exists a constant $C>0$ depending only on $p_1$ and $p_2$ such that for all integer $n \geq 1$, we have
    \begin{equation*}
        e^{-C\sqrt{n}} a_n^{(p_1)} \leq a_n^{(p_2)} \leq e^{C \sqrt{n}} a_n^{(p_1)}.
    \end{equation*}
\end{lemma}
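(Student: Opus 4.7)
The plan is to bootstrap directly from the Gaussian concentration bound in Lemma~\ref{lem:gauss-concentration}. Apply that lemma with $m=0$, $U = B_2(0)$, $K_1 = \{0\}$, $K_2 = \{e_1\}$ (noting that the two singletons are trivially connected inside $U$), which yields
\begin{equation*}
\mathbb{P}\bigl[\,|\log D_n(0,e_1;B_2(0)) - \mu_n| > t\,\bigr] \leq 2 e^{-\frac{t^2}{2\xi^2 \log 2 \cdot n}}, \qquad \forall\, t > 0, \quad n \geq 1,
\end{equation*}
where $\mu_n := \mathbb{E}\log D_n(0,e_1;B_2(0))$ (which is finite by the first part of the proof of Lemma~\ref{lem:gauss-concentration}).

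Next, for each fixed $p \in (0,1)$, I would show that $|\log a_n^{(p)} - \mu_n| \leq C_p \sqrt{n}$ for a constant $C_p$ depending only on $p$. Choose $t = C_p \sqrt{n}$ with $C_p$ large enough (depending only on $\min(p, 1-p)$) that $2 e^{-t^2/(2\xi^2 \log 2 \cdot n)} < \min(p, 1-p)$. Then the concentration bound gives
\begin{equation*}
\mathbb{P}[\log D_n(0,e_1;B_2(0)) < \mu_n - t] < p \quad \text{and} \quad \mathbb{P}[\log D_n(0,e_1;B_2(0)) > \mu_n + t] < 1-p,
\end{equation*}
and the definition \eqref{def:quantile} of the $p$-quantile then forces $\mu_n - t \leq \log a_n^{(p)} \leq \mu_n + t$.

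Applying this conclusion to both $p_1$ and $p_2$, and using the triangle inequality, yields
\begin{equation*}
|\log a_n^{(p_1)} - \log a_n^{(p_2)}| \leq (C_{p_1} + C_{p_2}) \sqrt{n},
\end{equation*}
and exponentiating with $C := C_{p_1} + C_{p_2}$ gives the desired two-sided bound. There is no serious obstacle here; the only point to be mildly careful about is that the constant $C_p$ must be chosen to dominate both tail bounds simultaneously, so it depends on $\min(p, 1-p)$ rather than on $p$ alone, which is harmless for the statement.
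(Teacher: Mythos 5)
Your proposal is correct and follows essentially the same route as the paper: apply Lemma~\ref{lem:gauss-concentration} with $K_1=\{0\}$, $K_2=\{e_1\}$, $U=B_2(0)$, deduce $|\log a_n^{(p)} - \mathbb{E}\log D_n(0,e_1;B_2(0))| \leq C_p\sqrt{n}$ for each fixed quantile, and conclude by the triangle inequality. The extra care you take in spelling out the choice of $t$ and the dependence on $\min(p,1-p)$ is fine and matches the paper's (more tersely stated) argument.
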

\begin{proof}
    Applying the concentration bound from Lemma~\ref{lem:gauss-concentration} with $K_1 = \{0\}, K_2 = \{e_1\}$, and $U = B_2(0)$ yields 
    \begin{equation*}
        \mathbb{P}\big[ |\log D_n(0,e_1;B_2(0)) - \mathbb{E} \log D_n(0,e_1;B_2(0))| \geq t \big] \leq Ce^{-\frac{t^2}{Cn}} \quad \forall t>0\,. 
    \end{equation*}
    Hence, for any fixed $p \in (0,1)$, the following inequality holds:
    \begin{equation*}
        |\log a_n^{(p)} - \mathbb{E} \log D_n(0,e_1;B_2(0)) | \leq C\sqrt{n}\,,
    \end{equation*}
    where the constant $C$ depends on $p$, but is independent of $n$. This implies the lemma.
\end{proof}

We now present a key lemma. It will imply Proposition~\ref{prop:exponent} when combined with Lemma~\ref{lem:compare-median-p}.
\begin{lemma}
\label{lem:prop3.1-1}
    There exist $c_3 \in (0,1)$ and a constant $C>0$ such that for all integers $n > m \geq 1$:
    \begin{equation}
    \label{eq:lem3.5-1}
    \lambda_n \leq  e^{Cn^{2/3}} a_m^{(c_3)} a_{n-m}^{(c_3)}\,.
    \end{equation}
\end{lemma}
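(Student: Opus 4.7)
The strategy is a subadditivity argument based on the decomposition $h_n = h_m + h_{m,n}$, where $h_m$ and $h_{m,n}$ are independent and $e^{\xi h_n} = e^{\xi h_m} e^{\xi h_{m,n}}$. My plan is to construct, on an event of probability strictly greater than $1/2$, a path from $0$ to $e_1$ in $B_2(0)$ whose $D_n$-length is at most $e^{Cn^{2/3}} a_m^{(c_3)} a_{n-m}^{(c_3)}$; since $\lambda_n$ is the median of $D_n(0,e_1;B_2(0))$, this suffices.

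For the coarse step, I pick $c_3 \in (0,1)$ close to $1$ and condition on the event $E_m := \{D_m(0,e_1;B_2(0)) \leq a_m^{(c_3)}\}$, which has probability at least $c_3$ by definition of the quantile. On $E_m$, I fix a continuous path $P$ from $0$ to $e_1$ in $B_2(0)$ of $D_m$-length at most $a_m^{(c_3)}+1$ and discretize at Euclidean arc-length mesh $2^{-m}$, snapping each discretization point to the nearest vertex of $\mathscrL_m = 2^{-m}\mathbb{Z}^d \cap B_2(0)$. Using the gradient bound of Lemma~\ref{lem:field-estimate}(2), on an event of probability $\geq 1 - e^{-cn}$ the resulting lattice walk $P^\flat$ from $0$ to $e_1$ satisfies
\[
\sum_{(x,y) \in P^\flat} 2^{-m} e^{\xi h_m(x)} \leq C a_m^{(c_3)},
\]
i.e.\ the discrete weighted length matches the continuous $D_m$-length of $P$ up to constants.

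For the fine step, I set up a percolation on $\mathscrL_m$ by declaring each directed edge $e = (x,x+2^{-m}e_j)$ \emph{good} if $D_{m,n}(x,x+2^{-m}e_j; B_{2\cdot 2^{-m}}(x)) \leq 2^{-m} a_{n-m}^{(c_3)}$. The scaling and translation invariance of $D_{m,n}$ (Lemma~\ref{lem:basic-LFPP}) together with the definition of $a_{n-m}^{(c_3)}$ give $\mathbb{P}[e\text{ good}] \geq c_3$, while the finite-range dependence of $h_{m,n}$ (Lemma~\ref{lem:dist-independent}) renders the goodness process $O(1)$-dependent on $\mathscrL_m$ viewed as a rescaled lattice. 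Taking $c_3$ close enough to $1$ and applying Lemma~\ref{lem:percolation-cluster}, bad $*$-clusters in $\mathscrL_m$ have diameter at most some $K = K(n,m)$ with high probability.

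Combining the two scales on the intersection of $E_m$, the smoothness event, and the percolation event (whose total probability exceeds $1/2$ for $c_3$ close enough to $1$), I modify $P^\flat$ by detouring around each bad $*$-cluster it traverses, obtaining a walk $\tilde P$ of good edges. On each good edge $(x,y)$ of $\tilde P$ I choose the corresponding continuous fine path realizing the good event, whose $D_n$-length is at most $2^{-m} e^{\xi \sup_{B_{2\cdot 2^{-m}}(x)} h_m} a_{n-m}^{(c_3)}$. Converting local suprema of $h_m$ to pointwise values via the gradient bound, summing over edges of $\tilde P$, and absorbing detour costs, I obtain
\[
D_n(0,e_1;B_2(0)) \leq e^{Cn^{2/3}} a_{n-m}^{(c_3)} \sum_{(x,y) \in \tilde P} 2^{-m} e^{\xi h_m(x)} \leq e^{Cn^{2/3}} a_m^{(c_3)} a_{n-m}^{(c_3)}.
\]
The main obstacle is the delicate balance between the percolation cluster-size threshold $K$ (which must be large enough relative to $m$ to make bad clusters rare across $\mathscrL_m$) and the extra $h_m$ cost paid along detours of size $O(K)$; the sub-polynomial $e^{Cn^{2/3}}$ factor is absorbed by combining the Gaussian concentration estimates of Lemma~\ref{lem:field-estimate}(3)--(5) with the log-correlated structure of $h_m$, which controls the fluctuation of $h_m$ over a detour by $O(\sqrt{\log K})$ rather than a naive gradient bound. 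The flexibility afforded by choosing the quantile $c_3$ close to $1$ (rather than $1/2$) is precisely what allows the percolation step to yield sub-exponential error.
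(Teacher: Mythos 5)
Your overall strategy is the same two-scale subadditivity scheme as the paper's (coarse discretization of a near-optimal $D_m$-path on $\mathscrL_m$, plus a percolation argument at a finer level to produce segments of controlled $D_{m,n}$-length), but you run the percolation at the coarse scale $2^{-m}$ itself, whereas the paper introduces an auxiliary scale $m+k$ with $k=\lfloor(\log m)^2\rfloor$ and declares vertices of $2^{-m-k}\mathbb{Z}^d$ open using $D_{m+k,n}$. This difference is not cosmetic, and it is where your argument has a gap. At a single scale, bad $*$-clusters can have diameter up to $K$ coarse cells, and to beat the union bound over the $\asymp 2^{dm}$ vertices of $\mathscrL_m$ in Lemma~\ref{lem:percolation-cluster} you need $K\gtrsim m$; your detours therefore leave the $O(2^{-m})$-neighborhood of the original path, and you must compare $h_m(y)$ with $h_m(x_j)$ for $y$ up to distance $K2^{-m}$ from the charged coarse vertex. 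Your claimed control ``$O(\sqrt{\log K})$ from the log-correlated structure'' is a pointwise fluctuation bound; what the argument requires is a bound holding simultaneously for all of the (exponentially many in $m$) potential detour neighborhoods along the path, and the union bound costs an extra factor $\sqrt m$, so the honest uniform bound is of order $\sqrt{m\log K}$ (obtainable, e.g., by splitting $h_m=h_{m'}+h_{m',m}$ at scale $2^{-m'}\approx K2^{-m}$ and using Lemma~\ref{lem:field-estimate}). Since $\sqrt{m\log m}=o(n^{2/3})$ this is repairable, but as written the key absorption step is unjustified. The paper's auxiliary scale is chosen precisely to avoid this: closed clusters then have physical diameter at most $2^{k-2}\cdot 2^{-m-k}=2^{-m-2}$, every detour stays inside one coarse cell, and the already-assumed gradient event $2^{-m}\sup|\nabla h_m|\le n^{2/3}$ gives the needed comparison with no additional field estimate (the price being the bookkeeping between $a_{n-m-k}^{(p)}$ and $a_{n-m}^{(p)}$ via Lemma~\ref{lem:compare(n)(n-m)}, and a multiplicity factor $2^{kd}$ which is subpolynomial).

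A second, more concrete gap: your modification of $P^\flat$ produces a walk of good edges, but nothing in the construction guarantees that this walk starts at $0$ and ends at $e_1$. If the edges incident to $0$ (or $e_1$) are bad, the detoured walk begins on the far side of a bad cluster, and the missing segment between $0$ and the good network has no $D_n$-control — that is exactly where $h_{m,n}$ may be large. The paper handles this by including in its percolation event that both $0$ and $e_1$ lie in infinite open clusters (via Lemma~\ref{lem:connect-inf}) and by using duality to produce open clusters enclosing $B_{2^{-m-1}}(x_i)$ inside each shell $B_{2^{-m}}(x_i)\setminus B_{2^{-m-1}}(x_i)$, whose intersections glue the pieces together; you need an analogue of this, together with an explicit duality argument that a good detour around each bad cluster exists within the prescribed neighborhood (you assert it). Two smaller points: take a path of $D_m$-length at most $2a_m^{(c_3)}$ rather than $a_m^{(c_3)}+1$, since $a_m^{(c_3)}$ is typically much smaller than $1$ and the additive constant destroys the bound; and your probability bookkeeping only beats $1/2$ for $m$ large, so small $m$ must be treated separately (as the paper does using Lemmas~\ref{lem:compare-median-p} and~\ref{lem:compare(n)(n-m)}).
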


We will first use a subadditivity argument to prove Proposition~\ref{prop:exponent} based on this lemma, and then provide the proof of Lemma~\ref{lem:prop3.1-1}.

\begin{proof}[Proof of Proposition~\ref{prop:exponent}]
Combining Lemmas~\ref{lem:prop3.1-1} and \ref{lem:compare-median-p}, we obtain that for all $n >m \geq 1$:
\begin{equation*}
   \lambda_n \leq  e^{Cn^{2/3}} a_m^{(c_3)} a_{n-m}^{(c_3)} \leq  e^{Cn^{2/3}} \lambda_m \lambda_{n-m}\,.
\end{equation*}
Combining this inequality with Lemma 6.4.10 in \cite{dembo-ld}, applied to $\log \lambda_n$, implies the existence of $a \in \mathbb{R}$ such that
\begin{equation*}
    \lambda_n = e^{an + o(n)} \quad \mbox{as }n \rightarrow \infty\,.
\end{equation*}
Taking $Q \in \mathbb{R}$ such that $e^a = 2^{-(1-\xi Q)}$ yields the desired result.
\end{proof}

Next, we proceed to the proof of Lemma~\ref{lem:prop3.1-1}. First, we present two auxiliary results. In Lemma~\ref{lem:h(n-m)(n)}, we provide estimates for the field $h_{n-m,n}$. Subsequently, we use these estimates in Lemma~\ref{lem:compare(n)(n-m)} to compare $a_n^{(p)}$ and $a_{n-m}^{(p)}$.

\begin{lemma}
\label{lem:h(n-m)(n)}
    There exist constants $C_1>0$ and $C>0$ such that for all integers $n > m \geq 1$:
    \begin{equation*}
        \mathbb{P} \Big[ \sup_{x \in B_2(0)} h_{n-m,n}(x) \geq C_1 \sqrt{mn} \Big] \leq Ce^{-n/C}.
    \end{equation*}
\end{lemma}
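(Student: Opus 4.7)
The plan is to use scaling to reduce the supremum of $h_{n-m,n}$ on $B_2(0)$ to the supremum of $h_m$ on a box of side $\sim 2^{n-m}$, and then combine a union bound with the Borell-TIS inequality (Claim (4) of Lemma~\ref{lem:field-estimate}).

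By the scaling identity in Lemma~\ref{lem:basic-h}(4), taking the displayed identity with the roles of $m$ and $n-m$ exchanged, one has
\[
\sup_{x \in B_2(0)} h_{n-m,n}(x) \;\overset{d}{=}\; \sup_{y \in 2^{n-m} B_2(0)} h_m(y).
\]
Since $2^{n-m} B_2(0)$ is a box of side $2^{n-m+2}$, it is covered by at most $C \cdot 2^{d(n-m)}$ translates of the form $z + B_1(0)$ with $z \in \mathbb{Z}^d$. For each such translate, translation invariance (Lemma~\ref{lem:basic-h}(2)), together with Claims (3) and (4) of Lemma~\ref{lem:field-estimate}, yields
\[
\mathbb{P}\Bigl[ \sup_{x \in z+B_1(0)} h_m(x) \geq m\sqrt{2d}\log 2 + C\sqrt{m} + u\Bigr] \leq \exp\Bigl(-\frac{u^2}{2m\log 2}\Bigr),
\]
for every $u>0$, so a union bound gives
\[
\mathbb{P}\Bigl[\sup_{y\in 2^{n-m}B_2(0)} h_m(y) \geq m\sqrt{2d}\log 2 + C\sqrt{m} + u\Bigr] \leq C\, 2^{d(n-m)} \exp\Bigl(-\frac{u^2}{2m\log 2}\Bigr).
\]

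To finish, I would take $u = C_2 \sqrt{mn}$ with $C_2$ a large constant. Since $m \leq n$, one has $m\sqrt{2d}\log 2 + C\sqrt{m} \leq C\sqrt{mn}$, so the full threshold on the left-hand side is at most $C_1 \sqrt{mn}$ for a suitable $C_1$. The right-hand side is bounded by
\[
C \exp\Bigl( d(n-m)\log 2 - \frac{C_2^2 n}{2\log 2}\Bigr),
\]
which is $\leq Ce^{-n/C}$ as soon as $C_2$ is chosen large enough that $C_2^2 /(2\log 2) > d\log 2$, since $n - m \leq n$. I do not anticipate a serious obstacle here: the proof is essentially a volume-versus-Gaussian-tail trade-off, and the key quantitative point is that the Borell-TIS variance $m\log 2$ grows slowly enough in $m$ that the $\sqrt{mn}$ threshold outruns the entropy $\sim (n-m)\log 2$ coming from the enlarged box, uniformly in $1 \leq m < n$.
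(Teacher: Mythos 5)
Your proof is correct and follows essentially the same route as the paper's: both arguments combine the scaling property of Lemma~\ref{lem:basic-h}(4) with Claims (3) and (4) of Lemma~\ref{lem:field-estimate} and a union bound over $\sim 2^{d(n-m)}$ boxes, concluding with the same volume-versus-Gaussian-tail comparison at threshold $C_1\sqrt{mn}$. The only cosmetic difference is that you rescale the whole field once and cover the enlarged box $2^{n-m}B_2(0)$ by unit boxes, whereas the paper covers $B_2(0)$ by boxes of side $2^{m-n}$ and rescales each piece, which is the same computation in a different order.
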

\begin{proof}
    By Claim (4) in Lemma~\ref{lem:basic-h}, we have that $\sup_{y \in B_{2^{m-n}}(0)} h_{n-m,n}(y) \overset{d}{=} \sup_{x \in B_1(0)} h_{0,m}(x)$. Therefore, 
    \begin{equation*}
        \mathbb{P} \big[\sup_{y \in B_{2^{m-n}}(0)} h_{n-m,n}(y) \geq s  \big] = \mathbb{P}\big[ \sup_{x \in B_1(0)} h_m(x) \geq s \big] \quad \forall s > 0\,. 
    \end{equation*}
    Using Claims (3) and (4) from Lemma~\ref{lem:field-estimate}, we get that for all $s \geq (1+ \sqrt{2d} \log 2 )m$:
    \begin{equation}
    \label{eq:lem3.3-2}
        \mathbb{P} \big[\sup_{y \in B_{2^{m-n}}(0)} h_{n-m,n}(y) \geq s  \big] \leq C\exp \big(-\frac{s^2}{Cm})\,.
    \end{equation}
    Hence, for all $t> 1+ \sqrt{2d} \log 2$:
    \begin{equation*}
        \begin{aligned}
             &\quad \mathbb{P} \Big[ \sup_{x \in B_2(0)} h_{n-m,n}(x) \geq t \sqrt{mn} \Big] \\
             &= \mathbb{P} \Big[ \sup_{x \in B_2(0) \cap 2^{m-n}\mathbb{Z}^d} \sup_{y \in B_{2^{m-n}}(x)} h_{n-m,n}(y) \geq t\sqrt{mn} \Big] \\
             &\leq \sum_{x \in B_2(0) \cap 2^{m-n}\mathbb{Z}^d} \mathbb{P} \Big[ \sup_{y \in B_{2^{m-n}}(0)} h_{n-m,n}(y) \geq t\sqrt{mn} \Big] \leq C2^{nd} \times \exp(-\frac{t^2mn}{Cm})\,.
        \end{aligned}
    \end{equation*}
    In the last inequality, we used \eqref{eq:lem3.3-2}, as well as the facts that $n>m$ and $|B_2(0) \cap 2^{m-n}\mathbb{Z}^d| \leq C2^{nd}$. By choosing a sufficiently large $t$, we obtain the desired result.
\end{proof}

We now provide a comparison between $a_n^{(p)}$ and $a_{n-m}^{(p)}$ based on the above lemma.

\begin{lemma}
    \label{lem:compare(n)(n-m)}
    For a fixed $p \in (0,1)$, there exists a constant $C= C(p)>0$ such that for all integers $n > m \geq 1$:
    \begin{equation*}
         e^{-C\sqrt{mn}} a_{n-m}^{(p)} \leq a_n^{(p)} \leq e^{C\sqrt{mn}} a_{n-m}^{(p)}\,.
    \end{equation*}
\end{lemma}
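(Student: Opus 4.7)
The plan is to exploit the orthogonal decomposition $h_n = h_{n-m} + h_{n-m,n}$, whose summands are independent Gaussian processes since they arise from integrating the same white noise $W$ over the disjoint time strata $[2^{-(n-m)},1]$ and $[2^{-n}, 2^{-(n-m)}]$. Set $S := \sup_{x \in B_2(0)} h_{n-m,n}(x)$ and $T := -\inf_{x \in B_2(0)} h_{n-m,n}(x)$. Factoring $e^{\xi h_n} = e^{\xi h_{n-m}} e^{\xi h_{n-m,n}}$ inside the length integral and taking the infimum over paths from $0$ to $e_1$ in $B_2(0)$ yields the deterministic sandwich
\[
e^{-\xi T}\, D_{n-m}(0,e_1;B_2(0)) \;\leq\; D_n(0,e_1;B_2(0)) \;\leq\; e^{\xi S}\, D_{n-m}(0,e_1;B_2(0)).
\]
Lemma~\ref{lem:h(n-m)(n)} yields $\mathbb{P}[S > C_1\sqrt{mn}] \leq Ce^{-n/C}$, and since $-h_{n-m,n}$ has the same law as $h_{n-m,n}$, the same bound applies to $T$.

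For the upper bound on $a_n^{(p)}$, fix $p' := (1+p)/2$. The independence of $h_{n-m}$ and $h_{n-m,n}$ combined with the sandwich gives
\[
\mathbb{P}\!\left[D_n(0,e_1;B_2(0)) \leq e^{\xi C_1 \sqrt{mn}}\, a_{n-m}^{(p')}\right] \geq \mathbb{P}[S \leq C_1\sqrt{mn}]\cdot \mathbb{P}\!\left[D_{n-m}(0,e_1;B_2(0)) \leq a_{n-m}^{(p')}\right] \geq (1-Ce^{-n/C})\,p',
\]
which exceeds $p$ for $n$ larger than some $N_0 = N_0(p)$; by the definition of $a_n^{(p)}$ this forces $a_n^{(p)} \leq e^{\xi C_1\sqrt{mn}}\, a_{n-m}^{(p')}$. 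Lemma~\ref{lem:compare-median-p} then replaces $a_{n-m}^{(p')}$ by $e^{C'\sqrt{n-m}}\, a_{n-m}^{(p)}$, and the bound $\sqrt{n-m} \leq \sqrt{mn}$ (valid for $m \geq 1$) absorbs $e^{C'\sqrt{n-m}}$ into $e^{C\sqrt{mn}}$, giving the upper half of the lemma. The lower bound is symmetric: with $p'' := p/2$, on the event $\{T \leq C_1\sqrt{mn}\}$ the sandwich yields $D_n \geq e^{-\xi C_1\sqrt{mn}} D_{n-m}$, so
\[
\mathbb{P}\!\left[D_n(0,e_1;B_2(0)) \leq e^{-\xi C_1\sqrt{mn}}\, a_{n-m}^{(p'')}\right] \leq \mathbb{P}[T > C_1\sqrt{mn}] + p'' \leq Ce^{-n/C} + p'',
\]
which is strictly less than $p$ once $n$ is large enough. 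Hence $a_n^{(p)} \geq e^{-\xi C_1\sqrt{mn}}\, a_{n-m}^{(p'')}$, and Lemma~\ref{lem:compare-median-p} returns from $p''$ back to $p$, completing the lower bound.

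The finitely many remaining small-$n$ pairs are handled by enlarging $C$, since each ratio $a_n^{(p)}/a_{n-m}^{(p)}$ is a positive finite constant. I do not expect a substantive obstacle: the lemma is essentially a quantitative manifestation of the independent-scale decomposition of $h_n$ together with the concentration estimate already proved in Lemma~\ref{lem:h(n-m)(n)}. The only care point is to introduce the slack quantiles $p'$ and $p''$ so that the exponentially small error on $\{S > C_1\sqrt{mn}\}$ or $\{T > C_1\sqrt{mn}\}$ does not erode the $p$-level mass when converting a probability inequality into a quantile inequality.
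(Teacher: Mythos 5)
Your proposal is correct and follows essentially the same route as the paper: decompose $h_n = h_{n-m} + h_{n-m,n}$, sandwich $D_n$ between $e^{\xi\inf h_{n-m,n}}D_{n-m}$ and $e^{\xi\sup h_{n-m,n}}D_{n-m}$, control $\sup/\inf$ via Lemma~\ref{lem:h(n-m)(n)} and symmetry, work with the slack quantiles $(1+p)/2$ and $p/2$, and return to level $p$ via Lemma~\ref{lem:compare-median-p}. The only cosmetic difference is that you invoke independence of the two fields to multiply probabilities, whereas the paper simply uses an intersection (union-type) bound, which does not change the argument in any substantive way.
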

\begin{proof}
    Based on the definition of $a_n^{(p)}$ in~\eqref{def:quantile}, we have
    \begin{equation}
    \label{eq:lem3.4-0}
        \mathbb{P}\big[ D_{n-m}(0,e_1; B_2(0)) \geq a_{n-m}^{(p/2)}\big] = 1 - p/2\,.
    \end{equation}
    Using Lemma~\ref{lem:h(n-m)(n)} and the symmetry of $h_{m,n}$, there exists a constant $A>0$ such that for all $n>m \geq 1$:
    \begin{equation}
    \label{eq:lem3.4-1}
        \mathbb{P} \big[ \inf_{x \in B_2(0)} h_{n-m,n}(x) \geq -A\sqrt{mn} \big] > 1 - p/2\,.
    \end{equation}
    Since $h_n = h_{n-m} + h_{n-m,n}$, we have $$D_n(0,e_1;B_2(0)) \geq D_{n-m}(0,e_1;B_2(0)) e^{\xi \inf_{x \in B_2(0)} h_{n-m,n}(x)}.$$ Therefore, for all $s>0$:
    \begin{equation*}
    \begin{aligned}
        &\quad \mathbb{P} \big[ D_n(0,e_1;B_2(0)) \geq  e^{-s\sqrt{mn}} a_{n-m}^{(p/2)} \big] \\
        &\geq \mathbb{P} \Big[ \big{\{} D_{n-m}(0,e_1;B_2(0)) \geq a_{n-m}^{(p/2)} \big{\}} \cap \big{\{}\inf_{x \in B_2(0)} h_{n-m,n}(x) \geq -s\sqrt{mn} /\xi \big{\}} \Big]\,.
    \end{aligned}
    \end{equation*}
    Combining this with \eqref{eq:lem3.4-0} and \eqref{eq:lem3.4-1}, we obtain that for all $s> A \xi $, with $A$ being the constant from \eqref{eq:lem3.4-1},
    \begin{equation*}
        \begin{aligned}
            \mathbb{P} \big[ D_n(0,e_1;B_2(0)) \geq  e^{-s\sqrt{mn}} a_{n-m}^{(p/2)} \big] > 1-p/2-p/2=1-p\,.
        \end{aligned}
    \end{equation*}
    This, combined with the definition of $a_n^{(p)}$ and Lemma~\ref{lem:compare-median-p}, yields 
    \begin{equation}
    \label{eq:lem3.4-2}
        a_n^{(p)} \geq e^{-(A \xi +1)\sqrt{mn}} a_{n-m}^{(p/2)} \geq e^{-C\sqrt{mn}} a_{n-m}^{(p)}.
    \end{equation}
    Similarly, for sufficiently large $s > 0$, we can show 
     \begin{equation*}
        \begin{aligned}
         &\mathbb{P} \big[ D_n(0,e_1;B_2(0)) \leq  e^{s\sqrt{mn}} a_{n-m}^{(\frac{p+1}{2})} \big]\\ 
         &\qquad \geq \mathbb{P} \Big[ \big{\{} D_{n-m}(0,e_1;B_2(0)) \leq a_{n-m}^{(\frac{p+1}{2})} \big{\}} \cap \big{\{} \sup_{x \in B_2(0)} h_{n-m,n}(x) \leq s\sqrt{mn} /\xi \big{\}} \Big]\\
         &\qquad > \frac{p+1}{2}-\frac{1-p}{2}=p\,.
        \end{aligned}
    \end{equation*}
    This, together with the definition of $a_n^{(p)}$ and Lemma~\ref{lem:compare-median-p}, implies
    \begin{equation}
    \label{eq:lem3.4-3}
        a_n^{(p)} \leq e^{C\sqrt{mn}} a_{n-m}^{(\frac{p+1}{2})} \leq e^{C\sqrt{mn}} a_{n-m}^{(p)}.
    \end{equation}
    Combining \eqref{eq:lem3.4-2} and \eqref{eq:lem3.4-3} yields the desired result.
\end{proof}

We now turn to the proof of Lemma~\ref{lem:prop3.1-1}. The proof follows a similar approach to that of \cite[Lemma 2.9]{dg-supercritical-lfpp}. Our goal is to construct a path that connects $0$ and $e_1$ within the box $B_2(0)$, such that the $D_n$-length of this path can be upper-bounded by $a_m^{(p)}$ and $a_{n-m}^{(p)}$ with high probability provided that $p$ is sufficiently large. (We will actually use $a_{n-m-k}^{(p)}$, with $k = \lfloor (\log m)^2 \rfloor$, instead of $a_{n-m}^{(p)}$. However, by Lemma~\ref{lem:compare(n)(n-m)}, they do not differ much.) The construction will consist of four steps. In Step 1, we introduce some regularity events for the field, which all happen with high probability. In Step 2, we construct a discrete path on $\mathscrL_m$ (recall its definition from \eqref{eq:def-rescaled-lattice}) whose $D_m$-length can be upper-bounded by $a_m^{(p)}$. Step 3 involves local modifications to the discrete path so that its $D_{m+k,n}$-length can be upper-bounded. We will use a percolation argument for the rescaled lattice $2^{-m-k} \mathbb{Z}^d$ to achieve this. The introduction of the auxiliary scale $k = \lfloor (\log m)^2 \rfloor$ is mainly for this step. In Step 4, we control the $D_n$-length of the resulting path using the regularity events.

\begin{proof}[Proof of Lemma~\ref{lem:prop3.1-1}]
    Let $p \in (0,1)$ be a constant to be chosen. Define the integer
    \begin{equation}
        \label{eq:def-k}
        k := \lfloor (\log m)^2 \rfloor\,.
    \end{equation}
    We assume that 
    \begin{equation*}
        m > 100 \quad \mbox{and} \quad n > m+k\,.
    \end{equation*}Otherwise, Equation~\eqref{eq:lem3.5-1} can be deduced from Lemmas~\ref{lem:compare-median-p} and \ref{lem:compare(n)(n-m)} by choosing a sufficiently large $C$. This is because, for a fixed $p$, by Lemmas~\ref{lem:compare-median-p} and \ref{lem:compare(n)(n-m)}, we have
    \begin{equation}
    \label{eq:lem3.5-finite}
        \begin{aligned}
            &\lambda_n \leq e^{C \sqrt{n}} a_n^{(p)} \leq e^{C \sqrt{n}} a_{n-m}^{(p)} \leq e^{C \sqrt{n}}a_m^{(p)} a_{n-m}^{(p)} \quad \forall 1 \leq m  \leq 100\,,\\
            &\lambda_n \leq e^{C\sqrt{n}} a_n^{(p)} a_1^{(p)} \leq e^{C \sqrt{nk}} a_m^{(p)} a_{n-m}^{(p)} \quad \forall m <n  \leq m+k\,,
        \end{aligned}
    \end{equation}
    and $C\sqrt{n}, C\sqrt{nk} \leq C n^{2/3}$.

    Next, we will construct a path connecting $0$ and $e_1$ within $B_2(0)$. When $p$ is sufficiently close to one (not depending on $n$), the $D_n$-length of this path will be at most $e^{Cn^{2/3}} a_m^{(p)} a_{n-m-k}^{(p)}$ with probability at least $1/2$. Therefore,
    \begin{equation}
    \label{eq:lem3.5-11}
        \lambda_n \leq e^{Cn^{2/3}} a_m^{(p)} a_{n-m-k}^{(p)}\,.
    \end{equation}
    Combining this with Lemma~\ref{lem:compare(n)(n-m)}, we obtain Lemma~\ref{lem:prop3.1-1}.
    
    As announced earlier, the construction consists of four steps.
    
    \noindent\textit{Step 1: Regularity event for $h_m$ and $h_{m,m+k}$.} Define the event
    \begin{equation}
        \label{eq:def-e1}
        \mathcal{E}_1 := \big{\{}2^{-m} \sup_{x \in B_2(0)} |\nabla h_m(x)|_\infty \leq n^{2/3} \big{\}} \cap \big{\{} \sup_{x \in B_2(0)} h_{m,m+k}(x) \leq C_1 \sqrt{k(m+k)} \big{\}} \,,
    \end{equation}
    where $C_1$ is the constant defined in Lemma~\ref{lem:h(n-m)(n)}. Using the fact that $|2^{-m}\mathbb{Z}^d \cap B_2(0)| \leq C2^{md}$ and Claim (2) in Lemma~\ref{lem:field-estimate}, we obtain
    \begin{equation}
    \label{eq:est-e1-0}
    \begin{aligned}
        &\mathbb{P}\Big[2^{-m} \sup_{x \in B_2(0)} |\nabla h_m(x)|_\infty \leq n^{2/3}\Big] \\
        &\qquad \geq 1 -  \sum_{x \in 2^{-m}\mathbb{Z}^d \cap B_2(0)} \mathbb{P}\Big[\sup_{y \in B_{2^{-m}}(x)} |\nabla h_m(y)|_\infty > 2^mn^{2/3}\Big]\\
        &\qquad \geq 1-C 2^{md} \times C e^{-n^{4/3}/C} \geq 1-Ce^{-n^{4/3}/C}.
    \end{aligned}
    \end{equation}
    Combining this with Lemma~\ref{lem:h(n-m)(n)}, applied with $(m+k,k)$ in place of $(n,m)$, yields 
    \begin{equation}
        \label{eq:est-e1}
        \mathbb{P}[\mathcal{E}_1] \geq 1-Ce^{-n^{4/3}/C} - C e^{-m/C} \geq 1- C e^{-m/C}.
    \end{equation}
    \noindent\textit{Step 2: Discretize the $D_m$-geodesic between $0$ and $e_1$ on $\mathscrL_m$.} Define the event
    \begin{equation}
    \label{eq:def-e2}
        \mathcal{E}_2:= \{ D_m(0,e_1;B_2(0)) \leq a_m^{(p)} \}\,.
    \end{equation}
    By \eqref{def:quantile}, we have
    \begin{equation}
        \label{eq:est-e2}
        \mathbb{P}[\mathcal{E}_2] = p\,.
    \end{equation}
    On the event $\mathcal{E}_2$, there exists a piecewise continuously differentiable path $P:[0,1] \rightarrow B_2(0)$ from $0$ to $e_1$ such that
    \begin{equation}
    \label{eq:lem3.5-geodesic}
    {\rm len}(P;D_m) = \int_0^1 e^{\xi h_m(P(t))} |P'(t)|dt \leq 2 a_m^{(p)}.
    \end{equation}

    Recall from \eqref{eq:def-rescaled-lattice} that $\mathscrL_m = 2^{-m}\mathbb{Z}^d \cap B_2(0)$. Then, we have $0, e_1 \in \mathscrL_m$. We consider $\mathscrL_m$ as a subset of $\mathbb{R}^d$. Sometimes, we will consider ($*$-)paths or ($*$-)clusters on the rescaled lattice $2^{-m} \mathbb{Z}^d$, as defined in Subsection~\ref{subsec:percolation}, and only in these cases, we regard $\mathscrL_m$ as a subset of $2^{-m} \mathbb{Z}^d$. We now construct, on the event $\mathcal{E}_1 \cap \mathcal{E}_2$, a self-avoiding path on $\mathscrL_m$ as a discrete approximation of the path $P$. See Figure~\ref{fig:1} for an illustration.\footnote{For illustrative purposes, we depict planar graphs, but all these arguments hold for dimensions greater than two.}

\begin{figure}[H]
\centering
\includegraphics[scale=0.6]{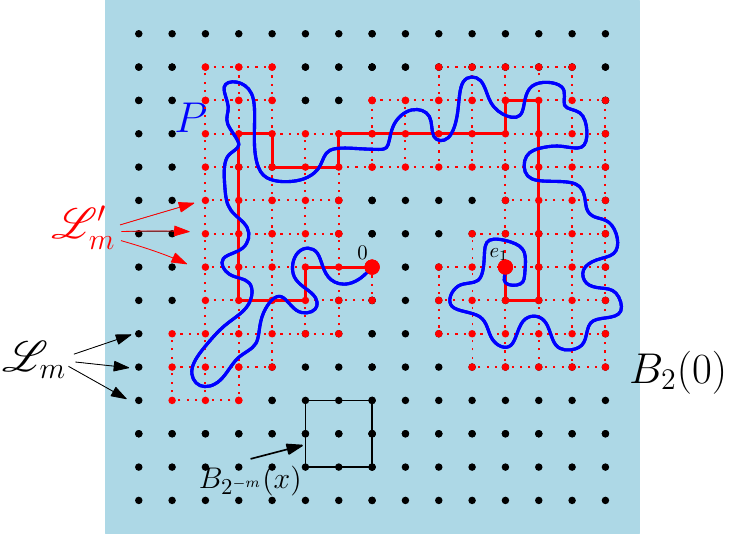}
\caption{Illustration of the sets $\mathscrL_m$ and $\mathscrL_m'$, and the path $P$ and its discrete approximation $(x_1,\ldots,x_J)$. The dotted red lines represent the edges between neighboring vertices in $\mathscrL_m'$. The path $(x_1,\ldots,x_J)$, as illustrated by the red curve, is a self-avoiding path on $\mathscrL_m'$ connecting $0$ and $e_1$.}
\label{fig:1}
\end{figure}

    Let $\mathscrL_m'$ be a subset of $\mathscrL_m$ defined as follows:
    \begin{equation*}
        \mathscrL_m' := \{ x \in \mathscrL_m : P \cap \overline{B_{2^{-m}}(x)} \neq \emptyset \}\,,
    \end{equation*}
    where $\overline{B_{2^{-m}}(x)}$ represents the closure of $B_{2^{-m}}(x)$. It follows that $0,e_1 \in \mathscrL_m'$, and there exists a discrete path in $\mathscrL_m'$ connecting them. This is because for any $x \in \mathscrL_m'$ considering the first exit time of $P$ from the box $\overline{B_{2^{-m}}(x)}$, we can find a vertex $y \in \mathscrL_m$ such that $|x-y|_1 = 2^{-m}$, and $P$ also enters the box $\overline{B_{2^{-m}}(y)}$. By doing this procedure iteratively, we obtain a discrete path in $\mathscrL_m'$ that connects $0$ and $e_1$. Taking any path in $\mathscrL_m'$ connecting $0$ and $e_1$, and applying the loop erasure procedure similar to \eqref{eq:lem2.8-saw}, yields a self-avoiding path connecting $0$ and $e_1$ in $\mathscrL_m'$. That is, there exists a self-avoiding path $0 = x_1,\ldots,x_J = e_1$ satisfying the properties that
    \begin{equation}
    \label{eq:lem3.5-path-prop}
    \begin{aligned}
        &x_i \in \mathscrL_m \quad \mbox{and} \quad P \cap \overline{B_{2^{-m}}(x_i)} \neq \emptyset \quad \forall 1 \leq i \leq J\,, \quad \mbox{and} \\
        &|x_i - x_{i+1}|_1 = 2^{-m} \quad  \forall 1 \leq i \leq J-1\,.
        \end{aligned}
    \end{equation}
    
    We now show that on the event $\mathcal{E}_1 \cap \mathcal{E}_2$, we have
    \begin{equation}
        \label{eq:lem3.5-weight}
        \sum_{j=1}^J 2^{-m} e^{\xi h_{0,m}(x_j)} \leq a_m^{(p)} e^{Cn^{2/3}}.
    \end{equation}
    This is because for each $1 \leq i \leq J$, the property $P \cap \overline{B_{2^{-m}}(x_i)} \neq \emptyset$ in \eqref{eq:lem3.5-path-prop} ensures that the path $P$ must cross the hypercubic shell $B_{2^{-m+1}}(x_i) \backslash B_{2^{-m}}(x_i)$. This segment has Euclidean length of at least $2^{-m}$. By the event $\mathcal{E}_1$, for some $C>0$, we have
    $$
    \inf_{z \in B_{2^{-m+1}}(x_i)}h_{0,m}(z) \geq h_{0,m}(x_i) -Cn^{2/3}.
    $$
    Therefore, this segment has a $D_m$-length of at least \begin{equation}
    \label{eq:lem3.5-weight-lower}  
    2^{-m} e^{\xi h_{0,m}(x_i) - Cn^{2/3}}.
    \end{equation}
    Furthermore, each point on $P$ is contained in at most $5^d$ such hypercubic shells. Combining this fact with \eqref{eq:lem3.5-weight-lower} and \eqref{eq:lem3.5-geodesic}, we obtain \eqref{eq:lem3.5-weight}.

\medskip

\noindent\textit{Step 3: Modify the path on $\mathscrL_{m+k}$.} Recall from \eqref{eq:def-rescaled-lattice} that $\mathscrL_{m+k}= 2^{-m-k} \mathbb{Z}^d \cap B_2(0)$. It follows that $\mathscrL_m \subset \mathscrL_{m+k}$. We now construct a path on $\mathscrL_{m+k}$ that closely follows the path $(x_1,\ldots,x_J)$ and has typical $D_{m+k,n}$-length; see Figure~\ref{fig:2}. We call a vertex $x \in \mathscrL_{m+k}$ $\textbf{open}$ if for all $\sigma \in \{1,-1\}$ and $1 \leq i \leq d$
\begin{equation}
\label{eq:lem3.5-def-open}
    D_{m+k,n}(x, x+ \sigma e_i 2^{-m-k}; B_{2^{-m-k+1}}(x)) \leq 2^{-m-k} a_{n-m-k}^{(p)},
\end{equation}
and \textbf{closed} otherwise. We assume that all vertices in $2^{-m-k} \mathbb{Z}^d \backslash \mathscrL_{m+k}$ are open. Using the translation and rotational invariance and the scaling property from Lemma~\ref{lem:basic-LFPP}, we have
$$
D_{m+k,n}(x, x+ \sigma e_i 2^{-m-k}; B_{2^{-m-k+1}}(x)) \overset{d}{=} 2^{-m-k}  D_{n-m-k}(0,e_1;B_2(0))\,.
$$
Combining this with the definition of $a_{n-m-k}^{(p)}$ from \eqref{def:quantile}, we obtain that for all $x \in \mathscrL_{m+k}$
\begin{equation}
\label{eq:lem3.5-est-open}
\begin{aligned}
&\quad \mathbb{P}[x \mbox{ is open}] \\
&\geq 1- \sum_{\sigma = \pm1, 1\leq i \leq d}\mathbb{P}\big[D_{m+k,n}(x, x+ \sigma e_i 2^{-m-k}; B_{2^{-m-k+1}}(x))>  2^{-m-k}a_{n-m-k}^{(p)}\big]\\
    &=1- 2d \cdot \mathbb{P}\big[D_{0,n-m-k}( 0, e_1 ; B_{2}(0) ) >  a_{n-m-k}^{(p)}\big] = 1-2d(1-p)\,.
\end{aligned}
\end{equation}
In particular, as $p$ approaches one, this probability also tends to one. Recalling the notation in Subsection~\ref{subsec:percolation}, we similarly define open (or closed) ($*$-)paths and ($*$-)clusters on the rescaled lattice $2^{-m-k} \mathbb{Z}^d$. Define the event
\begin{equation}
\label{eq:def-e3}
\begin{aligned}
\mathcal{E}_3 &:= \{\mbox{Both }0 \mbox{ and }e_1 \mbox{ are contained in infinite open clusters on }2^{-m-k} \mathbb{Z}^d,\\
&\qquad   \mbox{ and all closed }{\rm *}\mbox{-clusters have diameter at most }2^{k-2} \}\,.
\end{aligned}
\end{equation}
Here, the diameter is associated with the graph distance on the rescaled lattice $2^{-m-k} \mathbb{Z}^d$.

By the definition in \eqref{eq:lem3.5-def-open}, whether a vertex $x$ is open is determined by the field $h_{m+k, n}$ restricted to the domain $B_{2^{-m-k+1}}(x)$. So, according to Lemma~\ref{lem:dist-independent}, for two subsets $U,V \subset 2^{-m-k} \mathbb{Z}^d$ with graph distance at least $2\rr+4$, the statuses of the vertices in $U$ being open or closed are independent of the statuses of those within $V$. Therefore, $\mathbb{P}$ induces an $M$-dependent measure on $\{0,1\}^{2^{-m-k} \mathbb{Z}^d}$ (where $0$ represents closed and $1$ represents open) with $M = \lfloor 2\rr+4 \rfloor  + 1$. As a result, we can apply the percolation result in Subsection~\ref{subsec:percolation}. Using \eqref{eq:lem3.5-est-open}, 
Lemma~\ref{lem:percolation-cluster}, and Lemma~\ref{lem:connect-inf} (with $\epsilon = 0.01$), we can show the existence of $c_2' \in (0,1)$ such that when $p \geq c_2'$, the following inequality holds:
\begin{equation}
\label{eq:est-e3}
   \mathbb{P}[\mathcal{E}_3] \geq 1- 2\times 0.01 - C 2^{d(m+k)} e^{-2^{k-2}/C} \geq 1 - 0.02 - Ce^{-2^k/C}.
\end{equation}
The last inequality is due to the fact that $k \geq (\log m)^2-1$. From now on, we take
\begin{equation}
\label{eq:lem3.5-p}
p = \max \{ c_2', 0.99 \}.
\end{equation}

On the event $\mathcal{E}_3$, since all closed $*$-clusters on $2^{-m-k} \mathbb{Z}^d$ have diameter at most $2^{k-2}$, for each $x \in \mathscrL_m \subset \mathscrL_{m+k}$, there is no closed $*$-cluster on $\mathscrL_{m+k}$ that crosses the hypercubic shell $B_{2^{-m}}(x) \backslash B_{2^{-m-1}}(x)$\footnote{For $x \in \mathscrL_m$ with $\mathfrak d_\infty(x, \partial B_2(0)) = 2^{-m}$, we consider the hypercubic shell $(B_{2^{-m}}(x) \backslash B_{2^{-m-1}}(x))\cap \{y:\mathfrak d_\infty(y,\partial B_2(0)) > 2^{-m-k+1}\}$ instead. This ensures that for any $y$ in these hypercubic shells, we always have $B_{2^{-m-k+1}}(y) \subset B_2(0)$.} or encloses $B_{2^{-m-1}}(x)$. Therefore, by duality, there exists a unique open cluster on $\mathscrL_{m+k}$ that encloses $B_{2^{-m-1}}(x)$ within the hypercubic shell $B_{2^{-m}}(x) \backslash B_{2^{-m-1}}(x)$. Furthermore, the open clusters corresponding to neighboring vertices on $\mathscrL_m$ intersect, as illustrated in Figure~\ref{fig:2}. Since $0$ is contained in an infinite open cluster, we can find an open path in $\mathscrL_{m+k}$ (see the brown curves in Figure~\ref{fig:2}) that connects $0$ to its open cluster that encloses $B_{2^{-m-1}}(0)$. The same holds for $e_1$.

By joining these open paths and clusters together, and applying the loop erasure procedure, we can construct a self-avoiding open path on $\mathscrL_{m+k}$ that connects $0$ and $e_1$, closely following the sequence $(x_1,\ldots, x_J)$. Let us denote the resulting path as $0 = y_1,y_2,\ldots,y_K = e_1$. It satisfies the condition that for each $1 \leq i \leq K$:
\begin{equation}
\label{eq:lem3.5-path-prop-2}
    y_i \in \mathscrL_{m+k} \mbox{ is open}\,, \quad  \mbox{and} \quad \min_{1 \leq l \leq J} |y_i - x_l|_\infty \leq 2^{-m}.
\end{equation}

\begin{figure}[t]
\centering
\includegraphics[scale = 0.6]{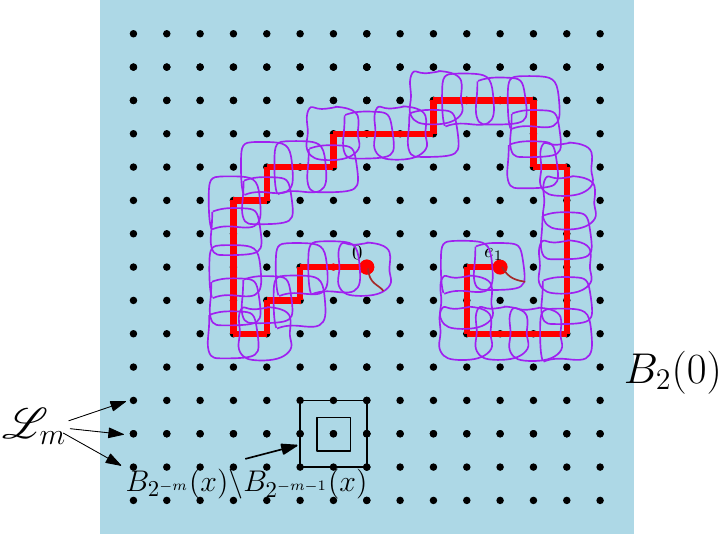}
\caption{The red path corresponds to $( x_1,x_2,\ldots,x_J )$. The open clusters on $\mathscrL_{m+k}$ that enclose $B_{2^{-m-1}}(x_i)$ are depicted in purple, and the two brown curves represent the open paths that connect $0$ and $e_1$ to their corresponding open clusters. By joining these clusters, we can construct a path on $\mathscrL_{m+k}$ connecting $0$ and $e_1$ that closely follows the red curve and has typical $D_{m+k,n}$-length.}
\label{fig:2}
\end{figure}

\noindent\textit{Step 4: Concatenate the geodesic and upper-bound the $D_n$-length.} In the final step, we join the geodesics between $y_i$ and $y_{i+1}$ for $1 \leq i \leq K-1$ and upper-bound its $D_n$-length. Assume that
\begin{equation*}
    \mathcal{E}_1 \cap \mathcal{E}_2 \cap \mathcal{E}_3 \mbox{ happens}\,.
\end{equation*}Using \eqref{eq:lem3.5-path-prop-2} and the definition of open vertices from \eqref{eq:lem3.5-def-open}, for each $1 \leq i \leq K-1$, there exists a piecewise continuously differentiable path $P_i : [0,1]  \rightarrow B_{2^{-m-k+1}}(y_i)$ that connects $y_i$ and $y_{i+1}$ and satisfies
\begin{equation}
    \label{eq:lem3.5-geodesic-2}
    {\rm len}(P_i; D_{m+k,n}) = \int_0^1 e^{\xi h_{m+k,n}(P_i(t))} |P_i'(t)|dt \leq 2^{1-m-k} a_{n-m-k}^{(p)}.
\end{equation}
By concatenating the paths $P_1,P_2,\ldots,P_{K-1}$, we obtain a path $\widetilde P$ that connects $0$ and $e_1$ within $B_2(0)$. 

We now upper-bound the $D_n$-length of $\widetilde{P}$ on the event $\mathcal{E}_1 \cap \mathcal{E}_2 \cap \mathcal{E}_3$. For each $1 \leq i \leq K$, by \eqref{eq:lem3.5-path-prop-2}, we can choose $1 \leq k_i \leq J$ such that 
\begin{equation}
\label{eq:lem3.5-neighbor-xy}
    |y_i - x_{k_i}|_\infty \leq 2^{-m}.
\end{equation}
Since $h_{0,n} = h_{0,m} + h_{m, m+k} + h_{m+k, n}$, we have
\begin{equation}
\label{eq:lem3.3-length-widetilde-P}
    \begin{aligned}
        {\rm len}(\widetilde P;D_n) 
        &= \sum_{i=1}^{K-1} \int_0^1 e^{\xi h_{0,n}(P_i(t))} |P_i'(t)|dt  \\
        &= \sum_{i=1}^{K-1} \int_0^1 e^{\xi h_{0,m}(P_i(t))}e^{\xi h_{m,m+k}(P_i(t))}e^{\xi h_{m+k,n}(P_i(t))} |P_i'(t)|dt \,.
    \end{aligned}
\end{equation}
By \eqref{eq:lem3.5-neighbor-xy} and the event $\mathcal{E}_1$ defined in \eqref{eq:def-e1}, we obtain that there exists a constant $C>0$ (not depending on $n,m$) such that for all $1 \leq i \leq K$ and $0 \leq t \leq 1$,
\begin{equation*}
    |h_{0,m}(P_i(t))- h_{0,m}(x_{k_i})| \leq Cn^{2/3}  \quad \mbox{and} \quad h_{m,m+k}(P_i(t)) \leq C\sqrt{k(m+k)}\,.
\end{equation*}
Combining this with \eqref{eq:lem3.3-length-widetilde-P} yields 
\begin{equation*}
    \begin{aligned}
        {\rm len}(\widetilde P;D_n) &\leq \sum_{i=1}^{K-1}e^{Cn^{2/3} + C\sqrt{k(m+k)}} e^{\xi h_{0,m}(x_{k_i})} \int_0^1 e^{\xi h_{m+k,n}(P_i(t))} |P_i'(t)|dt\,.
    \end{aligned}
\end{equation*}
Combining this with \eqref{eq:lem3.5-geodesic-2} and \eqref{eq:def-k}, we further have
\begin{equation*}
    {\rm len}(\widetilde P;D_n)\leq e^{Cn^{2/3}} a_{n-m-k}^{(p)} \sum_{i=1}^{K-1} 2^{-m-k} e^{\xi h_{0,m}(x_{k_i})}.
\end{equation*}
For each $x \in \mathscrL_m$, the number of vertices in $\mathscrL_{m+k}$ satisfying \eqref{eq:lem3.5-neighbor-xy} with $x_{k_i} = x$ is at most $C 2^{kd}$. Consequently, each $x$ appears in $(x_{k_i})_{1 \leq i \leq K}$ at most $C 2^{kd}$ times. Applying this fact with \eqref{eq:lem3.5-weight}, we obtain 
\begin{equation*}
\begin{aligned}
    {\rm len}(\widetilde P;D_n) 
    &\leq C 2^{kd} e^{Cn^{2/3}} a_{n-m-k}^{(p)} \sum_{j=1}^J 2^{-m} e^{\xi h_{0,m}(x_j)}  \\
    &\leq C 2^{kd} e^{Cn^{2/3}} a_{n-m-k}^{(p)} a_m^{(p)} \leq e^{Cn^{2/3}} a_{n-m-k}^{(p)} a_m^{(p)} \,.
\end{aligned}
\end{equation*}
The last inequality is due to \eqref{eq:def-k}. Therefore, on the event $\mathcal{E}_1 \cap \mathcal{E}_2 \cap \mathcal{E}_3$, we have
\begin{equation*}
    D_n(0,e_1; B_2(0)) \leq e^{Cn^{2/3}} a_{n-m-k}^{(p)} a_m^{(p)} \,.
\end{equation*}
Combining the estimates from \eqref{eq:est-e1}, \eqref{eq:est-e2}, and \eqref{eq:est-e3}, and recalling $p$ from \eqref{eq:lem3.5-p}, we conclude that for sufficiently large $m$:
\begin{equation*}
    \mathbb{P}[\mathcal{E}_1 \cap \mathcal{E}_2 \cap \mathcal{E}_3] \geq p - Ce^{-m/C} - 0.02 - Ce^{-2^k/C} \geq \frac{1}{2}\,.
\end{equation*}
Combining the above two inequalities yields Equation~\eqref{eq:lem3.5-11} when $m$ is sufficiently large. We can extend the result to small $m$ by enlarging the value of $C$ similar to \eqref{eq:lem3.5-finite}. This concludes the lemma.
\end{proof}

We now extend Proposition~\ref{prop:exponent} to the internal distance between any pair of points within a box.
\begin{lemma}
    \label{lem:distance-any-point}
    Fix $0 < r_1 <r_2$. For any $x,y \in B_{r_2 - r_1}(0)$ with $|x-y|_\infty \geq r_1$, we have
    \begin{equation*}
        {\rm Med}(D_n(x,y ; B_{r_2}(0))) = 2^{-(1-\xi Q)n + o(n) }\quad \mbox{as } n\rightarrow \infty\,.
    \end{equation*}
    Here, the $o(n)$ term only depends on $r_1$ and $r_2$, and is independent of both $x$ and $y$.
\end{lemma}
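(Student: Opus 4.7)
The plan is to prove matching upper and lower bounds for $\op{Med}(D_n(x,y;B_{r_2}(0)))$ up to $2^{o(n)}$ factors, using the already-established $\lambda_n = 2^{-(1-\xi Q)n+o(n)}$ from Proposition~\ref{prop:exponent}. The tools are translation and axis-reflection invariance of $h_n$ (Lemma~\ref{lem:basic-h}), the scaling property in Lemma~\ref{lem:basic-LFPP}(2), the quantile comparison in Lemma~\ref{lem:compare(n)(n-m)}, and the Gaussian concentration bound (Lemma~\ref{lem:gauss-concentration}). By translation I may assume $x=0$, and by axis-reflection that $y_1 = \max_{1\le i\le d}|y_i|\ge r_1$.

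For the upper bound, I fix an integer $m=m(r_1)$ with $2^{-m}\le r_1/4$ and connect $0$ to $y$ inside $B_{r_2-r_1/2}(0)$ by an axis-parallel polygonal path with $K=O(r_2/r_1)=O(1)$ coordinate steps of length $2^{-m}$. For each step from $z$ to $z+2^{-m}e_j$, since $B_{2^{-m+1}}(z)\subset B_{r_2}(0)$, the substitution $P(s)=z+2^{-m}Q(s)$ yields
\[
D_n\bigl(z,\,z+2^{-m}e_j;\,B_{2^{-m+1}}(z)\bigr)\;=\;2^{-m}\,\widetilde D(0,e_j;B_2(0)),
\]
where $\widetilde D$ is the exponential metric of the rescaled field $\widetilde h(q):=h_n(z+2^{-m}q)$. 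A white-noise change of variables shows $\widetilde h\stackrel{d}{=}h_{n-m}+G$ with $G$ an independent smooth Gaussian field of variance $O(m)$ on $B_2(0)$ coming from the integration range $t\in[1,2^m]$ in~\eqref{eq:field-def}. Using $\sup_{B_2(0)}|G|=O(\sqrt m)$ with high probability, axis symmetry, and Lemma~\ref{lem:gauss-concentration}, each step's $D_n$-length is at most $2^{-m}\lambda_{n-m}\cdot e^{O(\sqrt n)}$ with probability $\ge 1-1/(100K)$; combined with $\lambda_{n-m}\le e^{C\sqrt{mn}}\lambda_n=2^{o(n)}\lambda_n$ from Lemma~\ref{lem:compare(n)(n-m)}, a union bound over the $K=O(1)$ steps gives $D_n(0,y;B_{r_2}(0))\le\lambda_n\cdot 2^{o(n)}$ with probability $\ge 3/4$, so the median is at most $2^{-(1-\xi Q)n+o(n)}$.

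For the lower bound, I argue by reverse subadditivity: $\lambda_n\le C(r_1,r_2)\cdot\op{Med}(D_n(0,y;B_{r_2}(0)))\cdot 2^{o(n)}$. Pick $m$ so that $2^{-m}r_2=1/2$; then each rescaled translate $(z,\,z+2^{-m}y,\,B_{2^{-m}r_2}(z))$ with $|z|_\infty\le 3/2$ has its box inside $B_2(0)$, and by the same rescaling its $D_n$-distance is stochastically bounded by $2^{-m}e^{O(\sqrt m)}D_{n-m}(0,y;B_{r_2}(0))$. I arrange $K=\lceil 2^m/y_1\rceil=O(r_2/r_1)$ such translates into a chain from $0$ to $e_1$, alternating between translates with displacement $v:=2^{-m}y$ and with the reflected displacement $v':=2^{-m}(y_1,-y_2,\ldots,-y_d)$: after each pair the off-$e_1$ coordinates cancel and the $e_1$ coordinate advances by $2\cdot 2^{-m}y_1$, so by the choice of $K$ the chain lands exactly at $e_1$, while its maximum $l^\infty$-extent is $1+2^{-m}|y|_\infty\le 3/2$, placing every box center inside $B_{3/2}(0)$. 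Subadditivity, concentration, and Lemma~\ref{lem:compare(n)(n-m)} then give $\lambda_n\le K\cdot 2^{-m}\cdot M_{n-m}\cdot 2^{o(n)}$ with $M_{n-m}:=\op{Med}(D_{n-m}(0,y;B_{r_2}(0)))$ and $K\cdot 2^{-m}=1/y_1=O(1/r_1)$. Finally, since $D_n\ge D_{n-m}\cdot e^{\xi\inf_{B_{r_2}(0)}h_{n-m,n}}$ with $|\inf h_{n-m,n}|=O(\sqrt{mn})$ by Gaussian tail bounds analogous to Claim~(5) of Lemma~\ref{lem:field-estimate}, we have $M_{n-m}\le M_n\cdot 2^{o(n)}$, where $M_n:=\op{Med}(D_n(0,y;B_{r_2}(0)))$. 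Combining yields $M_n\ge\lambda_n\cdot 2^{-o(n)}=2^{-(1-\xi Q)n+o(n)}$, which together with the upper bound gives the claimed asymptotic.

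The main obstacle is the geometric chaining in the lower bound: arranging $O(1)$ translated copies of the generic displacement $y$ into a chain from $0$ to $e_1$ that remains within $B_2(0)$ while all the rescaled $B_{r_2}$-sized boxes also stay inside $B_2(0)$. The axis-reflection trick cancels off-axis components, and the choice $2^{-m}r_2=1/2$ simultaneously ensures that the boxes fit and that the chain has enough resolution to reach $e_1$; the low-frequency Gaussian field $G$ introduced by the rescaling has variance $O(m)$ with $m$ fixed in $n$, so contributes only a factor $2^{o(n)}$, absorbed into the error.
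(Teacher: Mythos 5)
Your overall strategy is the same as the paper's: split the journey into $O(1)$ mesoscopic steps, use translation plus symmetry invariance of the field together with the exact scaling decomposition of $h_n$ into an $h_{n-m}$-type field plus an independent smooth low-frequency field of variance $O(m)$, and then conclude via Lemma~\ref{lem:compare(n)(n-m)}, the concentration bound of Lemma~\ref{lem:gauss-concentration}, and Proposition~\ref{prop:exponent}. The one substantive difference is that the paper exploits the full rotational invariance of the law of $h_n$ (Lemmas~\ref{lem:basic-h} and~\ref{lem:basic-LFPP}), taking chain steps of a fixed $l^2$-length in arbitrary directions, whereas you allow only translations and coordinate reflections. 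This restriction creates a genuine gap in the chaining geometry. In the lower bound, the assertion that ``by the choice of $K$ the chain lands exactly at $e_1$'' is false in general: each pair of steps advances the first coordinate by exactly $2\cdot 2^{-m}y_1$, so with translates and reflections of the single displacement $2^{-m}y$ the chain can only terminate at points whose first coordinate lies in the lattice $2^{-m}y_1\mathbb{Z}$, which contains $1$ only for exceptional $y_1$. For generic $y$ the chain ends at some $w$ with $|w-e_1|_\infty$ of order $2^{-m}y_1$, and $D_n(w,e_1;B_2(0))$ is precisely the kind of generic point-to-point distance the lemma is trying to control, so it cannot simply be absorbed; as written the argument does not deliver $\lambda_n \leq C\,{\rm Med}(D_n(x,y;B_{r_2}(0)))\,2^{o(n)}$.

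The upper bound has the same reachability problem in milder form: axis-parallel steps of length exactly $2^{-m}$ only reach points of $2^{-m}\mathbb{Z}^d$, so hitting a generic $y$ forces leftover steps of arbitrary, possibly arbitrarily small, length $s$. For those, the naive straight-segment bound $s\exp(\xi\sup h_n)$ costs a factor as large as $2^{\xi\sqrt{2}\,n+o(n)}$, which is not $\leq \lambda_n 2^{o(n)}$ in general (recall $Q\leq 1/\xi+\sqrt2$, and positivity of $Q$ is not known at this stage), while your rescaling bound for a step of length $s$ produces a low-frequency field of variance $\log_2(1/s)\cdot\log 2$, so it only yields a $2^{o(n)}$ error when the step lengths are bounded below. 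Both issues are repairable, and the repair brings you back to the paper's proof: use rotational invariance and take steps of a fixed $l^2$-length (equal to $2^{-m}$ in the upper bound, and to the prescribed displacement length in the lower bound) in arbitrary directions, finishing with a two-step ``dog-leg'' so the chain matches the endpoint exactly; in the upper bound one can alternatively stay axis-parallel but arrange every step length to lie in $[2^{-m-1},2^{-m}]$ (two opposite such steps realize any net displacement in $[-2^{-m-1},2^{-m-1}]$), for which your scaling computation goes through verbatim. Note that in the lower bound the step displacement is forced to be a copy of $2^{-m}y$, so there the rotational-invariance fix is the natural one. With this correction, the remaining ingredients of your proof (the white-noise change of variables, the $O(\sqrt m)$ control of the low-frequency field, the quantile comparisons, and the uniformity of the $o(n)$ in $x,y$) coincide with the paper's argument.
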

\begin{proof}
    We first prove the upper bound. Fix an integer $m \geq 1$. Using the concentration bound from Lemma~\ref{lem:gauss-concentration}, we obtain that, with probability $1-o_n(1)$,
    \begin{equation*}
        D_{m,n}(0,2^{-m}e_1; 2^{-m}B_2(0)) = 2^{o(n)} {\rm Med}(D_{m,n}(0,2^{-m}e_1; 2^{-m}B_2(0))),
    \end{equation*}
    where the $o(n)$ term can depend on $m$. Since $h_{0,n} = h_{0,m} + h_{m,n}$ for all integer $n > m$, we have
    \begin{equation*}
        \inf_{x \in 2^{-m}B_2(0)} e^{\xi h_m(x)} \leq \frac{D_{0,n}(0,2^{-m}e_1; 2^{-m}B_2(0))}{D_{m,n}(0,2^{-m}e_1; 2^{-m}B_2(0))} \leq \sup_{x \in 2^{-m}B_2(0)} e^{\xi h_m(x)} .
    \end{equation*} Furthermore, since $m$ is fixed, we have $\sup_{x \in B_1(0)} |h_m(x)| = o(n)$ with probability $1-o_n(1)$. Therefore, as $n$ tends to infinity, we have
    \begin{equation*}
        {\rm Med}(D_{0,n}(0,2^{-m}e_1; 2^{-m}B_2(0))) = 2^{o(n)}{\rm Med}(D_{m,n}(0,2^{-m}e_1; 2^{-m}B_2(0))) \,.
    \end{equation*}
    Applying the scaling relation in Lemma~\ref{lem:basic-LFPP}, we obtain that for all integer $n > m$:
    \begin{equation*}
        D_{m,n}(0,2^{-m}e_1; 2^{-m}B_2(0)) \overset{d}{=} 2^{-m}D_{0,n-m}(0,e_1;B_2(0))\,.
    \end{equation*}
    Combining the above two equations with Proposition~\ref{prop:exponent}, we obtain that as $n$ tends to infinity
    \begin{equation}
    \label{eq:lem3.6-1}
    \begin{aligned}
        {\rm Med}(D_{0,n}(0,2^{-m}e_1; 2^{-m}B_2(0))) &= 2^{o(n)}{\rm Med}(D_{0,n-m}(0,e_1;B_2(0))) \\
        &= 2^{-(1-\xi Q)n + o(n) }.
    \end{aligned}
    \end{equation}
    
    We can choose a large enough integer $m$ such that for any $x,y$ as stated in the lemma, we can connect them by a sequence of points such that any two neighboring points have an $l^2$-distance of $2^{-m}$, and the number of points is upper-bounded by a constant $C$ depending only on $r_1$ and $r_2$. Using the translation and rotational invariance from Lemma~\ref{lem:basic-LFPP}, the internal $D_n$-distance within $B_2(0)$ of neighboring points in this sequence is stochastially dominated by $D_{0,n}(0,2^{-m}e_1; 2^{-m}B_2(0))$. This sequence provides an upper bound for $D_n(x,y ; B_{r_2}(0))$ in terms of the distribution of $D_{0,n}(0,2^{-m}e_1; 2^{-m}B_2(0))$. Combining with \eqref{eq:lem3.6-1} and the concentration bound from Lemma~\ref{lem:gauss-concentration}, we obtain 
    \begin{equation*}
        {\rm Med}(D_n(x,y ; B_{r_2}(0))) \leq  2^{-(1-\xi Q)n + o(n) }\quad \mbox{as } n\rightarrow \infty\,.
    \end{equation*}
    Furthermore, the $o(n)$ term is independent of both $x$ and $y$.
    
    We now prove the lower bound. Similar to \eqref{eq:lem3.6-1}, we can show that for any fixed integer $m \geq 1$:
    \begin{equation}
    \label{eq:lem3.6-2}
        {\rm Med}(D_{0,n}(0,2^m e_1; B_{2^{m+1}}(0))) = 2^{-(1-\xi Q)n + o(n) }\quad \mbox{as } n\rightarrow \infty\,.
    \end{equation}
    This is derived from Proposition~\ref{prop:exponent} and the following scaling relation from Lemma~\ref{lem:basic-LFPP}: 
    \begin{equation*}
        D_n(0, 2^m e_1 ;B_{2^{m+1}}(0)) \overset{d}{=} 2^m D_{m,n+m}(0, e_1 ;B_2(0))\,.
    \end{equation*}

    Similar to before, we can choose a large enough integer $m$ such that for any $x,y$ as stated in the lemma, we can connect $0$ and $2^m e_1$ by a sequence of points in the box $B_{2^{m+1}}(0)$ such that any two neighboring points have an $l^2$-distance of $|x-y|_2$, and the number of points is upper-bounded by a constant $C$ depending only on $r_1$ and $r_2$. See Figure~\ref{fig:3} for an illustration. Using the translation and rotational invariance from Lemma~\ref{lem:basic-LFPP}, the internal distance within $B_{2^{m+1}}(0)$ of neighboring points in this sequence is stochastically dominated by $D_n(x,y ; B_{r_2}(0))$. This sequence provides an upper bound for $D_n(0,2^m e_1 ; B_{2^{m+1}}(0)))$ in terms of the distribution of $D_n(x,y ; B_{r_2}(0))$. Combining this with \eqref{eq:lem3.6-2} and the concentration bound from Lemma~\ref{lem:gauss-concentration}, we obtain
    \begin{equation*}
       {\rm Med}(D_n(x,y ; B_{r_2}(0))) \geq  2^{-(1-\xi Q)n + o(n) }\quad \mbox{as } n\rightarrow \infty\,.
    \end{equation*}
    This concludes the lemma. \qedhere

    \begin{figure}[H]
\centering
\includegraphics[scale = 1]{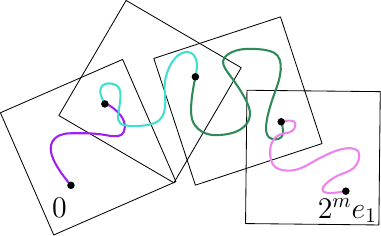}
\caption{Illustration of a path that connects $0$ and $2^m e_1$. The boxes, along with the points, represent the transformation of the triple $(x,y,B_{r_2}(0))$. The colored curves represent the geodesics between neighboring points in these boxes.}
\label{fig:3}
\end{figure}
\end{proof}

We now prove some basic properties of  $Q(\xi)$.

\begin{lemma}
\label{lem:Q-lower}
\begin{enumerate}
    \item We have $\frac{1}{\xi} - \sqrt{2d} \leq Q(\xi) \leq \frac{1}{\xi} + \sqrt{2} $ for all $\xi>0$.
    \item $\xi \mapsto Q(\xi)$ is a non-increasing, continuous function of $\xi$. 
\end{enumerate}
\end{lemma}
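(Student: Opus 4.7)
\noindent\textbf{Proof plan for Lemma~\ref{lem:Q-lower}.} For the lower bound $Q(\xi)\geq 1/\xi-\sqrt{2d}$ in Claim~(1), the plan is to use that every piecewise $C^1$ path from $0$ to $e_1$ has Euclidean length at least one, which gives the pointwise estimate
\begin{equation*}
D_n(0,e_1;B_2(0))\ \geq\ e^{\xi\inf_{B_2(0)}h_n}.
\end{equation*}
By the symmetry $h_n\stackrel{d}{=}-h_n$, applying Claims~(3) and (4) of Lemma~\ref{lem:field-estimate} to $-h_n$ (and covering $B_2(0)$ by finitely many translates of $B_1(0)$) yields $\inf_{B_2(0)}h_n\geq -n\sqrt{2d}\log 2-O(\sqrt n)$ with probability tending to one. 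Combining with Proposition~\ref{prop:exponent} then gives $Q(\xi)\geq 1/\xi-\sqrt{2d}$.

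For the upper bound $Q(\xi)\leq 1/\xi+\sqrt{2}$, I would use the straight segment $P(t)=te_1$ as a competitor, which gives $D_n(0,e_1;B_2(0))\leq\int_0^1 e^{\xi h_n(te_1)}\,dt\leq e^{\xi\sup_{t\in[0,1]}h_n(te_1)}$. The crucial observation is that this supremum is over a \emph{one-dimensional} set: a union bound over the $2^n+1$ grid points $\{k2^{-n}\}_{0\leq k\leq 2^n}$ on the segment (rather than the $O(2^{nd})$ points used in Lemma~\ref{lem:field-estimate}~(3)), combined with the gradient regularity of Lemma~\ref{lem:field-estimate}~(2), yields $\sup_{t\in[0,1]}h_n(te_1)\leq n\sqrt 2\log 2+O(\sqrt n)$ with high probability. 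The factor $\sqrt 2$ (in place of $\sqrt{2d}$) is the only place the dimension of the competitor enters, and it produces $\lambda_n\leq 2^{\xi\sqrt 2\,n+o(n)}$, yielding $Q(\xi)\leq 1/\xi+\sqrt 2$.

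For the continuity in Claim~(2), I would write $e^{\xi_1 h_n}=e^{\xi_2 h_n}\cdot e^{-(\xi_2-\xi_1)h_n}$ to obtain, for $0<\xi_1<\xi_2$ and every path $P\subset B_2(0)$,
\begin{equation*}
e^{-(\xi_2-\xi_1)\sup_{B_2(0)}|h_n|}\int_P e^{\xi_2 h_n}|P'|\,dt\ \leq\ \int_P e^{\xi_1 h_n}|P'|\,dt\ \leq\ e^{(\xi_2-\xi_1)\sup_{B_2(0)}|h_n|}\int_P e^{\xi_2 h_n}|P'|\,dt.
\end{equation*}
Taking the infimum over $P$, passing to medians via the concentration bound of Lemma~\ref{lem:gauss-concentration}, and using Lemma~\ref{lem:field-estimate} together with symmetry to control $\sup_{B_2(0)}|h_n|$ by $n\sqrt{2d}\log 2+O(\sqrt n)$, I would deduce $|\xi_1 Q(\xi_1)-\xi_2 Q(\xi_2)|\leq \sqrt{2d}\,|\xi_1-\xi_2|$. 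Hence $\xi\mapsto\xi Q(\xi)$ is Lipschitz on $(0,\infty)$, and therefore $Q$ is continuous.

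The main obstacle will be the monotonicity of $Q$, as the Lipschitz estimate above controls only the absolute value of the increments and not their sign. My plan is to apply Hölder's inequality (with exponents $\xi_2/\xi_1$ and $\xi_2/(\xi_2-\xi_1)$) to a near-minimizing path $P$ for $D_n(\xi_2)$:
\begin{equation*}
\int_P e^{\xi_1 h_n}|P'|\,dt\ \leq\ \Bigl(\int_P e^{\xi_2 h_n}|P'|\,dt\Bigr)^{\xi_1/\xi_2}\,\ell(P)^{1-\xi_1/\xi_2},
\end{equation*}
where $\ell(P)$ denotes the Euclidean length of $P$. This would give $\lambda_n(\xi_1)\leq \lambda_n(\xi_2)^{\xi_1/\xi_2}\ell(P)^{1-\xi_1/\xi_2}$ up to sub-exponential errors, and hence $Q(\xi_1)\geq Q(\xi_2)$ in the limit---\emph{provided} that the Euclidean length $\ell(P)$ of such a near-geodesic can be shown to grow only subexponentially in $n$. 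Establishing this length bound is the technical heart of the argument and would rely on the chaining/diameter estimates developed in Section~\ref{sec:bound-distance}, particularly the construction in Subsection~\ref{subsec:chaining}, which bounds the $D_n$-diameter of a box in terms of lower-scale quantiles and thereby yields polynomial-in-$2^n$ control on the length of near-geodesics.
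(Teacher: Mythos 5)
Your treatment of Claim~(1) and of the continuity statement is correct and essentially identical to the paper's: the lower bound via $D_n(0,e_1;B_2(0))\geq e^{\xi\inf_{B_2(0)}h_n}$ together with Claims~(3)--(4) of Lemma~\ref{lem:field-estimate} and Proposition~\ref{prop:exponent}; the upper bound via the straight segment and a one-dimensional union bound (which is exactly where the $\sqrt 2$ comes from in the paper as well); and continuity via the Lipschitz bound $|\xi_1Q(\xi_1)-\xi_2Q(\xi_2)|\leq C|\xi_1-\xi_2|$ obtained from $\sup_{B_2(0)}|h_n|\lesssim n$ (the paper runs this on a discretized functional rather than on $D_n$ directly, an immaterial difference).

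The monotonicity step, however, has a genuine gap, and the proposed route fails even if the missing length estimate were granted. Your H\"older step produces an \emph{upper} bound on $\lambda_n(\xi_1)$ in terms of $\lambda_n(\xi_2)$; since $\lambda_n(\xi_1)=2^{(\xi_1Q(\xi_1)-1)n+o(n)}$ and $Q(\xi_1)$ enters this exponent with the positive coefficient $\xi_1$, any such bound can only yield an \emph{upper} bound on $Q(\xi_1)$. Concretely, assuming $\ell(P)=2^{o(n)}$, your display gives $\xi_1Q(\xi_1)-1\leq\frac{\xi_1}{\xi_2}\bigl(\xi_2Q(\xi_2)-1\bigr)$, i.e.\ $Q(\xi_1)\leq Q(\xi_2)+\frac{1}{\xi_1}-\frac{1}{\xi_2}$, which is monotonicity of $\xi\mapsto Q(\xi)-1/\xi$ in the non-decreasing direction --- not $Q(\xi_1)\geq Q(\xi_2)$. (A profile such as $Q(\xi)=1/\xi+\xi$ on a suitable interval is compatible with this conclusion and with your Lipschitz bound while being increasing, so the two together cannot imply Claim~(2).) Moreover, the auxiliary input you flag --- that $D_n$-near-geodesics have Euclidean length $2^{o(n)}$ --- is not supplied by Section~\ref{sec:bound-distance}, whose chaining estimates control $D_n$-diameters rather than Euclidean lengths; the crude bound $\ell(P)\leq e^{-\xi\inf_{B_2(0)}h_n}\,{\rm len}(P;D_n)\approx 2^{(\xi\sqrt{2d}+\xi Q-1)n+o(n)}$ is exponential in $n$, your own phrase ``polynomial in $2^n$'' is likewise exponential in $n$, and such growth makes the factor $\ell(P)^{1-\xi_1/\xi_2}$ non-negligible at the exponential scale (indeed the limiting geodesics are expected to have Euclidean dimension strictly larger than one, so a $2^{o(n)}$ length bound should be false).

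The paper sidesteps both issues by discretizing at scale $2^{-n}$: it introduces $\mathcal{D}_n^{(\xi)}:=\min\sum_i e^{\xi h_n(x_i)}$ over lattice paths in $2^{-n}\mathbb{Z}^d\cap B_2(0)$ from $0$ to $e_1$, shows on the gradient-regularity event (via Lemma~\ref{lem:gauss-concentration} and Proposition~\ref{prop:exponent}) that its quantiles are $2^{\xi Q(\xi)n+o(n)}$, and then applies the counting-free inequality $\bigl(\sum_i a_i\bigr)^{\xi_1/\xi_2}\leq\sum_i a_i^{\xi_1/\xi_2}$ along the minimizer for $\xi_1$, obtaining $\bigl(\mathcal{D}_n^{(\xi_2)}\bigr)^{\xi_1/\xi_2}\leq\mathcal{D}_n^{(\xi_1)}$ and hence $Q(\xi_2)\leq Q(\xi_1)$ directly. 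The key point is that after multiplying by $2^n$ the normalization exponent is $\xi Q(\xi)$ with no additive $-1$ and no Euclidean-length factor, so the power comparison acts on $\xi Q$ alone; this is the missing idea in your scheme, and your continuity argument already contains the comparison between $D_n$ and these discrete sums needed to implement it.
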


\begin{proof}
 We begin with the first claim. Using Claims (3) and (4) from Lemma~\ref{lem:field-estimate}, we obtain that for any fixed $A > \sqrt{2d} \log 2$:
    \begin{equation*}
        \mathbb{P}\big[ \inf_{x \in B_2(0)} h_n(x) \geq -An \big] \geq 1-e^{-n/C}.
    \end{equation*}
    On the event $\{\inf_{x \in B_2(0)} h_n(x) \geq -An \}$, we have
    \begin{equation*}
        D_n(0,e_1; B_2(0)) \geq e^{\xi \inf_{x \in B_2(0)} h_n(x)} \geq e^{-\xi A n}.
    \end{equation*}
    Combining this with Proposition~\ref{prop:exponent}, we obtain that for any $A > \sqrt{2d} \log 2$ and sufficiently large $n$:
    \begin{equation*}
        e^{-\xi A n} \leq \lambda_n = 2^{-(1-\xi Q) n+o(n)}.
    \end{equation*}
    Therefore, $Q \geq 1/\xi - A /\log 2$. As the constant $A$ can be arbitrarily close to $\sqrt{2d} \log 2$, this implies that $Q \geq 1/\xi - \sqrt{2d}$. 
    
   Next, we prove the upper bound for $Q$. Let $l$ denote the straight line connecting $0$ and $e_1$, i.e., $l$ consists of the points $\{(x,0,\ldots,0): 0 \leq x \leq 1\}$. Fix any $\epsilon>0$. Similar to \eqref{eq:lem2.3-claim4}, by Claims (1) and (2) in Lemma~\ref{lem:field-estimate}, we have
    \begin{align*}
        &\quad \mathbb{P} \Big[\sup_{x \in l}h_n(x) \geq (\sqrt{2} \log 2 +\epsilon) n \Big] \\
        &\leq \mathbb{P} \Big[\sup_{x \in l \cap 2^{-n}\mathbb{Z}^d} h_n(x) \geq (\sqrt{2} \log 2 + \epsilon / 2) n \Big] + \mathbb{P} \Big[2^{-n} d\sup_{\substack{y \in B_{2^{-n}}(x)\\x \in l \cap 2^{-n}\mathbb{Z}^d}} |\nabla h_n(y)|_\infty \geq \epsilon n / 2  \Big]\\
        &\leq 2^n \times C\exp\big(-\frac{((\sqrt{2} \log 2 + \epsilon / 2) n)^2}{2\log2 \cdot n}\big) + 2^n \times Ce^{-n^2/C} \leq Ce^{-n/C}.
    \end{align*}
    Therefore, with high probability, we have $\sup_{x \in l}h_n(x) \leq (\sqrt{2} \log 2 +\epsilon) n$. This implies that, with high probability, $$D_n(0,e_1;B_2(0)) \leq {\rm len}(l;D_n) \leq e^{\xi \sup_{x \in l}  h_n(x)} \leq e^{\xi (\sqrt{2} \log 2 +\epsilon) n}.$$ Combining this with Proposition~\ref{prop:exponent}, we get $2^{-n(1-\xi Q) + o(n)} \leq e^{\xi (\sqrt{2} \log 2 +\epsilon) n}$, hence $Q \leq \frac{1}{\xi} +\sqrt{2} + \frac{\epsilon}{\log 2}$. Since this holds for any $\epsilon>0$, we get $Q \leq \frac{1}{\xi} +\sqrt{2}$.

   Next, we prove that $\xi \mapsto Q(\xi)$ is non-increasing and continuous. For $\xi>0$ and integer $n \geq 1$, define
    \begin{equation*}
        \mathcal{D}_n^{(\xi)} := \min_{0 = x_1, \ldots, x_J = e_1} \sum_{i=1}^J e^{\xi h_n(x_i)},
    \end{equation*}where the minimum is taken over all paths in $\mathscrL_n$ connecting $0$ and $e_1$. We first show that, for a fixed $p \in (0,1)$:
    \begin{equation}
    \label{eq:lem-Q-continuous-1}
        \left( p \mbox{-quantile of } \mathcal{D}_n^{(\xi)} \right) = 2^{n \xi \cdot Q(\xi) +o(n)} \quad \mbox{as } n \rightarrow \infty\,.
    \end{equation}
    Recall from Proposition~\ref{prop:exponent} and Lemma~\ref{lem:compare-median-p} that for any fixed $p \in (0,1)$:
    \begin{equation}
    \label{eq:lem-Q-continuous-2}
       \left( p \mbox{-quantile of } D_n(0,e_1;B_2(0)) \right) = a_n^{(p)} = 2^{-n + n \xi \cdot Q(\xi) +o(n)} \quad \mbox{as } n \rightarrow \infty\,.
    \end{equation}
    Define the event 
    \[
    \mathcal{E} := \{ 2^{-n} \sup_{x \in B_2(0)} |\nabla h_n(x)|_\infty \leq n^{2/3} \} . 
    \]
    We will compare $\mathcal{D}_n^{(\xi)}$ and $D_n(0,e_1;B_2(0))$ on the event $\mathcal{E}$. Using Claim (2) of Lemma~\ref{lem:field-estimate} in a similar manner to \eqref{eq:est-e1-0}, we get that
    \begin{equation}
    \label{eq:est-mathcal-E}
        \mathbb{P}[\mathcal{E}] \geq 1 - Ce^{-n^{4/3}/C}.
    \end{equation}
    Similarly to Step 2 in the proof of Lemma~\ref{lem:prop3.1-1}, on the event $\mathcal{E}$, we can construct a discrete path $0 = x_1,\ldots, x_J = e_1$ on $\mathscrL_n$ such that
    \begin{equation*}
        \sum_{i=1}^J e^{\xi h_n(x_i)} \leq 2^n D_n(0,e_1;B_2(0)) \cdot e^{Cn^{2/3}}.
    \end{equation*}
    Combining this with \eqref{eq:lem-Q-continuous-2} and \eqref{eq:est-mathcal-E}, we obtain the upper bound part of the claim~\eqref{eq:lem-Q-continuous-1}. 
    
    The lower bound can also be deduced using the event $\mathcal{E}$, as follows. For any path $0 = x_1, \ldots, x_J = e_1$, on the event $\mathcal{E}$, we have
    \begin{equation*}
        D_n(0,e_1;B_2(0)) \leq \sum_{i=1}^{J-1} D_n(x_i,x_{i+1};B_2(0)) \leq \sum_{i=1}^{J-1} 2^{-n} e^{\xi h_n(x_i)} \cdot e^{Cn^{2/3}}.
    \end{equation*}
    This, together with \eqref{eq:lem-Q-continuous-2} and \eqref{eq:est-mathcal-E}, implies the lower bound part of \eqref{eq:lem-Q-continuous-1}.
    
    We now prove the second claim using \eqref{eq:lem-Q-continuous-1}. For any $\wt\xi > \xi$, the function $x \mapsto x^{\xi/\wt\xi}$ is concave, hence subadditive. Using this, we get $(\sum_{i=1}^J e^{\wt \xi h_n(x_i)})^{\xi/\wt\xi} \leq \sum_{i=1}^J e^{\xi h_n(x_i)}$, which implies that $(\mathcal{D}_n^{(\wt \xi)})^{\xi/\wt\xi} \leq \mathcal{D}_n^{(\xi)}$. Together with \eqref{eq:lem-Q-continuous-1}, we obtain that $Q(\wt \xi) \leq Q(\xi)$ for any $\wt\xi > \xi$, and thus $Q$ is non-increasing in $\xi$. 
    
    For any $\wt \xi > 0$, we have
    \begin{equation*}
        \exp \big( - |\wt\xi - \xi| \cdot \sup_{x \in B_2(0)} |h_n(x)| \big) \leq \frac{\mathcal{D}_n^{(\wt \xi)}}{\mathcal{D}_n^{(\xi)}} \leq \exp \big( |\wt\xi - \xi| \cdot \sup_{x \in B_2(0)} |h_n(x)| \big)\,.
    \end{equation*}
    Recall from Claims (3) and (4) of Lemma~\ref{lem:field-estimate} that $\sup_{x \in B_2(0)} |h_n(x)| \leq (1 + \sqrt{2d} \log 2)n$ with high probability. This, together with \eqref{eq:lem-Q-continuous-1}, implies that $|\xi Q(\xi) - \wt \xi Q(\wt\xi)| \leq C |\xi - \wt\xi|$, and hence $Q(\xi)$ is continuous in $\xi$. \qedhere
\end{proof}

\begin{proof}[Proof of Proposition~\ref{prop:exponent-intro}]
Combine Proposition~\ref{prop:exponent} and Lemma~\ref{lem:Q-lower}.
\end{proof}

\section{Bounds for different types of distances}
\label{sec:bound-distance}
From now on, we only consider the case where $\xi$ satisfies the condition:
\begin{equation}
\label{eq:Q>2d}
Q(\xi) > \sqrt{2d}\,,
\end{equation}
where $Q(\xi)$ is defined in Proposition~\ref{prop:exponent-intro}. As implied by Lemma~\ref{lem:Q-lower}, this set of $\xi$ includes $(0,\frac{1}{2\sqrt{2d}})$. 

In this section, we will derive bounds for different types of distances. In Subsection~\ref{subsec:chaining}, we present a chaining argument to upper-bound the diameter of a box. In Subsections~\ref{subsec:bound-diameter} and \ref{subsec:cross}, we show that both the diameter of a box and distance across a hypercubic shell decay at the same exponential rate as $\lambda_n$, up to $o(n)$ errors in the exponent, with the rate given in terms of $Q$. In Subsection~\ref{subsec:super-exponential}, we establish super-exponential concentration bounds for the distances across and around a hypercubic shell.

For integers $n \geq m \geq 0$ and an open set $U$, we define the diameter of $U$ with respect to the metric $D_{m,n}$ as follows:
\begin{equation}
    \label{def:diameter-1}
    {\rm Diam}_{m,n}(U) := \sup_{z,w \in U} D_{m,n}(z,w)\,.
\end{equation}
Similarly, for open sets $U \subset V \subset \mathbb{R}^d$, we define ${\rm Diam}_{m,n}(U;V)$ as the diameter of $U$ with respect to the internal metric $D_{m,n}(\cdot,\cdot; V)$. When $m=0$, we sometimes write ${\rm Diam}_{n}(U)$ and ${\rm Diam}_{n}(U;V)$ for simplicity.

Throughout this section, we repeatedly use the percolation estimate in Lemma~\ref{lem:percolation-cluster}. Intuitively, it provides coarse high-probability upper bounds on cluster diameters, which supply the initial geometric control needed to obtain finer estimates for the metric.

\subsection{Chaining argument}
\label{subsec:chaining}

In this subsection, we present a chaining argument that connects each pair of points in the box $B_1(0)$ using paths of typical $D_n$-length at different scales. This argument is similar to the ones in Section 6.3 of \cite{ding-dunlap-lqg-fpp} and Section 6.1 of \cite{df-lqg-metric}.
We will consider several events: $\mathscrE_{m,n}, \mathscrG_n$, and $\mathscrF_{m,n}$, as defined below. As shown in Lemma~\ref{lem:est-e-m}, all these events occur with high probability. In Proposition~\ref{prop:chain}, we use these events to upper-bound ${\rm Diam}_n(B_1(0); B_2(0))$. In Proposition~\ref{prop:chain-2}, we upper-bound the $D_n$-distance between any two Euclidean-close points in $B_1(0)$. These results will be used in Subsection~\ref{subsec:bound-diameter} to upper-bound the diameter of a fixed box, in Subsection~\ref{subsec:prove-compare} to upper-bound the diameters of many small boxes simultaneously, and finally, in Subsection~\ref{subsec:tightness} to establish tightness.

We begin with some notation. We assume that $\xi$ satisfies $Q(\xi) > \sqrt{2d}$ and fix two constants $\eta$ and $q$ satisfying
\begin{equation}
    \label{eq:def-eta-q}
    0< \eta < \xi(Q-\sqrt{2d})  \quad \mbox{and} \quad q \in (0,1)\,.
\end{equation}
Fix an integer $n \geq 1$. 
Recall from~\eqref{eq:def-ei} that $e_i$ is the $i$-th standard basis vector in $\mathbb R^d$.
Also recall from \eqref{eq:def-rescaled-lattice} that $\mathscrL_m = 2^{-m} \mathbb{Z}^d \cap B_2(0)$. For any integer $0 \leq m \leq n - 1$, we say a vertex $x \in \mathscrL_m$ is \textbf{$m$-open} if it satisfies the following condition for all $\sigma \in \{1,-1\}$ and $1 \leq i \leq d$
\begin{equation}
\label{eq:def-m-open}
    D_n(x, x+ \sigma e_i 2^{-m}; B_{2^{-m+1}}(x)) \leq 2^{-(1-\xi Q +\eta) m } a_{n-m}^{(q)}.
\end{equation}
Otherwise, we say $x$ is \textbf{$m$-closed}. We assume all the vertices in $2^{-m}\mathbb{Z}^d \backslash \mathscrL_m$ to be $m$-open. Similar to Subsection~\ref{subsec:percolation}, we define $m$-open (or $m$-closed) ($*$-)paths and ($*$-)clusters on $2^{-m} \mathbb{Z}^d$. The role of $\eta$ in the exponent of~\eqref{eq:def-m-open} is essential for the proof of tightness; see Lemma~\ref{lem:internal-B_2(0)}. Since we expect that $a_{n-m}^{(q)} = 2^{-(1 - \xi Q + o(1))m} a_n^{(q)}$ (see Corollary~\ref{cor:an-compare} for a precise statement), $\eta$ ensures that the right-hand side of~\eqref{eq:def-m-open} is much smaller than $a_n^{(q)}$ and decays at an exponential rate in $m$. In particular, it is summable in $m$.

We now define the events $\mathscrE_{m,n}$ for $0 \leq m \leq n$, the event $\mathscrG_n$, and the events $\mathscrF_{m,n}$ for $1 \leq m \leq n-1$. On the intersection of these events, each pair of points in $B_1(0)$ can be connected by concatenating paths of typical $D_n$-length at different scales, as shown in Proposition~\ref{prop:chain}. For any integer $0 \leq m \leq  n-1$, define the event
\begin{equation}
    \label{def:e-m-n}
    \begin{aligned}
    &\mathscrE_{m,n} 
    = \mathscrE_{m,n}(\eta,q)  \\
    &\qquad:= \big{\{} \mbox{All }m\mbox{-closed }{\rm *}\mbox{-clusters on } \mathscrL_m \mbox{ have diameter at most }m^2 - 101^2 \big{\}  }\,.
    \end{aligned}
\end{equation}
Here, the diameter is with respect to the graph distance on the rescaled lattice $2^{-m} \mathbb{Z}^d$. Since the cluster diameter decays exponentially (Lemma~\ref{lem:percolation-cluster}), the exponent $2$ in $m^2$ can be replaced by any number greater than 1. We additionally subtract $101^2$ (which is an arbitrary fixed large constant) to avoid certain geometric issues. In particular, when $0 \leq m \leq 100$, $\mathscrE_{m,n}$ denotes the event that there are no $m$-closed vertices on $\mathscrL_m$. By duality, we can establish the following lemma.

\begin{lemma}
    \label{lem:e-m-connect}
    There exists a unique infinite $m$-open cluster on $2^{-m}\mathbb{Z}^d$, denoted as $\mathcal{O}_m$. Furthermore, for any $100 \leq m \leq n - 1$ and on the event $\mathscrE_{m,n}$, we have the following property: For any connected domain $U \subset \mathbb{R}^d$ and any two points contained in $U \cap \mathcal{O}_m$, they can be connected by an $m$-open path on $2^{-m} \mathbb{Z}^d$ within the domain $B_{m^2/2^{m-1}}(U)$.
\end{lemma}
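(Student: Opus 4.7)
The plan is to handle the two assertions of the lemma in turn. The first assertion (existence and uniqueness of the infinite $m$-open cluster $\mathcal{O}_m$) follows from classical lattice duality. On $\mathscrE_{m,n}$, every $m$-closed $*$-cluster has graph diameter at most $m^2 - 101^2$, and since every vertex of $2^{-m}\mathbb{Z}^d \setminus \mathscrL_m$ is defined to be $m$-open, each $m$-closed $*$-cluster is automatically contained in the bounded set $\mathscrL_m$; hence the collection of all $m$-closed vertices is finite. Since the complement of a finite subset of $\mathbb{Z}^d$ (with $d \geq 2$) has a unique infinite connected component, the set of $m$-open vertices admits a unique infinite cluster (in the sense of paths as defined in Subsection~\ref{subsec:percolation}), which we denote by $\mathcal{O}_m$.

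For the second assertion, fix $100 \leq m \leq n-1$, assume $\mathscrE_{m,n}$, and let $x, y \in U \cap \mathcal{O}_m$. The first step is to choose any piecewise linear continuous curve $\gamma \subset U$ joining $x$ and $y$, which exists because $U$ is a connected open domain. The second step is to discretize $\gamma$ at scale $2^{-m}$: pick a sequence $x = v_0, v_1, \ldots, v_K = y$ of vertices in $2^{-m}\mathbb{Z}^d$ with $|v_i - v_{i+1}|_1 = 2^{-m}$ for all $i$ and with each $v_i$ within Euclidean distance $2^{-m}$ of $\gamma$ (and hence within Euclidean distance $2^{-m}$ of $U$). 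This construction is analogous to Step 2 of the proof of Lemma~\ref{lem:prop3.1-1}.

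The third and main step is to modify $(v_i)$ into an $m$-open path. Each $m$-closed $v_i$ belongs to some $m$-closed $*$-cluster $C$ whose graph diameter is at most $m^2 - 101^2$, so $C$ is contained in an $l^\infty$-box of radius $(m^2 - 101^2) \cdot 2^{-m}$ centered at $v_i$. For each maximal sub-segment of $(v_i)$ lying inside a single such cluster $C$, we replace it by a detour routed through the outer $*$-boundary $\{v \notin C : |v - u|_\infty = 2^{-m} \text{ for some } u \in C\}$, which is entirely $m$-open by maximality of the $*$-cluster. Standard $\mathbb{Z}^d$-duality between $*$-connectivity and ordinary connectivity guarantees that such a detour can be chosen to be an $m$-open path (in the sense of Subsection~\ref{subsec:percolation}) and to remain within Euclidean distance $O(m^2 \cdot 2^{-m})$ of $C$. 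Concatenating the detours with the unmodified $m$-open portions of $(v_i)$ and applying loop-erasure yields an $m$-open path from $x$ to $y$ on $2^{-m}\mathbb{Z}^d$ that stays within Euclidean distance $m^2 / 2^{m-1}$ of $\gamma$, and thus inside $B_{m^2/2^{m-1}}(U)$. The main technical obstacle is the detour construction: showing that for each finite $*$-cluster $C$, the $m$-open vertices on its outer boundary form a sufficiently rich ordinarily-connected set to route around $C$, and that the routing can be kept within a narrow enough Euclidean neighborhood so that the resulting path lies exactly inside $B_{m^2/2^{m-1}}(U)$ with the stated constant.
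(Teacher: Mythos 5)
Your overall route for the second claim is genuinely different from the paper's. The paper does not modify a discretized curve at all: it observes that on $\mathscr{E}_{m,n}$ no $m$-closed $*$-path can cross the shell $B_{m^2/2^{m-1}}(U)\setminus B_{m^2/2^{m}}(U)$ (width $m^2 2^{-m}$, exceeding the closed-cluster diameter $(m^2-101^2)2^{-m}$) nor enclose $B_{m^2/2^{m}}(U)$, invokes duality once to produce a single $m$-open cluster inside the shell that encloses $B_{m^2/2^{m}}(U)$, and then notes that the infinite open paths emanating from the two points of $U\cap\mathcal{O}_m$ must hit this blocking cluster before leaving $B_{m^2/2^{m-1}}(U)$; concatenating through the blocking cluster finishes the proof with no path surgery. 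Your first claim (closed vertices are confined to the finite set $\mathscrL_m$, and the complement of a finite set in $\mathbb{Z}^d$, $d\geq 2$, has a unique infinite component) is correct and is essentially the paper's one-line argument; note it does not require the event $\mathscr{E}_{m,n}$ at all.

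Two points in your detour construction need repair. First, the step "replace each maximal sub-segment lying inside a cluster $C$ by a detour through the outer $*$-boundary of $C$" fails as stated when one endpoint of that sub-segment lies in a bounded component (a cavity) of $C^c$: then every open path between the two endpoints would have to cross $C$, so no detour exists. The fix is to collapse from the vertex just before the \emph{first} visit to $C$ to the vertex just after the \emph{last} visit; both of these are joined to $x$ resp.\ $y$ by portions of the current path avoiding $C$, and since $x,y\in\mathcal{O}_m$ lie in the unbounded component of $C^c$ (their infinite open paths avoid $C$), so do these endpoints, and the surgery eliminates all visits to $C$ at once. Second, the fact you call standard duality --- that for a finite $*$-connected $C\subset\mathbb{Z}^d$ the set of vertices of the unbounded component of $C^c$ that are $*$-adjacent to $C$ is connected in the nearest-neighbor sense --- is a genuine Tim\'ar-type boundary-connectivity lemma; it is true, and its level of justification is comparable to the paper's own unproved ``by duality'' step, but in your argument it is invoked once per cluster and is the load-bearing ingredient, whereas the paper needs a single application of the analogous annulus duality. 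With these repairs your argument does go through, and your distance bookkeeping is fine: every detour vertex lies within $(m^2-101^2+O(1))2^{-m}<m^2/2^{m-1}$ of $U$.
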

\begin{proof}
   The first claim follows from the definition, as all the vertices in $2^{-m} \mathbb{Z}^d \backslash B_2(0)$ are open. We now prove the second claim. Notice that on the event $\mathscrE_{m,n}$, there does not exist an $m$-closed $*$-path on $2^{-m} \mathbb{Z}^d$ that crosses the domain $B_{m^2/2^{m-1}}(U) \backslash B_{m^2/2^{m}}(U)$ or encloses $B_{m^2/2^{m}}(U)$. Therefore, by duality, there exists a unique $m$-open cluster on $2^{-m} \mathbb{Z}^d$ that encloses $B_{m^2/2^{m}}(U)$ within the domain $B_{m^2/2^{m-1}}(U) \backslash B_{m^2/2^{m}}(U)$.

   For any point contained in $U \cap \mathcal{O}_m$, there exists an $m$-open path on $2^{-m} \mathbb{Z}^d$ that connects it to the $m$-open cluster that encloses $B_{m^2/2^{m}}(U)$. Therefore, we can connect any two points in $U \cap \mathcal{O}_m$ by first connecting them to the $m$-open cluster that encloses $B_{m^2/2^{m}}(U)$ and then connecting the endpoints inside this $m$-open cluster. The resulting $m$-open path is within the domain $B_{m^2/2^{m-1}}(U)$. 
\end{proof}

For $n \geq 1$, we define the event
\begin{equation}
    \label{def:e-n-n}
    \begin{aligned}
    \mathscrG_n =\mathscrG_n(\eta) &:= \big{\{} \sup_{x \in B_3(0) } e^{\xi h_{0,n}(x)} \leq 2^{(\xi Q - \eta)n} \big{\}} \\
    &=  \big{\{} \sup_{x \in B_3(0) } h_{0,n}(x) \leq n (Q-\eta/\xi) \log 2 \big{\}} .
    \end{aligned}
\end{equation} For $1\leq m \leq n-1$, we define the event $\mathscrF_{m,n} = \mathscrF_{m,n}(\eta,q)$ to be
\begin{equation}
    \label{eq:def-f-m-n}
    \mathscrF_{m,n} := \mathscrE_{m,n} \cap \mathscrE_{m-1,n} \cap \big{\{} \forall x \in \mathscrL_m, \, \exists y \in \mathcal{O}_m \cap \mathcal{O}_{m-1} \mbox{ such that } |x-y|_\infty \leq \frac{m^2}{2^m} \big{\}}. 
\end{equation}
See Figure~\ref{fig:eventF}. The event $\mathscrF_{m,n}$, combined with Lemma~\ref{lem:e-m-connect}, ensures that we can connect any point in $\mathcal{O}_m$ to a point in $\mathcal{O}_{m-1}$ with a path of small $D_n$-length. We will justify this argument in Proposition~\ref{prop:chain}. 

\begin{figure}[H]
\centering
\includegraphics[scale = 0.7]{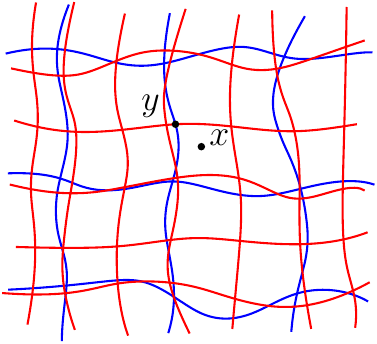}
\caption{Illustration of the event $\mathscrF_{m,n}$. On the event $\mathscrE_{m,n} \cap \mathscrE_{m-1,n}$, there are two infinite spanning clusters $\mathcal{O}_m $ and $\mathcal{O}_{m-1}$, colored red and blue, respectively. The event $\mathscrF_{m,n}$ requires that for each $x \in \mathscrL_m$, there exists $y \in \mathcal{O}_m \cap \mathcal{O}_{m-1}$ that is close to $x$.}
\label{fig:eventF}
\end{figure}

In the following lemma, we show that when $q$ is close to one, the events $\mathscrE_{m,n}$, $\mathscrG_n$, and $\mathscrF_{m,n}$ occur with high probability. The proof uses the percolation argument from Subsection~\ref{subsec:percolation}.

\begin{lemma}
    \label{lem:est-e-m}
    Fix any $\eta$ satisfying \eqref{eq:def-eta-q}. 
    \begin{enumerate}
        \item There exist constants $c_4 = c_4(\eta) \in (0,1)$ and $C = C(\eta) >0$ such that for all $q \in (c_4,1)$:
        \begin{equation}
        \label{eq:bound-e-m}
        \begin{aligned}
            &\mathbb{P}[\mathscrG_{n}] \geq 1-Ce^{-n/C} \quad \forall n \geq 1\,, \quad \quad \mathbb{P}[\mathscrE_{m,n}] \geq 1-Ce^{-m/C} \quad \forall 0 \leq m \leq n-1\,,\\
            &\mbox{and} \quad \mathbb{P}[\mathscrF_{m,n}] \geq 1-Ce^{-m/C} \quad \forall 1 \leq m \leq n-1\,.
        \end{aligned}
        \end{equation}
        \item For any $\epsilon>0$, there exists $c_5 = c_5(\eta,\epsilon) \in (0,1)$ such that for all $q \in (c_5,1)$:
        \begin{equation*}
            \mathbb{P}[\mathscrE_{m,n}] \geq 1-\epsilon \quad \forall 0 \leq m \leq n-1\,, \quad \mbox{and} \quad \mathbb{P}[\mathscrF_{m,n}] \geq 1-\epsilon \quad \forall 1 \leq m \leq n-1\,.
        \end{equation*}
    \end{enumerate}
\end{lemma}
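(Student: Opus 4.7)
The plan is to handle the three events separately: $\mathscrG_n$ via Borell--TIS, $\mathscrE_{m,n}$ via a reduction to finite-range percolation (the main difficulty), and $\mathscrF_{m,n}$ by combining $\mathscrE_{m,n}$ and $\mathscrE_{m-1,n}$ with a local existence argument. For $\mathscrG_n$, I would combine the mean bound $\mathbb{E}\sup_{B_1(0)} h_n \leq n\sqrt{2d}\log 2 + C\sqrt n$ from Lemma~\ref{lem:field-estimate}(3) with the Borell--TIS inequality (Lemma~\ref{lem:field-estimate}(4)). The assumption $0 < \eta < \xi(Q - \sqrt{2d})$ gives $Q - \eta/\xi > \sqrt{2d}$, so the threshold $n(Q-\eta/\xi)\log 2$ exceeds the mean by $\Omega(n)$, producing a Borell--TIS tail of order $\exp(-\Omega(n))$; a union bound over $O(1)$ translates covers $B_3(0)$ instead of $B_1(0)$.

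The central difficulty is $\mathscrE_{m,n}$. The condition \emph{$x$ is $m$-open} is defined through $D_n = D_{0,n}$, whose underlying field has Euclidean-scale correlations of order $1$, i.e.\ order $2^m$ in graph units on the rescaled lattice $2^{-m}\mathbb{Z}^d$; this dependence range grows with $m$, so the $M$-dependent percolation toolkit of Subsection~\ref{subsec:percolation} cannot be applied to $D_n$ directly. I would bridge this via the decomposition $h_{0,n} = h_{0,m} + h_{m,n}$ combined with the event $\mathscrG_m$ (for which $\mathbb{P}[\mathscrG_m^c] \leq Ce^{-m/C}$ by the first part). On $\mathscrG_m$, $\sup_{B_3(0)} e^{\xi h_{0,m}} \leq 2^{(\xi Q - \eta)m}$, and since
\[
D_n(x, x+\sigma e_i 2^{-m}; B_{2^{-m+1}}(x)) \leq e^{\xi \sup_{B_{2^{-m+1}}(x)} h_{0,m}} \cdot D_{m,n}(x, x+\sigma e_i 2^{-m}; B_{2^{-m+1}}(x)),
\]
the auxiliary condition $D_{m,n}(x, x+\sigma e_i 2^{-m}; B_{2^{-m+1}}(x)) \leq 2^{-m} a_{n-m}^{(q)}$ (for all $\sigma,i$) implies the $m$-open condition. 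Call vertices satisfying this auxiliary condition \emph{$m$-open$'$}. By the scaling relation of Lemma~\ref{lem:basic-LFPP}, $D_{m,n}(x, x+\sigma e_i 2^{-m}; B_{2^{-m+1}}(x)) \overset{d}{=} 2^{-m} D_{n-m}(0, \sigma e_i; B_2(0))$, so $\mathbb{P}[x \text{ is }m\text{-open}'] \geq 1 - 2d(1-q)$; moreover Lemma~\ref{lem:dist-independent} ensures $\{x\text{ is }m\text{-open}'\}$ has $M$-dependence on $2^{-m}\mathbb{Z}^d$ for the fixed value $M = \lfloor 2\rr+4\rfloor+1$. Choosing $q$ close enough to $1$ to pass the threshold in Lemma~\ref{lem:exponential-decay}, Lemma~\ref{lem:percolation-cluster} yields
\[
\mathbb{P}[\text{every }m\text{-closed}'\ *\text{-cluster in }\mathscrL_m\text{ has diameter} \leq K] \geq 1 - C \cdot 2^{md}\cdot e^{-K/C}.
\]
Setting $K = m^2 - 101^2$ absorbs the factor $2^{md}$ for large $m$, giving $\geq 1 - Ce^{-m^2/(2C)}$; for $m \leq 100$ the event $\mathscrE_{m,n}$ reduces to the absence of $m$-closed vertices among the $O(1)$ points of $\mathscrL_m$, handled by a direct union bound. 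Combining with the bound on $\mathscrG_m^c$ yields $\mathbb{P}[\mathscrE_{m,n}^c] \leq Ce^{-m/C}$ as required.

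For $\mathscrF_{m,n}$, the first two summands are controlled by the previous step at scales $m$ and $m-1$; what remains is to show that on $\mathscrE_{m,n}\cap\mathscrE_{m-1,n}\cap\mathscrG_m\cap\mathscrG_{m-1}$ each $x \in \mathscrL_m$ has a witness $y \in B_{m^2/2^m}(x)\cap 2^{-(m-1)}\mathbb{Z}^d$ that is simultaneously $m$-open$'$ and $(m-1)$-open$'$; on these events such $y$ automatically lies in $\mathcal{O}_m \cap \mathcal{O}_{m-1}$ via Lemma~\ref{lem:e-m-connect}. The ball contains $\Theta(m^{2d})$ candidates, and passing to a maximal sub-collection whose members are separated on $2^{-(m-1)}\mathbb{Z}^d$ by more than $M'$ graph units (with $M'$ a suitable constant depending on $\rr$) gives $\Theta(m^{2d})$ well-separated candidates for which the events ``$y$ is both $m$-open$'$ and $(m-1)$-open$'$'' are jointly independent by Lemma~\ref{lem:dist-independent}, each of probability $\geq 1 - 4d(1-q)$. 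Hence the probability that a given $x$ lacks a witness is at most $(4d(1-q))^{\Omega(m^{2d})}$, which easily survives a union bound over the $\leq C\cdot 2^{md}$ choices of $x$. Finally, the second claim of the lemma follows from the first: for $m$ larger than some $m_0 = m_0(\eta,\epsilon)$ the bound $Ce^{-m/C}$ is already $< \epsilon$, while for $m \leq m_0$ the union bound over the $O(1)$ vertices of $\mathscrL_m$ contributes at most $C(\epsilon)(1-q)$, made $< \epsilon$ by enlarging $q$.

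The principal obstacle is the reduction step for $\mathscrE_{m,n}$: the $m$-open condition is stated in terms of $D_n$ and thus inherits correlations of Euclidean range $1$, i.e.\ of order $2^m$ on the rescaled lattice, so no fixed-range percolation result applies verbatim. The decomposition $h_{0,n} = h_{0,m} + h_{m,n}$ together with the deterministic control of $e^{\xi h_{0,m}}$ on $\mathscrG_m$ is exactly what downgrades the problem to finite-range percolation for the $D_{m,n}$-version, and the $m^2$-scale diameter threshold in the definition of $\mathscrE_{m,n}$ is tuned precisely so that the $e^{-m^2/C}$ factor from Lemma~\ref{lem:percolation-cluster} dominates the $2^{md}$ polynomial factor coming from $|\mathscrL_m|$.
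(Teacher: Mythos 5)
Your treatment of $\mathscrG_n$ and of $\mathscrE_{m,n}$ follows essentially the paper's own proof: the paper likewise introduces the auxiliary $D_{m,n}$-based condition (its ``$m$-good'' vertices, identical to your ``$m$-open$'$''), uses $h_n=h_m+h_{m,n}$ together with $\mathscrG_m$ to upgrade good to open, and applies Lemma~\ref{lem:percolation-cluster} to the resulting $M$-dependent configuration with $M=\lfloor 2\rr+4\rfloor+1$, the $m^2$ diameter threshold beating the $2^{md}$ entropy factor. So those two parts are fine.

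The gap is in your argument for $\mathscrF_{m,n}$. The third event in \eqref{eq:def-f-m-n} demands a point $y\in\mathcal{O}_m\cap\mathcal{O}_{m-1}$, i.e.\ a point of the two \emph{infinite} open clusters, within distance $m^2/2^m$ of each $x\in\mathscrL_m$. You produce a witness $y$ that is merely $m$-open$'$ and $(m-1)$-open$'$ and assert that on $\mathscrE_{m,n}\cap\mathscrE_{m-1,n}\cap\mathscrG_m\cap\mathscrG_{m-1}$ such a $y$ ``automatically lies in $\mathcal{O}_m\cap\mathcal{O}_{m-1}$ via Lemma~\ref{lem:e-m-connect}''. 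That lemma gives no such statement: it only provides existence and uniqueness of $\mathcal{O}_m$ and a connectivity property for points that are already in $\mathcal{O}_m$. Local openness of a single vertex does not imply membership in the infinite cluster, even on $\mathscrE_{m,n}$: the vertex can be enclosed by a closed $*$-cluster of diameter $2$, which is perfectly compatible with all closed $*$-clusters having diameter at most $m^2-101^2$ once $m\geq 102$. Nor can the argument be repaired by requiring goodness of all vertices in a neighborhood of Euclidean size $\sim m^2 2^{-m}$ around the witness (which, by duality on $\mathscrE_{m,n}$, would force its open cluster to be infinite): that event has probability of order $(1-C(1-q))^{m^{2d}}$, which does not stay close to $1$ uniformly in $m$ for a fixed $q$. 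This is exactly the obstacle the paper's proof is built to circumvent: it introduces the composite notion of ``$m$-nice'' vertices of $\mathscrL_{m-1}$ (being $(m-1)$-good \emph{and} having all nearby $\mathscrL_m$-vertices $m$-good), runs a second $M$-dependent percolation argument at scale $m-1$ for nice/rough, and shows that the resulting \emph{infinite} nice cluster $\widetilde{\mathcal{O}}_m$ is contained in $\mathcal{O}_m\cap\mathcal{O}_{m-1}$ (connectivity to infinity is inherited along the infinite nice path, using $\mathscrG_m$ and $\mathscrG_{m-1}$), while smallness of rough $*$-clusters places $\widetilde{\mathcal{O}}_m$ within $m^2/2^m$ of every $x$. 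Some argument of this kind, which produces connectivity to infinity rather than a single locally good vertex, is needed; your independent-witnesses computation cannot supply it. A secondary, more minor imprecision: in claim (2) for small $m$ you bound the per-vertex closure probability by $C(\epsilon)(1-q)$, but the $m$-open condition involves $h_{0,m}$, whose contribution is not controlled by $q$; the paper handles this by inserting an arbitrary large constant $H$ first and then taking $q$ close to $1$.
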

\begin{proof}
    We begin with the first inequality in \eqref{eq:bound-e-m}. Using Claims (3) and (4) in Lemma~\ref{lem:field-estimate} and the fact that $Q-\eta/\xi>\sqrt{2d}$, we obtain 
    \begin{equation}
    \label{eq:est-mathcal-a-m}
        \mathbb{P}[\mathscrG_n] \geq 1-Ce^{-n/C} \quad \forall n \geq 1\,.
    \end{equation}

    Next, we prove the second inequality in \eqref{eq:bound-e-m}. Fix an integer $n \geq 1$, and let $0 \leq m \leq n-1$. We say a vertex $x \in \mathscrL_m$ is \textbf{$m$-good} if it satisfies
    \begin{equation}
    \label{eq:def-hat-m-open}
         D_{m,n}(x, x+ \sigma e_i 2^{-m}; B_{2^{-m+1}}(x)) \leq 2^{-m} a_{n-m}^{(q)}  \quad \forall \sigma \in \{1,-1\}\mbox{ and }1 \leq i \leq d\,.
    \end{equation}
    Otherwise, we say $x$ is \textbf{$m$-bad}. We assume all the vertices in $2^{-m} \mathbb{Z}^d \backslash \mathscrL_m$ to be $m$-good. Similarly to $\mathscrE_{m,n}$, we define $\widehat{\mathscr{E}}_{m,n}$ by replacing $m$-closed in \eqref{def:e-m-n} with $m$-bad. Since $h_n = h_m + h_{m,n}$, we get that for all $x \in \mathscrL_m$, $\sigma \in \{-1,1\}$, and $1 \leq i \leq d$:
    $$
    D_n(x, x+ \sigma e_i 2^{-m}; B_{2^{-m+1}}(x)) \leq e^{\xi \sup_{y \in B_3(0)} h_m(y)}  D_{m,n}(x, x+ \sigma e_i 2^{-m}; B_{2^{-m+1}}(x))\,.
    $$
    So, on the event $\mathscrG_m$, a vertex $x$ being $m$-good implies that it is also $m$-open. Therefore,
    \begin{equation}
    \label{eq:e-m-include}
        \mathscrE_{m,n} \supset \mathscrG_m \cap \widehat{\mathscr{E}}_{m,n}\,.
    \end{equation}
    
    We now lower-bound $\mathbb{P}[ \widehat{\mathscr{E}}_{m,n}]$ using the percolation result from Lemma~\ref{lem:percolation-cluster}. The proof follows verbatim that of \eqref{eq:est-e3}. According to the definition~\eqref{eq:def-hat-m-open} and Lemma~\ref{lem:dist-independent}, for two subsets $U,V \subset 2^{-m} \mathbb{Z}^d$ with graph distance at least $2\rr+4$ away, the statuses of the vertices in $U$ being $m$-good or $m$-bad are independent of the statuses of those within $V$. Therefore, $\mathbb{P}$ induces an $M$-dependent measure on $\{0,1\}^{2^{-m} \mathbb{Z}^d}$ (where $0$ represents $m$-bad and $1$ represents $m$-good) with $M = \lfloor 2\rr+4 \rfloor  + 1$. Similar to \eqref{eq:lem3.5-est-open}, using the translation and rotational invariance, and the scaling relation from Lemma~\ref{lem:basic-LFPP} and the definition of $a_n^{(p)}$, we can show that
    \begin{equation}
    \label{eq:est-hat-m-open}
        \mathbb{P}[x \mbox{ is }m\mbox{-good}] \geq 1-2d(1-q) \quad \mbox{for all $x \in \mathscrL_m$.}
    \end{equation}
    Hence, by Lemma~\ref{lem:percolation-cluster}, there exists $c_4' \in (0,1)$ such that for any $q>c_4'$
    \begin{equation}
    \label{eq:est-hat-e-m}
    \mathbb{P}[\widehat{\mathscr{E}}_{m,n}] \geq 1- C2^{md} e^{-m^2/C} \geq 1-Ce^{-m/C}.
    \end{equation}
    Combining \eqref{eq:e-m-include} with the estimates \eqref{eq:est-mathcal-a-m} and \eqref{eq:est-hat-e-m}, we obtain the inequality for $\mathbb{P}[\mathscrE_{m,n}]$ in \eqref{eq:bound-e-m} when $c_4 \geq c_4'$.

    We now prove the lower bound for $\mathbb{P}[\mathscrF_{m,n}]$. The proof is similar to that of $\mathscrE_{m,n}$. Let us assume that $m>200$; otherwise, we can enlarge the value of $C$. Recall the definitions of $\mathscrL_m$ from \eqref{eq:def-rescaled-lattice} and of $m$-good vertex from \eqref{eq:def-hat-m-open}. We call a vertex $x \in \mathscrL_{m-1}$ \textbf{$m$-nice} if $x$ is $(m-1)$-good and every vertex $z \in B_{2^{-m + 1}}(x) \cap \mathscrL_m$ is $m$-good. Otherwise, we say it to be \textbf{$m$-rough}. We assume all the vertices in $2^{-(m-1)} \mathbb{Z}^d \backslash \mathscrL_{m-1}$ to be $m$-nice. Similar to before, define the event
    \begin{equation*}
    \widetilde{\mathscr{E}}_{m,n} := \{ \mbox{All }m\mbox{-rough }{\rm *}\mbox{-clusters on } \mathscrL_{m-1} \mbox{ have diameter at most }m^2/2 - 101^2 \}\,,
    \end{equation*}
    where the diameter is defined associated with the graph distance on $2^{-(m-1)}\mathbb{Z}^d$. We now establish the following relation:
    \begin{equation}
    \label{eq:f-m-n-include}
        \mathscrF_{m,n} \supset \mathscrE_{m,n} \cap \mathscrE_{m-1,n} \cap \mathscrG_m \cap \mathscrG_{m-1} \cap \widetilde {\mathscr{E}}_{m,n}\,.
    \end{equation}
    Assume that all the events on the right-hand side happen. By definition, there exists a unique infinite $m$-nice cluster on $2^{-(m-1)} \mathbb{Z}^d$, denoted as $\widetilde{\mathcal{O}}_m$. We first show that $\widetilde{\mathcal{O}}_m \subset \mathcal{O}_m \cap \mathcal{O}_{m-1}$, where $\mathcal{O}_m$ and $\mathcal{O}_{m-1}$ are defined in Lemma~\ref{lem:e-m-connect}. By the definition of $m$-nice and $\mathscrG_{m-1}$, every vertex on $\widetilde{\mathcal{O}}_m$ is both $(m-1)$-good and $(m-1)$-open, which implies that $\widetilde{\mathcal{O}}_m \subset \mathcal{O}_{m-1}$. Furthermore, by the definition of $m$-nice and $\mathscrG_m$, every vertex on $\mathscrL_m$ with $l^\infty$-distance at most $2^{-m}$ from $\mathscrL_{m-1} \cap \widetilde{\mathcal{O}}_m$ is both $m$-good and $m$-open. This enables us to connect every vertex on $\mathscrL_m \cap \widetilde{\mathcal{O}}_m$ to infinity via an $m$-open path on $2^{-m} \mathbb{Z}^d$, indicating that $\widetilde{\mathcal{O}}_m \subset \mathcal{O}_m$. This proves that $\widetilde{\mathcal{O}}_m \subset \mathcal{O}_m \cap \mathcal{O}_{m-1}$. By the event $\widetilde{\mathscr{E}}_{m,n}$, for each $x \in \mathscrL_m$, there does not exist an $m$-rough $*$-cluster on $\mathscrL_{m-1}$ that surrounds $B_{m^2/2^m}(x)$. This implies the existence of $y \in \widetilde{\mathcal{O}}_m$ such that $|x-y|_\infty \leq m^2/2^m$, which satisfies the condition in the last event of \eqref{eq:def-f-m-n}. Therefore, the last event in \eqref{eq:def-f-m-n} occurs, concluding the claim~\eqref{eq:f-m-n-include}.

    We now estimate $\mathbb{P}[\widetilde{\mathscr{E}}_{m,n}]$. Similar to before, by the definition of $m$-nice and Lemma~\ref{lem:dist-independent}, $\mathbb{P}$ induces an $M$-dependent measure on $\{0,1\}^{2^{-m +1} \mathbb{Z}^d}$ (where $0$ represents $m$-rough and $1$ represents $m$-nice) with $M = \lfloor 2\rr+4 \rfloor + 1$. Moreover, using \eqref{eq:est-hat-m-open}, we obtain that for all $x \in \mathscrL_{m-1}$:
    \begin{equation*}
    \begin{aligned}
        \mathbb{P}[x \mbox{ is }m\mbox{-nice}] 
        &\geq 1- \mathbb{P}[x \mbox{ is }(m-1)\mbox{-bad}] - \sum_{z \in B_{2^{-m + 1}}(x) \cap \mathscrL_m} \mathbb{P}[z \mbox{ is }m\mbox{-bad}]  \\
        &\geq 1- 2d(1+3^d)(1-q)\,.
     \end{aligned}
    \end{equation*}
    Therefore, by Lemma~\ref{lem:percolation-cluster}, there exists $c_4'' \in (0,1)$ such that for any $q>c_4''$
    \begin{equation}
    \label{eq:est-widetilde-e-m}
    \mathbb{P}[\widetilde{\mathscr{E}}_{m,n}] \geq 1- C2^{md} e^{-m^2/C} \geq 1-Ce^{-m/C}.
    \end{equation}
    By combining the relation \eqref{eq:f-m-n-include} with the estimates \eqref{eq:est-widetilde-e-m}, and the bounds for $\mathbb{P}[\mathscrE_{m,n}]$ and $\mathbb{P}[\mathscrG_{n}]$ in \eqref{eq:bound-e-m}, we obtain the lower bound for $\mathbb{P}[\mathscrF_{m,n}]$ in \eqref{eq:bound-e-m} by taking $c_4 = \max\{c_4',c_4''\}$.

    Finally, we prove the second claim. By the first claim, the second claim holds for sufficiently large $m$. We now focus on the case where $m < A$, where $A$ is a constant depending only on $\eta$ and $\epsilon$. It suffices to show the existence of $c_5 \in (0,1)$ such that for any $n \geq 1$, $m<A$, and $q>c_5$:
    \begin{equation}
    \label{eq:lem4.2-11}
        \mathbb{P}\big[\mbox{Every vertex on }\mathscrL_m \mbox{ is }m\mbox{-open} \big] \geq 1-\epsilon/2\,.
    \end{equation}
    Using the fact that for all $x \in \mathscrL_m$, $\sigma \in \{-1,1\}$ and $1 \leq i \leq d$:
    \begin{equation*}
         D_n(x, x+ \sigma e_i 2^{-m}; B_{2^{-m+1}}(x)) \leq \sup_{z \in B_3(0)} e^{\xi h_{0,m}(z)} \cdot D_{m,n}(x, x+ \sigma e_i 2^{-m}; B_{2^{-m+1}}(x))
    \end{equation*}
    and $D_{m,n}(x, x+ \sigma e_i 2^{-m}; B_{2^{-m+1}}(x)) \overset{d}{=} 2^{-m} D_{n-m}(0, e_1 ; B_2(0))$, we obtain 
    \begin{equation*}
        \begin{aligned}
            & \mathbb{P}\big[x\mbox{ is }m\mbox{-open} \big] \\
            &\qquad \geq 1- \sum_{\sigma = \pm 1, 1 \leq i \leq d} \mathbb{P}\big[D_n(x, x+ \sigma e_i 2^{-m}; B_{2^{-m+1}}(x)) > 2^{-(1-\xi Q +\eta) m } a_{n-m}^{(q)}\big]\\
            &\qquad \geq 1- 2d \cdot \mathbb{P}\big[\sup_{z \in B_3(0)} e^{\xi h_{0,m}(z)} > H 2^{(\xi Q - \eta) m }  \big]\\
            &\qquad\qquad\qquad - 2d \cdot \mathbb{P}\big[D_{n-m}(0, e_1 ; B_2(0)) > \frac{1}{H} a_{n-m}^{(q)} \big]\,,
        \end{aligned}
    \end{equation*}
    where $H$ is any positive constant. Recall from Claim (4) in Lemma~\ref{lem:field-estimate} that $\sup_{z \in B_3(0)} h_{0,m}(z) $ has a Gaussian tail. By enlarging the value of $H$ first, and then enlarging $q$, we obtain \eqref{eq:lem4.2-11}. This implies the second claim.
\end{proof}

We now show that on the events $\mathscrG_n$ and $\mathscrF_{m,n}$ for $1 \leq m \leq n-1$, we can use paths of typical $D_n$-length to connect any two points in $B_1(0)$ within $B_2(0)$, and this will provide an upper bound for ${\rm Diam}_n(B_1(0); B_2(0))$. The approach is to first connect any point within $B_1(0)$ to the nearest point on the cluster $\mathcal{O}_{n-1}$. The $D_n$-length of the path is upper-bounded by the event $\mathscrG_n$. Next, we use an inductive procedure to connect the point obtained on $\mathcal{O}_{k}$ to the nearest point on $\mathcal{O}_{k} \cap \mathcal{O}_{k-1}$ for each $n-1 \geq k \geq 101$. The event $\mathscrF_{k,n}$ ensures that the nearest point on $\mathcal{O}_{k} \cap \mathcal{O}_{k-1}$ is not too far away with respect to the $l^\infty$-distance. Lemma~\ref{lem:e-m-connect} and the definition of $k$-open vertices from \eqref{eq:def-m-open} then allow us to control the $D_n$-distance between these two points. Finally, using the definition of $100$-open vertices from \eqref{eq:def-m-open} and the event $\mathscrE_{100,n}$, we can connect the point obtained on $\mathcal{O}_{100}$ to the origin with a path of typical $D_n$-length. Since any pair of points in $B_1(0)$ can be connected to the origin using this method, we derive an upper bound on the diameter by concatenating the paths.

\begin{proposition}
    \label{prop:chain}
    There exists a constant $C_2>0$ such that for any $\eta$, $q$ satisfying \eqref{eq:def-eta-q} and integers $n \geq 1$, on the event $(\cap_{1 \leq m \leq n-1} \mathscrF_{m,n}) \cap \mathscrG_n$, we have
    \begin{equation}
    \label{eq:prop-chain}
        {\rm Diam}_n(B_1(0); B_2(0)) \leq C_2 \sum_{m = 1}^n m^{2d} 2^{-(1-\xi Q +\eta)m} a_{n-m}^{(q)}\,.
    \end{equation}
    Here, we use the convention that $a_{0}^{(p)} = 1$ for any $p \in (0,1)$.
\end{proposition}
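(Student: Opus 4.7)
The strategy is a multiscale chain argument. Since $\mathrm{Diam}_n(B_1(0); B_2(0)) \leq 2 \sup_{z \in B_1(0)} D_n(z, 0; B_2(0))$, it suffices to bound $D_n(z, 0; B_2(0))$ for each $z \in B_1(0)$; I may assume $n \geq 102$ since small $n$ is absorbed into $C_2$ (the claimed bound contains the positive term $n^{2d}2^{-(1-\xi Q+\eta)n}$). Pick $x_{n-1} \in \mathscrL_{n-1}$ nearest to $z$, so $|z - x_{n-1}|_\infty \leq 2^{-n+1}$, and use $\mathscrF_{n-1,n}$ to obtain $y_{n-1} \in \mathcal{O}_{n-1} \cap \mathcal{O}_{n-2}$ with $|x_{n-1} - y_{n-1}|_\infty \leq (n-1)^2/2^{n-1}$. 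Recursively, for $k = n-2, n-3, \ldots, 100$: given $y_{k+1} \in \mathcal{O}_{k+1} \cap \mathcal{O}_k$, note that $y_{k+1} \in \mathcal{O}_k \subset 2^{-k}\mathbb{Z}^d$, so (once the chain's boundedness is verified) $y_{k+1} \in \mathscrL_k$, and I apply $\mathscrF_{k,n}$ with $x = y_{k+1}$ to produce $y_k \in \mathcal{O}_k \cap \mathcal{O}_{k-1}$ with $|y_{k+1} - y_k|_\infty \leq k^2/2^k$. Because $\sum_{k \geq 100} k^2/2^k$ is a small constant, the whole chain stays inside $B_{1.5}(0)$, making each inductive step valid. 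Finally, $100^2 - 101^2 < 0$ forces $\mathscrE_{100,n}$ to declare every vertex of $\mathscrL_{100}$ open, so $\mathscrL_{100} \subset \mathcal{O}_{100}$; in particular $0 \in \mathcal{O}_{100}$.

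\textbf{Length bounds per segment.} For the initial segment, a straight line $z \to y_{n-1}$ of Euclidean length $\lesssim n^2 \cdot 2^{-n}$, the event $\mathscrG_n$ yields a $D_n$-length $\lesssim n^2 \cdot 2^{-(1 - \xi Q + \eta)n}$, which matches the $m=n$ term of the sum (with $a_0^{(q)} = 1$). For each intermediate scale $k \in \{100, \ldots, n-2\}$, I apply Lemma~\ref{lem:e-m-connect} at $m=k$ (using $\mathscrE_{k,n} \subset \mathscrF_{k,n}$) to connect $y_{k+1}$ and $y_k$ by a $k$-open path on $2^{-k}\mathbb{Z}^d$ contained in the $k^2/2^{k-1}$-neighborhood of $[y_{k+1}, y_k]$; this neighborhood has $l^\infty$-diameter $O(k^2/2^k)$ and hence contains $O(k^{2d})$ lattice vertices, so the path has $O(k^{2d})$ edges. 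The definition \eqref{eq:def-m-open} of $k$-open then bounds each edge's $D_n$-length (inside the relevant small box, which is contained in $B_2(0)$) by $2^{-(1-\xi Q + \eta)k} a_{n-k}^{(q)}$, producing a total contribution $\lesssim k^{2d} \cdot 2^{-(1-\xi Q + \eta)k} a_{n-k}^{(q)}$ at scale $k$. The closing leg $y_{100} \to 0$ similarly uses Lemma~\ref{lem:e-m-connect} at $m = 100$: since both endpoints lie in $\mathcal{O}_{100}$ and $100$ is a fixed constant, the resulting $100$-open path has $O(1)$ edges, contributing at most $C \cdot a_{n-100}^{(q)}$, which is absorbed into the $m = 100$ term.

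\textbf{Conclusion and main obstacle.} Summing these contributions over the initial, intermediate, and final segments and enlarging $C_2$ to absorb the implicit constants yields \eqref{eq:prop-chain}. The main technical work lies not in any single quantitative estimate but in the chain-construction bookkeeping: verifying that the entire chain together with its $k^2/2^{k-1}$-thickenings stays inside $B_2(0)$ (via the rapid decay of $k^2/2^k$), that each $y_{k+1}$ actually belongs to $\mathscrL_k$ (via $\mathcal{O}_k \subset 2^{-k}\mathbb{Z}^d$ and the boundedness of the chain), and that $y_{100}, 0 \in \mathcal{O}_{100}$ so the chain actually closes. Once these geometric inclusions are in place, the quantitative bound on each segment is immediate from the defining inequalities of the events $\mathscrE_{k,n}$, $\mathscrF_{k,n}$, $\mathscrG_n$ and the definition \eqref{eq:def-m-open} of $k$-open vertices.
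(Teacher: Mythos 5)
Your proposal is correct and follows essentially the same multiscale chaining argument as the paper's proof: descend through scales using the events $\mathscrF_{k,n}$ to find nearby points of $\mathcal{O}_k \cap \mathcal{O}_{k-1}$, connect consecutive points by $k$-open paths of $O(k^{2d})$ steps via Lemma~\ref{lem:e-m-connect}, bound the finest-scale segment with $\mathscrG_n$ and close the chain at scale $100$ using $\mathscrE_{100,n}$, exactly as in the paper. The only cosmetic deviations are that the paper locates the first cluster point $x_{n-1}\in\mathcal O_{n-1}$ by a duality argument from $\mathscrE_{n-1,n}$ rather than from the approximation condition in $\mathscrF_{n-1,n}$, and note that the correct inclusion is $\mathscrF_{k,n}\subset\mathscrE_{k,n}$ (not the reverse, as written), though your use of it is sound.
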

\begin{proof}
    Fix any $\eta$ and $q$ that satisfy \eqref{eq:def-eta-q}. The constants $C$ in this proof will not depend on $\eta$ or $q$. Assume that \begin{equation*}
    \mbox{the event }(\cap_{1 \leq m \leq n-1} \mathscrF_{m,n}) \cap \mathscrG_n \mbox{ happens}\,.
    \end{equation*}By the definition of $\mathscrF_{m,n}$ from \eqref{eq:def-f-m-n}, the event $\mathscrE_{m,n}$ occurs for all $0 \leq m \leq n - 1$. Since we can connect any pair of points to the origin and then concatenate the paths, it suffices to show that
    \begin{equation}
    \label{eq:bound-x-0}
        D_n(x,0; B_2(0)) \leq \frac{1}{2}C_2 \sum_{m = 1}^n m^{2d} 2^{-(1-\xi Q +\eta)m} a_{n-m}^{(q)} \quad \forall x \in B_1(0)\,.
    \end{equation}
    
    Fix $x \in B_1(0)$. For $n \leq 200$, we can establish \eqref{eq:bound-x-0} by using the event $\mathscrG_n$ and the following bound:
    \begin{equation*}
        D_n(x,0;B_2(0)) \leq \sqrt{d} \sup_{z \in B_2(0)} e^{\xi h_{0,n}(z)}.
    \end{equation*}
    Let us assume that $n>200$, and construct a path of typical $D_n$-length from $x$ to $0$. Recall from Lemma~\ref{lem:e-m-connect} that $\mathcal{O}_k$ is the infinite $k$-open cluster on $2^{-k} \mathbb{Z}^d$. We will define a sequence of points $(x_k)$ for $n-1 \geq k \geq 100$ inductively, and all these points will satisfy:
    \begin{equation*}
        x_k \in \mathcal{O}_k \cap B_{3/2}(0) \quad \forall n-1 \geq k \geq 100\,.
    \end{equation*}
    The $D_n$-distance between $0$ and $x$ can be bounded by summing the $D_n$-distances between neighboring points in this sequence.
    
    \noindent\textit{Step 1: Connect $x$ to $x_{n-1}$.} First, we select $x_{n-1} \in \mathcal{O}_{n-1}$ such that 
    \begin{equation}
    \label{eq:prop4.3-step1-0}
        |x-x_{n-1}|_\infty \leq n^2 2^{-n+1}.
    \end{equation}
    Such a point $x_{n-1}$ exists because on the event $\mathscrE_{n-1,n}$, there does not exist an $(n-1)$-closed $*$-cluster on $\mathscrL_{n-1}$ that surrounds $B_{n^2/2^{n-1}}(x)$.
    Then we connect $x$ and $x_{n-1}$ by a straight line. By the event $\mathscrG_n$, we have
    \begin{equation}
    \label{eq:prop4.3-step1}
        D_n(x,x_{n-1};B_2(0)) \leq \sqrt{d} |x - x_{n-1}|_\infty \sup_{z \in B_2(0)} e^{\xi h_{0,n}(z)} \leq Cn^{2} 2^{-(1-\xi Q +\eta)n} a_{0}^{(q)}.
    \end{equation}
    
    \noindent\textit{Step 2: Connect $x_k$ to $x_{k-1}$ inductively for $n-1 \geq k \geq 101$.}
    Suppose that $x_k \in \mathcal{O}_k$ has been defined and $k \geq 101$. We now choose $x_{k-1}$ and control the $D_n$-distance between $x_k$ and $x_{k-1}$. By the event $\mathscrF_{k,n}$ defined in \eqref{eq:def-f-m-n}, we can choose 
    \begin{equation}
    \label{eq:prop4.3-step2-0}
        x_{k-1} \in \mathcal{O}_k \cap \mathcal{O}_{k-1} \quad \mbox{such that} \quad |x_{k-1} - x_k|_\infty \leq k^2/2^k.
    \end{equation}
    By Lemma~\ref{lem:e-m-connect}, applied with $m=k$ and $U = B_{2k^2/2^k}(x_k)$, there exists a $k$-open path on $2^{-k}\mathbb{Z}^d$ contained in the domain $B_{k^2/2^{k-1}}(U)$ that connects $x_k$ and $x_{k-1}$, which will be denoted as $x_k = z_1,z_2,\ldots,z_K = x_{k-1}$. Since this path is contained in $B_{k^2/2^{k-1}}(U)$, we have $K \leq Ck^{2d}$. Combined with the definition of $k$-open from \eqref{eq:def-m-open}, we obtain
    \begin{equation}
    \label{eq:prop4.3-step2}
        D_n(x_k,x_{k-1};B_2(0)) \leq \sum_{i=1}^{K-1} D_n(z_i,z_{i+1}; B_{2^{-k+1}}(z_i)) \leq Ck^{2d} 2^{-(1-\xi Q +\eta)k} a_{n-k}^{(q)} \,.
    \end{equation}
    
    \noindent\textit{Step 3: Connect $x_{100}$ to $0$.} We now control the $D_n$-distance from $x_{100}$ to $0$ using the event $\mathscrE_{100,n}$. By \eqref{eq:prop4.3-step1-0} and \eqref{eq:prop4.3-step2-0}, we have $x_{100} \in B_{3/2}(0)$. On the event $\mathscrE_{100,n}$, all the vertices in $\mathscrL_{100}$ are $100$-open. Therefore, there exists a $100$-open path that connects $x_{100}$ to $0$ on $2^{-100}\mathbb{Z}^d \cap B_{3/2}(0)$. By the definition of $100$-open from \eqref{eq:def-m-open}, we obtain
    \begin{equation}
    \label{eq:prop4.3-step3}
        D_n(x_{100},0;B_2(0)) \leq C a_{n-100}^{(q)}.
    \end{equation}

    Combining \eqref{eq:prop4.3-step1}, \eqref{eq:prop4.3-step2}, and \eqref{eq:prop4.3-step3}, we obtain 
    \begin{equation*}
    \begin{aligned}
        &D_n(x,0;B_2(0)) \\
        &\qquad \leq D_n(x,x_{n-1};B_2(0)) +\sum_{k=101}^{n-1}D_n(x_k,x_{k-1};B_2(0)) + D_n(x_{100},0;B_2(0)) \\
        &\qquad \leq C \sum_{m = 1}^n m^{2d} 2^{-(1-\xi Q +\eta)m} a_{n-m}^{(q)}.
    \end{aligned}
    \end{equation*}
    This proves \eqref{eq:bound-x-0} and thus yields the proposition.
\end{proof}

\begin{remark}
\label{rmk:chain}
    As a direct consequence of Proposition~\ref{prop:exponent}, we know that the right-hand side of \eqref{eq:prop-chain} is at most $2^{-(1-\xi Q)n +o(n)}$. Using this fact and the estimates from Lemma~\ref{lem:est-e-m}, we will establish in Proposition~\ref{prop:diameter} that the median of the diameter of a box corresponds to this order. Nevertheless, to demonstrate the tightness of the metric $D_n$ when normalized by $a_n^{(q)}$, we must upper-bound the right-hand side of \eqref{eq:prop-chain} by $a_n^{(q)}$, with at most a constant-order multiplicative error. Therefore, a more accurate comparison between $a_n^{(q)}$ and $a_{n-m}^{(q)}$ than the one from Proposition~\ref{prop:exponent} is necessary. This will be the main focus of Section~\ref{sec:compare}. Such a comparison is provided in Corollary~\ref{cor:an-compare}. 
\end{remark}

We also establish a variant of the above proposition, which provides an upper bound on the distance between any two Euclidean-close points.
\begin{proposition}
    \label{prop:chain-2}
    There exists a constant $C_3>0$ such that for any $\eta$, $q$ satisfying \eqref{eq:def-eta-q} and integers $n \geq r >100$, on the event $(\cap_{r \leq m \leq n -1} \mathscrF_{m,n}) \cap \mathscrG_n$, we have
    \begin{equation*}
        \sup_{\substack{x,y \in B_1(0)\\ |x-y|_\infty \leq 2^{-r}}} D_n(x,y;B_2(0)) \leq C_3 \sum_{m = r}^n m^{2d} 2^{-(1-\xi Q +\eta)m} a_{n-m}^{(q)}.
    \end{equation*}
\end{proposition}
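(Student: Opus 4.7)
The plan is to adapt the chaining argument from the proof of Proposition~\ref{prop:chain}, but instead of chaining each of $x$ and $y$ all the way down to the origin, I would stop the chaining at scale $r$ and then join the two resulting anchor points by a single $r$-open path inside the infinite cluster $\mathcal{O}_r$.

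Concretely, I would run Steps 1 and 2 of the proof of Proposition~\ref{prop:chain} twice, once starting from $x$ and once from $y$, producing sequences $(x_k^{(x)})_{r \leq k \leq n-1}$ and $(x_k^{(y)})_{r \leq k \leq n-1}$ with $x_k^{(x)}, x_k^{(y)} \in \mathcal{O}_k \cap B_{3/2}(0)$. The event $\mathscrE_{n-1,n} \subset \mathscrF_{n-1,n}$ supplies an initial choice of $x_{n-1}^{(\cdot)}$ within $\mathfrak d_\infty$-distance at most $n^2\cdot 2^{-n+2}$ of the starting point, and the events $\mathscrF_{k,n}$ for $r+1 \leq k \leq n-1$ iteratively supply $x_{k-1}^{(\cdot)} \in \mathcal{O}_k \cap \mathcal{O}_{k-1}$ within $\mathfrak d_\infty$-distance $k^2/2^k$ of $x_k^{(\cdot)}$. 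The $D_n(\cdot,\cdot;B_2(0))$-costs of these segments are bounded exactly as in Steps~1 and~2 of the earlier proof, and together contribute
\[
C \sum_{m=r+1}^{n} m^{2d}\, 2^{-(1-\xi Q+\eta)m}\, a_{n-m}^{(q)}.
\]

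The new ingredient is to link $x_r^{(x)}$ to $x_r^{(y)}$ within $\mathcal{O}_r$. Since $|x-y|_\infty \leq 2^{-r}$ and the total $\mathfrak d_\infty$-displacement on each side is bounded by $n^2\cdot 2^{-n+2} + \sum_{k=r+1}^{n-1} k^2/2^k \leq C r^2/2^r$, the triangle inequality gives $|x_r^{(x)} - x_r^{(y)}|_\infty \leq Cr^2/2^r$. Taking $U$ to be a small connected set of diameter $O(r^2/2^r)$ containing both anchor points and applying Lemma~\ref{lem:e-m-connect} at scale $m=r$ (which needs only $\mathscrE_{r,n} \subset \mathscrF_{r,n}$), I obtain an $r$-open path on $2^{-r}\mathbb{Z}^d$ joining them inside $B_{r^2/2^{r-1}}(U) \subset B_2(0)$, consisting of at most $Cr^{2d}$ vertices. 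By the definition of $r$-open in \eqref{eq:def-m-open}, this path has $D_n(\cdot,\cdot;B_2(0))$-length at most $Cr^{2d}\cdot 2^{-(1-\xi Q+\eta)r}\, a_{n-r}^{(q)}$, which is precisely the $m=r$ term.

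Concatenating the three pieces (from $x$ down to $x_r^{(x)}$, across $\mathcal{O}_r$ to $x_r^{(y)}$, and back up to $y$) and summing yields the claimed estimate. I do not foresee any substantive obstacle: one only needs to verify that all constructed paths stay in $B_2(0)$, which is automatic because $x,y \in B_1(0)$ and all displacements encountered are of size $O(r^2/2^r)$ with $r > 100$.
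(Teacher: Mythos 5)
Your proposal is correct and follows essentially the same route as the paper's proof: the paper likewise runs the chaining construction of Proposition~\ref{prop:chain} from both $x$ and $y$ down to scale $r$, notes that $|x_r-y_r|_\infty \leq Cr^2 2^{-r}$, and then joins the two anchor points via Lemma~\ref{lem:e-m-connect} applied with $U=B_{Cr^2 2^{-r}}(x_r)$ at scale $r$, giving the $m=r$ term $Cr^{2d}2^{-(1-\xi Q+\eta)r}a_{n-r}^{(q)}$. No substantive differences.
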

\begin{proof}
    Fix $\eta$ and $q$ that satisfy \eqref{eq:def-eta-q}. The constants $C$ in this proof will not depend on $\eta$ or $q$. Let $n \geq r >100$, and let $x,y \in B_1(0)$ such that $|x-y|_\infty \leq 2^{-r}$. We construct the sequence of points $(x_k)$ and $(y_k)$ as in Proposition~\ref{prop:chain} such that 
    \begin{equation*}
        x_k, y_k \in \mathcal{O}_k \cap B_{3/2}(0) \quad \mbox{for all } r \leq k \leq n-1\,.
    \end{equation*}
    By \eqref{eq:prop4.3-step1-0} and \eqref{eq:prop4.3-step2-0}, we obtain
    \begin{equation*}
    \begin{aligned}
        |x_r - x|_\infty &\leq |x - x_{n-1}|_\infty +\sum_{m=r+1}^{n-1} |x_m -x_{m-1}|_\infty \\
        &\leq n^22^{-n+1} + \sum_{m=r+1}^{n-1} m^2 2^{-m} \leq Cr^2 2^{-r}.
    \end{aligned}
    \end{equation*}
    Similarly, we have $|y_r - y|_\infty \leq Cr^2 2^{-r}$. Therefore,
    \begin{equation*}
        |x_r - y_r|_\infty \leq |x_r-x|_\infty+|x-y|_\infty + |y_r - y|_\infty  \leq Cr^2 2^{-r}.
    \end{equation*}
    Note that $x_r, y_r$ are both in $\mathcal{O}_r$. Similar to the derivation of \eqref{eq:prop4.3-step2}, we can use the event $\mathscrE_{r,n}$ and apply Lemma~\ref{lem:e-m-connect} with $U = B_{Cr^22^{-r}}(x_r)$ and $r$ in place of $m$ to obtain 
    \begin{equation*}
        D_n(x_r,y_r; B_2(0)) \leq C r^{2d} 2^{-(1-\xi Q +\eta)r} a_{n-r}^{(q)} \,.
    \end{equation*}
    Combining this with \eqref{eq:prop4.3-step1} and \eqref{eq:prop4.3-step2} yields
    \begin{equation*}
    \begin{aligned}
        D_n(x,y;B_2(0)) &\leq  D_n(x,x_{n-1};B_2(0)) + D_n(y,y_{n-1}; B_2(0))   + D_n(x_r,y_r;B_2(0))\\
        &\quad + \sum_{k=r+1}^{n-1} \Big(D_n(x_k,x_{k-1};B_2(0))+ D_n(y_k,y_{k-1};B_2(0)) \Big)\\
        &\leq C \sum_{m = r}^n m^{2d} 2^{-(1-\xi Q +\eta)m} a_{n-m}^{(q)} \,.
    \end{aligned}
    \end{equation*}
    This holds for any pair of $x,y$, and thus gives the desired proposition.
\end{proof}

\subsection{Bounds for the box diameter}
\label{subsec:bound-diameter}

In this subsection, we study the box diameter. Lemma~\ref{lem:diamater-rough-concentration} provides a Gaussian concentration inequality for the diameter of a general set. In Proposition~\ref{prop:diameter}, we use the results from Subsection~\ref{subsec:chaining} to show that the median of the diameter of a box decays at a rate given by $Q$.

\begin{lemma}
   \label{lem:diamater-rough-concentration}
   Fix a connected open set $V$ and a bounded connected open set $U$ such that $\overline{U} \subset V$. For all integers $n>m \geq 0$, the following concentration bound holds:
   \begin{equation*}
       \mathbb{P} \big[|\log {\rm Diam}_{m,n}(U;V) - \mathbb{E}\log {\rm Diam}_{m,n}(U;V)| >t \big] \leq 2e^{-\frac{t^2}{2\xi^2 \log 2 \cdot (n-m)}} \quad \forall t>0\,.
   \end{equation*}
\end{lemma}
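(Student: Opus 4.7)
The plan is to adapt the proof of Lemma~\ref{lem:gauss-concentration} almost verbatim, relying on the fact that the supremum of a family of $\xi$-Lipschitz functions of the underlying Gaussian vector is itself $\xi$-Lipschitz.

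First I would check that $|\mathbb{E}\log {\rm Diam}_{m,n}(U;V)| < \infty$. Since $\overline U$ is compact and $V$ is open and connected, there exists $N$ such that any two points of $U$ can be joined by a piecewise smooth path of Euclidean length at most $N$ inside $V \cap B_N(0)$, giving
\begin{equation*}
{\rm Diam}_{m,n}(U;V) \leq N \exp\!\Bigl(\xi \sup_{x \in V \cap B_N(0)} h_{m,n}(x)\Bigr),
\end{equation*}
whose logarithm is integrable by the Gaussian tail from Claim~(2) of Lemma~\ref{lem:field-estimate}. A matching lower bound uses the Euclidean diameter of $U$ and an infimum of $h_{m,n}$ over a compact set.

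Assume first that $V$ is bounded. For $k \geq 1$, let $h_{m,n}^{(k)}$ be the piecewise constant approximation on dyadic cubes of side $2^{-k}$ as in Lemma~\ref{lem:gauss-concentration}, and let ${\rm Diam}_{m,n}^{(k)}(U;V)$ be the corresponding diameter. For each fixed $z,w \in U$, shifting the field pointwise by $\delta$ scales the integrand in Definition~\ref{def:LFPP} by at most $e^{\xi \|\delta\|_\infty}$, so $\log D_{m,n}^{(k)}(z,w;V)$ is $\xi$-Lipschitz in $(Y_1,\ldots,Y_p) := (h_{m,n}^{(k)}(x))_{x \in B_1(V) \cap 2^{-k}\mathbb{Z}^d}$ with respect to the $L^\infty$ norm. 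Taking the supremum over $(z,w) \in U^2$ preserves this property, so $\log {\rm Diam}_{m,n}^{(k)}(U;V)$ is also $\xi$-Lipschitz in $(Y_1,\ldots,Y_p)$. Writing $(Y_1,\ldots,Y_p)^\intercal = A(X_1,\ldots,X_p)^\intercal$ with $X_i$ i.i.d.\ standard Gaussian and each row of $A$ of $L^2$-norm $\sqrt{(n-m)\log 2}$ by Claim~(1) of Lemma~\ref{lem:field-estimate}, the composition is $\xi\sqrt{(n-m)\log 2}$-Lipschitz in $L^2$, and the Gaussian concentration inequality yields the claimed bound for ${\rm Diam}_{m,n}^{(k)}(U;V)$.

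Passing $k \to \infty$ proceeds exactly as in Lemma~\ref{lem:gauss-concentration}: the sandwich
\begin{equation*}
e^{-\xi d 2^{-k} \sup_{x \in B_1(V)} |\nabla h_{m,n}(x)|_\infty} \leq \frac{{\rm Diam}_{m,n}(U;V)}{{\rm Diam}_{m,n}^{(k)}(U;V)} \leq e^{\xi d 2^{-k} \sup_{x \in B_1(V)} |\nabla h_{m,n}(x)|_\infty}
\end{equation*}
combined with Claim~(2) of Lemma~\ref{lem:field-estimate} yields almost sure and $L^1$ convergence as $k\to\infty$. For unbounded $V$, I would truncate to $V \cap B_N(0)$; since enlarging the ambient domain makes the internal metric smaller, ${\rm Diam}_{m,n}(U; V \cap B_N(0)) \searrow {\rm Diam}_{m,n}(U;V)$ for all $N$ large enough that $\overline U$ lies in the connected component of $V \cap B_N(0)$ containing it, and the concentration bound descends by monotone convergence. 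I do not anticipate a substantive obstacle; the one point meriting care is the preservation of the Lipschitz constant under the outer supremum in the definition of the diameter, which is immediate from the pointwise Lipschitz property of each $\log D_{m,n}^{(k)}(z,w;V)$.
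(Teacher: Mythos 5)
Your proposal is correct and takes essentially the same approach as the paper, whose proof of this lemma is literally "follows verbatim that of Lemma~\ref{lem:gauss-concentration}"; your write-up is exactly that adaptation, with the key (and correct) observation that a supremum of $\xi$-Lipschitz functions of the discretized field is again $\xi$-Lipschitz. The only step meriting one extra line is the unbounded-$V$ case: the interchange of the supremum over $(z,w)$ with the decreasing limit in $N$, i.e.\ ${\rm Diam}_{m,n}(U;V\cap B_N(0)) \searrow {\rm Diam}_{m,n}(U;V)$, follows for instance from Dini's theorem, since the truncated internal distances are continuous on the compact set $\overline U \times \overline U$ and decrease pointwise to the continuous limit $D_{m,n}(\cdot,\cdot;V)$.
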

\begin{proof}
    The proof follows verbatim that of Lemma~\ref{lem:gauss-concentration}. Hence, we omit it here.
\end{proof}

We now use results from Subsection~\ref{subsec:chaining} to show that when $Q(\xi)>\sqrt{2d}$, the median of the diameter ${\rm Med}({\rm Diam}_n (B_1(0);B_2(0)))$ satisfies the same decay rate as $\lambda_n$ (recall from Proposition~\ref{prop:exponent}), up to $o(n)$ errors in the exponent. This result, together with the above concentration inequality, implies that ${\rm Diam}_n (B_1(0);B_2(0)) = 2^{-(1-\xi Q)n + o(n)}$ with high probability.

\begin{proposition}
    \label{prop:diameter}
    For any $\xi>0$ with $Q(\xi)>\sqrt{2d}$, we have
    \begin{equation*}
        {\rm Med}({\rm Diam}_n (B_1(0);B_2(0))) = 2^{-(1-\xi Q)n + o(n)} \quad \mbox{as } n \rightarrow \infty\,.
    \end{equation*}
\end{proposition}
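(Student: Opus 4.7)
The proof splits into a matching upper and lower bound, both reducing to statements already established.

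For the \emph{upper bound} I would invoke the chaining estimate Proposition~\ref{prop:chain}. Fix any $\eta \in (0, \xi(Q-\sqrt{2d}))$ and, using Lemma~\ref{lem:est-e-m}, choose $q$ close enough to $1$ that the event $\mathcal A_n := (\cap_{1 \le m \le n-1}\mathscrF_{m,n}) \cap \mathscrG_n$ has $\mathbb P[\mathcal A_n] \ge 3/4$ for all $n$ (Claim~(2) handles the small-$m$ terms with arbitrarily small loss, while Claim~(1) gives exponential decay in $m$ for the tail). On $\mathcal A_n$ we have
\[
\mathrm{Diam}_n(B_1(0); B_2(0)) \le C_2 \sum_{m=1}^n m^{2d}\, 2^{-(1-\xi Q+\eta)m}\, a_{n-m}^{(q)}.
\]
By Proposition~\ref{prop:exponent} together with Lemma~\ref{lem:compare-median-p}, $a_k^{(q)} = 2^{-(1-\xi Q)k + o(k)}$ as $k\to\infty$. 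Splitting the sum at $m = n-N$ for large fixed $N$ and then taking $N\to\infty$ after $n\to\infty$, the main contribution factors as $2^{-(1-\xi Q)n + o(n)}\sum_m m^{2d} 2^{-\eta m}$, and the geometric factor $2^{-\eta m}$ (with $\eta>0$) makes the sum uniformly bounded. Since $\mathbb P[\mathcal A_n] \ge 3/4 > 1/2$, this forces $\mathrm{Med}(\mathrm{Diam}_n(B_1(0); B_2(0))) \le 2^{-(1-\xi Q)n + o(n)}$.

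For the \emph{lower bound} I would simply note that the diameter dominates any single point-to-point internal distance between points of $B_1(0)$. Picking $x = \tfrac{1}{2}e_1$ and $y = -\tfrac{1}{2}e_1$, both of which lie in $B_{3/2}(0) = B_{r_2 - r_1}(0)$ with $r_2 = 2$, $r_1 = 1/2$, and satisfy $|x-y|_\infty = 1 \ge r_1$, Lemma~\ref{lem:distance-any-point} yields
\[
\mathrm{Med}\bigl(D_n(x,y; B_2(0))\bigr) = 2^{-(1-\xi Q)n + o(n)}.
\]
Since $\mathrm{Diam}_n(B_1(0); B_2(0)) \ge D_n(x,y; B_2(0))$ almost surely, the same comparison passes to medians, giving $\mathrm{Med}(\mathrm{Diam}_n(B_1(0); B_2(0))) \ge 2^{-(1-\xi Q)n + o(n)}$.

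The only mildly delicate point is the uniformity in the upper bound: the $o(k)$ error in $a_k^{(q)} = 2^{-(1-\xi Q)k + o(k)}$ is not uniform in $k$, so one must carefully separate the range $m \le n-N$ (where the error is controlled uniformly by $\epsilon n$ for large $n$) from the terminal range $m > n-N$ (only $N$ terms, each absorbable into the $o(n)$ because the prefactor $2^{-(1-\xi Q+\eta)m}$ already supplies the required decay). Once this bookkeeping is done the two bounds match and the proposition follows.
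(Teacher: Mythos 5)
Your proposal is correct and takes essentially the same route as the paper: the upper bound via Proposition~\ref{prop:chain} on the event supplied by Lemma~\ref{lem:est-e-m}, combined with the quantile asymptotics from Proposition~\ref{prop:exponent} and Lemma~\ref{lem:compare-median-p}, and the lower bound by dominating the diameter by a single point-to-point internal distance. The only immaterial differences are that the paper absorbs the $o(k)$ error into a uniform bound $a_k^{(q)} \le C\,2^{-(1-\xi Q-\eta/2)k}$ rather than splitting the sum at $m=n-N$, and bounds the diameter below directly by $D_n(0,e_1;B_2(0))$ (i.e.\ by $\lambda_n$) rather than invoking Lemma~\ref{lem:distance-any-point} with interior points.
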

\begin{proof}
    Using the fact that ${\rm Diam}_n (B_1(0);B_2(0)) \geq D_n(0,e_1;B_2(0))$ and Proposition~\ref{prop:exponent}, we obtain
    \begin{equation}
    \label{eq:prop4.6-lower}
        {\rm Med}({\rm Diam}_n (B_1(0);B_2(0))) \geq \lambda_n = 2^{-(1-\xi Q)n +o(n)} \quad \mbox{as } n\rightarrow \infty\,.
    \end{equation}
    
    Next, we prove the upper bound. Fix any $\eta$ that satisfies \eqref{eq:def-eta-q}. Let $A>0$ and $\epsilon \in (0,1)$ be two constants to be chosen. Applying Lemma~\ref{lem:est-e-m} with the above choice of $\eta$ and $\epsilon$, we obtain that for any $q > \max\{c_4(\eta),c_5(\eta, \epsilon)\}$:
    \begin{equation}
    \label{eq:prop-diameter-prob}
    \begin{aligned}
    \mathbb{P}\Big[(\cap_{1 \leq m \leq n -1} \mathscrF_{m,n}) \cap \mathscrG_n \Big] &\geq 1 -  \mathbb{P}[\mathscrG_n^c] -\sum_{1\leq m \leq A} \mathbb{P}[\mathscrF_{m,n}^c] - \sum_{A < m \leq n-1 }\mathbb{P}[\mathscrF_{m,n}^c] \\
    &\geq 1- Ce^{-n/C} - A \epsilon - \sum_{A < m \leq n-1} Ce^{-m/C}.
    \end{aligned}
    \end{equation}
    We can choose a large value for $A$ and then select a small $\epsilon$ such that the right-hand side of \eqref{eq:prop-diameter-prob} is at least $1/2$ for all sufficiently large $n$. Therefore, there exists a constant $c = c(\eta) \in (0,1)$ such that for any $q > c(\eta)$ and all sufficiently large $n$:
    \begin{equation*}
    \mathbb{P}\Big[(\cap_{1 \leq m \leq n -1} \mathscrF_{m,n}) \cap \mathscrG_n \Big] \geq \frac{1}{2}.
    \end{equation*}
    Let us fix any $q > c(\eta)$. Combining the above inequality with Proposition~\ref{prop:chain}, we obtain that for all sufficiently large $n$:
    \begin{equation*}
        {\rm Med}({\rm Diam}_n (B_1(0);B_2(0)))\leq C_2 \sum_{m = 1}^n m^{2d} 2^{-(1-\xi Q +\eta)m} a_{n-m}^{(q)}\,.
    \end{equation*}
    Using Proposition~\ref{prop:exponent} and Lemma~\ref{lem:compare-median-p}, there exists a large constant $C = C(q, \eta) > 0$ such that 
    \begin{equation*}
        a_n^{(q)} \leq C 2^{-(1-\xi Q -\eta/2)n} \quad \forall n \geq 1 \,.
    \end{equation*}
    Therefore,
    \begin{equation*}
    \begin{aligned}
        &\quad {\rm Med}({\rm Diam}_n (B_1(0);B_2(0))) \leq C \sum_{m = 1}^n m^{2d} 2^{-(1-\xi Q +\eta)m} 2^{-(1-\xi Q -\eta/2)(n-m)} \\
        &= C 2^{-(1-\xi Q-\eta/2)n} \sum_{m = 1}^n m^{2d} 2^{-\eta m/2} \leq C 2^{-(1-\xi Q-\eta/2)n}.
    \end{aligned}
    \end{equation*}
    Since this holds for any $\eta \in (0, \xi(Q-\sqrt{2d}))$, we obtain 
    \begin{equation*}
        {\rm Med}({\rm Diam}_n (B_1(0);B_2(0))) \leq 2^{-(1-\xi Q )n + o(n)} \quad \mbox{as} \quad n\rightarrow \infty .
    \end{equation*}
    Combining this with \eqref{eq:prop4.6-lower} yields the desired result.
\end{proof}

\subsection{Bounds for distances across hypercubic shells}
\label{subsec:cross}

In this subsection, we study the distance across a hypercubic shell, which will be defined below. Proposition~\ref{prop:box-cross} will show that its median satisfies the same decay rate as $\lambda_n$, given in terms of $Q$, up to $o(n)$ errors in the exponent.

For integers $n \geq m \geq 0$ and a hypercubic shell $A= B_{r_1}(x)\backslash B_{r_2}(x)$ where $x \in \mathbb{R}^d$ and $r_1>r_2>0$, we define the distance across $A$ as:
\begin{equation}
\label{eq:def-box-cross}
    D_{m,n}(\mbox{across }A):= D_{m,n}(\partial B_{r_1}(x),\partial B_{r_2}(x))\,.
\end{equation}
Note that this distance only depends on the internal metric $D_{m,n}(\cdot,\cdot;B_{r_1}(x))$. When $m=0$, we sometimes abbreviate it as $D_{n}(\mbox{across }A)$. We also know that this distance satisfies the concentration bound in Lemma~\ref{lem:gauss-concentration} with $(K_1, K_2) = (\partial B_{r_1}(x), \partial B_{r_2}(x))$.

\begin{proposition}
    \label{prop:box-cross}
    For any $\xi>0$ with $Q(\xi)>\sqrt{2d}$, we have
    \begin{equation*}
        {\rm Med}(D_n(\mbox{across }B_2(0) \backslash B_1(0))) = 2^{-(1-\xi Q)n + o(n)} \quad \mbox{as } n \rightarrow \infty\,.
    \end{equation*}
\end{proposition}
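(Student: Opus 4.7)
The plan is to establish matching upper and lower bounds of the form $2^{-(1-\xi Q)n + o(n)}$ on $\mathrm{Med}(D_n(\mathrm{across}\, B_2(0)\setminus B_1(0)))$.

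For the upper bound, I would use that the crossing distance is dominated by the internal distance between two specific boundary points, $D_n(\mathrm{across}\, B_2(0)\setminus B_1(0)) \leq D_n(e_1, 2e_1; B_2(0))$, and then bound the right-hand side by a chain of small, scaled-down diameters along the segment $[e_1, 2e_1] \subset \overline{B_2(0)}$. Concretely, dividing the segment into finitely many Euclidean sub-segments and applying Proposition~\ref{prop:diameter} on each associated small box (through the translation/scaling invariance of Lemma~\ref{lem:basic-LFPP}, and using Lemma~\ref{lem:field-estimate} to pass between $D_{0,n}$ and $D_{k,n}$), yields $\mathrm{Med}(D_n(e_1, 2e_1; B_2(0))) \leq 2^{-(1-\xi Q)n + o(n)}$. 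The sub-boxes that approach the boundary $\partial B_2(0)$ are handled by a finer recursive subdivision or by approximating the endpoint $2e_1$ from inside.

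For the lower bound, note that the crossing distance equals $\inf_{a \in \partial B_1(0),\, b \in \partial B_2(0)} D_n(a, b; B_2(0))$. By Lemma~\ref{lem:distance-any-point} applied in a larger ambient box (for instance with $r_1 = 1$, $r_2 = 3$), every fixed pair $(a,b)$ with $a \in \partial B_1(0)$, $b \in \partial B_2(0)$ has $\mathrm{Med}(D_n(a, b; B_2(0))) \geq 2^{-(1-\xi Q)n + o(n)}$, uniformly in the pair. To pass from fixed pairs to the infimum, I would place a net $\{(x_i, y_j)\}$ of boundary point pairs at scale $2^{-r}$ with $r$ to be chosen, apply the Gaussian concentration bound of Lemma~\ref{lem:gauss-concentration} to each net-pair distance, and invoke the FKG inequality for positively correlated Gaussian functionals of $h_n$ to conclude that with probability at least $1/2$ all net-pair distances simultaneously exceed a threshold of the form $2^{-(1-\xi Q)n - o(n)}$. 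The correction---the $D_n$-distance in $B_2(0)$ from an arbitrary boundary point to its nearest net point---would then be controlled by the small-scale chaining estimate of Proposition~\ref{prop:chain-2}, which decays like $2^{-(1-\xi Q + \eta) r}$ for any $\eta \in (0, \xi(Q - \sqrt{2d}))$.

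The main technical challenge is to calibrate the net scale $r$ and the concentration slack so that the correction is strictly smaller than the threshold while the union/FKG bound over the $2^{r(d-1)}$ net pairs still succeeds. The subcritical condition $Q(\xi) > \sqrt{2d}$ is essential: it guarantees the existence of $\eta > 0$ with $\eta < \xi(Q - \sqrt{2d})$, giving Proposition~\ref{prop:chain-2} a genuine decay factor $2^{-\eta r}$ relative to the main scale, while the Gaussian fluctuations in $\log D_n$ contribute only $O(\sqrt{n})$ by Lemma~\ref{lem:gauss-concentration}. Balancing these exponents---by choosing $r$ of order a small fraction of $n$ and taking $\eta$ close to $\xi(Q-\sqrt{2d})$, possibly iterating the argument across scales---should close the estimate at cost $o(n)$ in the exponent. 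Combining with the upper bound proves the proposition.
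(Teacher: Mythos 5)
Your upper bound is fine and is essentially the paper's: the paper simply bounds the crossing distance by $D_n(e_1,2e_1)\leq D_n(e_1,2e_1;B_3(0))$ and quotes Lemma~\ref{lem:distance-any-point} (your chaining along the segment just re-derives that lemma). The problem is in your lower bound, and it is a genuine calibration gap, not a technicality. You control the net-pair distances $D_n(x_i,y_j)$ by Lemma~\ref{lem:gauss-concentration} and then take a union (FKG adds nothing here: with $\approx 2^{2(d-1)r}$ pairs you still need each failure probability to be smaller than $2^{-2(d-1)r}$, which is exactly the union-bound requirement). Since the sub-Gaussian parameter of $\log D_n$ is of order $n$, surviving a union over $2^{2(d-1)r}$ pairs forces a threshold slack of order $\sqrt{rn}$ in the exponent, i.e.\ $T_1 = 2^{-(1-\xi Q)n - c\sqrt{rn}+o(n)}$. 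On the other hand, the correction you propose to subtract, $\sup_{|x-y|_\infty\leq 2^{-r}}D_n(x,y;B_2(0))$ via Proposition~\ref{prop:chain-2}, is only smaller than the main order by a factor $2^{-\eta r+o(n)}$, i.e.\ $T_2 = 2^{-(1-\xi Q)n-\eta r+o(n)}$. Your triangle inequality gives crossing $\geq T_1 - 2T_2$, which is useful only if $\eta r \gtrsim \sqrt{rn}$, i.e.\ $r\gtrsim n/\eta^2$; but to make the final slack $c\sqrt{rn}$ into an $o(n)$ (or $\to 0$ after $n\to\infty$) quantity you need $r=o(n)$. These two requirements are incompatible for every admissible $\eta<\xi(Q-\sqrt{2d})$ unless $Q$ exceeds $\sqrt{2d}$ by a definite constant of order $\xi^{-1}\sqrt{d-1}$, which is not available throughout the subcritical phase. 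With $r=\epsilon n$ and $\epsilon$ small (which is what you need to send the exponent error to zero), one has $\eta\epsilon n \ll \sqrt{\epsilon}\,n$, so $T_2\gg T_1$ and the bound $T_1-2T_2$ is vacuous.

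The paper avoids this by moving the correction \emph{inside} the random variable to which concentration and the union bound are applied. It works with the box-to-box distances $D_n(B_{2^{-m}}(x_i),B_{2^{-m}}(y_j);B_3(0))$, $m=\lfloor\epsilon n\rfloor$, and first compares \emph{medians}: by Lemma~\ref{lem:distance-any-point}, Proposition~\ref{prop:diameter} together with the scaling relation, and the triangle inequality, the median of the box-to-box distance is at least $2^{-(1-\xi Q)n+o(n)} - 2^{-(1-\xi Q)n-\epsilon\xi Qn+o(n)} = 2^{-(1-\xi Q)n+o(n)}$ (here only $o(n)$ slack is needed because no union bound is involved yet, and $Q>0$ makes the diameter term strictly lower order). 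Only then does it apply Lemma~\ref{lem:gauss-concentration} to each box-to-box distance with slack $\epsilon^{1/3}n$ and union over the $O(2^{2(d-1)\epsilon n})$ pairs, which works since $\epsilon^{2/3}\gg\epsilon$; since every crossing path must join some $B_{2^{-m}}(x_i)$ to some $B_{2^{-m}}(y_j)$, the correction never has to compete with the $\sqrt{\epsilon}\,n$-type slack. You could repair your argument by making the same switch (net \emph{boxes} instead of net points, with the small-scale correction absorbed at the median stage); as written, the order of quantifiers in your scheme cannot be closed.
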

\begin{proof}
    The upper bound follows directly from Lemma~\ref{lem:distance-any-point} and the fact that $D_n(\mbox{across }B_2(0) \backslash B_1(0)) \leq D_n(e_1,2e_1) \leq D_n(e_1,2e_1; B_3(0))$.

    Next, we prove the lower bound. Fix $\epsilon>0$, which will eventually tend to zero. Let the integer $n \geq 1$ and $m = \lfloor \epsilon n \rfloor$. Define the sets
    \begin{equation}
    \label{eq:prop-box-cross-def-cover}
        \{x_1, \ldots, x_{J}\} = 2^{-m + 1} \mathbb{Z}^d \cap \partial B_2(0) \quad \mbox{and} \quad \{y_1, \ldots, y_{K}\} = 2^{-m + 1} \mathbb{Z}^d \cap \partial B_1(0)\,.
    \end{equation}Then, these points satisfy the following three conditions:
    \begin{enumerate}
        \item \label{prop4.8-condition1} $x_i \in \partial B_2(0)$ for any $1 \leq i \leq J$, and $y_i \in \partial B_1(0)$ for any $1 \leq i \leq K$.
        \item \label{prop4.8-condition2}$\partial B_2(0) \subset \cup_{1 \leq i \leq J} \overline{B_{2^{-m}}(x_i)}$ and $\partial B_1(0) \subset \cup_{1 \leq i \leq K} \overline{B_{2^{-m}}(y_i)}$; see Figure~\ref{fig:5} for an illustration.
        \item \label{prop4.8-condition3}$J < C2^{(d-1)m}$ and $K < C2^{(d-1)m}$ for some constant $C$ that is independent of $\epsilon$ and $n$.
    \end{enumerate} 

    \begin{figure}[H]
\centering
\includegraphics[scale=0.6]{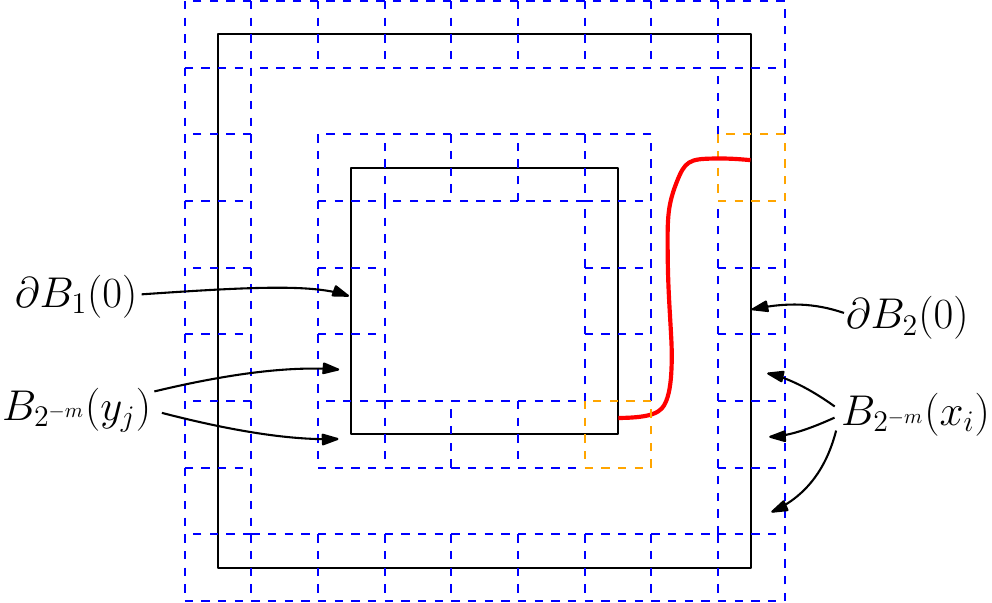}
\caption{Illustration of the boxes $B_{2^{-m}}(x_i)$ for $1 \leq i \leq J$, and $B_{2^{-m}}(y_j)$ for $1 \leq j \leq K$. Any path connecting $\partial B_1(0)$ and $\partial B_2(0)$, highlighted in red, must connect the boxes $B_{2^{-m}}(x_i)$ and $B_{2^{-m}}(y_j)$ for some values of $i$ and $j$. These two boxes are highlighted with orange dashed lines.}
\label{fig:5}
\end{figure}

    By Lemma~\ref{lem:distance-any-point} and condition~\ref{prop4.8-condition1}, we obtain that for all $1 \leq i \leq J$ and $1 \leq j \leq K$:
    \begin{equation}
    \label{eq:prop4.8-1}
        {\rm Med}(D_n(x_i, y_j; B_3(0))) = 2^{-(1-\xi Q)n+o(n)} \quad \mbox{as }n \rightarrow \infty\,.
    \end{equation}
    In particular, the $o(n)$ term does not depend on $i$ and $j$. 
    
    We claim that
    \begin{equation}
    \label{eq:prop4.8-2}
       {\rm Med}({\rm Diam}_n(B_{2^{-m}}(0); B_{2^{-m+1}}(0))) = 2^{-(1-\xi Q)(1-\epsilon)n-\epsilon n + o(n)}.
    \end{equation}
    This follows from Proposition~\ref{prop:diameter} and the scaling relation from Lemma~\ref{lem:basic-LFPP}, as we now elaborate. Since $h_n = h_m + h_{m,n}$, we have
    \begin{equation*}
    \begin{aligned}
        \inf_{z \in B_{2^{-m+1}}(0)}e^{ \xi h_{0,m}(z)}\leq \frac{{\rm Diam}_n(B_{2^{-m}}(0); B_{2^{-m+1}}(0))}{{\rm Diam}_{m,n}(B_{2^{-m}}(0); B_{2^{-m+1}}(0))}\leq \sup_{z \in B_{2^{-m+1}}(0)}e^{ \xi h_{0,m}(z)}.
    \end{aligned}
    \end{equation*}
    Using Claim (5) from Lemma~\ref{lem:field-estimate}, we see that with probability $1-o_m(1)$, the inequality $|h_{0,m}(z)| \leq C m^{2/3}$ holds for all $z \in B_{2^{-m+1}}(0)$. Combining these results with the concentration bound from Lemma~\ref{lem:diamater-rough-concentration} gives
    \begin{equation*}
    \begin{aligned}
       {\rm Med}({\rm Diam}_n(B_{2^{-m}}(0); B_{2^{-m+1}}(0))) = 2^{o(n)}{\rm Med}({\rm Diam}_{m,n}(B_{2^{-m}}(0); B_{2^{-m+1}}(0)))  \,.
    \end{aligned}
    \end{equation*}
    Using the scaling relation from Lemma~\ref{lem:basic-LFPP}, we have 
    \begin{equation*}
        {\rm Diam}_{m,n}(B_{2^{-m}}(0); B_{2^{-m+1}}(0)) \overset{d}{=} 2^{-m} {\rm Diam}_{n-m}(B_1(0); B_2(0))\,.
    \end{equation*}
    Combining the above two identities with Proposition~\ref{prop:diameter}, we obtain \eqref{eq:prop4.8-2}.
    
    By the triangle inequality, we have
    \begin{equation*}
    \begin{aligned}
        &\quad D_n(B_{2^{-m}}(x_i), B_{2^{-m}}(y_j); B_3(0)) \\
        &\geq D_n(x_i,y_j; B_3(0)) - {\rm Diam}_n(B_{2^{-m}}(x_i); B_{2^{-m+1}}(x_i)) \\
        &\quad - {\rm Diam}_n(B_{2^{-m}}(y_j); B_{2^{-m+1}}(y_j))
    \end{aligned}
    \end{equation*}
    Using \eqref{eq:prop4.8-1} and \eqref{eq:prop4.8-2}, together with the concentration bounds from Lemmas~\ref{lem:gauss-concentration} and \ref{lem:diamater-rough-concentration} for $D_n(x_i,y_j; B_3(0))$ as well as for ${\rm Diam}_n(B_{2^{-m}}(0); B_{2^{-m+1}}(0))$, respectively, we obtain
    \begin{equation*}
    \begin{aligned}
        {\rm Med}(D_n(B_{2^{-m}}(x_i), B_{2^{-m}}(y_j); B_3(0))) 
        &\geq 2^{-(1-\xi Q)n+o(n)} - 2^{-(1-\xi Q)(1-\epsilon)n-\epsilon n + o(n)} \\
        &= 2^{-(1-\xi Q)n+o(n)}.
    \end{aligned}
    \end{equation*}
    Here, we also used the assumption that $Q>0$. Applying the concentration bound from Lemma~\ref{lem:gauss-concentration} with $(K_1,K_2,U) = (B_{2^{-m}}(x_i), B_{2^{-m}}(y_j), B_3(0))$, we obtain that for some constant $A>0$, independent of $\epsilon$ and $n$, the following inequality holds for all sufficiently large $n$:
    \begin{equation*}
        \mathbb{P}\big[D_n(B_{2^{-m}}(x_i), B_{2^{-m}}(y_j); B_3(0))) \geq 2^{-(1-\xi Q)n - \epsilon^{1/3}n }\big] \geq 1-Ae^{-\epsilon^{2/3}n/A}.
    \end{equation*}
    Using the above inequality with condition~\ref{prop4.8-condition3} below \eqref{eq:prop-box-cross-def-cover} and $m = \lfloor \epsilon n \rfloor$, we obtain that for all sufficiently large $n$:
    \begin{equation*}
    \begin{aligned}
        &\quad \mathbb{P}\big[D_n(B_{2^{-m}}(x_i), B_{2^{-m}}(y_j); B_3(0))) \geq 2^{-(1-\xi Q)n - \epsilon^{1/3}n } \mbox{ } \forall 1 \leq i \leq J, 1 \leq j \leq K\big] \\
        &\geq 1-J \times K \times A e^{-\epsilon^{2/3}n/A} \geq 1 - C e^{2\epsilon(d-1) \log 2 \cdot n - \epsilon^{2/3}n/A}.
    \end{aligned}
    \end{equation*}
    Assume that $2\epsilon  (d-1) \log 2< \epsilon^{2/3} / (2A)$, which holds for all sufficiently small $\epsilon$. Then, we have
    \begin{equation}
    \label{eq:prop4.8-3}
    \begin{aligned}
     &\mathbb{P}\big[D_n(B_{2^{-m}}(x_i), B_{2^{-m}}(y_j); B_3(0))) \geq 2^{-(1-\xi Q)n - \epsilon^{1/3}n } \mbox{ } \forall 1 \leq i \leq J, 1 \leq j \leq K\big] \\
     &\qquad\qquad\geq 1-Ce^{-\epsilon^{2/3} n/C} \,.
     \end{aligned}
    \end{equation}
    
    By condition~\ref{prop4.8-condition2} below \eqref{eq:prop-box-cross-def-cover}, we know that any path that crosses $B_2(0)\backslash B_1(0)$ must connect the boxes $B_{2^{-m}}(x_i)$ and $B_{2^{-m}}(y_j)$ for some pair of $i$ and $j$, as depicted in Figure~\ref{fig:5}. Therefore,
    \begin{equation*}
        D_n(\mbox{across }B_2(0) \backslash B_1(0))  \geq \min_{1 \leq i \leq  J, 1 \leq j \leq K} D_n(B_{2^{-m}}(x_i), B_{2^{-m}}(y_j); B_3(0)))\,.
    \end{equation*}
    Combining this with \eqref{eq:prop4.8-3}, we obtain that for all sufficiently large $n$:
    \begin{equation*}
        \mathbb{P}\big[ D_n(\mbox{across }B_2(0) \backslash B_1(0)) \geq 2^{-(1-\xi Q)n - \epsilon^{1/3}n } \big] \geq 1-Ce^{-\epsilon^{2/3}n/C}.
    \end{equation*}
    Therefore, for all sufficiently large $n$:
    \begin{equation*}
        {\rm Med}(D_n(\mbox{across }B_2(0) \backslash B_1(0))) \geq 2^{-(1-\xi Q)n - \epsilon^{1/3}n}.
    \end{equation*}
    Since this inequality holds for any small enough $\epsilon$, we get the corresponding lower bound. This proves the proposition. \qedhere

\end{proof}

\subsection{Concentration bounds for distances across and around hypercubic shells}
\label{subsec:super-exponential}

In this subsection, we establish super-exponential concentration bounds for the distances across and around hypercubic shells in Lemmas~\ref{lem:super-exponential-1} and \ref{lem:super-exponential-2}. The distance around a hypercubic shell will be defined just below.

For integers $n \geq m \geq 0$ and a hypercubic shell $A = B_{r_1}(x) \backslash B_{r_2}(x)$ where $x \in \mathbb{R}^d$ and $r_1 > r_2 > 0$, the distance around $A$ is defined as\footnote{Our definition of the distance around a hypercubic shell is a natural generalization of the distance around an annulus in two dimensions, which is defined as the minimal length of a path that disconnects the inner and outer boundaries of the annulus (see e.g.\ \cite[Definition 2.1]{dg-supercritical-lfpp}). There are two main reasons why this is a natural generalization. First, it allows us to connect any two paths that cross $A$ using a path whose $D_n$-length can be bounded by $D_{n}(\mbox{around }A)$. This is similar to the role played by the distance around an annulus in two dimensions. Second, in contrast to the point-to-point distance, we can establish super-exponential concentration bounds for $D_{n}(\mbox{around }A)$, as shown in Lemmas~\ref{lem:super-exponential-1} and \ref{lem:super-exponential-2}, which will be crucial in Section~\ref{sec:compare}. Bounds of this type (though in a stronger form) also play an important role in \cite{dddf-lfpp, dg-supercritical-lfpp}; see Section 4 of \cite{dddf-lfpp} and Proposition 2.4 of \cite{dg-supercritical-lfpp}.}
\begin{equation}
    \label{eq:def-around}
    D_{m,n}(\mbox{around }A) := \sup_B \sup_{l_1,l_2 \subset B} D_{m,n}(l_1,l_2;A \cap B)  \,,
\end{equation}
where $B$ ranges over all the boxes that contain a path crossing $A$, and $l_1$ and $l_2$ range over all piecewise continuously differentiable paths that cross $A$ and are contained in $B$.\footnote{The inclusion of $B$ in \eqref{eq:def-around} is for some technical reasons in Section~\ref{sec:compare}. In that section, we will use the distance around a box to find a detour that bypasses the box $B_{r_2}(x)$ within the prescribed domain $B_2(0)$. To ensure that the detour stays within $B_2(0)$, we introduce $B$ here and will take $B = B_2(0)$ throughout Section~\ref{sec:compare}.} See Figure~\ref{fig:sec4.4} for an illustration. The distance only depends on the internal metric $D_{m,n}(\cdot,\cdot; B_{r_1}(x))$. For $m=0$, we sometimes abbreviate it as $D_{n}(\mbox{around }A)$. This distance will be used in Section~\ref{sec:compare}.

\begin{figure}[H]
\centering
\includegraphics[scale = 0.6]{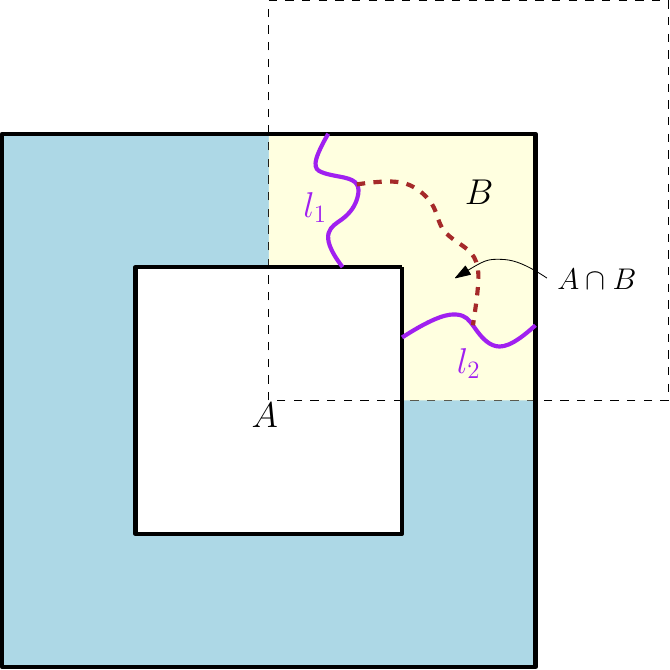}
\caption{Illustration of $D_{n}(\mbox{around }A)$. The blue domain represents $A$, and the yellow domain represents $A \cap B$. The two purple curves that cross $A$ within the domain $B$ represent $l_1$ and $l_2$. The dashed brown curve connects $l_1$ and $l_2$ within the domain $A \cap B$. The distance $D_{n}(\mbox{around }A)$ is defined by taking the minimum length of such a dashed brown curve, then the maximum over all possible choices of $l_1$, $l_2$, and the box $B$ which contains a crossing of $A$.}
\label{fig:sec4.4}
\end{figure}

In the following lemma, we show that both the distances across and around a hypercubic shell are upper-bounded by the typical order, up to a multiplicative error of $2^{\epsilon n}$, with super-exponentially high probability, for any fixed $\epsilon$. A corresponding lower bound will be established in Lemma~\ref{lem:super-exponential-2}. Note that Lemma~\ref{lem:gauss-concentration} can only give an exponential bound. To achieve a super-exponential bound, we will use a percolation argument.

\begin{lemma}
    \label{lem:super-exponential-1}
    For any $\epsilon>0$ and $r_1>r_2>0$, there exists a constant $C = C(\epsilon, r_1, r_2) >0$ such that for all integer $n \geq 1$:
    \begin{equation*}
    \begin{aligned}
        &\mathbb{P}[D_n(\mbox{across }B_{r_1}(0) \backslash B_{r_2}(0)) < 2^{-(1-\xi Q)n + \epsilon n}] > 1- Ce^{-n\log n},\\
        &\mathbb{P}[D_n(\mbox{around }B_{r_1}(0) \backslash B_{r_2}(0)) < 2^{-(1-\xi Q)n + \epsilon n}] > 1- Ce^{-n\log n}.
    \end{aligned}
    \end{equation*}
\end{lemma}
\begin{proof}
    Fix $\epsilon>0$ and $r>r_1>r_2>0$. Assume that $n > \max \{100, \frac{1}{r-r_1}, \frac{1}{r_1 - r_2} \}$ and let
    \begin{equation*}
        m = \lfloor n/(\log n )^2 \rfloor \,.
    \end{equation*} Define the event
    \begin{equation*}
        \mathcal{G}_1:=\big{\{} \sup_{x \in B_r(0)} e^{\xi h_{0,m}(x)} < 2^{\epsilon n/3} \big{\}} = \big{\{} \sup_{x \in B_r(0)} h_{0,m}(x) <\frac{\epsilon n \log 2 }{3\xi}  \big{\}}\,.
    \end{equation*}
    By Claims (3) and (4) in Lemma~\ref{lem:field-estimate}, we obtain
    \begin{equation}
    \label{eq:lem4.9-est-1}
        \mathbb{P}[\mathcal{G}_1] \geq 1- C e^{-\frac{n^2}{Cm}} \geq 1- Ce^{-n \log n}.
    \end{equation}
    Define the set
    \begin{equation}
    \label{eq:def-mathscr-L}
        \widetilde{\mathscr{L}}_m:= 2^{-m} \mathbb{Z}^d \cap B_{r_1}(0) \backslash B_{r_2}(0)\,.
    \end{equation}
    Similar to $\mathscrL_m$ defined in \eqref{eq:def-rescaled-lattice}, we consider $\widetilde{\mathscr{L}}_m$ as a subset of $\mathbb{R}^d$. Sometimes, we will consider ($*$-)paths or ($*$-)clusters on the rescaled lattice $2^{-m} \mathbb{Z}^d$, and only in these cases, we regard $\widetilde{\mathscr{L}}_m$ as a subset of $2^{-m} \mathbb{Z}^d$. We consider a vertex $x \in \widetilde{\mathscr{L}}_m$ to be \textbf{open} if it satisfies
    \begin{equation}
    \label{eq:lem4.9-def-open}
        {\rm Diam}_{m,n}(B_{2^{-m}}(x); B_{2^{-m+1}}(x)) \leq 2^{-(1-\xi Q)n + \epsilon n /3}.
    \end{equation}
    Otherwise, we say it is \textbf{closed}. We consider all the vertices in $2^{-m} \mathbb{Z}^d \backslash \widetilde{\mathscr{L}}_m$ to be open. Using Lemma~\ref{lem:basic-LFPP} and Proposition~\ref{prop:diameter}, we obtain
    \begin{equation*}
    \begin{aligned}
        {\rm Med}({\rm Diam}_{m,n}(B_{2^{-m}}(x); B_{2^{-m+1}}(x))) 
        &= 2^{-m}{\rm Med}({\rm Diam}_{n-m}(B_1(0);B_2(0))  \\
        &= 2^{-(1-\xi Q)n + o(n)}.
    \end{aligned}
    \end{equation*}
    Combining this with the concentration bound from Lemma~\ref{lem:diamater-rough-concentration} yields that for all $x \in \widetilde{\mathscr{L}}_m$:
    \begin{equation}
    \label{eq:lem4.9-p-open}
        \mathbb{P}[x \mbox{ is open} ] \geq 1-Ce^{-n/C}.
    \end{equation}
    In particular, this probability is close to one when $n$ becomes large. According to the definition~\eqref{eq:lem4.9-def-open} and Lemma~\ref{lem:dist-independent}, for two subsets $U,V \subset 2^{-m} \mathbb{Z}^d$ with graph distance at least $2\rr+4$ away, the statuses of the vertices in $U$ being open or closed are independent of the statuses of those within $V$. Hence, $\mathbb{P}$ induces an $M$-dependent probability measure on $\{0,1\}^{2^{-m} \mathbb{Z}^d}$ with $M = \lfloor 2\rr+4 \rfloor +1$. Define the event
    \begin{equation*}
        \mathcal{G}_2 := \{ \mbox{All closed }{\rm *}\mbox{-clusters on } 2^{-m} \mathbb{Z}^d \mbox{ have diameter at most }(r_1 - r_2)2^{m-1} \}\,.
    \end{equation*}
    Here, the diameter is associated with the graph distance on the rescaled lattice $2^{-m} \mathbb{Z}^d$. Using \eqref{eq:lem4.9-p-open} and Lemma~\ref{lem:percolation-cluster}, we obtain that for all sufficiently large $n$:
    \begin{equation}
    \label{eq:lem4.9-est-2}
        \mathbb{P}[\mathcal{G}_2] \geq 1-C2^{md} e^{-2^m/C} \geq 1-Ce^{-n\log n}.
    \end{equation}
    
    We now show that on the event $\mathcal{G}_1 \cap \mathcal{G}_2$, for all sufficiently large $n$:
    \begin{equation}
    \label{eq:lem4.9-upper-bound}
    \begin{aligned}
    &D_n(\mbox{across }B_{r_1}(0) \backslash B_{r_2}(0)) \leq 2^{-(1-\xi Q)n + \epsilon n} \quad \mbox{and} \\
    &D_n(\mbox{around }B_{r_1}(0) \backslash B_{r_2}(0)) \leq 2^{-(1-\xi Q)n + \epsilon n}.
    \end{aligned}
    \end{equation}
    Suppose that the event $\mathcal{G}_1 \cap \mathcal{G}_2$ happens. We first upper-bound $D_n(\mbox{across }B_{r_1}(0) \backslash B_{r_2}(0))$. By the event $\mathcal{G}_2$ and duality, there exists an open path on $2^{-m} \mathbb{Z}^d$ that crosses $B_{r_1}(0) \backslash B_{r_2}(0)$, in the sense that the end points of this path have $l^\infty$-distance at most $2^{-m}$ to $\partial B_{r_1}(0)$ and $\partial B_{r_2}(0)$, respectively. Furthermore, we can constrain this path to be within $\widetilde{\mathscr{L}}_m$. We denote this path by $(x_1,x_2,\ldots, x_J)$. Then we have $J \leq C2^{md}$ and 
    \begin{equation*}
        D_n(\mbox{across }B_{r_1}(0) \backslash B_{r_2}(0)) \leq \sum_{i=1}^{J} {\rm Diam}_n(B_{2^{-m}}(x_i); B_{2^{-m+1}}(x_i))\,.
    \end{equation*}
    Using the fact that $h_n = h_m + h_{m,n}$ and the event $\mathcal{G}_1$, we further have
    \begin{equation*}
    \begin{aligned}
        &D_n(\mbox{across }B_{r_1}(0) \backslash B_{r_2}(0))\\ &\qquad \leq \sup_{x \in B_r(0)} e^{\xi h_{0,m}(x)}\sum_{i=1}^{J} {\rm Diam}_{m,n}(B_{2^{-m}}(x_i); B_{2^{-m+1}}(x_i)) \\
        &\qquad \qquad \leq 2^{\epsilon n/3} \sum_{i=1}^{J} {\rm Diam}_{m,n}(B_{2^{-m}}(x_i); B_{2^{-m+1}}(x_i))\,.
    \end{aligned}
    \end{equation*}
    Using the definition of open vertices from \eqref{eq:lem4.9-def-open} and the fact that $J \leq C2^{md}$, we obtain that for all sufficiently large $n$:
    \begin{equation}
    \label{eq:lem4.9-upper-bound-101}
        \begin{aligned}
            D_n(\mbox{across }B_{r_1}(0) \backslash B_{r_2}(0)) \leq 2^{\epsilon n/3} \times C 2^{md} \times 2^{-(1-\xi Q)n + \epsilon n /3} \leq 2^{-(1-\xi Q)n + \epsilon n}.
        \end{aligned}
    \end{equation}
    This justifies the bound for $D_n(\mbox{across }B_{r_1}(0) \backslash B_{r_2}(0))$ in the claim~\eqref{eq:lem4.9-upper-bound}.

    We now bound $D_n(\mbox{around }B_{r_1}(0) \backslash B_{r_2}(0))$. The proof is similar to before. By the event $\mathcal{G}_2$, there exists an open cluster in $\widetilde{\mathscr{L}}_m$ that encloses the domain $B_{r_2}(0)$. Fix a box $B$ that contains a path crossing $B_{r_1}(0) \backslash B_{r_2}(0)$. For any $l_1$ and $l_2$ that cross $B_{r_1}(0) \backslash B_{r_2}(0)$ and are contained within $B$, they must intersect the boxes $B_{2^{-m}}(x_1)$ and $B_{2^{-m}}(x_2)$, respectively, for some $x_1$ and $x_2$ in this open cluster. This allows us to connect $l_1$ and $l_2$ via a discrete path on $\widetilde{\mathscr{L}}_m$ inside this open cluster. Furthermore, we can require that this discrete path is contained within $B$. Similar to \eqref{eq:lem4.9-upper-bound-101}, using the event $\mathcal{G}_1$ and the definition of open vertices from \eqref{eq:lem4.9-def-open}, we obtain that for all sufficiently large $n$: \footnote{Careful readers may worry that for some $x$ on this discrete path, the box $B_{2^{-m+1}}(x)$ may not be entirely contained within $B$. To address this issue, we can modify \eqref{eq:lem4.9-def-open} to include the condition that ${\rm Diam}_{m,n}(B_{2^{-m}}(x) \cap B; B_{2^{-m+1}}(x) \cap B) \leq 2^{-(1-\xi Q)n + \epsilon n /3}$ for any such box $B$. By adapting the arguments in Subsection~\ref{subsec:chaining} and considering percolation clusters in the half-space or smaller sections of the space, we can show that $\sup_{B}{\rm Diam}_n(B_1(0) \cap B; B_2(0) \cap B) = 2^{-n(1-\xi Q) +o(n)}$, where $B$ ranges over all boxes of side-length at least $10$ that contain 0. Thus, we can still prove \eqref{eq:lem4.9-p-open} under this modified condition. The rest of the proof remains the same.}
    \begin{equation*}
    \begin{aligned}
            D_n(l_1,l_2; B \cap B_{r_1}(0) \backslash B_{r_2}(0) ) \leq 2^{\epsilon n/3} \times C 2^{md} \times 2^{-(1-\xi Q)n + \epsilon n /3} \leq 2^{-(1-\xi Q)n + \epsilon n}.
    \end{aligned}
    \end{equation*}
    This inequality holds uniformly for any pair of paths, $l_1$ and $l_2$, and any $B$. Hence, we get the claim~\eqref{eq:lem4.9-upper-bound}. Combining this claim with the estimates \eqref{eq:lem4.9-est-1} and \eqref{eq:lem4.9-est-2}, we obtain the desired lemma.
\end{proof}

In the following lemma, we establish a lower bound for the distances across and around a hypercubic shell which holds with super-exponentially high probability. The proof uses a percolation argument and is similar to that of Lemma~\ref{lem:super-exponential-1}.

\begin{lemma}
\label{lem:super-exponential-2}
    For any $\epsilon>0$ and $r_1>r_2>0$, there exists a constant $C = C(\epsilon, r_1, r_2) >0$ such that for all integer $n \geq 1$
    \begin{equation*}
    \begin{aligned}
        &\mathbb{P}[D_n(\mbox{across }B_{r_1}(0) \backslash B_{r_2}(0)) > 2^{-(1-\xi Q)n - \epsilon n}] > 1- Ce^{-n\log n},\\
        &\mathbb{P}[D_n(\mbox{around }B_{r_1}(0) \backslash B_{r_2}(0)) > 2^{-(1-\xi Q)n - \epsilon n}] > 1- Ce^{-n\log n}.
    \end{aligned}
    \end{equation*}
\end{lemma}
\begin{proof}
    Fix $\epsilon>0$ and $r>r_1>r_2>0$. Assume that $n > \max \{100, \frac{1}{r-r_1}, \frac{1}{r_1 - r_2} \}$ and let $m = \lfloor n/(\log n )^2 \rfloor$. Define the event
    \begin{equation*}
        \mathcal{G}_3:=\big{\{} \inf_{x \in B_r(0)} e^{\xi h_{0,m}(x)} > 2^{-\epsilon n/3} \big{\}} = \big{\{} \inf_{x \in B_r(0)} h_{0,m}(x) >-  \frac{\epsilon n \log 2 }{3\xi}  \big{\}}\,.
    \end{equation*}
    By Claims (3) and (4) in Lemma~\ref{lem:field-estimate}, we obtain
    \begin{equation}
    \label{eq:lem4.10-est-3}
        \mathbb{P}[\mathcal{G}_3] \geq 1- C e^{-\frac{n^2}{Cm}} \geq 1- Ce^{-n \log n}.
    \end{equation}
    Recall from \eqref{eq:def-mathscr-L} that $\widetilde{\mathscr{L}}_m= 2^{-m} \mathbb{Z}^d \cap B_{r_1}(0) \backslash B_{r_2}(0)$. We now consider a vertex $x \in \widetilde{\mathscr{L}}_m$ to be \textbf{open} if it satisfies
    \begin{equation}
    \label{eq:lem4.10-def-open-2}
        D_{m,n}(\mbox{across }B_{2^{-m+1}}(x) \backslash B_{2^{-m}}(x)) \geq 2^{-(1-\xi Q)n - \epsilon n /3}.
    \end{equation}
    Otherwise, we say it is \textbf{closed}. Note that this definition of open vertices is different from the one in Lemma~\ref{lem:super-exponential-1}. According to Lemma~\ref{lem:dist-independent}, this induces an $M$-dependent measure on $\{ 0 ,1 \}^{2^{-m} \mathbb{Z}^d}$ with $M = \lfloor 2\rr+4 \rfloor +1$. Using Lemma~\ref{lem:basic-LFPP} and Proposition~\ref{prop:box-cross}, we obtain
    \begin{equation*}
    \begin{aligned}
        &\quad {\rm Med}(D_{m,n}(\mbox{across }B_{2^{-m+1}}(x) \backslash B_{2^{-m}}(x))) \\
        &= 2^{-m} {\rm Med}(D_{n-m}(\mbox{across }B_2(0) \backslash B_1(0)) 
        = 2^{-(1-\xi Q)n +o(n)}.
     \end{aligned}
    \end{equation*}
    Using Lemma~\ref{lem:gauss-concentration} with $(K_1,K_2) = (\partial B_{2^{-m+1}}(x), \partial B_{2^{-m}}(x))$, we obtain that for all $x \in \widetilde{\mathscr{L}}_m$:
    \begin{equation}
    \label{eq:lem4.10-open-probab}
    \begin{aligned}
         \mathbb{P}[x \mbox{ is open} ] &= \mathbb{P} \big[D_{m,n}(\mbox{across }B_{2^{-m+1}}(x) \backslash B_{2^{-m}}(x)) \geq 2^{-(1-\xi Q)n - \epsilon n /3}\big]\\
         &\geq 1-Ce^{-n/C}.
    \end{aligned}
    \end{equation}
    Define the event
    \begin{equation*}
        \mathcal{G}_4 := \{ \mbox{All closed }{\rm *}\mbox{-clusters on } 2^{-m} \mathbb{Z}^d \mbox{ have diameter at most }(r_2 - r_1)2^{m-1} \}\,.
    \end{equation*} Using \eqref{eq:lem4.10-open-probab} and Lemma~\ref{lem:percolation-cluster}, we get that for all sufficiently large $n$:
    \begin{equation}
    \label{eq:lem4.10-est-4}
        \mathbb{P}[\mathcal{G}_4] \geq 1-C2^{md} e^{-2^m/C} \geq 1-Ce^{-n\log n}.
    \end{equation}
    
    We now show that on the event $\mathcal{G}_3 \cap \mathcal{G}_4$,
    \begin{equation}
    \label{eq:lem4.10-lower-bound}
    \begin{aligned}
     &D_n(\mbox{across }B_{r_1}(0) \backslash B_{r_2}(0)) > 2^{-(1-\xi Q)n - \epsilon n} \quad \mbox{and} \\
     &D_n(\mbox{around }B_{r_1}(0) \backslash B_{r_2}(0)) > 2^{-(1-\xi Q)n - \epsilon n}.
     \end{aligned}
    \end{equation}
    Suppose that the event $\mathcal{G}_3 \cap \mathcal{G}_4$ happens. We first lower-bound $D_n(\mbox{across }B_{r_1}(0) \backslash B_{r_2}(0))$. By the event $\mathcal{G}_4$ and duality, there is an open cluster in $\widetilde{\mathscr{L}}_m$ that encloses the domain $B_{r_1}(0)$. Note that any path crossing $B_{r_2}(0)\backslash B_{r_1}(0)$ must enter the box $B_{2^{-m}}(x)$ for some $x$ in this open cluster. Therefore, using the definition of open vertices from \eqref{eq:lem4.10-def-open-2} and the event $\mathcal{G}_3$, we deduce that
    \begin{equation}
    \label{eq:lem4.10-lower-bound-101}
    \begin{aligned}
        & D_n(\mbox{across }B_{r_1}(0) \backslash B_{r_2}(0)) \geq \inf_{x \in \widetilde{\mathscr{L}}_m \mbox{ }{\rm is}\mbox{ }{\rm open}}D_{n}(\mbox{across }B_{2^{-m+1}}(x) \backslash B_{2^{-m}}(x)) \\
        &\quad\geq \inf_{z \in B_r(0)} e^{\xi h_m(z)} \times \inf_{x \in \widetilde{\mathscr{L}}_m \mbox{ }{\rm is}\mbox{ }{\rm open}} D_{m,n}(\mbox{across }B_{2^{-m+1}}(x) \backslash B_{2^{-m}}(x))\\
        &\qquad > 2^{-(1-\xi Q)n - \epsilon n}.
    \end{aligned}
    \end{equation}
    
    Next, we lower-bound $D_n(\mbox{around }B_{r_1}(0) \backslash B_{r_2}(0))$. Recall from \eqref{eq:def-ei} the definition of the point $e_1$. Let $l_1$ and $l_2$ be two straight curves connecting $r_1 e_1$ to $r_2 e_1$ and $-r_1 e_1$ to $-r_2 e_1$, respectively. By the event $\mathcal{G}_4$ and duality, there exists an open cluster in $\widetilde{\mathscr{L}}_m$ that separates these curves. Namely, any path in $B_{r_1}(0) \backslash B_{r_2}(0)$ connecting $l_1$ and $l_2$ must intersect the box $B_{2^{-m}}(x)$ for some vertex $x$ in this open cluster. Therefore, similar to \eqref{eq:lem4.10-lower-bound-101}, we obtain 
    \begin{equation*}
    \begin{aligned}
        &\quad D_n(\mbox{around }B_{r_1}(0) \backslash B_{r_2}(0)) \geq D_n(l_1,l_2; B_{r_1}(0) \backslash B_{r_2}(0)) \\
        &\geq \inf_{x \in \widetilde{\mathscr{L}}_m \mbox{ }{\rm is}\mbox{ }{\rm open}} D_{n}(\mbox{across }B_{2^{-m+1}}(x) \backslash B_{2^{-m}}(x)) > 2^{-(1-\xi Q)n - \epsilon n}.
    \end{aligned}
    \end{equation*}
    This justifies the claim~\eqref{eq:lem4.10-lower-bound}. Combining the claim~\eqref{eq:lem4.10-lower-bound} with the estimates \eqref{eq:lem4.10-est-3} and \eqref{eq:lem4.10-est-4} yields the desired lemma.
\end{proof}

\section{Comparison between different scales}
\label{sec:compare}
In this section, we continue to assume that $\xi$ satisfies $Q(\xi) > \sqrt{2d}$. Our main result is Proposition~\ref{prop:compare}, where we compare the metrics $D_n$ and $D_{n+k}$ for integers $n, k \geq 1$. As a consequence, we establish a relation between the quantiles $a_n^{(q)}$ for different values of $n$ in Corollary~\ref{cor:an-compare}.

\begin{proposition}
\label{prop:compare}
    Suppose that $\xi$ satisfies $Q(\xi)>\sqrt{2d}$. For any $\epsilon>0$, there exists a constant $c = c(\epsilon) \in (0,1)$ such that for all integers $n \geq 1$ and $1 \leq k \leq c n$, with probability greater than $1 - 2^{-c n}$, the following bound holds for all pair of points $z,w \in B_1(0)$ with $|z-w|_\infty \geq 2^{-c n }$:
    \begin{equation}
    \label{eq:prop-compare-0}
        c 2^{-(1-\xi Q + \epsilon) k } < \frac{D_{n+k}(z,w ; B_2(0))}{D_{n}(z,w; B_2(0))} < \frac{1}{c} 2^{-(1-\xi Q - \epsilon) k}.
    \end{equation}
\end{proposition}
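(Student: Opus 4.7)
The proof compares $D_n$ and $D_{n+k}$ by exploiting the decomposition $h_{n+k} = h_n + h_{n,n+k}$, so that along any path $P$ one has $\int e^{\xi h_{n+k}(P)}|P'|\,dt = \int e^{\xi h_n(P)}e^{\xi h_{n,n+k}(P)}|P'|\,dt$. My plan is to carry out the two-stage strategy announced in Subsection~\ref{subsec:outline}: first a coarse-graining that isolates the bad regions at multiple scales, then a path modification that bypasses them using the distance-around bound.

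In the coarse-graining step, I would call a box $B$ of side-length $2^{-m}$ (with $n\le m \le n+k$) \emph{good} when three things hold simultaneously on enlarged concentric hypercubic shells $B'\supset B$ and $B''\supset B'$: (a) $D_n(\text{around } B'\setminus B) \le C_0\, D_n(\text{across } B''\setminus B')$, which by Lemmas~\ref{lem:super-exponential-1} and \ref{lem:super-exponential-2} fails only with probability $\le e^{-m\log m}$; (b) the internal metric $D_{n,n+k}$ on $B''$ is within a factor $2^{\epsilon k/10}$ of its typical value $2^{-m}\cdot 2^{-(1-\xi Q)k+o(k)}$ (using Lemma~\ref{lem:basic-LFPP} together with Lemmas~\ref{lem:super-exponential-1}, \ref{lem:super-exponential-2} applied to the rescaled $h_{0,k}$); and (c) the analogous comparison for $D_{n+k}$. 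By Lemma~\ref{lem:dist-independent}, conditions (a)-(c) define an $M$-dependent site percolation on $2^{-m}\mathbb{Z}^d$, and Lemma~\ref{lem:percolation-cluster} produces, for each scale $m$, bad $*$-clusters of diameter $\ll k$ with probability $1-2^{-cn}$.

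For the upper bound in \eqref{eq:prop-compare-0}, I would take a near-$D_n$-geodesic $\gamma$ from $z$ to $w$, apply the multi-scale covering to replace every excursion of $\gamma$ into a bad cluster by a detour in the enclosing annulus (using condition~(a) so that the $D_n$-length grows by at most a constant factor), and partition the resulting modified path $\gamma'$ into Euclidean-scale-$2^{-m}$ pieces. On each such piece, which lies in a good box, one upgrades condition (b) and the bound $h_{n+k}=h_n+h_{n,n+k}$ into the estimate $\text{len}(\text{piece};D_{n+k})\le 2^{-(1-\xi Q-\epsilon)k}\cdot \text{len}(\text{piece};D_n)$, up to constants. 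Summing yields the upper bound. The lower bound comes from the symmetric argument applied to a near-$D_{n+k}$-geodesic, now using condition~(c) to detour in $D_{n+k}$. Uniformity over all pairs $(z,w)$ with $|z-w|_\infty\ge 2^{-cn}$ follows from a union bound over a discretization of $B_1(0)$, combined with the chaining argument of Proposition~\ref{prop:chain-2} to cheaply connect arbitrary pairs to the grid.

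The main obstacle, as flagged in Subsection~\ref{subsec:outline}, is the coarse-graining. Any single scale is inadequate: condition (a) needs the $D_n$-around-to-across comparison to hold, but at a fixed scale there will always be finitely many exceptional clusters where this fails, and these clusters themselves must be bypassed at a larger scale. One must therefore build a hierarchy of scales and show that detours across scales can be composed with only constant multiplicative loss, which requires a careful bookkeeping of how bad clusters at scale $2^{-m}$ are enclosed by good hypercubic shells at scale $2^{-m'}$ with $m'<m$. A second subtlety is ensuring that conditions (b)-(c) on $h_{n,n+k}$ remain informative uniformly across all scales $m\in[n,n+k]$: this uses the scaling relation of Lemma~\ref{lem:basic-LFPP} to rephrase each as a statement about the standard metric $D_{k-(m-n)}$ on a box, for which Lemmas~\ref{lem:super-exponential-1} and~\ref{lem:super-exponential-2} supply the requisite superexponential concentration.
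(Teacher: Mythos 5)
Your overall two-step architecture (a multi-scale covering of the regions where $h_{n,n+k}$ misbehaves, followed by path surgery based on around-versus-across comparisons) is the same as the paper's, but the way you propose to implement the covering has genuine gaps. The probabilistic input you invoke for your condition (a) is not available: Lemmas~\ref{lem:super-exponential-1} and~\ref{lem:super-exponential-2} give $D_n(\mbox{around}) \leq 2^{-(1-\xi Q)n+\epsilon n}$ and $D_n(\mbox{across}) \geq 2^{-(1-\xi Q)n-\epsilon n}$, i.e.\ a comparison only up to a factor $2^{2\epsilon n}$ at the relevant scale, not up to a constant $C_0$; no concentration of that strength is proved in the paper (the lack of sharp concentration in $d\geq 3$ is the stated reason Section~\ref{sec:compare} exists at all). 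Moreover, a same-scale constant-factor comparison would not make the surgery close: in Lemma~\ref{lem:bypassU} the total detour cost is absorbed into the length of the \emph{modified} path, and this works only because each detour is shorter than the across-distance of a strictly larger shell by the factor $\exp(-K\sqrt{\mathsf{a}_j})$ of \eqref{eq:scale-control-new}, which is summable over scales; with a constant $C_0$ in its place one gets ${\rm len}(\widetilde P;D_n) \leq C_0\, 100^d\, {\rm len}(\widetilde P;D_n) + {\rm len}(P|_{B_2(0)\backslash \mathcalU};D_n)$, which is vacuous. The cross-scale comparison \eqref{eq:scale-control} fails with probability that is only polynomially small in $2^{\lambdaC \mathsf{a}_{j-1}}$ (Lemma~\ref{lem:5.4}, where $Q>\sqrt{2d}$, i.e.\ $Q^2/2>d$, is exactly what is used), so it cannot be union-bounded over all locations and scales; this forces the recursive nice/bad definition (Definition~\ref{def:nice-boxes}) and the recursion for $\qq_j$ in Lemmas~\ref{lem:est-qj}--\ref{lem:est-qm}, rather than a per-scale percolation estimate. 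Relatedly, your claim that at every scale the bad $*$-clusters are small with probability $1-2^{-cn}$ is false in the interesting regime: for $k$ of constant order a single $2^{-n}$-box violates your conditions (b)--(c) with probability bounded below by a constant, so no union bound over the $\asymp 2^{dn}$ base-scale boxes can give $1-2^{-cn}$; badness at the base scale must be tolerated and enclosed hierarchically, not excluded.

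Your scales also point in the wrong direction. You define good boxes of side $2^{-m}$ with $n\leq m\leq n+k$, i.e.\ at and below the base scale, and propose multi-scale control of $h_{n,n+k}$ there. In the actual argument only the single scale $2^{-n}$ of $h_{n,n+k}$ is ever needed (conditions~\ref{cond-a1-new-2}--\ref{cond-a1-new-3} of Proposition~\ref{prop:cover}, consumed in Lemma~\ref{lem:modify-outU} by discretizing paths on $2^{-n}\mathbb{Z}^d$); what genuinely requires a hierarchy is the $D_n$-geometry at scales \emph{coarser} than $2^{-n}$, namely the boxes of side $2^{-n+\mathsf{a}_j}$ with $K^4\leq \mathsf{a}_j\leq (1-\lambdaC)n$, which furnish the enclosing shells for the detours. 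You gesture at this (``bad clusters at scale $2^{-m}$ enclosed by shells at scale $2^{-m'}$, $m'<m$''), but your declared range of scales does not contain these coarser shells, and you give no mechanism producing, with probability $1-2^{-cn}$, for every bad region an enclosing shell at some coarser scale satisfying a quantitative around/across gain. That construction is precisely Proposition~\ref{prop:cover} (built on the top-scale event $\mathcalJ_1$ rather than on all scales being good) and is the technical heart of the proof; your proposal leaves it unaddressed. The final uniformity step via discretization and Proposition~\ref{prop:chain-2} is fine in spirit and parallels Lemma~\ref{lem:compare-j3}.
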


As a consequence, we derive a comparison between $a_n^{(q)}$ for different values of $n$. As mentioned in Remark~\ref{rmk:chain}, combining this comparison with the results from Subsection~\ref{subsec:chaining} implies the tightness of the metric $D_n$ when normalized by $a_n^{(q)}$ for $q$ close to one (see Subsection~\ref{subsec:tightness} for details).

\begin{corollary}
\label{cor:an-compare}
    Given $\epsilon>0$ and $q \in (0,1)$, there exists a constant $c = c(\epsilon, q) >0$ such that for all integers $n \geq k \geq 1$:
    \begin{equation}
    \label{eq:cor-an-compare}
        c 2^{(1-\xi Q -\epsilon)k} a_n^{(q-2^{-cn})} \leq a_{n-k}^{(q)} \leq \frac{1}{c} 2^{(1-\xi Q +\epsilon)k} a_n^{(q+2^{-cn})}.
    \end{equation}
\end{corollary}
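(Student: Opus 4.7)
The plan is to transfer the high-probability pointwise comparison of $D_n$ and $D_{n+k}$ furnished by Proposition~\ref{prop:compare} into the claimed quantile comparison via a standard coupling argument. The proof splits into two regimes depending on whether $k$ is a bounded fraction of $n$ or not; the main substance is in the first regime, and the second is a boundary-case estimate based on Proposition~\ref{prop:exponent}.

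Consider first the main regime $1 \leq k \leq \alpha n$, where $\alpha = c_P/(1+c_P)$ and $c_P = c_P(\epsilon) \in (0,1)$ is the constant produced by Proposition~\ref{prop:compare} at the present $\epsilon$. Apply that proposition with the substitution $(n,k) \mapsto (n-k,k)$, which is valid since $k \leq c_P(n-k)$, and with the specific choice $(z,w) = (0,e_1)$; the condition $|z-w|_\infty \geq 2^{-c_P(n-k)}$ is trivially met because $|z - w|_\infty = 1$. The proposition then yields an event $E$ with $\mathbb{P}[E] \geq 1 - 2^{-c_P(n-k)} \geq 1 - 2^{-c'n}$ for $c' = c_P(1-\alpha)$, on which
\[
c_P\, 2^{-(1-\xi Q + \epsilon)k}\, D_{n-k}(0,e_1;B_2(0)) \leq D_n(0,e_1;B_2(0)) \leq c_P^{-1}\, 2^{-(1-\xi Q - \epsilon)k}\, D_{n-k}(0,e_1;B_2(0)).
\]
From here, the lower bound in~\eqref{eq:cor-an-compare} follows by observing that on $E$ one has the inclusion $\{D_{n-k}(0,e_1;B_2(0)) \leq a_{n-k}^{(q)}\} \subset \{D_n(0,e_1;B_2(0)) \leq c_P^{-1} 2^{-(1-\xi Q - \epsilon)k} a_{n-k}^{(q)}\}$, so combining $\mathbb{P}[D_{n-k}(0,e_1;B_2(0)) \leq a_{n-k}^{(q)}] = q$ (by continuity, as noted after~\eqref{def:quantile}) with $\mathbb{P}[E^c] \leq 2^{-c'n}$, the definition of quantiles in~\eqref{def:quantile} forces $a_n^{(q - 2^{-c'n})} \leq c_P^{-1}\, 2^{-(1-\xi Q - \epsilon)k}\, a_{n-k}^{(q)}$. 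The upper bound follows symmetrically by evaluating the reverse inclusion at $\ell = a_n^{(q + 2^{-c'n})}$.

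For the residual regime $\alpha n < k \leq n$, Proposition~\ref{prop:compare} is unavailable, and I instead fall back on Proposition~\ref{prop:exponent} combined with Lemma~\ref{lem:compare-median-p}. These jointly give that for every $\epsilon' > 0$ there is $N = N(\epsilon',q)$ such that $\log_2 a_m^{(q')} + (1-\xi Q)m = O(\epsilon' m)$ uniformly over $m \geq N$ and $q' \in [q/2,(q+1)/2]$. Taking $\epsilon' < \alpha \epsilon / 3$ and $n - k \geq N$, the ratio $a_{n-k}^{(q)}/a_n^{(q \pm 2^{-cn})}$ is bounded above by $2^{(1-\xi Q)k + 2\epsilon' n} \leq 2^{(1-\xi Q + \epsilon)k}$, since $n \leq k/\alpha$; the lower direction is analogous. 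The finite set of boundary cases $n - k < N$ or $n < N$ is absorbed into the constant $c$ in the statement, since on these finite scales all relevant quantities are uniformly bounded in distribution. The main bookkeeping obstacle is matching the stretched-exponential error $2^{-cn}$ arising from Proposition~\ref{prop:compare} with the quantile shift by $2^{-cn}$ required in~\eqref{eq:cor-an-compare}, and arranging the $\epsilon$- and $q$-dependent constants across the two regimes so that a single constant $c = c(\epsilon,q)$ suffices uniformly over $1 \leq k \leq n$.
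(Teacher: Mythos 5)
Your proposal is correct and follows essentially the same route as the paper's proof: apply Proposition~\ref{prop:compare} to the pair $(0,e_1)$ with $n-k$ in place of $n$, transfer the resulting high-probability comparison of $D_{n-k}(0,e_1;B_2(0))$ and $D_n(0,e_1;B_2(0))$ to quantiles via \eqref{def:quantile}, and cover the remaining range of $k$ using the exponent asymptotics of Proposition~\ref{prop:exponent} (with Lemma~\ref{lem:compare-median-p}), exactly as the paper does when it "extends to all $1\leq k\leq n$ by decreasing $c$." One small imprecision: the cases $n-k<N$ with $n$ large are not a finite set, but they are handled by the same Proposition~\ref{prop:exponent} computation you already carry out (since $a_{n-k}^{(q)}$ is then bounded above and below by constants while $2^{(1-\xi Q\pm\epsilon)k}a_n^{(q\pm 2^{-cn})}$ is controlled by the exponent asymptotics at scale $n$), so this does not affect correctness.
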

\begin{proof}
    Applying Proposition~\ref{prop:compare} with the above choice of $\epsilon, k$, and $n-k$ in place of $n$, we obtain that for all $n \geq 1$ and $1 \leq k \leq c n$:
    \begin{equation*}
        \mathbb{P}\big[ D_{n-k}(0,e_1;B_2(0)) \leq \frac{1}{c} 2^{(1-\xi Q +\epsilon) k } D_{n}(0,e_1;B_2(0)) \big] \geq 1-2^{-cn}.
    \end{equation*}
    Combining this with the definition of $a_n^{(q)}$ from \eqref{def:quantile}, we obtain that for all $n \geq 1$ and $1 \leq k \leq c n$:
    \begin{equation*}
        a_{n-k}^{(q)} \leq \frac{1}{c} 2^{(1-\xi Q +\epsilon) k } a_{n}^{(q + 2^{-cn})}.
    \end{equation*}
    We can extend this result to all $1 \leq k \leq n$ by applying Proposition~\ref{prop:exponent} and decreasing the value of $c$. This yields the second inequality in \eqref{eq:cor-an-compare}. The other one can be obtained in a similar way. 
\end{proof}

\subsection{Proof strategy}
\label{subsec:sec5-strategy}

Here we outline the proof strategy of Proposition~\ref{prop:compare} and describe the structure of Section~\ref{sec:compare}. 

Note that the metric $D_{n+k}$ is obtained from $D_n$ by adding the field $h_{n,n+k}$ to $h_n$. It is straightforward to check that $D_{n+k}$ and $D_n$ satisfy the desired relation in \eqref{eq:prop-compare-0} when the field $h_{n,n+k}$ is well-behaved in the sense that for each box of side-length $2^{-n}$, both its $D_{n,n+k}$-diameter and the $D_{n,n+k}$-distance across a hypercubic shell enclosing the box can be bounded by $\lambda_k$, up to a multiplicative error of $2^{\epsilon k}$, for some small but fixed $\epsilon>0$. For precise definitions, we refer to conditions~\ref{cond-a1-new-1}, \ref{cond-a1-new-2}, and \ref{cond-a1-new-3} in Proposition~\ref{prop:cover} below. However, there exist regions (likely even along geodesics) where $h_{n,n+k}$ does not behave well, and the primary focus of this section is to address these regions. The main idea is that with high probability, these problematic regions are sparsely distributed in the space. Therefore, we expect that the length metric $D_n$ will not increase much if we reroute the path to avoid these regions. Furthermore, the modified metrics obtained by bypassing these problematic regions, should satisfy the relation in \eqref{eq:prop-compare-0}, as the path only traverses the regions where $h_{n,n+k}$ is well-behaved.

We now describe the proof strategy in more detail. The proof consists of two steps. The first step is to show that, with high probability, we can find boxes at different scales to cover the problematic regions where the field $h_{n,n+k}$ does not behave well. We refer to Proposition~\ref{prop:cover} for a precise statement. These boxes will satisfy the condition that the $D_n$-distance around a hypercubic shell enclosing the box can be upper-bounded by the $D_n$-distance across a larger hypercubic shell; see condition~\ref{cond-new-3} of Proposition~\ref{prop:cover} and also Figure~\ref{fig:nice-box} for an illustration. This condition will be used in the second step to argue that the metric $D_n$ will increase by at most a constant factor when we reroute the path to avoid these boxes. 

The construction of these boxes is via a coarse-graining argument, presented in Subsections~\ref{subsec:coarse-grain}--\ref{subsec:cover}. In particular, we introduce the notions of nice and bad boxes in Definition~\ref{def:nice-boxes}. A key feature of this definition is that the statuses of being nice or bad for two distant boxes are independent. Using this independence property, we will show in Lemma~\ref{lem:est-qm} that the probabilities of bad boxes decay rapidly to zero. Finally, Lemma~\ref{lem:est-j12} and Proposition~\ref{prop:cover} show that, with high probability, we can find boxes at various scales to cover the problematic region, and in the remaining region the field $h_{n,n+k}$ is well-behaved.

The second step, carried out in Subsection~\ref{subsec:prove-compare}, completes the proof of Proposition~\ref{prop:compare} using the covering constructed in Proposition~\ref{prop:cover}. Let $\mathcalU$ denote the union of the boxes from Proposition~\ref{prop:cover}; see \eqref{eq:def-mathcal-U} for its precise definition. We will prove the following two claims:
\begin{itemize}
    \item For any path $P$ in $B_2(0)$, we can modify it to avoid the domain $\mathcalU$ and the $D_n$-length of the modified path can be upper-bounded the $D_n$-length of $P$ outside $\mathcalU$; see Lemma~\ref{lem:bypassU}. 
    
    \item For any path contained in $B_2(0) \backslash \mathcalU$, by adjusting the path, its $D_n$-length and $D_{n+k}$-length satisfy the desired bound in \eqref{eq:prop-compare-0}; see Lemma~\ref{lem:modify-outU} for a precise statement.
    
\end{itemize}

Combining these two statements yields Proposition~\ref{prop:compare}. Specifically, the second inequality in~\eqref{eq:prop-compare-0} is straightforward: we start with the $D_n$-geodesic $P$, modify it to bypass $\mathcalU$ and then further modify it within $B_2(0) \backslash \mathcalU$ so that the $D_{n+k}$-length of the resulting path is upper-bounded by the $D_n$-length of $P$. To prove the first inequality in~\eqref{eq:prop-compare-0}, we begin with the $D_{n+k}$-geodesic $P$ and modify it into a path $P'$ whose $D_n$-length outside $\mathcalU$ is upper-bounded by the $D_{n+k}$-length of $P$. We can only control the $D_n$-length of $P'$ outside $\mathcalU$ (which is also why we consider the $D_n$-length outside $\mathcalU$ in Lemma~\ref{lem:bypassU}), since the $D_n$-length of $P$ could be concentrated in $\mathcalU$, in which case any local modification would not work. We then apply Lemma~\ref{lem:bypassU} to further modify $P'$ so that the $D_n$-length of the resulting path is upper-bounded by the $D_n$-length of $P'$ outside $\mathcalU$, which implies the first inequality in~\eqref{eq:prop-compare-0}. We refer to the proof of Proposition~\ref{prop:compare-1} for more details.

The structure of this section is as follows: Subsection~\ref{subsec:coarse-grain} introduces the notation for the coarse-graining argument including definitions of nice and bad boxes. In Subsection~\ref{subsec:est-bad}, we estimate the probabilities of bad boxes. Subsection~\ref{subsec:cover} is devoted to the construction of the covering. Finally, in Subsection~\ref{subsec:prove-compare}, we prove Proposition~\ref{prop:compare} using the covering from Proposition~\ref{prop:cover}.

\subsection{Coarse-graining argument}
\label{subsec:coarse-grain}

In this subsection, we introduce the notation for the coarse-graining argument. We start with the notation concerning constants and the coarse-graining scale. Fix $\epsilon>0$, and let 
\begin{equation}
\label{eq:def-lambda}
    \alphaC = Q^2/4 - d/2 > 0 \,, \quad \lambdaC = \frac{\alphaC}{4(\alphaC + d)}\,, \quad \mbox{and} \quad \RC = \lfloor 100 \rr \rfloor + 100 \, ,
\end{equation}
where we recall from Subsection~\ref{subsec:intro-1} that $\rr$ is the $l^\infty$-diameter of the support of the convolution kernel in the definition of $h$.
We fix two integers to be determined later:
\begin{equation*}
    L>100\quad \mbox{and} \quad K>\max \big{\{} 100^d \RC, (16d \xi+1)^2 \big{\}}\,.
\end{equation*}
In the end, we will first choose $K$ to be large, and then select $L$ to be large. Let $n$ and $k$ be two positive integers. We will assume that $n>100$ and is sufficiently large (which may depend on $L$ and $K$). Throughout this section, the constants $c$ and $C$ are independent of both $n$ and $k$, and all the sets are considered as subsets of $\mathbb{R}^d$.

We define a sequence of integers $\{ \mathsf{a}_i \}_{i \geq 1}$ such that
\begin{equation}
\label{eq:def-ai}
    \mathsf{a}_1 = K^4, \quad \mbox{and} \quad \mathsf{a}_{i+1} = \lfloor (1+\lambdaC) \mathsf{a}_i \rfloor + 1 \qquad \forall i \geq 1\,.
\end{equation}
Define the integer
\begin{equation} \label{eq:def:mm}
    \mm := \sup \{ i \geq 1: \mathsf{a}_i \leq (1-\lambdaC) n \}\,.
\end{equation}
Then, we have
\begin{equation}
\label{eq:def-am}
    \mm \geq c\log n\,, \quad \mbox{and} \quad (1-2 \lambdaC)n \leq \mathsf{a}_\mm \leq (1-\lambdaC) n\,.
\end{equation}
The second inequality is due to the fact that when $n$ is sufficiently large, $\mathsf{a}_\mm \geq \frac{1-\lambdaC}{1+\lambdaC} n -1 \geq (1-2\lambdaC)n$. Let us define the sets
\begin{equation} \label{eq:def-mathscrY}
   \mathscrY_0 := 2^{-n} \mathbb{Z}^d, \quad \mbox{and} \quad  \mathscrY_i := 2^{-n+\mathsf{a}_i} \mathbb{Z}^d \quad \forall 1 \leq i \leq \mm\,.
\end{equation}
For $0 \leq i \leq \mm$, we consider $\mathscrY_i$ as a subset of $\mathbb{R}^d$. We call a box $B_r(x)$ in $\mathbb{R}^d$ an \textbf{$\mathsf{a}_i$-box} if $r = 2^{-n+\mathsf{a}_i}$ and $x \in \mathscrY_i$.

We will consider the decomposition of $h_{0,n}$ and $h_{0,n+k}$ as follows:
\begin{equation}
\label{eq:field-decompose-sec5}
\begin{aligned}
    &h_{0,n} = h_{0,n-\mathsf{a}_\mm} + h_{n-\mathsf{a}_\mm,n-\mathsf{a}_{\mm-1}} + \ldots + h_{n-\mathsf{a}_1,n} \,, \quad \mbox{and}\\
    &h_{0,n+k} = h_{0,n-\mathsf{a}_\mm} + h_{n-\mathsf{a}_\mm,n-\mathsf{a}_{\mm-1}} + \ldots + h_{n-\mathsf{a}_1,n} + h_{n,n+k}\,.
\end{aligned}
\end{equation}
In particular, the field $h_{n+k}$ can be seen as $h_n$ plus an additional field $h_{n,n+k}$. 

We now introduce the notation for \textbf{nice $\mathsf{a}_i$-boxes} and \textbf{bad $\mathsf{a}_i$-boxes} inductively for $1 \leq i \leq \mm$. In particular, the statuses of being nice or bad for two $\mathsf{a}_i$-boxes are independent, if they are at $|\cdot|_\infty$-distance at least $\RC \cdot 2^{-n+\mathsf{a}_i}$ away. Recall from Section~\ref{sec:bound-distance} the definitions of ${\rm Diam}_n(A), D_n(\mbox{across }A)$, and $D_n(\mbox{around }A)$.

Here is the intuition behind the definition. At the smallest coarse-graining scale, a nice box is one in which the field varies in a controlled way and the fine metric has the expected upper and lower bounds. At each subsequent scale, a box is declared nice if its field increment is regular and either all smaller boxes inside it are nice or all bad smaller boxes are confined to one localized region around which a sufficiently cheap detour is available. Thus, a nice box permits us either to use the fine-scale geometry directly or to isolate and bypass its exceptional part. One expects this recursive scheme to cover the relevant region because the smallest boxes are nice with probability arbitrarily close to one when $L$ is large, while finite-range independence makes the probability of a bad box decay rapidly with its scale; see Lemmas~\ref{lem:est-q1} and~\ref{lem:est-qm}. At the largest scale $\mathsf{a}_\mm$, which is proportional to $n$, this decay beats the number of boxes by a union bound. Descending through the scales then leaves every point either in a nice box or in one of the localized regions selected for the covering in Proposition~\ref{prop:cover}.

\begin{definition}
    \label{def:nice-boxes}
    \begin{enumerate}
        \item An $\mathsf{a}_1$-box $B_{2^{-n+\mathsf{a}_1}}(x)$ is called \textbf{nice} if it satisfies the following three conditions:
        \begin{enumerate}[(1)]
            \item\label{condition-a1-nice-a} $2^{-n} |\nabla h_{n-\mathsf{a}_1,n}(z)|_\infty \leq L$ for every $z \in B_{2^{-n+\mathsf{a}_1}}(x)$.
            \item\label{condition-a1-nice-b} ${\rm Diam}_{n,n+k} (B_{2^{-n}}(y); B_{2 \cdot 2^{-n}}(y)) < L 2^{-n - (1-\xi Q - \epsilon) k}$ for every $y \in \mathscrY_0 \cap B_{2^{-n+\mathsf{a}_1}}(x)$.
            \item\label{condition-a1-nice-c} $D_{n,n+k} (\mbox{across }B_{2 \cdot 2^{-n}}(y) \backslash B_{2^{-n}}(y)) >  \frac{1}{L}2^{-n - (1-\xi Q + \epsilon) k}$ for every $y \in \mathscrY_0 \cap B_{2^{-n+\mathsf{a}_1}}(x)$.
        \end{enumerate}
        Otherwise, it is called \textbf{bad}.
        \item For $2 \leq j \leq \mm$, given the definition of nice and bad $\mathsf{a}_{j-1}$-boxes, we say an $\mathsf{a}_j$-box $B_{2^{-n+\mathsf{a}_j}}(x)$ is \textbf{nice} if the following condition \ref{condition-nice-a} holds and one of the conditions \ref{condition-nice-b} or \ref{condition-nice-c} hold:
        \begin{enumerate}[(a)]
            \item \label{condition-nice-a} $2^{-n+\mathsf{a}_{j-1}} |\nabla h_{n-\mathsf{a}_j,n-\mathsf{a}_{j-1}}(z)|_\infty \leq K \sqrt{\mathsf{a}_{j-1}}$ for every $z \in B_{4 \cdot 2^{-n+\mathsf{a}_j}}(x)$.  
            \item \label{condition-nice-b} The $\mathsf{a}_{j-1}$-boxes contained in the $\mathsf{a}_j$-box $B_{2^{-n+\mathsf{a}_j}}(x)$ are all nice.
            \item \label{condition-nice-c} There are bad $\mathsf{a}_{j-1}$-boxes contained in $B_{2^{-n+\mathsf{a}_j}}(x)$, but there exists  $z \in \mathscrY_{j-1} \cap B_{2^{-n+\mathsf{a}_j}}(x)$ such that all of these bad $\mathsf{a}_{j-1}$-boxes are contained in the box $B_{\RC \cdot 2^{-n+\mathsf{a}_{j-1}}}(z)$. Moreover, the following inequality holds:
            \begin{equation}
            \label{eq:scale-control}
            \begin{aligned}
                &\quad D_{n-\mathsf{a}_j,n}(\mbox{across }B_{4 \cdot 2^{-n+\mathsf{a}_j}}(x) \backslash B_{2 \cdot 2^{-n+\mathsf{a}_j}}(x)) \\
                &> \exp(K^{3/2} \sqrt{\mathsf{a}_j}) D_{n-\mathsf{a}_j,n}(\mbox{around }B_{4\RC \cdot 2^{-n+\mathsf{a}_{j-1}}}(z) \backslash B_{2\RC \cdot 2^{-n+\mathsf{a}_{j-1}}}(z))\,.
            \end{aligned}
            \end{equation}
        \end{enumerate}
        Otherwise, it is called \textbf{bad}. 
    \end{enumerate}
\end{definition}

\begin{figure}[H]
\centering
\includegraphics[scale = 0.5]{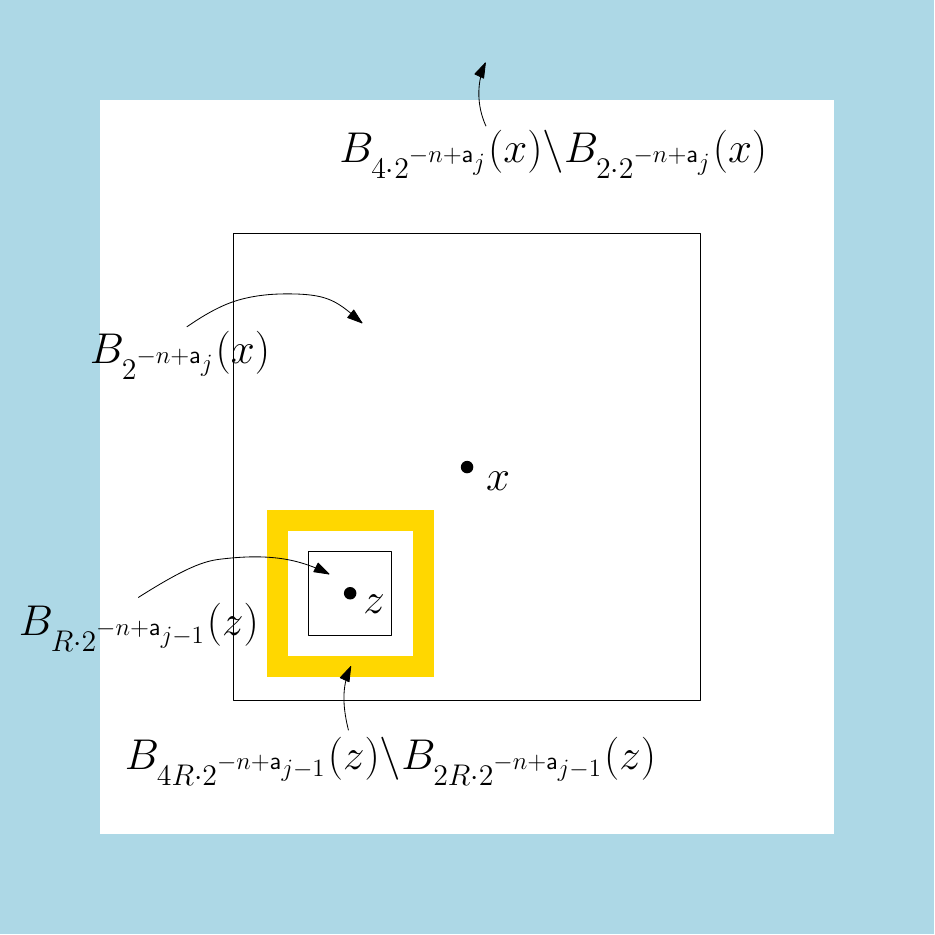}
\caption{Illustration of condition~\ref{condition-nice-c} from Definition~\ref{def:nice-boxes}. All the $a_{j-1}$-boxes contained in $B_{2^{-n + \mathsf{a}_j}}(x)$ but not in $B_{\RC \cdot 2^{-n + \mathsf{a}_{j-1}}}(z)$ are nice. Moreover, the $D_{n-\mathsf{a}_j,n}$-distance around the hypercubic shell represented by the yellow region can be upper-bounded by the $D_{n-\mathsf{a}_j,n}$-distance across the hypercubic shell represented by the blue region.}
\label{fig:nice-box}
\end{figure}

\begin{remark}[Comments on the definitions of nice boxes]
\label{rmk:nice-box}
\begin{enumerate}
\item Using the definition and an induction argument, we can show that the event that an $\mathsf{a}_i$-box $B_{2^{-n+\mathsf{a}_i}}(x)$ is nice is measurable with respect to the $\sigma$-algebra $\sigma( h_{a,b}|_{B_{4 \cdot 2^{-n+\mathsf{a}_i}}(x)} : n-\mathsf{a}_i \leq a \leq b \leq n+k)$. Together with Claim (3) in Lemma~\ref{lem:basic-h}, this shows that the statuses of being nice or bad for two $\mathsf{a}_i$-boxes are independent, if they are at $|\cdot|_\infty$-distance at least $\RC \cdot 2^{-n+\mathsf{a}_i}$ away. 

\item In condition~\ref{condition-nice-c}, the factor ``$\RC$'' in $B_{\RC \cdot 2^{-n+\mathsf{a}_{j-1}}}(z)$ ensures that, for any two $\mathsf{a}_{j-1}$-boxes that cannot be covered by such a box, their statuses of being nice or bad are independent. In addition, for a fixed choice of $z \in \mathscrY_{j-1} \cap B_{2^{-n+\mathsf{a}_j}}(x)$, both of the distances considered in \eqref{eq:scale-control} are independent of $\sigma( h_{a,b}|_{B_{(\RC+4) \cdot 2^{-n+\mathsf{a}_{j-1}}}(z)} : n-\mathsf{a}_{j-1} \leq a \leq b \leq n+k)$. Here, we consider the hypercubic shell $B_{4 \cdot 2^{-n+\mathsf{a}_j}}(x) \backslash B_{2 \cdot 2^{-n+\mathsf{a}_j}}(x)$ to ensure that any path entering $B_{\RC \cdot 2^{-n+\mathsf{a}_{j-1}}}(z)$ with start and end points that are at $|\cdot|_\infty$-distance at least $8\cdot 2^{-n+\mathsf{a}_j}$ away, must cross this hypercubic shell.

\item Conditions~\ref{condition-a1-nice-a} and~\ref{condition-nice-a} are finally used to control $\nabla h_n$. Rather than $\nabla h_n$, we consider the gradients $\nabla h_{n-\mathsf{a}_1,n}$ in condition~\ref{condition-a1-nice-a} and $\nabla h_{n-\mathsf{a}_j,n-\mathsf{a}_{j-1}}$ in condition~\ref{condition-nice-a} to ensure the independence properties. Eventually, we will use these bounds, together with the bound for $\nabla h_{n, n-\mathsf{a}_\mm}$, to estimate $\nabla h_n$; see condition~\ref{cond-a1-new-1} of Proposition~\ref{prop:cover}. For a similar purpose, we consider the metric $ D_{n-\mathsf{a}_j,n}$ in \eqref{eq:scale-control}. Eventually, we will combine \eqref{eq:scale-control} with the bound for $\nabla h_{n, n-\mathsf{a}_j}$ to derive a similar inequality for the metric $D_n$, as shown in \eqref{eq:scale-control-new} in Proposition~\ref{prop:cover}.

\end{enumerate}
    
\end{remark}

\subsection{Probabilities of bad boxes}
\label{subsec:est-bad}
In this subsection, we estimate the probabilities of the bad boxes. We will prove in Lemma~\ref{lem:est-qm} that these probabilities decay rapidly to zero. For $1 \leq j \leq \mm$, define the probability 
\begin{equation}
\label{eq:def-qj}
    \qq_j  = \qq_j(L,K,n,k)  := \mathbb{P}\big[\mbox{the }\mathsf{a}_j\mbox{-box }B_{2^{-n+\mathsf{a}_j}}(x) \mbox{ is bad}\big]
\end{equation}
(noting that the probability does not depend on $x$). We first show that $\qq_1$ tends to zero uniformly as $L$ becomes large.
\begin{lemma}
\label{lem:est-q1}
    For any fixed $K>100$, we have
    \begin{equation*}
        \lim_{L \rightarrow \infty} \sup_{n,k \geq 1} \qq_1 = 0\,.
    \end{equation*}
\end{lemma}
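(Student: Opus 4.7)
\medskip
\noindent\textbf{Proof proposal.}
Since $\mathsf{a}_1 = K^4$ is fixed once $K$ is chosen, the plan is to bound the probability that each of the three conditions in the definition of a nice $\mathsf{a}_1$-box fails, and show that each bound tends to $0$ as $L \to \infty$, uniformly in $n$ and $k$. The scaling identities of Lemmas~\ref{lem:basic-h}(4) and \ref{lem:basic-LFPP}(2), together with translation invariance, reduce every relevant random variable to one whose law depends only on $K$ (for condition \ref{condition-a1-nice-a}) or only on $k$ (for conditions \ref{condition-a1-nice-b}, \ref{condition-a1-nice-c}). A final union bound over $y \in \mathscrY_0 \cap B_{2^{-n+\mathsf{a}_1}}(x)$ costs at most a factor $C \cdot 2^{K^4 d}$, which is a constant once $K$ is fixed.

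For condition \ref{condition-a1-nice-a}, I would apply Lemma~\ref{lem:basic-h}(4) to identify
\[
\sup_{z \in B_{2^{-n+\mathsf{a}_1}}(x)} 2^{-n} |\nabla h_{n-\mathsf{a}_1,n}(z)|_\infty \eqD 2^{-\mathsf{a}_1} \sup_{z \in B_1(0)} |\nabla h_{\mathsf{a}_1}(z)|_\infty,
\]
then combine Claim~(2) of Lemma~\ref{lem:field-estimate} with a union bound over a covering of $B_1(0)$ by $O(2^{\mathsf{a}_1 d})$ boxes of radius $2^{-\mathsf{a}_1}$. This yields a bound $\leq C \cdot 2^{K^4 d} \exp(-L^2/C)$, which vanishes as $L \to \infty$ for fixed $K$.

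For conditions \ref{condition-a1-nice-b} and \ref{condition-a1-nice-c}, Lemma~\ref{lem:basic-LFPP}(2) with $m = n$ and translation invariance give, for each fixed $y$,
\[
{\rm Diam}_{n,n+k}(B_{2^{-n}}(y); B_{2\cdot 2^{-n}}(y)) \eqD 2^{-n} {\rm Diam}_k(B_1(0); B_2(0))
\]
and similarly $D_{n,n+k}(\mathrm{across}\,B_{2\cdot 2^{-n}}(y) \setminus B_{2^{-n}}(y)) \eqD 2^{-n} D_k(\mathrm{across}\,B_2(0) \setminus B_1(0))$. The task thus reduces to bounding
\[
\mathbb{P}[{\rm Diam}_k(B_1(0); B_2(0)) \geq L \cdot 2^{-(1-\xi Q - \epsilon)k}] \quad \mbox{and} \quad \mathbb{P}[D_k(\mathrm{across}\,B_2(0) \setminus B_1(0)) \leq L^{-1} \cdot 2^{-(1-\xi Q + \epsilon)k}]
\]
uniformly in $k$. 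I would split into two regimes. For $k \leq k_0(\epsilon)$, both random variables have fixed laws and are a.s.\ finite and positive, while the thresholds respectively diverge to $\infty$ or collapse to $0$ as $L \to \infty$, so both probabilities vanish in $L$. For $k > k_0$, I invoke Propositions~\ref{prop:diameter} and \ref{prop:box-cross} to locate the medians at $2^{-(1-\xi Q)k + o(k)}$, and use the Gaussian concentration of Lemmas~\ref{lem:diamater-rough-concentration} and \ref{lem:gauss-concentration} (which also pin $\mathbb{E} \log$ to the median up to $O(\sqrt{k})$). The excess (resp.\ deficit) of $\log$ over $\mathbb{E}\log$ is at least $(\epsilon/2)\, k \log 2 + \log L$ for $k_0$ large, giving a probability bound $\leq 2 \exp\bigl(-c\,((\epsilon k + \log L)^2/k)\bigr) \leq C \exp(-c' \epsilon^2 k)$ uniformly in $L \geq 1$. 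Taking $k_0$ large makes this contribution as small as desired.

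The main obstacle is making the estimates genuinely uniform in both $n$ and $k$. The $n$-uniformity is resolved cleanly by the scaling identities. The $k$-uniformity requires the two-regime split, and the key point to verify is that the linear-in-$k$ gap $\epsilon k \log 2$ between the threshold and the median beats the $k$ appearing in the denominator of the Gaussian concentration exponent, so that the resulting bound decays exponentially in $k$ without losing its $L \to \infty$ behavior. This is routine but deserves care at the junction between the two regimes.
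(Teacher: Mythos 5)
Your proposal is correct and follows essentially the same route as the paper: reduce by translation invariance and the scaling relations of Lemmas~\ref{lem:basic-h} and~\ref{lem:basic-LFPP} to quantities whose laws depend only on $K$ or $k$, then invoke Claim~(2) of Lemma~\ref{lem:field-estimate} for condition~\ref{condition-a1-nice-a} and Propositions~\ref{prop:diameter} and~\ref{prop:box-cross} together with the concentration bounds of Lemmas~\ref{lem:diamater-rough-concentration} and~\ref{lem:gauss-concentration} for conditions~\ref{condition-a1-nice-b} and~\ref{condition-a1-nice-c}. Your explicit two-regime split in $k$ (small $k$ handled by fixed laws, large $k$ by the $\epsilon k$ gap beating the variance of order $k$) is just a more detailed writeup of the uniformity-in-$k$ step that the paper states tersely, and it is sound.
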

\begin{proof}
    Fix $K>100$. It suffices to show that, uniformly for any $n$ and $k$, conditions~\ref{condition-a1-nice-a}, \ref{condition-a1-nice-b}, and \ref{condition-a1-nice-c} in Definition~\ref{def:nice-boxes} happen with probability close to $1$ as $L$ becomes large. We begin with condition~\ref{condition-a1-nice-a}. Using the translation invariance and scaling property from Lemma~\ref{lem:basic-h}, we obtain
    \begin{equation*}
        \mathbb{P}\Big[\sup_{z \in B_{2^{-n+\mathsf{a}_1}}(x)} 2^{-n}|\nabla h_{n-\mathsf{a}_1,n}(z)|_\infty \leq L\Big] = \mathbb{P}\Big[\sup_{z \in B_1(0)} 2^{-\mathsf{a}_1}|\nabla h_{\mathsf{a}_1}(z)|_\infty \leq L\Big]\,.
    \end{equation*}
    By Claim (2) from Lemma~\ref{lem:field-estimate}, the right-hand side tends to 1 as $L$ becomes large since the random variable inside the bracket has a Gaussian tail. Therefore, condition~\ref{condition-a1-nice-a} happens with probability close to $1$ as $L$ becomes large. We now lower-bound the probabilities of the conditions \ref{condition-a1-nice-b} and \ref{condition-a1-nice-c}. Using the translation invariance and scaling property from Lemma~\ref{lem:basic-LFPP}, we obtain
    \begin{equation*}
    \begin{aligned}
        &{\rm Diam}_{n,n+k} (B_{2^{-n}}(y); B_{2 \cdot 2^{-n}}(y)) \overset{d}{=} 2^{-n}{\rm Diam}_{k} (B_1(0); B_2(0))\,, \quad \mbox{and}\\
        &D_{n,n+k} (\mbox{across }B_{2 \cdot 2^{-n}}(y) \backslash B_{2^{-n}}(y))\overset{d}{=} 2^{-n} D_k (\mbox{across }B_2(0) \backslash B(0))\,.
    \end{aligned}
    \end{equation*}
    Combining these with Propositions~\ref{prop:diameter} and \ref{prop:box-cross}, and noting that $|\mathscrY_0 \cap B_{2^{-n+\mathsf{a}_1}}(x)| \leq C$, we obtain that conditions~\ref{condition-a1-nice-b} and \ref{condition-a1-nice-c} both happen with probability close to $1$ as $L$ becomes large. Here, we also used the concentration bounds from Lemmas~\ref{lem:diamater-rough-concentration} and \ref{lem:gauss-concentration}. This concludes the proof.
\end{proof}

Next, we upper-bound $\qq_j$. We first prove a lemma, which is a consequence of the scaling property from Lemma~\ref{lem:basic-LFPP} and the super-exponential concentration bounds in Subsection~\ref{subsec:super-exponential}.

\begin{lemma}
\label{lem:5.4}
    Fix $\beta \in (d, Q^2/2)$. For all sufficiently large integer $K$ (which may depend on $\beta$), the following inequality holds for all integers $n,k \geq 1$, $2 \leq j \leq \mm$, and for all $x,z \in \mathbb{R}^d$:
    \begin{equation*}
        \begin{aligned}
            &\mathbb{P}\Big[D_{n-\mathsf{a}_j,n}(\mbox{across }B_{4 \cdot 2^{-n+\mathsf{a}_j}}(x) \backslash B_{2 \cdot 2^{-n+\mathsf{a}_j}}(x)) \\
            &\qquad \quad > \exp(K^{3/2} \sqrt{\mathsf{a}_j}) D_{n-\mathsf{a}_j,n}(\mbox{around }B_{4\RC \cdot 2^{-n+\mathsf{a}_{j-1}}}(z) \backslash B_{2\RC \cdot 2^{-n+\mathsf{a}_{j-1}}}(z)) \Big] \\
            &\quad \geq 1- 2^{-\beta \lambdaC \mathsf{a}_{j-1}}.
        \end{aligned}
    \end{equation*}
\end{lemma}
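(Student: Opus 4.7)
The plan is to rescale both distances via the scaling property (Lemma~\ref{lem:basic-LFPP}) and translation invariance so that they become quantities involving the metric $D_{\mathsf{a}_j}$ on fixed or rescaled annuli, then bound the two sides separately using the super-exponential concentration bounds of Lemmas~\ref{lem:super-exponential-1} and~\ref{lem:super-exponential-2} and apply a union bound. Setting $\ell := \mathsf{a}_j - \mathsf{a}_{j-1}$ and $A := B_{4\RC \cdot 2^{-\ell}}(0) \backslash B_{2\RC \cdot 2^{-\ell}}(0)$, the scaling property with $m = n - \mathsf{a}_j$ combined with translation invariance gives
\begin{align*}
D_{n-\mathsf{a}_j, n}(\mbox{across } B_{4 \cdot 2^{-n+\mathsf{a}_j}}(x) \backslash B_{2 \cdot 2^{-n+\mathsf{a}_j}}(x))
&\overset{d}{=} 2^{-(n-\mathsf{a}_j)} D_{\mathsf{a}_j}(\mbox{across } B_4(0) \backslash B_2(0)),\\
D_{n-\mathsf{a}_j, n}(\mbox{around } B_{4\RC \cdot 2^{-n+\mathsf{a}_{j-1}}}(z) \backslash B_{2\RC \cdot 2^{-n+\mathsf{a}_{j-1}}}(z))
&\overset{d}{=} 2^{-(n-\mathsf{a}_j)} D_{\mathsf{a}_j}(\mbox{around } A),
\end{align*}
so the common prefactor cancels out of the inequality being proved.

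For the across-distance, Lemma~\ref{lem:super-exponential-2} applied at the fixed radii $(4,2)$ yields the lower bound $2^{-(1-\xi Q)\mathsf{a}_j - \epsilon_1 \mathsf{a}_j}$ with probability at least $1 - Ce^{-\mathsf{a}_j \log \mathsf{a}_j}$ for any small $\epsilon_1 > 0$. The delicate step is controlling the around-distance on the \emph{shrinking} annulus $A$, since Lemma~\ref{lem:super-exponential-1} only applies at fixed radii. To handle this I split $h_{\mathsf{a}_j} = h_\ell + h_{\ell, \mathsf{a}_j}$ and use the deterministic bound
\[
D_{\mathsf{a}_j}(\mbox{around } A) \leq \Big(\sup_{x \in A} e^{\xi h_\ell(x)}\Big) \cdot D_{\ell, \mathsf{a}_j}(\mbox{around } A).
\]
Using $\beta < Q^2/2$, I pick $c_1$ with $\sqrt{2\beta}\log 2 < c_1 < Q \log 2$. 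A Gaussian tail bound on $h_\ell(0)$ (variance $\ell \log 2$ by Lemma~\ref{lem:field-estimate}(1)) together with Lemma~\ref{lem:field-estimate}(2) applied after a Taylor expansion on the small set $A$ gives $\sup_{x \in A} h_\ell(x) \leq c_1 \ell$ with probability at least $1 - Ce^{-c_1^2 \ell / (2 \log 2)}$. Applying the scaling property again with $m = \ell$ rescales $A$ to the fixed annulus $B_{4\RC}(0) \backslash B_{2\RC}(0)$ and transforms $D_{\ell, \mathsf{a}_j}$ into $2^{-\ell} D_{\mathsf{a}_{j-1}}$, so Lemma~\ref{lem:super-exponential-1} gives $D_{\ell, \mathsf{a}_j}(\mbox{around } A) \leq 2^{-\ell - (1-\xi Q)\mathsf{a}_{j-1} + \epsilon_2 \mathsf{a}_{j-1}}$ with probability at least $1 - Ce^{-\mathsf{a}_{j-1}\log \mathsf{a}_{j-1}}$ for any small $\epsilon_2 > 0$.

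On the joint good event, the log-ratio of the across-distance to the around-distance is at least $(\xi Q \log 2 - \xi c_1)\ell - (\log 2)(\epsilon_1 \mathsf{a}_j + \epsilon_2 \mathsf{a}_{j-1})$. Since $\ell \geq \lambdaC \mathsf{a}_{j-1} \geq c_0 \mathsf{a}_j$ with $c_0 := \lambdaC/(2(1+\lambdaC))$ fixed, and $Q\log 2 - c_1 > 0$ by the choice of $c_1$, taking $\epsilon_1, \epsilon_2$ sufficiently small (depending only on $\xi, Q, c_1, \lambdaC$) leaves a positive linear-in-$\mathsf{a}_j$ gain, which exceeds $K^{3/2}\sqrt{\mathsf{a}_j}$ provided $\sqrt{\mathsf{a}_j} \gtrsim K^{3/2}$; this is automatic since $\mathsf{a}_j \geq \mathsf{a}_1 = K^4$ once $K$ is large. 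The total error probability $Ce^{-\mathsf{a}_j\log\mathsf{a}_j} + Ce^{-\mathsf{a}_{j-1}\log \mathsf{a}_{j-1}} + Ce^{-c_1^2 \ell /(2\log 2)}$ is absorbed by $2^{-\beta \lambdaC \mathsf{a}_{j-1}}$ for $K$ large: the third term because $c_1 > \sqrt{2\beta}\log 2$ forces $c_1^2 \ell/(2\log 2) \geq \beta \log 2 \cdot \ell \geq \beta \lambdaC \log 2 \cdot \mathsf{a}_{j-1}$, and the first two because $\log \mathsf{a}_{j-1} \geq 4\log K > \beta \lambdaC \log 2$ once $K$ is large.

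The main obstacle is the treatment of the around-distance on the shrinking annulus $A$: the field must first be decomposed into coarse and fine parts, and the fine part rescaled to a fixed-scale annulus. The role of the hypothesis $\beta < Q^2/2$ is precisely to guarantee a choice of $c_1$ sandwiched between $\sqrt{2\beta}\log 2$ (the minimum tolerated by the Gaussian tail bound on $h_\ell(0)$ in order that $Ce^{-c_1^2 \ell/(2\log 2)}$ beat $2^{-\beta \lambdaC \mathsf{a}_{j-1}}$) and $Q\log 2$ (the maximum compatible with retaining a positive linear gain $\xi(Q\log 2 - c_1)\ell$ in the main exponent).
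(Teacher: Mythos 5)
Your proof is correct and follows essentially the same route as the paper's: rescale via Lemma~\ref{lem:basic-LFPP}, split the field at the intermediate scale $\mathsf{a}_{j-1}$ so the around-distance factors into a coarse-field supremum (whose Gaussian tail is where $\beta < Q^2/2$ enters) times a fine-field around-distance at a fixed-size shell, then apply Lemmas~\ref{lem:super-exponential-1} and~\ref{lem:super-exponential-2} and absorb all errors using $\mathsf{a}_1 = K^4$ for $K$ large. The only differences are cosmetic (you rescale before decomposing and rederive the supremum bound from Claims (1)--(2) of Lemma~\ref{lem:field-estimate} rather than citing Claim (5)).
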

\begin{proof}
    Let $\theta$ and $\delta$ be two positive constants to be chosen. Define the random variables $D_1, D_2$, and $H$ as follows:
    \begin{equation*}
    \begin{aligned}
        &D_1 := D_{n-\mathsf{a}_j,n}(\mbox{across }B_{4 \cdot 2^{-n+\mathsf{a}_j}}(x) \backslash B_{2 \cdot 2^{-n+\mathsf{a}_j}}(x))\,,\\
        &D_2 := D_{n-\mathsf{a}_{j-1},n}(\mbox{around }B_{4\RC \cdot 2^{-n+\mathsf{a}_{j-1}}}(z) \backslash B_{2\RC \cdot 2^{-n+\mathsf{a}_{j-1}}}(z))\,,\\
        &H := \sup_{w \in B_{4\RC \cdot 2^{-n+\mathsf{a}_{j-1}}}(z)}h_{n-\mathsf{a}_j, n-\mathsf{a}_{j-1}}(w)\,.
    \end{aligned}
    \end{equation*}
    
    Using the fact that
    \begin{equation*}
       D_{n-\mathsf{a}_j,n}(\mbox{around }B_{4\RC \cdot 2^{-n+\mathsf{a}_{j-1}}}(z) \backslash B_{2\RC \cdot 2^{-n+\mathsf{a}_{j-1}}}(z)) \leq D_2 \times e^{\xi H},
    \end{equation*}
    we obtain
    \begin{equation}
    \label{eq:lem5.4-1}
    \begin{aligned}
        &\quad \mathbb{P}\big[D_1 > \exp(K^{3/2} \sqrt{\mathsf{a}_j}) D_{n-\mathsf{a}_j,n}(\mbox{around }B_{4\RC \cdot 2^{-n+\mathsf{a}_{j-1}}}(z) \backslash B_{2\RC \cdot 2^{-n+\mathsf{a}_{j-1}}}(z)) \big]\\
        &\geq \mathbb{P}\Big[ \big{\{} D_1 > 2^{-n + (\xi Q - \theta) \mathsf{a}_j} \big{\}} \cap \big{\{} D_2 <  2^{-n + (\xi Q + \theta) \mathsf{a}_{j-1} } \big{\}} \cap \big{\{} H <  s_j  \big{\}} \Big]\, 
    \end{aligned}
    \end{equation}
    where
    \[
    s_j = Q(\mathsf{a}_j - \mathsf{a}_{j-1}) \log 2 - \frac{\theta}{\xi} (\mathsf{a}_j + \mathsf{a}_{j-1}) \log 2 - K^{3/2} \sqrt{\mathsf{a}_j} / \xi .
    \]
    By \eqref{eq:def-ai}, we have $(1+\lambdaC)\mathsf{a}_{j-1} \leq  \mathsf{a}_j \leq (1+\lambdaC)\mathsf{a}_{j-1} + 1$. Therefore, we can choose a sufficiently small $\theta = \theta(\delta)>0$ such that, for all sufficiently large $K$ (which may depend on $\delta$ and $\theta$):
    \begin{equation}
        \label{eq:lem5.4-s}
        s_j > (Q \lambdaC \log 2  - \delta) \mathsf{a}_{j-1} \quad \forall j \geq 2\,.
    \end{equation}Here, we also used the fact that $\mathsf{a}_{j-1} \geq \mathsf{a}_1 = K^4$. 

    Using the translation invariance and scaling property from Lemma~\ref{lem:basic-LFPP}, we obtain
    \begin{equation*}
    \begin{aligned}
        &D_1 \overset{d}{=} 2^{-n+\mathsf{a}_j} D_{\mathsf{a}_j}(\mbox{across } B_4(0) \backslash B_2(0)) \quad \mbox{and}\\
        &D_2 \overset{d}{=}  2^{-n+\mathsf{a}_{j-1}} D_{\mathsf{a}_{j-1}}(\mbox{around }B_{4\RC}(0) \backslash B_{2\RC}(0))\,.
        \end{aligned}
    \end{equation*}
    Combining these with Lemmas~\ref{lem:super-exponential-1} and \ref{lem:super-exponential-2} yields
    \begin{equation}
    \label{eq:lem5.4-2}
    \begin{aligned}
        &\mathbb{P}\Big[D_1 > 2^{-n + (\xi Q - \theta) \mathsf{a}_j} \Big] > 1- C2^{-\mathsf{a}_j \log \mathsf{a}_j} \quad \mbox{and} \\
        &\mathbb{P}\Big[D_2 < 2^{-n + (\xi Q + \theta) \mathsf{a}_{j-1} } \Big] > 1- C2^{-\mathsf{a}_{j-1} \log \mathsf{a}_{j-1}}.
        \end{aligned}
    \end{equation}

    Using the translation invariance and the scaling property from Lemma~\ref{lem:basic-h}, we obtain
    \begin{equation}
    \label{eq:lem5.4-3-0}
        H \overset{d}{=} \sup_{w \in B_{4\RC \cdot 2^{\mathsf{a}_{j-1}-\mathsf{a}_j}}(0)} h_{\mathsf{a}_j - \mathsf{a}_{j-1}}(w)\,.
    \end{equation}
    Combining \eqref{eq:lem5.4-3-0} with \eqref{eq:lem5.4-s} and Claim (5) from Lemma~\ref{lem:field-estimate} for $n = \mathsf{a}_j - \mathsf{a}_{j-1}$, we obtain that for all sufficiently large $K$:
    \begin{equation*}
    \begin{aligned}
        \mathbb{P}[H < s_j] 
        &\geq \mathbb{P}[H < t \mathsf{a}_{j-1} ] \\ 
        &\geq 1- C \exp \Big(- \frac{(t \mathsf a_{j-1} - (t \mathsf a_{j-1})^{2/3})^2 }{2 \log 2 \cdot(\mathsf{a}_j - \mathsf{a}_{j-1})} \Big) - C \exp \Big(- \frac{(t \mathsf a_{j-1})^{4/3}}{C} \Big)\,,
        \end{aligned}
    \end{equation*}
    where $t = Q \lambdaC \log 2 - \delta$ and $C$ is a constant independent of $\delta$, $K$, and $j$. Since $\beta < Q^2/2$, which implies that $\sqrt{2 \beta} < Q$, we can choose a sufficiently small $\delta$ such that $t > \sqrt{2 \beta} \lambda \log 2$. Combining this with the fact that $(1+\lambdaC) \mathsf{a}_{j-1} \leq \mathsf{a}_j \leq (1+\lambdaC) \mathsf{a}_{j-1} +1$, we obtain that for all sufficiently large $K$:
    \begin{equation}
    \label{eq:lem5.4-3}
        \mathbb{P}[H <  s_j ] \geq 1- 2^{-\beta\lambdaC \mathsf{a}_{j-1}} / 2\,.
    \end{equation}
    Combining \eqref{eq:lem5.4-1} with the estimates \eqref{eq:lem5.4-2} and \eqref{eq:lem5.4-3} yields the desired result.
\end{proof}

We now upper-bound $\qq_j$ in terms of $\qq_{j-1}$.

\begin{lemma}
\label{lem:est-qj}
    For each fixed $\beta \in (d, Q^2/2)$, there exists a constant $C = C(\beta)>0$ such that for all $K \geq C$ and $L>100$, the following inequality holds for all integers $n,k \geq 1$ and $2 \leq j \leq \mm$:
    \begin{equation*}
        \qq_j \leq C \Big( e^{-K\mathsf{a}_j} + 2^{2d \lambdaC \mathsf{a}_{j-1}} \qq_{j-1}^2 + 2^{(d - \beta)\lambdaC \mathsf{a}_{j-1}} \qq_{j-1} \Big)\,.
    \end{equation*}
\end{lemma}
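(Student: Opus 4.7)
The plan is to decompose the event ``the $\mathsf{a}_j$-box $B_{2^{-n+\mathsf{a}_j}}(x)$ is bad'' into three mutually exclusive (or at least bounded by three) pieces corresponding to the three terms in the claimed inequality. Writing $E_a,E_b,E_c$ for the three conditions of Definition~\ref{def:nice-boxes}(2), the box is bad iff $E_a^c \cup (E_b^c \cap E_c^c)$, so by a union bound it suffices to bound (i) $\mathbb{P}[E_a^c]$, (ii) the probability that bad $\mathsf{a}_{j-1}$-sub-boxes exist but cannot be covered by a single $B_{\RC \cdot 2^{-n+\mathsf{a}_{j-1}}}(z)$ with $z \in \mathscrY_{j-1} \cap B_{2^{-n+\mathsf{a}_j}}(x)$, and (iii) the probability that they can be so covered but the inequality~\eqref{eq:scale-control} fails for every admissible $z$.

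For (i), the scaling identity in Claim~(4) of Lemma~\ref{lem:basic-h} reduces the sup over $B_{4 \cdot 2^{-n+\mathsf{a}_j}}(x)$ to the sup of $|\nabla h_N|_\infty$ over a box of radius $4$, with $N = \mathsf{a}_j - \mathsf{a}_{j-1} \leq \lambdaC \mathsf{a}_{j-1}+1$. Covering this box by $O(2^{dN})$ boxes of radius $2^{-N}$ and applying Claim~(2) of Lemma~\ref{lem:field-estimate} together with a union bound yields $\mathbb{P}[E_a^c] \leq C\, 2^{d\lambdaC \mathsf{a}_{j-1}} \exp(-K^2 \mathsf{a}_{j-1}/C)$. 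Since $\mathsf{a}_{j-1} \geq \mathsf{a}_j/2$ and $K > (16d\xi+1)^2$, for $K$ large enough this is dominated by $C e^{-K \mathsf{a}_j}$, which gives the first term.

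For (ii), if the bad $\mathsf{a}_{j-1}$-sub-boxes of $B_{2^{-n+\mathsf{a}_j}}(x)$ cannot all be covered by a single box of radius $\RC \cdot 2^{-n+\mathsf{a}_{j-1}}$ centered at a vertex of $\mathscrY_{j-1}$, then there must exist two bad $\mathsf{a}_{j-1}$-sub-boxes at $l^\infty$-distance at least $\RC \cdot 2^{-n+\mathsf{a}_{j-1}}$. By the independence property recorded in the first part of Remark~\ref{rmk:nice-box} (which relies on Claim~(3) of Lemma~\ref{lem:basic-h}), the events that two such boxes are bad are independent. Since the number of $\mathsf{a}_{j-1}$-sub-boxes inside $B_{2^{-n+\mathsf{a}_j}}(x)$ is at most $C\, 2^{d(\mathsf{a}_j - \mathsf{a}_{j-1})} \leq C\, 2^{d\lambdaC \mathsf{a}_{j-1}}$, a union bound over pairs produces the bound $C\, 2^{2d\lambdaC \mathsf{a}_{j-1}} \qq_{j-1}^2$, the second term.

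For (iii), the event is contained in the union over $z \in \mathscrY_{j-1} \cap B_{2^{-n+\mathsf{a}_j}}(x)$ of the event $\mathcal{B}_z$ that some $\mathsf{a}_{j-1}$-sub-box inside $B_{\RC \cdot 2^{-n+\mathsf{a}_{j-1}}}(z)$ is bad and the inequality~\eqref{eq:scale-control} fails for this $z$. The second part of Remark~\ref{rmk:nice-box} asserts that for fixed $z$ the two distances in~\eqref{eq:scale-control} are independent of the $\sigma$-algebra generated by the finer-scale fields in $B_{(\RC+4) \cdot 2^{-n+\mathsf{a}_{j-1}}}(z)$, which is precisely what controls the statuses of the sub-boxes inside $B_{\RC \cdot 2^{-n+\mathsf{a}_{j-1}}}(z)$. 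Hence by independence, Lemma~\ref{lem:5.4}, and a union bound over the at most $(2\RC)^d$ sub-boxes inside $B_{\RC \cdot 2^{-n+\mathsf{a}_{j-1}}}(z)$, we obtain $\mathbb{P}[\mathcal{B}_z] \leq C\, \qq_{j-1} \cdot 2^{-\beta\lambdaC \mathsf{a}_{j-1}}$. Summing over the $O(2^{d\lambdaC \mathsf{a}_{j-1}})$ choices of $z$ gives the third term $C\, 2^{(d-\beta)\lambdaC \mathsf{a}_{j-1}} \qq_{j-1}$. The main subtlety (and the place that has to be handled with care) is precisely the independence argument in (iii): one must verify, using the white-noise representation~\eqref{eq:field-def} together with the separation between the shell $B_{4 \cdot 2^{-n+\mathsf{a}_j}}(x) \setminus B_{2 \cdot 2^{-n+\mathsf{a}_j}}(x)$ (and the shell around $z$) and the small box $B_{(\RC+4)\cdot 2^{-n+\mathsf{a}_{j-1}}}(z)$, that the distance inequality truly is independent of the fine-scale field in that small box; this requires the choice $\RC > 100\rr$ and $\mathsf{a}_{j-1} \geq \mathsf{a}_1 = K^4$ to guarantee the shell lies sufficiently outside the small box.
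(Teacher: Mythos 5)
Your proposal is correct and follows essentially the same route as the paper: the same three-way decomposition (failure of condition \ref{condition-nice-a}; two far-apart bad $\mathsf{a}_{j-1}$-boxes handled by pairwise independence; a single covering box $B_{\RC\cdot 2^{-n+\mathsf{a}_{j-1}}}(z)$ with \eqref{eq:scale-control} failing, handled via the independence noted in Remark~\ref{rmk:nice-box} together with Lemma~\ref{lem:5.4}), with the same scaling and gradient estimates and the same union bounds producing the three terms. The independence subtlety you flag in step (iii) is exactly the point the paper addresses through Remark~\ref{rmk:nice-box} and the choice of $\RC$.
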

\begin{proof}
    By Definition~\ref{def:nice-boxes}, an $\mathsf{a}_j$-box $B_{2^{-n+\mathsf{a}_j}}(x)$ is bad if either condition~\ref{condition-nice-a} does not hold, or if both conditions~\ref{condition-nice-b} and \ref{condition-nice-c} fail. We first estimate the probability that condition~\ref{condition-nice-a} fails, which will be denoted by $I$. Then, we have
    \begin{equation*}
        I = \mathbb{P}\Big[ \sup_{z \in B_{4 \cdot 2^{-n+\mathsf{a}_j}}(x)}2^{-n+\mathsf{a}_{j-1}} |\nabla h_{n-\mathsf{a}_j,n-\mathsf{a}_{j-1}}(z)|_\infty > K \sqrt{\mathsf{a}_{j-1}}\Big]\,.
    \end{equation*}
    By the translation and scaling property from Lemma~\ref{lem:basic-h}, we obtain
    \begin{equation*}
        I = \mathbb{P}\Big[ \sup_{z \in B_4(0)} 2^{-(\mathsf{a}_j-\mathsf{a}_{j-1})} |\nabla h_{\mathsf{a}_j-\mathsf{a}_{j-1}}(z)|_\infty > K \sqrt{\mathsf{a}_{j-1}} \Big]\,.
    \end{equation*}
    Applying Claim (2) from Lemma~\ref{lem:field-estimate}, with $n = \mathsf{a}_j - \mathsf{a}_{j-1}$, and noting that $|2^{\mathsf{a}_{j-1} - \mathsf{a}_j }\mathbb{Z}^d \cap B_4(0)| \leq C2^{d \lambdaC \mathsf{a}_{j-1}}$, we obtain
    \begin{equation*}
    \begin{aligned}
        I 
        &\leq \sum_{y \in 2^{\mathsf{a}_{j-1} - \mathsf{a}_j }\mathbb{Z}^d \cap B_4(0) } \mathbb{P}\Big[ \sup_{z \in B_{2^{\mathsf{a}_{j-1} - \mathsf{a}_j }}(y)} 2^{\mathsf{a}_{j-1} - \mathsf{a}_j } |\nabla h_{\mathsf{a}_{j} - \mathsf{a}_{j-1} }(z)|_\infty > K \sqrt{\mathsf{a}_{j-1}} \Big] \\
        &\leq C2^{d\lambdaC \mathsf{a}_{j-1}} \times C e^{-K^2\mathsf{a}_{j-1}/C}.
        \end{aligned}
    \end{equation*}
    Therefore, for all sufficiently large $K$, we get
    \begin{equation}
    \label{eq:lem5.5-1}
        \mathbb{P}\big[ \mbox{condition }\ref{condition-nice-a} \mbox{ fails}\big] = I \leq C e^{-K \mathsf{a}_j}.
    \end{equation}

    Next, we upper-bound the probability that both conditions~\ref{condition-nice-b} and \ref{condition-nice-c} fail. By definition, there are two possible cases:
    \begin{enumerate}
        \item There exist two bad $\mathsf{a}_{j-1}$ boxes contained in $B_{2^{-n+\mathsf{a}_j}}(x)$, and they are at $|\cdot|_\infty$-distance at least $\RC \cdot 2^{-n+\mathsf{a}_{j-1}}$ apart from each other.
        \item For some $z \in \mathscrY_{j-1} \cap B_{2^{-n+\mathsf{a}_j}}(x)$, there exists at least one bad $\mathsf{a}_{j-1}$-box inside the box $B_{\RC \cdot 2^{-n+\mathsf{a}_{j-1}}}(z)$, and the inequality~\eqref{eq:scale-control} fails for $z$.
    \end{enumerate}
    We denote the event that the first case happens by $\mathcal{B}_1$ and the event that the second case happens by $\mathcal{B}_2$. 

    We first upper-bound $\mathbb{P}[\mathcal{B}_1]$. As noted in Remark~\ref{rmk:nice-box}, for any two $\mathsf{a}_{j-1}$-boxes that are at least $\RC \cdot 2^{-n+\mathsf{a}_{j-1}}$ away from each other, their statuses of being nice or bad are independent. Therefore,
    \begin{equation}
    \label{eq:lem5.5-2}
    \begin{aligned}
        &\quad \mathbb{P}[ \mathcal{B}_1] \\
        &\leq \sum_{\substack{x_1,x_2 \in \mathscrY_{j-1} \cap B_{2^{-n+\mathsf{a}_j}}(x)\\ |x_1 - x_2|_\infty \geq \RC \cdot  2^{-n+\mathsf{a}_{j-1}} }}\mathbb{P}\Big[B_{2^{-n+\mathsf{a}_{j-1}}}(x_1) \mbox{ is bad}\Big] \cdot  \mathbb{P}\Big[B_{2^{-n+\mathsf{a}_{j-1}}}(x_2) \mbox{ is bad} \Big] \\
        &\leq C 2^{2d\lambdaC \mathsf{a}_{j-1}} \qq_{j-1}^2.
    \end{aligned}
    \end{equation}
    The last inequality follows from $|\mathscrY_{j-1} \cap B_{2^{-n+\mathsf{a}_j}}(x)| \leq C 2^{d(\mathsf{a}_j - \mathsf{a}_{j-1})} \leq C 2^{ d\lambdaC \mathsf{a}_{j-1}}$ and the definition of $\qq_{j-1}$ from \eqref{eq:def-qj}.

    Next, we upper-bound $\mathbb{P}[\mathcal{B}_2]$. Using the definition of $\qq_{j-1}$ from \eqref{eq:def-qj}, we have
    \begin{equation}
    \label{eq:lem5.5-3-0}
        \mathbb{P}\big[\exists y \in \mathscrY_{j-1} \cap B_{\RC \cdot 2^{-n+\mathsf{a}_{j-1}}}(z) \mbox{ such that }B_{2^{-n+\mathsf{a}_{j-1}}}(y) \mbox{ is bad}] \leq C \qq_{j-1}\,.
    \end{equation}
    Note that both the distances considered in \eqref{eq:scale-control} are independent of the $\sigma$-algebra $\sigma( h_{a,b}|_{B_{(\RC+4) \cdot 2^{-n+\mathsf{a}_{j-1}}}(z)} : n-\mathsf{a}_{j-1} \leq a \leq b \leq n+k)$. Therefore, by Remark~\ref{rmk:nice-box}, they are also independent of the statuses of the $\mathsf{a}_{j-1}$-boxes contained in $B_{\RC \cdot 2^{-n+\mathsf{a}_{j-1}}}(z)$. Combining this independence argument with \eqref{eq:lem5.5-3-0} and Lemma~\ref{lem:5.4} with the same choice of $\beta$, we obtain that for all sufficiently large $K$:
    \begin{equation}
    \label{eq:lem5.5-3}
        \mathbb{P}[ \mathcal{B}_2] \leq |\mathscrY_{j-1} \cap B_{2^{-n+\mathsf{a}_j}}(x)| \times C \qq_{j-1} \times 2^{-\beta \lambdaC \mathsf{a}_{j-1}} \leq C 2^{(d - \beta)\lambdaC \mathsf{a}_{j-1}} \qq_{j-1}\,.
    \end{equation}
    In the last inequality, we used $|\mathscrY_{j-1} \cap B_{2^{-n+\mathsf{a}_j}}(x)| \leq C2^{d \lambdaC \mathsf{a}_{j-1}}$. Combining \eqref{eq:lem5.5-1}, \eqref{eq:lem5.5-2}, and \eqref{eq:lem5.5-3} yields the desired result.
\end{proof}

In the following lemma, we derive an upper bound for $\qq_j$ based on Lemmas~\ref{lem:est-q1} and \ref{lem:est-qj}. Recall from \eqref{eq:def-lambda} the definition of the constants $\alphaC$ and $\lambdaC$.

\begin{lemma}
    \label{lem:est-qm}
    There exist a constant $C>0$ and an increasing function $L_0: \mathbb{N} \rightarrow \mathbb{N}$ such that for all $K >C$ and $L>L_0(K)$, we have
    \begin{equation*}
        \qq_j \leq 2^{-\alphaC \mathsf{a}_j} \quad \forall n,k \geq 1 \mbox{ and }1 \leq j \leq \mm\,.
    \end{equation*}
\end{lemma}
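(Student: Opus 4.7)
The plan is to prove the lemma by induction on $j$, using Lemma~\ref{lem:est-q1} for the base case and Lemma~\ref{lem:est-qj} for the inductive step. First I would fix a small $\eta>0$ so that $\beta:=\alphaC+d+\eta$ satisfies $d<\beta<Q^2/2$; this is possible because the standing assumption $Q>\sqrt{2d}$ gives $\alphaC+d=Q^2/4+d/2<Q^2/2$. This choice of $\beta$ is the input to Lemma~\ref{lem:est-qj}.

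For the base case, Lemma~\ref{lem:est-q1} provides $\sup_{n,k\geq 1}\qq_1\to 0$ as $L\to\infty$ for any fixed $K$, so I define $L_0(K)$ as the smallest integer such that $L>L_0(K)$ forces $\qq_1\leq 2^{-\alphaC K^4}=2^{-\alphaC\mathsf{a}_1}$ uniformly in $n,k$. For the inductive step, assuming $\qq_{j-1}\leq 2^{-\alphaC\mathsf{a}_{j-1}}$ and applying Lemma~\ref{lem:est-qj}, it suffices to verify that each of the three summands is at most $\tfrac{1}{3C}\,2^{-\alphaC\mathsf{a}_j}$. Using $\mathsf{a}_j\leq(1+\lambdaC)\mathsf{a}_{j-1}+1$, these reduce to three exponent inequalities:
\begin{enumerate}
\item[(i)] The first term $C e^{-K\mathsf{a}_j}$ is immediately dominated by $2^{-\alphaC\mathsf{a}_j}$ once $K>(\alphaC+1)\log 2$ and $\mathsf{a}_j\geq K^4$ is large enough to absorb the constant.
\item[(ii)] For the second term I need $2\alphaC-2d\lambdaC>\alphaC(1+\lambdaC)$, equivalently $\lambdaC<\alphaC/(2d+\alphaC)$. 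Since the paper chose $\lambdaC=\alphaC/(4(\alphaC+d))$ and $4(\alphaC+d)>2d+\alphaC$, this holds strictly, giving geometric decay of rate $(\alphaC-(2d+\alphaC)\lambdaC)\mathsf{a}_{j-1}$ per step.
\item[(iii)] For the third term with $\beta=\alphaC+d+\eta$, the exponent inequality $(d-\beta)\lambdaC-\alphaC\leq -\alphaC(1+\lambdaC)-\delta/\mathsf{a}_{j-1}$ reduces to $\eta\lambdaC>0$, again giving strict geometric decay.
\end{enumerate}

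Since $\mathsf{a}_{j-1}\geq\mathsf{a}_1=K^4$, the geometric decay factors in (ii) and (iii) absorb the multiplicative constant $3C$ once $K$ is chosen large enough depending only on $\alphaC$, $d$, and $\eta$ (hence only on $\xi$). After fixing such a $K$, the updated $L_0(K)$ handles the base case, and the induction closes.

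The main thing to check is not so much conceptual as algebraic: one must confirm that the two exponent inequalities (ii) and (iii) are compatible with the single choice of $\lambdaC$ fixed in~\eqref{eq:def-lambda}. Both reduce, respectively, to $\lambdaC<\alphaC/(2d+\alphaC)$ and to $\eta\lambdaC>0$ (for any $\eta>0$), and the specific prescription $\lambdaC=\alphaC/(4(\alphaC+d))$ satisfies the first with plenty of room, which is the reason this $\lambdaC$ was chosen. No RSW-type input or new probabilistic estimate is needed beyond Lemmas~\ref{lem:est-q1} and~\ref{lem:est-qj}; the proof is a clean deterministic recursion.
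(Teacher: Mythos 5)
Your proposal is correct and follows essentially the same route as the paper: induction on $j$ with the base case supplied by Lemma~\ref{lem:est-q1} (which defines $L_0(K)$) and the inductive step by Lemma~\ref{lem:est-qj}, where the two exponent inequalities $2d\lambdaC-2\alphaC<-\alphaC(1+\lambdaC)$ and $(d-\beta)\lambdaC-\alphaC<-\alphaC(1+\lambdaC)$ close the recursion. Your explicit choice $\beta=\alphaC+d+\eta$ is just a particular instance of the paper's $\beta\in(d+\alphaC,Q^2/2)$, so the arguments coincide.
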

\begin{proof}
By Lemma~\ref{lem:est-q1}, for any fixed $K>100$, there exists $L(K)>0$ such that for all $L > L(K)$,
\begin{equation}
\label{eq:lem5.6-1}
    \qq_1 \leq 2^{-\alphaC \mathsf{a}_1} \quad \forall n,k \geq 1\,.
\end{equation}
Since $\alphaC < Q^2/2-d$, we can choose a constant $\beta \in (d+\alphaC, Q^2/2)$. We now prove the lemma using an induction argument. Suppose that the inequality $\qq_{j-1} \leq 2^{-\alphaC \mathsf{a}_{j-1}}$ holds. Using Lemma~\ref{lem:est-qj}, we obtain that for all $K > C$ and $L>100$,
\begin{equation*}
    \begin{aligned}
        \qq_j \leq C \Big( e^{-K\mathsf{a}_j} + 2^{(2d \lambdaC-2\alphaC) \mathsf{a}_{j-1}}  + 2^{((d - \beta)\lambdaC -\alphaC )\mathsf{a}_{j-1}}\Big)\,.
    \end{aligned}
\end{equation*}
Recall the definition of $\lambdaC$ from \eqref{eq:def-lambda}. Since $\lambdaC < \frac{\alphaC}{2d +\alphaC}$, we obtain $(2d + \alphaC) \lambdaC - 2 \alphaC < -\alphaC$, which implies $2d\lambdaC - 2\alphaC < -\alphaC(1+\lambdaC)$. Since $d - \beta < - \alphaC$, we obtain $(d-\beta) \lambdaC - \alphaC < -\alphaC (1+\lambdaC)$. Moreover, recall from \eqref{eq:def-ai} that $\mathsf{a}_j \leq (1+\lambdaC)\mathsf{a}_{j-1}+1$. Therefore, we can choose a sufficiently large constant $C'$ (independent of $n,k$, and $j$) such that for all $K>C'$, $L>100$, and $2 \leq j \leq \mm$:
\begin{equation*}
    \qq_{j-1} \leq 2^{-\alphaC \mathsf{a}_{j-1}} \quad \mbox{implies that } \quad  \qq_j \leq 2^{-\alphaC \mathsf{a}_j}.
\end{equation*}
By \eqref{eq:lem5.6-1} and an induction argument, we obtain that $\qq_j \leq 2^{-\alphaC \mathsf{a}_j}$ for all $1 \leq j \leq \mm$.
\end{proof}

\subsection{Construction of a covering}
\label{subsec:cover}

In this subsection, we construct a sequence of boxes at various scales in Proposition~\ref{prop:cover}. These boxes satisfy a series of conditions which will play an important role in the proof of Proposition~\ref{prop:compare} in Subsection~\ref{subsec:prove-compare}. We first introduce two events and show in Lemma~\ref{lem:est-j12} that both of these events happen with high probability. Define the events as follows (recall the definition of ``nice" from Definition~\ref{def:nice-boxes}):
\begin{equation}
\label{eq:def-j12}
\begin{aligned}
    &\mathcalJ_1 = \mathcalJ_1(L,K,n,k) := \big{\{}\mbox{Each }\mathsf{a}_\mm \mbox{-box contained in }B_2(0) \mbox{ is nice} \big{\}}\,, \\
    &\mathcalJ_2 = \mathcalJ_2(L,K,n,k) := \big{\{} \sup_{z \in B_3(0)} 2^{-n + \mathsf{a}_\mm} |\nabla h_{0, n-\mathsf{a}_\mm}(z)|_\infty \leq K \sqrt{\mathsf{a}_\mm} \big{\}} \,.
\end{aligned}
\end{equation}

\begin{lemma}
\label{lem:est-j12}
There exists a constant $C>0$ such that for all $K>C$ and $L>L_0(K)$ (recall from Lemma~\ref{lem:est-qm}), we have
    \begin{equation*}
        \mathbb{P}[\mathcalJ_1 \cap \mathcalJ_2] \geq 1-C2^{-n/C} \quad \forall n,k \geq 1\,.
    \end{equation*}
\end{lemma}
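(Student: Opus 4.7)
The plan is to bound $\mathbb{P}[\mathcalJ_1^c]$ and $\mathbb{P}[\mathcalJ_2^c]$ separately by union bounds, using Lemma~\ref{lem:est-qm} for the first event and Claim (2) of Lemma~\ref{lem:field-estimate} for the second.

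For $\mathcalJ_1$, I would first observe that the number of $\mathsf{a}_\mm$-boxes contained in $B_2(0)$ is of order $2^{(n-\mathsf{a}_\mm)d}$. Each such box is bad with probability $\qq_\mm$, and by Lemma~\ref{lem:est-qm} (which applies once $K>C$ and $L>L_0(K)$) we have $\qq_\mm \leq 2^{-\alphaC \mathsf{a}_\mm}$. A union bound gives
\[
\mathbb{P}[\mathcalJ_1^c] \leq C 2^{(n-\mathsf{a}_\mm)d}\, 2^{-\alphaC \mathsf{a}_\mm}.
\]
Using $\mathsf{a}_\mm \geq (1-2\lambdaC)n$ and $n-\mathsf{a}_\mm \leq 2\lambdaC n$ from \eqref{eq:def-am}, this is bounded by $C 2^{(2\lambdaC d - \alphaC(1-2\lambdaC))n}$. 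The definition $\lambdaC = \alphaC/(4(\alphaC+d))$ in \eqref{eq:def-lambda} was tailored precisely so that $\alphaC(1-2\lambdaC) - 2\lambdaC d = \alphaC/2 > 0$, which gives $\mathbb{P}[\mathcalJ_1^c] \leq C 2^{-\alphaC n/2} \leq C 2^{-n/C}$.

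For $\mathcalJ_2$, I would apply Claim (2) of Lemma~\ref{lem:field-estimate} to the field $h_{n-\mathsf{a}_\mm}$. Covering $B_3(0)$ by $O(2^{(n-\mathsf{a}_\mm)d})$ translates of the small box $2^{-(n-\mathsf{a}_\mm)}B_1(0)$ and using translation invariance from Lemma~\ref{lem:basic-h}, a union bound with $t = K\sqrt{\mathsf{a}_\mm}$ yields
\[
\mathbb{P}[\mathcalJ_2^c] \leq C 2^{(n-\mathsf{a}_\mm)d}\, e^{-K^2 \mathsf{a}_\mm / C}.
\]
Again using $\mathsf{a}_\mm \geq (1-2\lambdaC)n$ and $n-\mathsf{a}_\mm \leq 2\lambdaC n$, for $K$ sufficiently large (depending only on $d$ and $\lambdaC$, hence only on $Q$) the exponent $-K^2(1-2\lambdaC)/C + 2\lambdaC d \log 2$ is negative and of order $-K^2$, so $\mathbb{P}[\mathcalJ_2^c] \leq C 2^{-n/C}$.

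Combining the two estimates and possibly enlarging $C$ gives $\mathbb{P}[\mathcalJ_1 \cap \mathcalJ_2] \geq 1 - C 2^{-n/C}$, as desired. There is no real obstacle here since Lemma~\ref{lem:est-qm} has done all the heavy lifting; the only point to keep track of is that the balance between $\alphaC$, $\lambdaC$, and $d$ fixed in \eqref{eq:def-lambda} exactly absorbs the union-bound loss, which is why that particular choice of $\lambdaC$ was made in the first place.
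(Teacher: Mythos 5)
Your proposal is correct and follows essentially the same route as the paper: a union bound over the $O(2^{d(n-\mathsf{a}_\mm)})$ many $\mathsf{a}_\mm$-boxes using $\qq_\mm \leq 2^{-\alphaC \mathsf{a}_\mm}$ from Lemma~\ref{lem:est-qm} for $\mathcalJ_1$, and a union bound over boxes of side $2^{-n+\mathsf{a}_\mm}$ using Claim (2) of Lemma~\ref{lem:field-estimate} (with $n-\mathsf{a}_\mm$ in place of $n$) for $\mathcalJ_2$, with the choice of $\lambdaC$ in \eqref{eq:def-lambda} and $K$ large absorbing the entropy factors exactly as you compute. The only cosmetic difference is that you split the exponent as $\alphaC(1-2\lambdaC)-2\lambdaC d=\alphaC/2$ whereas the paper writes it as $-(\alphaC+d)(1-2\lambdaC)n+dn<0$; these are the same bound.
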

\begin{proof}
    Using Lemma~\ref{lem:est-qm}, we obtain that for $K>C$ and $L>L_0(K)$,
    \begin{equation*}
        \mathbb{P}[\mathcalJ_1] \geq 1- \qq_m \times | \mathscrY_\mm \cap B_2(0)| \geq 1-C 2^{-\alphaC \mathsf{a}_\mm} 2^{d(n-\mathsf{a}_\mm)}.
    \end{equation*}
    Recall from \eqref{eq:def-am} that $\mathsf{a}_\mm \geq (1-2\lambdaC) n $ and recall the choice of $\lambdaC$ from \eqref{eq:def-lambda}. Since $\lambdaC < \frac{\alphaC}{2 \alphaC + 2d}$, we have $-(\alphaC + d)(1- 2 \lambdaC) + d < 0$. Therefore,
    \begin{equation*}
        \mathbb{P}[\mathcalJ_1] \geq 1 - C 2^{-(\alphaC + d)(1-2 \lambdaC)n + dn} \geq 1-C2^{-n/C}.
    \end{equation*}
    Applying Claim (2) in Lemma~\ref{lem:field-estimate}, with $n-\mathsf{a}_\mm$ in place of $n$, we obtain that for all sufficiently large $K$:
    \begin{equation*}
    \begin{aligned}
        \mathbb{P}[\mathcalJ_2] &\geq 1 - \sum_{y \in \mathscrY_\mm \cap B_3(0)} \mathbb{P} \Big[ \sup_{z \in B_{2^{-n+\mathsf{a}_\mm}}(y)} 2^{-n + \mathsf{a}_\mm} |\nabla h_{0, n-\mathsf{a}_\mm}(z)|_\infty > K \sqrt{\mathsf{a}_\mm} \Big] \\
        &\geq 1- C2^{d(n-\mathsf{a}_\mm)} \times C e^{-K^2 \mathsf{a}_\mm / C} \geq 1-C2^{-n/C}.
    \end{aligned}
    \end{equation*}
    In the last inequality, we used the fact that $\mathsf{a}_\mm \geq (1-2\lambdaC)n$ and $K$ is sufficiently large. Combining the above two inequalities yields the lemma.
\end{proof}

In the following proposition, on the event $\mathcalJ_1 \cap \mathcalJ_2$, we will construct a sequence of boxes at various scales described by their centers $\mathcalX_1,\mathcalX_2,\ldots, \mathcalX_{\mm-1}$. These boxes will satisfy a series of conditions as listed below. Condition~\ref{cond-new-3} will be used in Lemma~\ref{lem:bypassU} to demonstrate that the length metric $D_n$ increases by at most a constant factor when rerouting the path to avoid these boxes. Conditions~\ref{cond-a1-new-1}, \ref{cond-a1-new-2}, and \ref{cond-a1-new-3} for $\mathsf{a}_1$-boxes will be used in Lemma~\ref{lem:modify-outU}. This will establish that for any path inside the region $B_2(0) \backslash \mathcalU$ (see \eqref{eq:def-mathcal-U} for the definition of $\mathcalU$), by adjusting the path, its $D_n$-length and $D_{n+k}$-length satisfy the desired bound in Proposition~\ref{prop:compare}.

The proof of Proposition~\ref{prop:cover} uses an induction argument based on the event $\mathcalJ_1$ and the definitions of nice boxes. As mentioned in Remark~\ref{rmk:nice-box}, we will use the event $\mathcalJ_2$, combined with the bounds for $\nabla h_{n-\mathsf{a}_1,n}$ and $\nabla h_{n-\mathsf{a}_j,n-\mathsf{a}_{j-1}}$, to derive bounds on $\nabla h_n$ and the properties of the metric $D_n$ using Equation~\eqref{eq:scale-control}. Readers can keep in mind that Proposition~\ref{prop:cover} and the estimates from Lemma~\ref{lem:est-j12} serve as the only two inputs from the coarse-graining argument when proving Proposition~\ref{prop:compare} in Subsection~\ref{subsec:prove-compare}. In particular, we will no longer use the notion of nice or bad $\mathsf{a}_j$-boxes there.

\begin{proposition}
\label{prop:cover}
On the event $\mathcalJ_1 \cap \mathcalJ_2$, we can construct a sequence of sets, $\mathcalX_1,\mathcalX_2,\ldots, \mathcalX_{\mm-1}$, that satisfy the following properties for every $1 \leq j \leq \mm-1$: 
\begin{enumerate}[(i)]
    \item\label{cond-new-1} $\mathcalX_j \subset \mathscrY_j \cap B_2(0)$.
    \item\label{cond-new-2} For each pair of distinct points $z , w \in \mathcalX_j$, we have $|z-w|_\infty > 2^{-n + \mathsf{a}_{j+1}}/4$.
    \item\label{cond-new-3} For every $z \in \mathcalX_j$, there exists $x \in \mathscrY_{j+1} \cap B_2(0)$ such that $|x-z|_\infty \leq 2^{-n + \mathsf{a}_{j+1}}$ and
    \begin{equation}
        \label{eq:scale-control-new}
        \begin{aligned}
         &D_n\big(\mbox{across }B_{4 \cdot 2^{-n+\mathsf{a}_{j+1}}}(x) \backslash B_{2 \cdot 2^{-n+\mathsf{a}_{j+1}}}(x)\big)\\ 
         &\qquad\qquad> \exp(K \sqrt{\mathsf{a}_j}) D_n\big(\mbox{around }B_{4\RC \cdot 2^{-n+\mathsf{a}_j}}(z) \backslash B_{2\RC \cdot 2^{-n+\mathsf{a}_j}}(z)\big)\,.
        \end{aligned}
    \end{equation}
\end{enumerate}
Define the domain
\begin{equation}
\label{eq:def-mathcal-U}
    \mathcalU := \cup_{1 \leq j \leq \mm-1} \cup_{z \in \mathcalX_j}  B_{\RC \cdot 2^{-n+\mathsf{a}_j}}(z)\,.
\end{equation}
Moreover, for every $\mathsf{a}_1$-box $B_{2^{-n+\mathsf{a}_1}}(w)$ that is contained in $B_2(0)$ but not in $\mathcalU$, it satisfies the following conditions:
\begin{enumerate}[(A)]
    \item\label{cond-a1-new-1} $2^{-n} |\nabla h_n(y)|_\infty \leq L + 1$ for every $y \in B_{2^{-n+\mathsf{a}_1}}(w)$.
    \item\label{cond-a1-new-2} ${\rm Diam}_{n,n+k} (B_{2^{-n}}(y); B_{2 \cdot 2^{-n}}(y)) < L 2^{-n - (1-\xi Q - \epsilon) k}$ for every $y \in \mathscrY_0 \cap B_{2^{-n+\mathsf{a}_1}}(w)$.
    \item\label{cond-a1-new-3} $D_{n,n+k} (\mbox{across }B_{2 \cdot 2^{-n}}(y) \backslash B_{2^{-n}}(y)) >  \frac{1}{L}2^{-n - (1-\xi Q + \epsilon) k}$ for every $y \in \mathscrY_0 \cap B_{2^{-n+\mathsf{a}_1}}(w)$.
\end{enumerate}

\end{proposition}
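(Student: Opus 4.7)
The plan is to construct the sets $\mathcalX_j$ top-down, exploiting the recursive structure of Definition~\ref{def:nice-boxes}: on $\mathcalJ_1$, every $\mathsf{a}_\mm$-box contained in $B_2(0)$ is nice, and for a nice $\mathsf{a}_{j+1}$-box either all its $\mathsf{a}_j$-sub-boxes are nice (case~\ref{condition-nice-b}) or the bad $\mathsf{a}_j$-sub-boxes all lie in a single ball $B_{\RC\cdot 2^{-n+\mathsf{a}_j}}(z)$ with $z\in\mathscrY_j$, accompanied by the scale-control inequality~\eqref{eq:scale-control} (case~\ref{condition-nice-c}). Harvesting these witness points scale by scale will produce the desired covering.

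First, process scales $j=\mm-1,\mm-2,\dots,1$ in turn. At each $j$, enumerate the nice $\mathsf{a}_{j+1}$-boxes $B_{2^{-n+\mathsf{a}_{j+1}}}(x)$ with $x\in\mathscrY_{j+1}\cap B_2(0)$ that are \emph{not} already contained in $\bigcup_{j'>j}\bigcup_{w\in\mathcalX_{j'}}B_{\RC\cdot 2^{-n+\mathsf{a}_{j'}}}(w)$; whenever such a box falls into case~\ref{condition-nice-c}, record a witness $z\in\mathscrY_j$. Then thin the collected points greedily to obtain $\mathcalX_j$, retaining $z$ only when it is farther than $2^{-n+\mathsf{a}_{j+1}}/4$ from all previously retained level-$j$ points. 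The scale separation $\mathsf{a}_{j+1}-\mathsf{a}_j\geq\lambdaC K^4$ makes $\RC\cdot 2^{-n+\mathsf{a}_j}$ negligible compared with $2^{-n+\mathsf{a}_{j+1}}/4$, so any cluster ball discarded by the thinning can be absorbed into its retained neighbor or promoted one level up without hurting coverage. Conditions~\ref{cond-new-1} and~\ref{cond-new-2} hold by construction, and a downward induction on the above dichotomy shows that every bad $\mathsf{a}_j$-box in $B_2(0)$ lies in $\mathcalU$: either it sits inside a bad $\mathsf{a}_{j+1}$-box (covered at a higher scale by the inductive hypothesis), or inside a nice $\mathsf{a}_{j+1}$-box (covered by some higher-scale cluster or by the level-$j$ witness).

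For condition~\ref{cond-new-3}, take $z\in\mathcalX_j$ with associated $x\in\mathscrY_{j+1}$, so $|z-x|_\infty\leq 2^{-n+\mathsf{a}_{j+1}}$. Definition~\ref{def:nice-boxes}(c) at scale $j+1$ gives the analogue of~\eqref{eq:scale-control-new} for $D_{n-\mathsf{a}_{j+1},n}$ with the boosted exponent $K^{3/2}\sqrt{\mathsf{a}_{j+1}}$. To pass to $D_n$, use $h_n=h_{0,n-\mathsf{a}_{j+1}}+h_{n-\mathsf{a}_{j+1},n}$: both hypercubic shells in~\eqref{eq:scale-control-new} sit inside a ball of radius $O(2^{-n+\mathsf{a}_{j+1}})$ around $x$, so each $D_n/D_{n-\mathsf{a}_{j+1},n}$ differs from $1$ by at most $\exp(\pm\xi\,\op{osc}(h_{0,n-\mathsf{a}_{j+1}}))$ on that ball. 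Telescope $h_{0,n-\mathsf{a}_{j+1}}=h_{0,n-\mathsf{a}_\mm}+\sum_{i=j+2}^{\mm}h_{n-\mathsf{a}_i,n-\mathsf{a}_{i-1}}$: the construction ensures the $\mathsf{a}_{j+1}$-box around $x$ is not contained in any higher-scale cluster, so no $\mathsf{a}_i$-box containing $x$ (for $i\geq j+2$) can be bad (else it would engulf the $\mathsf{a}_{j+1}$-box into some $\mathcalX_{j'}$-cluster ball with $j'\geq i$, contradicting the choice of $x$). Hence each such $\mathsf{a}_i$-super-box is nice, and condition~\ref{condition-nice-a} controls $|\nabla h_{n-\mathsf{a}_i,n-\mathsf{a}_{i-1}}|$ on a neighborhood of $x$; together with $\mathcalJ_2$ for the first summand, the geometric growth of $\mathsf{a}_i$ yields total oscillation at most $CK\sqrt{\mathsf{a}_{j+1}}$. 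Then $K^{3/2}\sqrt{\mathsf{a}_{j+1}}-CK\xi\sqrt{\mathsf{a}_{j+1}}\geq K\sqrt{\mathsf{a}_j}$ under the standing assumption $K>(16d\xi+1)^2$, proving~\eqref{eq:scale-control-new}.

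Finally, for an $\mathsf{a}_1$-box $B_{2^{-n+\mathsf{a}_1}}(w)\subset B_2(0)\setminus\mathcalU$: by the coverage claim it is nice, so conditions~\ref{cond-a1-new-2} and~\ref{cond-a1-new-3} reproduce~\ref{condition-a1-nice-b} and~\ref{condition-a1-nice-c}. For~\ref{cond-a1-new-1}, split $\nabla h_n=\nabla h_{0,n-\mathsf{a}_1}+\nabla h_{n-\mathsf{a}_1,n}$; niceness of $B_{2^{-n+\mathsf{a}_1}}(w)$ bounds $|\nabla h_{n-\mathsf{a}_1,n}|$ by $L\cdot 2^n$, and since $w\notin\mathcalU$ every $\mathsf{a}_i$-box containing $w$ is nice, so telescoping through condition~\ref{condition-nice-a} plus $\mathcalJ_2$ yields $|\nabla h_{0,n-\mathsf{a}_1}|\leq CK^3\cdot 2^{n-K^4}\leq 2^n$ for $K$ large. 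The hard part of the argument is condition~\ref{cond-new-3}: the cost of trading $D_{n-\mathsf{a}_{j+1},n}$ for $D_n$ is an oscillation factor that must be absorbed by the gap between the boosted exponent $K^{3/2}\sqrt{\mathsf{a}_{j+1}}$ in Definition~\ref{def:nice-boxes}(c) and the target $K\sqrt{\mathsf{a}_j}$ in~\eqref{eq:scale-control-new}---this is precisely why the definition uses $K^{3/2}$ rather than $K$. A secondary technical point is that the greedy thinning preserves coverage, which ultimately rests on the scale separation $\mathsf{a}_{j+1}\gg\mathsf{a}_j$ built into~\eqref{eq:def-ai}.
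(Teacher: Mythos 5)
Your overall route is the same as the paper's: harvest the case~\ref{condition-nice-c} witnesses top-down from scale $\mm$ to scale $1$, keep the inductive statement that boxes outside the union of cluster balls are nice, convert \eqref{eq:scale-control} into \eqref{eq:scale-control-new} by telescoping the gradient bounds from condition~\ref{condition-nice-a} together with $\mathcalJ_2$, and read off conditions~\ref{cond-a1-new-1}--\ref{cond-a1-new-3} from niceness of the $\mathsf{a}_1$-boxes; those parts are sound. The genuine gap is in how you enforce the separation condition~\ref{cond-new-2}. You collect a witness from \emph{every} case~\ref{condition-nice-c} box and then greedily thin, discarding any witness within $2^{-n+\mathsf{a}_{j+1}}/4$ of a retained one, claiming the discarded ball is ``absorbed into its retained neighbor or promoted one level up.'' Neither option works. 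A discarded witness $z_2$ can lie at distance up to $2^{-n+\mathsf{a}_{j+1}}/4$ from the retained $z_1$, which is enormously larger than the ball radius $\RC\cdot 2^{-n+\mathsf{a}_j}$; the scale separation you invoke makes absorption \emph{impossible}, not harmless. Concretely, take two adjacent overlapping $\mathsf{a}_{j+1}$-boxes $B(x_1),B(x_2)$, a bad $\mathsf{a}_j$-box $\beta_2\subset B(x_2)$ sticking just outside $B(x_1)$, and bad sub-boxes of $B(x_1)$ located a few multiples of $\RC\cdot 2^{-n+\mathsf{a}_j}$ away from $\beta_2$: then $z_1$ and $z_2$ are within a few $\RC\cdot2^{-n+\mathsf{a}_j}$ of each other, $z_2$ (and every other witness whose box contains $\beta_2$) is thinned away, and $\beta_2$ ends up outside $\mathcalU$. ``Promotion'' is not defined: $\mathcalX_{j+1}$ is already fixed at that point of the top-down pass, and a promoted point would need its own $x\in\mathscrY_{j+2}$ and scale-control inequality plus a re-check of \ref{cond-new-2} at level $j+1$, none of which you supply. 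Once coverage fails, the inductive statement ``$\mathsf{a}_p$-boxes in $B_2(0)$ not contained in $U_p$ are nice'' fails, and with it both your proof of \ref{cond-new-3} (which needs every $\mathsf{a}_l$-super-box of $x$, $l\geq j+2$, to be nice so that condition~\ref{condition-nice-a} can be telescoped) and the final claims \ref{cond-a1-new-1}--\ref{cond-a1-new-3} (which need $\mathsf{a}_1$-boxes outside $\mathcalU$ to be nice).

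The paper avoids thinning altogether: it iterates over \emph{still-uncovered bad} $\mathsf{a}_j$-boxes, and for each one chooses (using the overlap of the lattice boxes) a containing nice $\mathsf{a}_{j+1}$-box in which that bad box sits at distance about $2^{-n+\mathsf{a}_{j+1}}/2$ from the boundary. Then case~\ref{condition-nice-c} forces the chosen witness $z$ to lie deep inside that $\mathsf{a}_{j+1}$-box and to cover \emph{all} its bad sub-boxes, so any bad box not covered by $B_{\RC\cdot 2^{-n+\mathsf{a}_j}}(z)$ must exit the $\mathsf{a}_{j+1}$-box and is therefore at distance at least about $2^{-n+\mathsf{a}_{j+1}}/3$ from $z$; hence subsequent witnesses are automatically $2^{-n+\mathsf{a}_{j+1}}/4$-separated and nothing ever needs to be discarded. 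Replacing your enumerate-then-thin step by this ``pick an uncovered bad box, recentre the containing $\mathsf{a}_{j+1}$-box around it'' selection repairs the argument; the rest of your write-up (the oscillation budget $16d\xi K\sqrt{\mathsf{a}_{j+1}}$ against the gap $K^{3/2}\sqrt{\mathsf{a}_{j+1}}-K\sqrt{\mathsf{a}_j}$, and the bound $2\cdot 2^{n-\mathsf{a}_1}K\sqrt{\mathsf{a}_1}\leq 2^n$ for condition~\ref{cond-a1-new-1}) then goes through as in the paper.
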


\begin{proof}

Suppose that the event $\mathcalJ_1 \cap \mathcalJ_2$ happens. We will define the sets $\mathcalX_j$ inductively for $j$ going from $\mm$ to $1$. For any $1 \leq j \leq \mm - 1$, the set $\mathcalX_j$ will satisfy conditions~\ref{cond-new-1}, \ref{cond-new-2}, and \ref{cond-new-3}. Let $U_p := \cup_{p \leq r \leq \mm } \cup_{z \in \mathcalX_r}  B_{\RC \cdot 2^{-n+\mathsf{a}_r}}(z)$. In addition, we require that for any $1 \leq p \leq \mm$, they also satisfy the assumption:
\begin{equation}
\label{eq:prop5.8-asmp}
    \mbox{every }\mathsf{a}_p \mbox{-box contained in }B_2(0), \mbox{ but not in }U_p, \mbox{ is nice}.
\end{equation}Setting $p=1$ in this assumption implies that conditions~\ref{cond-a1-new-2} and \ref{cond-a1-new-3} hold for each $\mathsf{a}_1$-box contained in $B_2(0)$ but not in $U_1 = \mathcalU$. Condition~\ref{cond-a1-new-1} will be derived from the bounds of $\nabla h_{n-\mathsf{a}_\mm}$, $\nabla h_{n-\mathsf{a}_1,n}$, and $\nabla h_{n-\mathsf{a}_j,n-\mathsf{a}_{j-1}}$ for $2 \leq j \leq \mm$.

Let $\mathcalX_\mm = \emptyset$. By the event $\mathcalJ_1$, the assumption \eqref{eq:prop5.8-asmp} holds for $p=\mm$. Assume that $\mathcalX_\mm,\ldots,\mathcalX_{j+1}$ have been defined and that the assumption \eqref{eq:prop5.8-asmp} holds for every $j + 1 \leq p \leq \mm$. We now construct $\mathcalX_j$ and verify that it satisfies conditions~\ref{cond-new-1}, \ref{cond-new-2}, \ref{cond-new-3}, as well as the assumption \eqref{eq:prop5.8-asmp} for $p = j$. Consider any bad $\mathsf{a}_j$-box that is contained in $B_2(0)$ but not in $U_{j+1}$. Then there exists an $\mathsf{a}_{j+1}$-box that contains this $\mathsf{a}_j$-box and is contained in $B_2(0)$ but not in $U_{j+1}$. Moreover, we can require that the $\mathsf{a}_j$-box is at $|\cdot|_\infty$-distance at least $2^{-n+\mathsf{a}_{j+1}}/2 - 2^{-n+\mathsf{a}_j}$ away from the boundary of this $\mathsf{a}_{j+1}$-box that is inside $B_2(0)$. By the assumption \eqref{eq:prop5.8-asmp} for $p = j + 1$, this $\mathsf{a}_{j+1}$-box is nice. Therefore, according to condition~\ref{condition-nice-c} from Definition~\ref{def:nice-boxes}, for some $z \in \mathscrY_j \cap B_2(0)$, we can use the box $B_{\RC \cdot 2^{-n+\mathsf{a}_j}}(z)$ to cover this bad $\mathsf{a}_j$-box. Furthermore, other bad $\mathsf{a}_j$-boxes not covered by $B_{\RC \cdot 2^{-n+\mathsf{a}_j}}(z)$ are at least $2^{-n+\mathsf{a}_{j+1}}/3$ away from the point $z$. This is due to the fact that these bad $\mathsf{a}_j$-boxes cannot be contained in this nice $\mathsf{a}_{j+1}$-box, and by construction, $z$ is at least $2^{-n+\mathsf{a}_{j+1}}/2.5$ away from the boundary of this $\mathsf{a}_{j+1}$-box. We refer to Figure~\ref{fig:nice-box} for an illustration. We define $\mathcalX_j$ to be the union of such $z$'s.

Next, we verify that $\mathcalX_j$ satisfies conditions~\ref{cond-new-1}, \ref{cond-new-2}, and \ref{cond-new-3}, as well as the induction assumption~\eqref{eq:prop5.8-asmp} for $p = j$. Conditions~\ref{cond-new-1} and \ref{cond-new-2}, and the assumption are straightforward from the construction. We now verify condition~\ref{cond-new-3}. Consider any $z \in \mathcalX_j$, and let $x$ be the center of the associated $\mathsf{a}_{j+1}$-box to the box $B_{\RC \cdot 2^{-n+\mathsf{a}_j}}(z)$. By construction, we have $x \in \mathscrY_{j+1} \cap B_2(0)$, $|x-z|_\infty \leq 2^{-n + \mathsf{a}_{j+1}}$, and Equation~\eqref{eq:scale-control} holds for this pair of $(x,z)$. That is,
\begin{equation}
\label{eq:prop5.8-1001}
\begin{aligned}
    &D_{n - \mathsf{a}_{j+1}, n} \big(\mbox{across }B_{4 \cdot 2^{-n+\mathsf{a}_{j+1}}}(x) \backslash B_{2 \cdot 2^{-n+\mathsf{a}_{j+1}}}(x)\big) \\
    &\qquad > \exp(K^{3/2} \sqrt{\mathsf{a}_{j+1}}) D_{n - \mathsf{a}_{j+1}, n}\big(\mbox{around }B_{4\RC \cdot 2^{-n+\mathsf{a}_j}}(z) \backslash B_{2\RC \cdot 2^{-n+\mathsf{a}_j}}(z)\big)\,.
\end{aligned}
\end{equation}
Since the $\mathsf{a}_{j+1}$-box centered at $x$ is not contained in $U_{j+1}$, for each $j+2 \leq l \leq \mm$, there exists an $\mathsf{a}_l$-box that contains $B_{2^{-n+\mathsf{a}_j}}(x)$ and is contained within $B_2(0)$ but not in $U_l$. By assumption \eqref{eq:prop5.8-asmp} with $p=l$, this $\mathsf{a}_l$-box must be nice. Using condition~\ref{condition-nice-a} from Definition~\ref{def:nice-boxes}, we obtain that for all $j+2 \leq l \leq \mm$:
\begin{equation*}
    2^{-n + \mathsf{a}_{l-1}}| \nabla h_{n-\mathsf{a}_l, n-\mathsf{a}_{l-1}}(w)|_\infty \leq K \sqrt{\mathsf{a}_{l-1}} \quad \forall w \in B_{4 \cdot 2^{-n+\mathsf{a}_{j+1}}}(x)\,.
\end{equation*}
Furthermore, by the event $\mathcalJ_2$, we have $2^{-n + \mathsf{a}_\mm} |\nabla h_{0, n-\mathsf{a}_\mm}(w)|_\infty \leq K \sqrt{\mathsf{a}_\mm}$ for all $w \in B_{4 \cdot 2^{-n+\mathsf{a}_{j+1}}}(x)$. Summing these inequalities yields that for all $w \in B_{4 \cdot 2^{-n+\mathsf{a}_{j+1}}}(x)$,
\begin{equation}
\label{eq:prop5.8-1}
\begin{aligned}
|\nabla h_{0, n-\mathsf{a}_{j+1}}(w)|_\infty \leq \sum_{l = j+2}^\mm 2^{n - \mathsf{a}_{l-1}} K \sqrt{\mathsf{a}_{l-1}} + 2^{n-\mathsf{a}_\mm} K \sqrt{\mathsf{a}_\mm} \leq 2 \cdot 2^{n-\mathsf{a}_{j+1}} K \sqrt{\mathsf{a}_{j+1}}\,,
\end{aligned}
\end{equation}where the last inequality follows from the fact that $\sum_{l \geq s} 2^{-l} \sqrt{l} \leq 2 \cdot 2^{-s} \sqrt{s}$ for any $s \geq 100$. This implies that $h_{n-\mathsf{a}_{j+1}}(x) - 8d K   \sqrt{\mathsf{a}_{j+1}} \leq h_{n-\mathsf{a}_{j+1}}(w) \leq h_{n-\mathsf{a}_{j+1}}(x) + 8d K   \sqrt{\mathsf{a}_{j+1}} $ for all $w \in B_{4 \cdot 2^{-n+\mathsf{a}_{j+1}}}(x)$. Therefore,
\begin{equation*}
\begin{aligned}
    e^{\xi h_{n-\mathsf{a}_{j+1}}(x) - 8d \xi K  \sqrt{\mathsf{a}_{j+1}}}
    &\leq \frac{D_n(\mbox{across }B_{4 \cdot 2^{-n+\mathsf{a}_{j+1}}}(x) \backslash B_{2 \cdot 2^{-n+\mathsf{a}_{j+1}}}(x))}{D_{n-\mathsf{a}_{j+1},n}(\mbox{across }B_{4 \cdot 2^{-n+\mathsf{a}_{j+1}}}(x) \backslash B_{2 \cdot 2^{-n+\mathsf{a}_{j+1}}}(x))}  \\
    &\qquad\qquad\qquad\leq e^{\xi h_{n-\mathsf{a}_{j+1}}(x) + 8d \xi K  \sqrt{\mathsf{a}_{j+1}}} \quad \mbox{and}
\end{aligned}
\end{equation*}
\begin{equation*}
\begin{aligned}
    e^{\xi h_{n-\mathsf{a}_{j+1}}(x) - 8d \xi K  \sqrt{\mathsf{a}_{j+1}}} 
    &\leq \frac{D_n(\mbox{around }B_{4\RC \cdot 2^{-n+\mathsf{a}_j}}(z) \backslash B_{2\RC \cdot 2^{-n+\mathsf{a}_j}}(z))}{D_{n-\mathsf{a}_{j+1},n}(\mbox{around }B_{4\RC \cdot 2^{-n+\mathsf{a}_j}}(z) \backslash B_{2\RC \cdot 2^{-n+\mathsf{a}_j}}(z))}\\
    &\qquad\qquad\qquad\leq e^{\xi h_{n-\mathsf{a}_{j+1}}(x) + 8d \xi K  \sqrt{\mathsf{a}_{j+1}}}.
    \end{aligned}
\end{equation*}
Combining these results with \eqref{eq:prop5.8-1001} and $K^{3/2} \sqrt{\mathsf{a}_{j+1}} - 16d \xi K \sqrt{\mathsf{a}_{j+1}} > K \sqrt{\mathsf{a}_j}$ (which follows from $K > (16 d \xi +1)^2$ and $\mathsf{a}_{j+1} > \mathsf{a}_j$), we obtain \eqref{eq:scale-control-new}. This verifies condition~\ref{cond-new-3}.

Using an induction argument, we can construct the sequence $\mathcalX_\mm, \ldots, \mathcalX_1$ satisfying conditions~\ref{cond-new-1}, \ref{cond-new-2}, and \ref{cond-new-3} for all $1 \leq j \leq \mm-1$, and satisfying \eqref{eq:prop5.8-asmp} for all $1 \leq p \leq \mm$. We now show that conditions~\ref{cond-a1-new-1}, \ref{cond-a1-new-2}, and \ref{cond-a1-new-3} hold for every $\mathsf{a}_1$-box $B_{2^{-n+\mathsf{a}_1}}(x)$ contained in $B_2(0)$ but not in $\mathcalU = U_1$. By \eqref{eq:prop5.8-asmp} with $p=1$, such $\mathsf{a}_1$-boxes are nice. Hence, conditions~\ref{cond-a1-new-2} and \ref{cond-a1-new-3} hold, as they are the same as conditions~\ref{condition-a1-nice-b} and \ref{condition-a1-nice-c} from Definition~\ref{def:nice-boxes}, respectively. We now prove condition~\ref{cond-a1-new-1}. Similarly to \eqref{eq:prop5.8-1}, we obtain that for all $w \in B_{4 \cdot 2^{-n+\mathsf{a}_1}}(x)$,
\begin{equation*}
\begin{aligned}
    |\nabla h_{0, n-\mathsf{a}_1}(w)|_\infty \leq \sum_{l = 2}^\mm 2^{n - \mathsf{a}_{l-1}} K \sqrt{\mathsf{a}_{l-1}} + 2^{n-\mathsf{a}_\mm} K \sqrt{\mathsf{a}_\mm} \leq 2 \cdot 2^{n-\mathsf{a}_1} K \sqrt{\mathsf{a}_1} \leq 2^n.
\end{aligned}
\end{equation*}
The last inequality is due to the fact that $\mathsf{a}_1 = K^4$ and $K>100$. Combining this with condition~\ref{condition-a1-nice-a} from Definition~\ref{def:nice-boxes}, we obtain condition~\ref{cond-a1-new-1}. This completes the proof. \qedhere
\end{proof}

\subsection{Proof of Proposition~\ref{prop:compare}}
\label{subsec:prove-compare}

In this subsection, we finish the proof of Proposition~\ref{prop:compare}. We assume that the events $\mathcalJ_1$ and $\mathcalJ_2$ defined in \eqref{eq:def-j12} happen. Recall the sequence of sets $\mathcalX_1, \ldots, \mathcalX_{\mm-1}$ and the domain $\mathcalU$ constructed in Proposition~\ref{prop:cover}. 

We first show that for any continuous path within $B_2(0)$, we can modify it to bypass the domain $\mathcalU$, while only slightly changing the start and end points in terms of the $|\cdot|_\infty$-distance. Moreover, the $D_n$-length of this modified path can be upper-bounded by that of the original one up to constants. Our strategy is to apply \eqref{eq:scale-control-new} in an inductive manner to iteratively reroute the path around the boxes $B_{\RC \cdot 2^{-n+\mathsf{a}_j}}(z)$ for $z \in \mathcalX_j$. Note that it is possible for the start and end points to be contained in $\mathcalU$, in which case they have to be adjusted. In addition, the following lemma is trivial when the path has $|\cdot|_\infty$-length at most $10 \cdot 2^{-n+\mathsf{a}_\mm}$---in particular, for paths entirely contained in $\mathcalU$---in which case we simply take the modified path to be empty.

\begin{lemma}
    \label{lem:bypassU}
    There exists a constant $C$ that depends only on $K$ such that on the event $\mathcalJ_1 \cap \mathcalJ_2$, for any continuous path $P:[0,t] \rightarrow B_2(0)$, there exists a continuous path $\widetilde P$ in $B_2(0) \backslash \mathcalU$ connecting $B_{10 \cdot 2^{-n+\mathsf{a}_\mm}}(P(0))$ and $B_{10 \cdot 2^{-n+\mathsf{a}_\mm}}(P(t))$ that satisfies the inequality:
    \begin{equation}
    \label{eq:lem5.9}
        {\rm len}(\widetilde P ; D_n) \leq C \cdot {\rm len}(P|_{B_2(0) \backslash \mathcalU}; D_n)\,,
    \end{equation}
    where ${\rm len}(P|_{B_2(0) \backslash \mathcalU}; D_n)$ denotes the $D_n$-length of $P$ restricted to the region $B_2(0) \backslash \mathcalU$, as defined in \eqref{eq:restrict-length}.
\end{lemma}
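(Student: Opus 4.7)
The plan is to construct $\widetilde{P}$ by iteratively rerouting $P$ around the balls of $\mathcalU$, processing scales from largest to smallest. I would set $\widetilde{P}_\mm:=P$ and, for $j=\mm-1,\mm-2,\dots,1$, build $\widetilde{P}_j$ from $\widetilde{P}_{j+1}$ while maintaining the invariant that $\widetilde{P}_j$ is a continuous path in $B_2(0)$ disjoint from $\bigcup_{l>j}\bigcup_{z\in\mathcalX_l}B_{\RC\cdot 2^{-n+\mathsf{a}_l}}(z)$; then $\widetilde{P}:=\widetilde{P}_1$ lies in $B_2(0)\setminus\mathcalU$. At step $j$, for each $z\in\mathcalX_j$ that $\widetilde{P}_{j+1}$ enters, each maximal excursion of $\widetilde{P}_{j+1}$ into $B_{2\RC\cdot 2^{-n+\mathsf{a}_j}}(z)$ is replaced by a path in $A_{j,z}\cap B_2(0)$ realizing the internal distance $D_n(\mathrm{around}\,A_{j,z})$ (with the box $B=B_2(0)$ in~\eqref{eq:def-around}), where $A_{j,z}:=B_{4\RC\cdot 2^{-n+\mathsf{a}_j}}(z)\setminus B_{2\RC\cdot 2^{-n+\mathsf{a}_j}}(z)$; this detour connects two crossings of $A_{j,z}$ belonging to $\widetilde{P}_{j+1}$ and stays inside $B_2(0)$.

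The core estimate is condition~\ref{cond-new-3} of Proposition~\ref{prop:cover}: letting $x=x_z\in\mathscrY_{j+1}$ be the associated point, the detour $D_n$-length is at most $e^{-K\sqrt{\mathsf{a}_j}}\,D_n(\mathrm{across}\,A'_{j,z})$ with $A'_{j,z}:=B_{4\cdot 2^{-n+\mathsf{a}_{j+1}}}(x)\setminus B_{2\cdot 2^{-n+\mathsf{a}_{j+1}}}(x)$. The inclusion $B_{\RC\cdot 2^{-n+\mathsf{a}_j}}(z)\subset B_{2\cdot 2^{-n+\mathsf{a}_{j+1}}}(x)$ holds because $|x-z|_\infty\leq 2^{-n+\mathsf{a}_{j+1}}$ and $\RC\cdot 2^{-n+\mathsf{a}_j}\leq 2^{-n+\mathsf{a}_{j+1}}$ for large $K$ (using $\mathsf{a}_{j+1}\geq(1+\lambdaC)\mathsf{a}_j$ and $\mathsf{a}_j\geq K^4$), so each excursion of $\widetilde{P}_{j+1}$ into the scale-$j$ ball is flanked by two crossings of $A'_{j,z}$ (unless the path starts or ends inside $B_{4\cdot 2^{-n+\mathsf{a}_{j+1}}}(x)$, a corner case absorbed by the $10\cdot 2^{-n+\mathsf{a}_\mm}$ slack allowed at the endpoints of $\widetilde{P}$). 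Hence $D_n(\mathrm{across}\,A'_{j,z})$ is bounded by the $D_n$-length of those crossings. Because the centers lie on the grid $\mathscrY_{j+1}$ of spacing $2^{-n+\mathsf{a}_{j+1}}$ and the shells have outer radius $4\cdot 2^{-n+\mathsf{a}_{j+1}}$, each point of $\mathbb{R}^d$ lies in at most $O_d(1)$ shells $A'_{j,z}$; combined with the fact that by condition~\ref{cond-new-2} at most $O_d(1)$ points of $\mathcalX_j$ can share the same $x$, summing over $z\in\mathcalX_j$ yields ${\rm len}(\widetilde{P}_j;D_n)\le(1+C_d e^{-K\sqrt{\mathsf{a}_j}})\,{\rm len}(\widetilde{P}_{j+1};D_n)$, and iterating gives ${\rm len}(\widetilde{P};D_n)\leq C\,{\rm len}(P;D_n)$ because $\sum_j e^{-K\sqrt{\mathsf{a}_j}}<\infty$ when $\mathsf{a}_j\geq K^4$ and $K$ is large.

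The main obstacle is upgrading the right-hand side from ${\rm len}(P;D_n)$ to ${\rm len}(P|_{B_2(0)\setminus\mathcalU};D_n)$. Since every scale-$j$ detour replaces a sub-arc contained in $B_{2\RC\cdot 2^{-n+\mathsf{a}_j}}(z)\subset\mathcalU$, the portions of $P$ in $B_2(0)\setminus\mathcalU$ are preserved verbatim throughout the iteration and contribute exactly ${\rm len}(P|_{B_2(0)\setminus\mathcalU};D_n)$ to the final length; the task reduces to bounding the total length of inserted detours by the same quantity. My plan is to re-attribute the scale-$j$ charge so that $D_n(\mathrm{across}\,A'_{j,z})$ is estimated using only the original $P$-crossings of $A'_{j,z}$ (rather than those of $\widetilde{P}_{j+1}$, which include prior detours), and then to use the rapid decay $e^{-K\sqrt{\mathsf{a}_j}}$ together with the bounded-multiplicity argument at each scale to show that the inside-$\mathcalU$ contributions of these $P$-crossings, aggregated across all scales, are dominated by a constant times the outside-$\mathcalU$ ones. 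The subtlety is the cascading of detours: a $P$-crossing of $A'_{j,z}$ may traverse $\mathcalU$-balls at scales $l<j$, so one must verify that inside-$\mathcalU$ segments of $P$ are not over-charged across scales. For the endpoint condition, $P$ is truncated at its first exit from $\mathcalU$ (if $P(0)\in\mathcalU$) and symmetrically at $P(t)$; by condition~\ref{cond-new-2} scale-$(\mm-1)$ balls are mutually disjoint and lower-scale balls add only geometric extensions, so every connected component of $\mathcalU$ has $l^\infty$-diameter at most $O(2^{-n+\mathsf{a}_{\mm-1}})\ll 10\cdot 2^{-n+\mathsf{a}_\mm}$, keeping the truncation comfortably within the allowed slack.
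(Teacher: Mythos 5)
Your construction of the detours and the per-scale bound via condition~\ref{cond-new-3} of Proposition~\ref{prop:cover} matches the paper's scheme, and your first accounting step, ${\rm len}(\widetilde P_j;D_n)\le (1+C_d e^{-K\sqrt{\mathsf{a}_j}}){\rm len}(\widetilde P_{j+1};D_n)$, is correct; but it only yields ${\rm len}(\widetilde P;D_n)\le C\,{\rm len}(P;D_n)$, and the upgrade to ${\rm len}(P|_{B_2(0)\setminus\mathcalU};D_n)$ — which is the whole point of the lemma — is left as a ``plan'' whose key step does not work. You propose to charge each scale-$j$ detour to the \emph{original} $P$-crossings of the large shell $B_{4\cdot 2^{-n+\mathsf{a}_{j+1}}}(x_z)\setminus B_{2\cdot 2^{-n+\mathsf{a}_{j+1}}}(x_z)$ and then to argue that the inside-$\mathcalU$ contributions of these crossings are dominated by the outside-$\mathcalU$ ones. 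There is no such domination for a general path: $P$ may wind for an arbitrarily long $D_n$-length inside the bad balls (e.g.\ inside smaller-scale balls of $\mathcalU$ sitting within the big shell) while its outside-$\mathcalU$ length stays tiny, so a bound in terms of $P$-crossings cannot be converted into a bound by ${\rm len}(P|_{B_2(0)\setminus\mathcalU};D_n)$, no matter how the charges are redistributed across scales.

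The paper closes this gap with a different, self-bounding accounting that your proposal is missing. After the full iteration one restricts $\widetilde P^{(1)}$ to $S=B_2(0)\setminus(B_{2^{-n+\mathsf{a}_\mm}}(P(0))\cup B_{2^{-n+\mathsf{a}_\mm}}(P(t)))$ and covers the resulting $\widetilde P$ by $P|_{B_2(0)\setminus\mathcalU}$ together with the retained detours $I_z^{(j)}$, giving ${\rm len}(\widetilde P;D_n)\le \sum_j\sum_z \mathbbm{1}_{\{I_z^{(j)}\cap\widetilde P\neq\emptyset\}}{\rm len}(I_z^{(j)};D_n)+{\rm len}(P|_{B_2(0)\setminus\mathcalU};D_n)$. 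The crucial observation is that whenever $I_z^{(j)}\cap\widetilde P\neq\emptyset$, the path $\widetilde P$ itself enters $B_{2\cdot 2^{-n+\mathsf{a}_{j+1}}}(x_z)$ and, since its endpoints are at $|\cdot|_\infty$-distance at least $8\cdot 2^{-n+\mathsf{a}_\mm}$ apart, must cross the shell $B_{4\cdot 2^{-n+\mathsf{a}_{j+1}}}(x_z)\setminus B_{2\cdot 2^{-n+\mathsf{a}_{j+1}}}(x_z)$; hence $D_n(\mbox{across}\,\cdot)\le {\rm len}(\widetilde P;D_n)$, and by the separation of $\mathcalX_j$ (condition~\ref{cond-new-2}) the bounded multiplicity gives $\sum_z\mathbbm{1}_{\{\cdot\}}{\rm len}(I_z^{(j)};D_n)\le 100^d\cdot 2e^{-K\sqrt{\mathsf{a}_j}}\,{\rm len}(\widetilde P;D_n)$. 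Since $K>100^d$ and $\mathsf{a}_j\ge K^4$, the total prefactor $\sum_j 2\cdot 100^d e^{-K\sqrt{\mathsf{a}_j}}$ is less than $1$, so this term is absorbed into the left-hand side, yielding \eqref{eq:lem5.9} with the restricted length on the right. In short: charge the detours to the \emph{new} path (which avoids $\mathcalU$ by construction) and absorb, rather than to the original path; without this step your argument does not prove the stated inequality.
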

\begin{proof}
Assume that $|P(0) - P(t)|_\infty > 10 \cdot 2^{-n+\mathsf{a}_\mm}$; otherwise, the lemma becomes trivial. Define the domain 
\begin{equation}
\label{eq:def-S}
    S:= B_2(0) \backslash (B_{2^{-n+\mathsf{a}_\mm}}(P(0)) \cup B_{2^{-n+\mathsf{a}_\mm}}(P(t)))\,.
\end{equation}For $1 \leq j \leq \mm-1$, we define the domain
\begin{equation*}
    U_j := \cup_{j \leq p \leq \mm-1}\cup_{z \in \mathcalX_p}  B_{\RC \cdot 2^{-n+\mathsf{a}_p}}(z) \,.
\end{equation*}
Recall from condition~\ref{cond-new-2} in Proposition~\ref{prop:cover} that the $l^\infty$-distance between any two different points in $\mathcalX_p$ is at least $2^{-n + \mathsf{a}_{p+1}}/4$.

We now construct a sequence of paths $\widetilde P^{(\mm)}, \widetilde P^{(\mm-1)}, \ldots, \widetilde P^{(1)}$ inductively, with $\widetilde P^{(\mm)} = P$, such that for each $\mm-1 \geq j \geq 1$:
\begin{enumerate}
    \item\label{lem5.9-cond-1} $\widetilde P^{(j)}$ connects $P(0)$ and $P(t)$ within $B_2(0)$.
    \item\label{lem5.9-cond-2} The path $\widetilde P^{(j)}$ is contained in $B_{8\RC \cdot 2^{-n + \mathsf{a}_j}}(\widetilde P^{(j+1)})$, and $\mathfrak d_\infty(\widetilde P^{(j)}, S \cap U_j) \geq \frac{1}{2}\RC \cdot 2^{-n+\mathsf{a}_j}$.
    \item\label{lem5.9-cond-3} There exists a set $\mathcalW_j \subset \mathcalX_j$, and for each $z \in \mathcalW_j$, there exist an $x_z$ with $|x_z-z|_\infty \leq 2^{-n+\mathsf{a}_{j+1}}$ and a curve $I_z^{(j)}$ in the hypercubic shell $B_{4\RC \cdot 2^{-n+\mathsf{a}_j}}(z) \backslash B_{2\RC \cdot 2^{-n+\mathsf{a}_j}}(z)$ such that
    \begin{equation}
    \label{eq:lem5.9-1}
        {\rm len}(I_z^{(j)}; D_n) < 2\exp(- K \sqrt{\mathsf{a}_j}) D_n\big(\mbox{across }B_{4 \cdot 2^{-n+\mathsf{a}_{j+1}}}(x_z) \backslash B_{2 \cdot 2^{-n+\mathsf{a}_{j+1}}}(x_z)\big)\,.
    \end{equation}
    Moreover, we have
    \begin{equation}
    \label{eq:lem5.9-1-0}
        \widetilde P^{(j)} \subset  \cup_{z \in \mathcalW_j} I_z^{(j)} \cup \widetilde P^{(j+1)}|_{B_2(0) \backslash U_{j}}\,.
    \end{equation}
\end{enumerate}
Recall from \eqref{eq:restrict-length} the definition of the restriction of a curve to a set. Equation~\eqref{eq:lem5.9-1-0} indicates that the curve on the left-hand side can be covered by the curves on the right-hand side. Furthermore, by comparing the $D_n$-lengths of both sides, the length of the left-hand curve is not greater than the total lengths of the right-hand curves. Equations~\eqref{eq:lem5.9-1} and \eqref{eq:lem5.9-1-0} will eventually be used to show that the $D_n$-length of $\widetilde P^{(1)}$, after removing the parts outside of the region $S$ from~\eqref{eq:def-S}, can be upper-bounded by that of $\widetilde P^{(\mm)} = P$.

Let $\widetilde P^{(\mm)} = P$. For any $1 \leq j \leq \mm-1$, assume that the path $\widetilde P^{(j+1)}$ has been constructed and satisfies conditions~\ref{lem5.9-cond-1}, \ref{lem5.9-cond-2}, and \ref{lem5.9-cond-3}. We now construct $\widetilde P^{(j)}$. Our strategy is that each time the path $\widetilde P^{(j+1)}$ is close to a box $B_{\RC \cdot 2^{-n+\mathsf{a}_j}}(z)$ for some $z \in \mathcalX_j$, we will replace the subpath with a detour $I_z^{(j)}$. This detour traverses a larger hypercubic shell $B_{4\RC \cdot 2^{-n+\mathsf{a}_j}}(z) \backslash B_{2\RC \cdot 2^{-n+\mathsf{a}_j}}(z)$, and avoids the box $B_{\RC \cdot 2^{-n+\mathsf{a}_j}}(z)$. We refer to Figure~\ref{fig:short-cut} for an illustration. Equation~\eqref{eq:lem5.9-1} will be a consequence of condition~\ref{cond-new-3} from Proposition~\ref{prop:cover}. 

\begin{figure}[H]
\centering
\includegraphics[scale = 0.5]{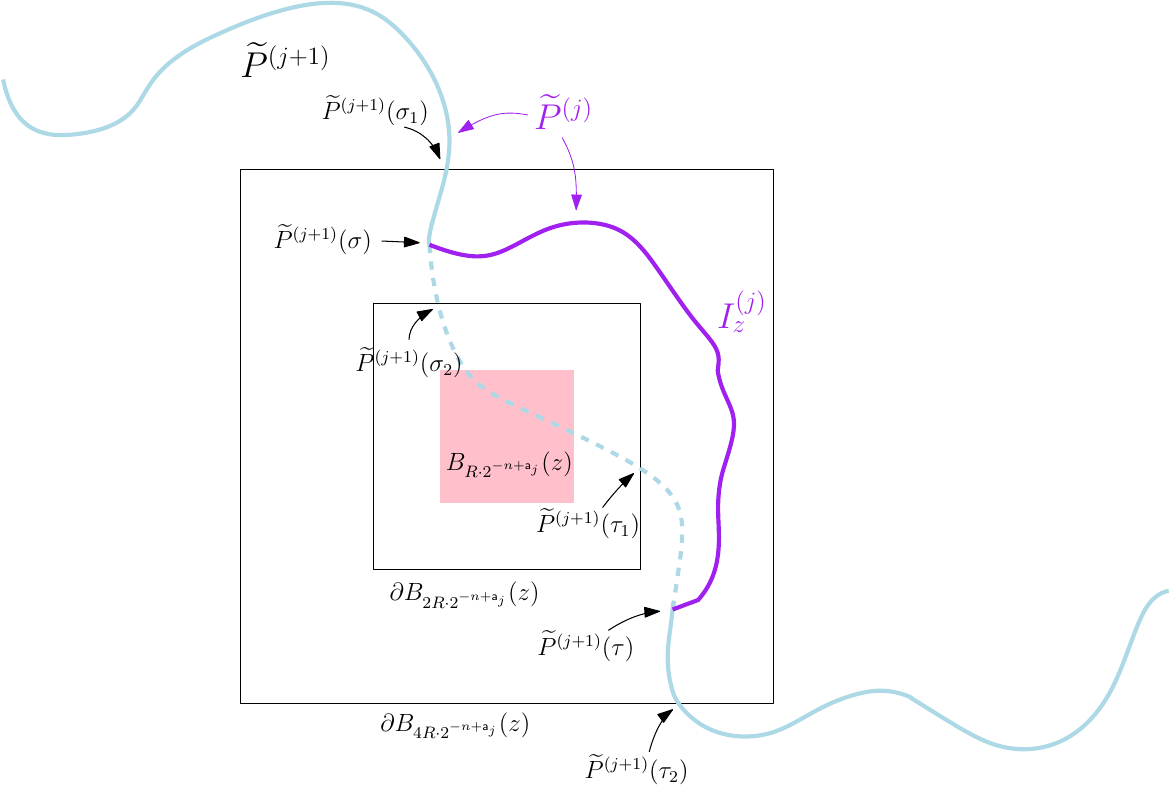}
\caption{Illustration of the paths $\widetilde P^{(j+1)}$ and $\widetilde P^{(j)}$. The pink region corresponds to $B_{\RC \cdot 2^{-n+\mathsf{a}_j}}(z)$. The cyan curve, including both the solid and dashed segments, represents the path $\widetilde P^{(j+1)}$. The purple curve depicts the detour $I_z^{(j)}$, introduced when the curve $\widetilde P^{(j+1)}$ enters the box $B_{\frac{3}{2}\RC \cdot 2^{-n + \mathsf{a}_j}}(z)$. We iteratively replace the cyan curve by the union of the solid part of the cyan and purple curves until we get a path which does not enter $B_{\frac{3}{2}\RC \cdot 2^{-n + \mathsf{a}_j}}(z)$ for any $z\in \mathcalXp_j$. The resulting path is defined to be $\widetilde P^{(j)}$.}
\label{fig:short-cut}
\end{figure}

Let 
\begin{equation} \label{eq:def-mathcalXp}
\mathcalXp_j := \{z \in \mathcalX_j : B_{\RC \cdot 2^{-n+\mathsf{a}_j}}(z) \cap S \neq \emptyset\}\,.
\end{equation}
For each $z \in \mathcalXp_j$, by condition~\ref{cond-new-3} from Proposition~\ref{prop:cover}, there exists an $x_z$ such that $|x_z - z|_\infty \leq 2^{-n + \mathsf{a}_{j+1}}$ and it satisfies the following condition:
   \begin{equation}
        \label{eq:lem5.9-2}
        \begin{aligned}
             &D_n\big(\mbox{around }B_{4\RC \cdot 2^{-n+\mathsf{a}_j}}(z) \backslash B_{2\RC \cdot 2^{-n+\mathsf{a}_j}}(z)\big) \\
             &\qquad\qquad< \exp(- K \sqrt{\mathsf{a}_j}) D_n\big(\mbox{across }B_{4 \cdot 2^{-n+\mathsf{a}_{j+1}}}(x_z) \backslash B_{2 \cdot 2^{-n+\mathsf{a}_{j+1}}}(x_z)\big)\,.
        \end{aligned}
    \end{equation}
Suppose that the path $\widetilde P^{(j+1)}$ enters the box $B_{\frac{3}{2}\RC \cdot 2^{-n+\mathsf{a}_j}}(z)$. Let us parametrize the path $\widetilde P^{(j+1)}$ by $[0,\widetilde t]$, and consider the subsequent times:
\begin{equation*}
    \begin{aligned}
        \sigma_2 &:= \inf \{s >0 : \widetilde P^{(j+1)}(s) \in \partial B_{2 \RC \cdot 2^{-n+\mathsf{a}_j}}(z) \}\,,\\ 
        \sigma_1 &:= \sup \{s <\sigma_2 : \widetilde P^{(j+1)}(s) \in \partial B_{4 \RC \cdot 2^{-n+\mathsf{a}_j}}(z) \}\,,\\
        \tau_1 &:= \sup \{ s > \sigma_2: \widetilde P^{(j+1)}(s) \in \partial B_{2 \RC \cdot 2^{-n+\mathsf{a}_j}}(z) \}\,,\\
         \tau_2 &:= \inf \{ s>\tau_1: \widetilde P^{(j+1)}(s) \in \partial B_{4 \RC \cdot 2^{-n+\mathsf{a}_j}}(z) \}\,.
    \end{aligned}   
\end{equation*}
Then, we have $0 < \sigma_1 < \sigma_2 < \tau_1 <\tau_2 < \widetilde t$ which follows from the definitions of $S$ in~\eqref{eq:def-S} and  $\mathcalXp_j$. By definition, the paths $\widetilde P^{(j+1)}[0,\sigma_1]$ and $\widetilde P^{(j+1)}[\tau_2, \widetilde t]$ do not intersect $B_{\frac{3}{2}\RC \cdot 2^{-n+\mathsf{a}_j}}(z)$. Furthermore, the paths $\widetilde P^{(j+1)}[\sigma_1, \sigma_2]$ and $\widetilde P^{(j+1)}[\tau_1, \tau_2]$ cross the hypercubic shell $B_{4\RC \cdot 2^{-n+\mathsf{a}_j}}(z) \backslash B_{2\RC \cdot 2^{-n+\mathsf{a}_j}}(z)$. By the definition of $D_n(\mbox{around } A)$ from \eqref{eq:def-around} with $B = B_2(0)$, there exists a continuous path $I_z^{(j)}$ and times $\sigma \in [\sigma_1, \sigma_2]$ and $\tau \in [\tau_1, \tau_2]$, such that this path connects $\widetilde P^{(j+1)} (\sigma)$ and $\widetilde P^{(j+1)} (\tau)$ within $(B_{4\RC \cdot 2^{-n+\mathsf{a}_j}}(z) \backslash B_{2\RC \cdot 2^{-n+\mathsf{a}_j}}(z)) \cap B_2(0)$ and satisfies that 
\[
{\rm len}(I_z^{(j)} ; D_n) \leq 2 D_n(\mbox{around }B_{4\RC \cdot 2^{-n+\mathsf{a}_j}}(z) \backslash B_{2\RC \cdot 2^{-n+\mathsf{a}_j}}(z))\,.
\]
 Combining this with \eqref{eq:lem5.9-2}, we obtain
\begin{equation}
    \label{eq:lem5.9-3}
        {\rm len}(I_z^{(j)} ; D_n) < 2 \exp(- K \sqrt{\mathsf{a}_j}) D_n(\mbox{across }B_{4 \cdot 2^{-n+\mathsf{a}_{j+1}}}(x_z) \backslash B_{2 \cdot 2^{-n+\mathsf{a}_{j+1}}}(x_z))\,.
\end{equation}

We now reroute the path $\widetilde P^{(j+1)}$ around one point in $\mathcalXp_j$ at a time, until the resulting path does not get close to any points in $\mathcalXp_j$. That is, let $P_0 = \widetilde P^{(j+1)}$. Inductively, if $k \geq 1$ and $P_{k-1}$ has been defined, we arbitrarily select a point $z$ in $\mathcalXp_j$ such that $P_{k-1}$ enters the box $B_{\frac{3}{2}\RC \cdot 2^{-n+\mathsf{a}_j}}(z)$. We then reroute $P_{k-1}$ to get a new path $P_k$ by replacing the subpath $P_{k-1}[\sigma, \tau]$ (defined analogously to $\widetilde P^{(j+1)}[\sigma, \tau]$) in $P_{k-1}$ with a detour (analogous to $I_z^{(j)}$) that satisfies \eqref{eq:lem5.9-3}. Thus, the new path $P_k$ does not enter $B_{\frac{3}{2}\RC \cdot 2^{-n+\mathsf{a}_j}}(z)$. Since the points in $\mathcalXp_j \subset \mathcalX_j$ are spaced at $|\cdot|_\infty$-distance at least $2^{-n + \mathsf{a}_{j+1}}/4 >10\RC \cdot 2^{-n+\mathsf{a}_j}$ away from each other, we do the rerouting for each $z$ at most once. We will continue to use $I_z^{(j)}$ to denote the detour associated with $z$, albeit with a slight abuse of notation. Since there are only finitely many possibilities for $z \in \mathcalXp_j$, this procedure will eventually terminate. Consequently, we obtain a path that does not enter the box $B_{\frac{3}{2}\RC \cdot 2^{-n+\mathsf{a}_j}}(z)$ for any $z \in \mathcalXp_j$. We denote the resulting path by $\widetilde P^{(j)}$ and use $\mathcalW_j$ to denote the subset of points $z\in \mathcalXp_j$ for which we have added an associated detour $I_z^{(j)}$. It is possible that for some $z \in \mathcalW_j$, its corresponding detour $I_z^{(j)}$ has been removed from the path at a subsequent stage of the above iterative procedure to construct $\wt P^{(j)}$, but we still consider such points to be in $\mathcalW_j$. 

We now verify that the path $ \widetilde P^{(j)}$ satisfies conditions~\ref{lem5.9-cond-1}, \ref{lem5.9-cond-2}, and \ref{lem5.9-cond-3}. Conditions~\ref{lem5.9-cond-1} and \ref{lem5.9-cond-3} are straightforward from the definition and \eqref{eq:lem5.9-3}. From our construction, it follows that $\widetilde P^{(j)}$ is contained in $B_{8\RC \cdot 2^{-n + \mathsf{a}_j}}(\widetilde P^{(j+1)})$, and 
\begin{equation*}
\mathfrak d_\infty(\widetilde P^{(j)}, \cup_{z \in \mathcalX_j} B_{\RC \cdot 2^{-n+\mathsf{a}_j}}(z) \cap S) \geq \frac{1}{2}\RC \cdot 2^{-n+\mathsf{a}_j} . 
\end{equation*}
Combining these with the fact that $\mathfrak d_\infty(\widetilde P^{(j+1)}, S \cap U_{j+1}) \geq \frac{1}{2}\RC \cdot 2^{-n+\mathsf{a}_{j+1}}$, which follows from condition~\ref{lem5.9-cond-2} with $j+1$, we obtain
\begin{equation*}
    \mathfrak d_\infty(\widetilde P^{(j)}, S \cap U_j ) \geq \frac{1}{2}\RC \cdot 2^{-n+\mathsf{a}_j}. 
\end{equation*}
This verifies condition~\ref{lem5.9-cond-2}.

In this way, we can inductively construct the paths $\widetilde P^{(1)}, \ldots, \widetilde P^{(\mm-1)}$ satisfying conditions~\ref{lem5.9-cond-1}, \ref{lem5.9-cond-2}, and \ref{lem5.9-cond-3}. We now restrict $\widetilde P^{(1)}$ to the domain $S$ defined in~\eqref{eq:def-S}, yielding a path $\widetilde P$ that connects $B_{2^{-n+\mathsf{a}_\mm}}(P(0))$ and $B_{2^{-n+\mathsf{a}_\mm}}(P(t))$. Using conditions~\ref{lem5.9-cond-1} and \ref{lem5.9-cond-2} with $j=1$, we obtain $\widetilde P \subset B_2(0) \backslash U_1 = B_2(0) \backslash \mathcalU$. Next, we will show that $\widetilde P$ satisfies \eqref{eq:lem5.9} with a constant $C$ that depends only on $K$. Using \eqref{eq:lem5.9-1-0} with $1 \leq j \leq \mm-1$ and noting that $U_1 = \mathcalU$, we obtain
\begin{equation*}
    \widetilde P \subset \widetilde P^{(1)} \subset \cup_{1 \leq j \leq \mm-1} \cup_{z \in \mathcalW_j} I_z^{(j)} \cup P|_{B_2(0) \backslash \mathcalU}\,.
\end{equation*}
Removing the detours that do not intersect $\widetilde P$ and taking the $D_n$-lengths of both sides yield:
\begin{equation}
\label{eq:lem5.9-4}
    {\rm len}(\widetilde P; D_n) \leq \sum_{j=1}^{\mm-1} \sum_{z \in \mathcalW_j} \mathbbm{1}_{ \{ I_z^{(j)} \cap \widetilde P \neq \emptyset \} } {\rm len}(I_z^{(j)}; D_n) + {\rm len}(P|_{B_2(0) \backslash \mathcalU}; D_n)\,.
\end{equation}
Applying \eqref{eq:lem5.9-1}, we obtain that for each $1 \leq j \leq \mm-1$
\begin{equation*}
\begin{aligned}
   &\quad \sum_{z \in \mathcalW_j} \mathbbm{1}_{ \{ I_z^{(j)} \cap \widetilde P \neq \emptyset \} } {\rm len}(I_z^{(j)}; D_n) \\
   &\leq \sum_{z \in \mathcalW_j} \mathbbm{1}_{ \{ I_z^{(j)} \cap \widetilde P \neq \emptyset \} } 2\exp(- K \sqrt{\mathsf{a}_j}) D_n\big(\mbox{across }B_{4 \cdot 2^{-n+\mathsf{a}_{j+1}}}(x_z) \backslash B_{2 \cdot 2^{-n+\mathsf{a}_{j+1}}}(x_z)\big)\,,
\end{aligned}
\end{equation*}
where $x_z$ is the point associated with $z$ defined in~\eqref{eq:lem5.9-1}. Recall from the condition above \eqref{eq:lem5.9-1} that $|x_z-z|_\infty \leq 2^{-n + \mathsf{a}_{j+1}}$, and hence $I_z^{(j)} \subset B_{4\RC \cdot 2^{-n+\mathsf{a}_j}}(z) \backslash B_{2\RC \cdot 2^{-n+\mathsf{a}_j}}(z) \subset B_{2 \cdot 2^{-n+\mathsf{a}_{j+1}}}(x_z)$. Consequently, for each $1 \leq j \leq \mm-1$, if $I_z^{(j)} \cap \widetilde P \neq \emptyset$, then the path $\widetilde P $ must enter the box $B_{2 \cdot 2^{-n+\mathsf{a}_{j+1}}}(x_z)$. Since $|P(0) - P(t)|_\infty > 10 \cdot 2^{-n+\mathsf{a}_\mm}$, the start and end points of $\widetilde P$ are at $|\cdot|_\infty$-distance at least $8 \cdot 2^{-n + \mathsf{a}_\mm}$ away from each other. Therefore, we can deduce that $\widetilde P $ must cross the hypercubic shell $B_{4 \cdot 2^{-n+\mathsf{a}_{j+1}}}(x_z) \backslash B_{2 \cdot 2^{-n+\mathsf{a}_{j+1}}}(x_z)$. According to the fact that $\mathcalW_j \subset \mathcalX_j$ and condition~\ref{cond-new-2} from Proposition~\ref{prop:cover}, each point on $\widetilde P$ is contained in at most $100^d$ such hypercubic shells. Therefore, for each $1 \leq j \leq \mm-1$,
\begin{equation*}
\begin{aligned}
   \sum_{z \in \mathcalW_j} \mathbbm{1}_{ \{ I_z^{(j)} \cap \widetilde P \neq \emptyset \} } {\rm len}(I_z^{(j)}; D_n) \leq 100^d \times 2\exp(- K \sqrt{\mathsf{a}_j}) {\rm len}(\widetilde P; D_n)\,.
\end{aligned}
\end{equation*}
Combining this with \eqref{eq:lem5.9-4} and the fact that $K>100^d$, we get inequality~\eqref{eq:lem5.9}. Therefore, the path $\widetilde P$ satisfies all the conditions in the lemma. \qedhere
\end{proof}

In the next lemma, we prove that for any continuous path, we can modify it within the region $B_2(0) \backslash \mathcalU$ such that the $D_n$-length and $D_{n+k}$-length satisfy the desired bound in Proposition~\ref{prop:compare}.

\begin{lemma}
\label{lem:modify-outU}
    There exists a constant $C$ that depends only on $L$ such that the following holds on the event $\mathcalJ_1 \cap \mathcalJ_2$.
    \begin{enumerate}
        \item For any continuous path $P:[0,t] \rightarrow B_2(0)$ with $|P(0) - P(t)|_\infty > 10\cdot 2^{-n}$, there exists a continuous path $\widetilde P$ in $B_2(0)$ that connects $P(0)$ and $P(t)$ and satisfies\footnote{Although not required in the latter proof, we remark that the first claim still holds if we upper-bound by ${\rm len}(P|_{B_2(0) \backslash \mathcalU} ; D_{n+k})$ instead of ${\rm len}(P; D_{n+k})$. Moreover, in the second claim, we can require $\widetilde P$ to be contained in $B_2(0) \backslash \mathcalU$.} 
        \begin{equation*}
            {\rm len}(\widetilde P|_{B_2(0) \backslash \mathcalU} ; D_n) \leq C 2^{(1 - \xi Q +\epsilon)k} {\rm len}(P; D_{n+k})\,.
        \end{equation*}
        Here, ${\rm len}(\widetilde P|_{B_2(0) \backslash \mathcalU} ; D_n)$ denotes the $D_n$-length of $\widetilde P$ restricted to the region $B_2(0) \backslash \mathcalU$, as defined in \eqref{eq:restrict-length}.
    \item For any continuous path $P:[0,t] \rightarrow B_2(0) \backslash \mathcalU$ with $|P(0) - P(t)|_\infty > 10\cdot 2^{-n}$, there exists a continuous path $\widetilde P$ in $B_2(0)$ that connects $B_{2^{-n}}(P(0))$ and $B_{2^{-n}}(P(t))$ and satisfies 
    \begin{equation*}
        {\rm len}(\widetilde P ; D_{n+k}) \leq C 2^{-(1 - \xi Q -\epsilon)k} {\rm len}(P; D_n)\,.
    \end{equation*}
    \end{enumerate}
\end{lemma}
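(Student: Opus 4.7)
The plan is, for both parts, to discretize $P$ at the lattice scale $\mathscrY_0 = 2^{-n}\mathbb{Z}^d$ and then exploit the three conditions \ref{cond-a1-new-1}--\ref{cond-a1-new-3} of Proposition~\ref{prop:cover} at the lattice points lying in nice $\mathsf{a}_1$-boxes outside $\mathcalU$. The first key observation is that on any such $\mathsf{a}_1$-box, condition~\ref{cond-a1-new-1} forces $|h_n(y') - h_n(y)| \leq C(L)$ for all $y' \in B_{4 \cdot 2^{-n}}(y)$, so $e^{\xi h_n}$ is uniformly within a constant factor of $e^{\xi h_n(y)}$ there. Combined with \ref{cond-a1-new-2}, this yields an upper bound of order $e^{\xi h_n(y)}\, 2^{-n - (1-\xi Q - \epsilon)k}$ on the $D_{n+k}$-diameter of $B_{2^{-n}}(y)$ inside $B_{2\cdot 2^{-n}}(y)$; combined with \ref{cond-a1-new-3}, it yields a matching lower bound of order $e^{\xi h_n(y)}\, 2^{-n - (1-\xi Q + \epsilon)k}$ on the $D_{n+k}$-length of any crossing of the shell $B_{2 \cdot 2^{-n}}(y) \setminus B_{2^{-n}}(y)$. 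The same oscillation bound sandwiches the $D_n$-length of any such crossing between $c(L)\, e^{\xi h_n(y)}\, 2^{-n}$ and $C(L)\, e^{\xi h_n(y)}\, 2^{-n}$, since the Euclidean length of such a crossing is between $2^{-n}$ and $C 2^{-n}$.

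The discretization I would use associates to $P$ an ordered sequence $y_0, y_1, \ldots, y_J \in \mathscrY_0$ and times $0 = \tau_0 < \tau_1 < \cdots < \tau_J \leq t$ defined inductively: let $y_0$ be the lattice point closest to $P(0)$, and, having chosen $(y_i, \tau_i)$, let $\tau_{i+1}$ be the first time after $\tau_i$ at which $P$ exits $B_{2 \cdot 2^{-n}}(y_i)$ and take $y_{i+1}$ to be the lattice point closest to $P(\tau_{i+1})$. The recursion terminates when the remainder of $P$ never leaves $B_{2 \cdot 2^{-n}}(y_J)$, and a single extra segment will handle this tail. By construction $1.5 \cdot 2^{-n} \leq |y_i - y_{i+1}|_\infty \leq 2.5 \cdot 2^{-n}$, and each sub-segment $P[\tau_i, \tau_{i+1}]$ crosses the shell $B_{2 \cdot 2^{-n}}(y_i) \setminus B_{2^{-n}}(y_i)$.

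For Part~2, the input path $P$ lies in $B_2(0) \setminus \mathcalU$, so after enlarging $\mathcalU$ by a constant number of $2^{-n}$-spacings every $y_i$ lies in a nice $\mathsf{a}_1$-box outside $\mathcalU$. I would then take $\widetilde P$ to be the concatenation of segments $\widetilde P_i$, each realizing (up to a factor of $2$) the internal $D_{n+k}$-distance from $y_i$ to $y_{i+1}$ within $B_{2 \cdot 2^{-n}}(y_i)$; the first paragraph supplies $\mathrm{len}(\widetilde P_i; D_{n+k}) \leq C(L)\, e^{\xi h_n(y_i)}\, 2^{-n - (1-\xi Q -\epsilon)k}$ and $\mathrm{len}(P[\tau_i, \tau_{i+1}]; D_n) \geq c(L)\, e^{\xi h_n(y_i)}\, 2^{-n}$. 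Summing over $i$ and taking the ratio of these two per-index estimates cancels the common factor $e^{\xi h_n(y_i)}$ term by term and produces
\[
\mathrm{len}(\widetilde P; D_{n+k}) \leq C(L)\, 2^{-(1-\xi Q -\epsilon)k}\, \mathrm{len}(P; D_n).
\]
For Part~1, I would apply the same discretization to an arbitrary $P \subset B_2(0)$, let $\mathcal I$ denote the set of indices $i$ with $y_i$ in a nice $\mathsf{a}_1$-box outside $\mathcalU$, and take $\widetilde P$ to be a concatenation in which, for $i \in \mathcal I$, the segment from $y_i$ to $y_{i+1}$ is a $D_n$-shortest path in $B_{2 \cdot 2^{-n}}(y_i)$, and for $i \notin \mathcal I$, it is any continuous path inside $\mathcalU$ (whose $D_n$-length contribution to $\mathrm{len}(\widetilde P|_{B_2(0) \setminus \mathcalU}; D_n)$ is zero). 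Each $i \in \mathcal I$ then contributes at most $C(L)\, e^{\xi h_n(y_i)}\, 2^{-n}$ to the $D_n$-length, while $\mathrm{len}(P[\tau_i, \tau_{i+1}]; D_{n+k})$ is at least $c(L)\, e^{\xi h_n(y_i)}\, 2^{-n - (1-\xi Q + \epsilon)k}$. Summing over $i \in \mathcal I$ and dividing yields $\mathrm{len}(\widetilde P|_{B_2(0) \setminus \mathcalU}; D_n) \leq C(L)\, 2^{(1-\xi Q + \epsilon)k}\, \mathrm{len}(P; D_{n+k})$, as required.

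The main obstacle will be the lattice points $y_i$ whose containing $\mathsf{a}_1$-box straddles $\partial \mathcalU$, because conditions \ref{cond-a1-new-2} and \ref{cond-a1-new-3} of Proposition~\ref{prop:cover} do not apply there. I expect to resolve this by enlarging $\mathcalU$ by its $C \cdot 2^{-n}$-neighborhood during the proof: since the boxes constituting $\mathcalU$ have side at least $2\RC \cdot 2^{-n + K^4}$, such a fattening is invisible at the $\mathsf{a}_1$-scale and will not affect the subsequent application of the lemma in the proof of Proposition~\ref{prop:compare}. Minor secondary issues — matching the endpoints of $\widetilde P$ with those required by the statement, handling the tail $P[\tau_J, t]$, and verifying that each $y_i$ appears with bounded multiplicity in the sums — will all follow from the construction of the $\tau_i$'s and can be absorbed into the $L$-dependent constant.
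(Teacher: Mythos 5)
Your core argument is the same as the paper's: discretize $P$ at scale $2^{-n}$, use condition~\ref{cond-a1-new-1} of Proposition~\ref{prop:cover} to freeze $e^{\xi h_n}$ on each $2^{-n}$-box up to an $L$-dependent factor, and then pair the upper bound from condition~\ref{cond-a1-new-2} (resp.\ the lower bound from condition~\ref{cond-a1-new-3}) against the crossing of the shell $B_{2\cdot 2^{-n}}(y_i)\setminus B_{2^{-n}}(y_i)$ by the original path, so that the common factor $e^{\xi h_n(y_i)}$ cancels. Your stopping-time discretization with term-by-term pairing is a harmless variant of the paper's construction (which collects all boxes hit by $P$ and uses a $5^d$ multiplicity bound), and the estimates you quote are the right ones.

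Two points need repair. First, in Part~1 your prescription for $i\notin\mathcal I$ — ``any continuous path inside $\mathcalU$, contributing zero'' — is not well defined at transition indices: when $y_i$ lies in (or near) $\mathcalU$ but $y_{i+1}$ does not, no connecting path stays inside $\mathcalU$, and if $\mathcalU$ is disconnected two consecutive ``bad'' points need not be joinable within $\mathcalU$ at all; the excursion outside $\mathcalU$ then contributes to the restricted length and must be controlled. The clean fix (and what the paper does) is to keep the portions of $P$ lying in $\mathcalU$ verbatim and only replace the maximal excursions of $P$ in $B_2(0)\setminus\mathcalU$ by discrete paths through lattice points of $\mathscrY_0\setminus\mathcalU$; alternatively, charge the short piece outside $\mathcalU$ to the shell-crossing at the adjacent index in $\mathcal I$. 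Second, the fattening of $\mathcalU$ you propose as the ``main obstacle'' fix is unnecessary and slightly costly: Proposition~\ref{prop:cover} states conditions~\ref{cond-a1-new-1}--\ref{cond-a1-new-3} for every $\mathsf{a}_1$-box contained in $B_2(0)$ but \emph{not contained} in $\mathcalU$, so boxes straddling $\partial\mathcalU$ are already covered, and since $\mathcalU$ is a union of $\mathsf{a}_0$-boxes, every lattice point of $\mathscrY_0\setminus\mathcalU$ (indeed every $y_i$ arising from a path in $B_2(0)\setminus\mathcalU$, by the closest-point argument) sits in such a box, with the conditions holding at all lattice points of that box. If you insist on proving the lemma for an enlarged $\mathcalU'\supset\mathcalU$, the Part~1 conclusion becomes a bound on ${\rm len}(\widetilde P|_{B_2(0)\setminus\mathcalU'};D_n)$, which is weaker than the stated one, and you would then have to recheck that Lemma~\ref{lem:bypassU} and Proposition~\ref{prop:compare-1} still chain together with $\mathcalU'$ (they do, because the rerouted path of Lemma~\ref{lem:bypassU} stays at distance $\frac12\RC\cdot2^{-n+\mathsf{a}_1}\gg C2^{-n}$ from $\mathcalU$, but this must be said). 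With these adjustments the proof goes through as in the paper.
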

\begin{proof}
Note that, in this proof, the constant $C$ depends only on $L$. We begin with the first claim. Let $P:[0,t] \rightarrow B_2(0)$ be a continuous path with $|P(0) - P(t)|_\infty > 10\cdot 2^{-n}$. We will use discrete paths on $2^{-n} \mathbb{Z}^d$ to approximate $P$ outside the domain $\mathcalU$, as illustrated in Figure~\ref{fig:lem5.10}. Conditions~\ref{cond-a1-new-1} and \ref{cond-a1-new-3} from Proposition~\ref{prop:cover} will be used to upper-bound the $D_n$-length of the new path. 

\begin{figure}[H]
\centering
\includegraphics[scale = 0.6]{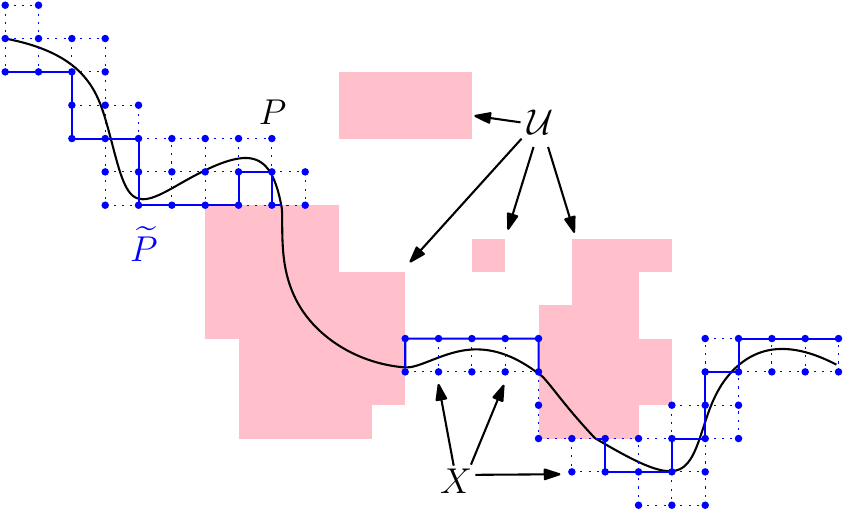}
\caption{Illustration of the domain $\mathcalU$, the path $P$, the modified path $\widetilde P$ and the set $X$. The path $P$ is represented by the black curve, while $\widetilde P$ combines the blue paths with segments of the black curve within $\mathcalU$. The dotted blue curves represent the paths between neighboring points in $X$. }
\label{fig:lem5.10}
\end{figure}

Define the set
\begin{equation*}
    X := \{ x \in \mathscrY_0 \cap B_2(0) \backslash \mathcalU : P \cap \overline{B_{2^{-n}}(x)} \neq \emptyset \}\,.
\end{equation*}
Note that for any $x \in X$, there exists an $\mathsf{a}_1$-box that covers $B_{2 \cdot 2^{-n}}(x) \cap B_2(0)$ and is contained in $B_2(0)$ but not in $\mathcalU$. Moreover, recall from Proposition~\ref{prop:cover} that all the $\mathsf{a}_1$-boxes contained in $B_2(0)$ but not in $\mathcalU$ satisfy the conditions~\ref{cond-a1-new-1} and \ref{cond-a1-new-3} therein. Therefore, for all $x \in X$, we have
\begin{equation}
    \label{eq:lem5.10-cond}
    \begin{aligned}
    &D_{n,n+k}(\mbox{across } B_{2 \cdot 2^{-n}}(x) \backslash B_{2^{-n}}(x)) > \frac{1}{L}2^{-n-(1-\xi Q +\epsilon) k} \quad \mbox{and} \\
    &\sup_{y \in B_{2 \cdot 2^{-n}}(x) \cap B_2(0)} 2^{-n}|\nabla h_n(y)|_\infty \leq L+1 \,.
    \end{aligned}
\end{equation}

By \eqref{eq:def-mathcal-U}, condition~\ref{cond-new-1} from Proposition~\ref{prop:cover}, and the fact that $\RC$ from~\eqref{eq:def-lambda} is an integer, we know that $\mathcalU$ is a union of $\mathsf{a}_0$-boxes. This implies that the boxes $\{B_{2^{-n}}(x) : x \in  \mathscrY_0 \cap B_2(0) \backslash \mathcalU  \}$ can cover $B_2(0) \backslash \mathcalU$. Therefore, for each segment $P[a,b]$ of $P$ that lies within $B_2(0) \backslash \mathcalU$, there exist $z,w \in X$ such that $|z-P(a)|_\infty \leq 2^{-n}$ and $|w-P(b)|_\infty \leq 2^{-n}$. Moreover, there exists a discrete path in $X$ connecting them. This is because for any $x \in X$ considering the first exit time of $P[a,b]$ from the box $\overline{B_{2^{-n}}(x)}$, we can find a vertex $y \in \mathscrY_0 \cap B_2(0) \backslash \mathcalU$ such that $|x-y|_1 = 2^{-n}$, and $P$ also enters the box $\overline{B_{2^{-n}}(y)}$. 
By doing this procedure iteratively, we obtain a discrete path in $X$ that connects $P(a)$ and $P(b)$. In other words, there exists a path $(z_1, \ldots, z_J)$ on $X$ such that
\begin{align*}
    &|z_1 - P(a)|_\infty \leq 2^{-n}, \quad |z_J - P(b)|_\infty \leq 2^{-n}, \quad \mbox{and} \\
    &|z_i - z_{i+1}|_1 = 2^{-n} \quad \forall 1 \leq i \leq J-1\,.
\end{align*}
Therefore, we can replace each maximal segment of $P$ contained in $B_2(0) \backslash \mathcalU$ with the concatenation of the corresponding $(z_1, \ldots, z_J)$ path on $X$ and two straight lines connecting $P(a)$ to $z_1$ and $z_J$ to $P(b)$, respectively. Then, we apply a loop erasure procedure to the obtained path, resulting in a path $\widetilde P$ that hits each point in $X$ at most once, as illustrated in Figure~\ref{fig:lem5.10}. 

Let $x_1,x_2,\ldots,x_T$ denote the points in $X$ that are crossed by $\widetilde P$. Note that $\widetilde P|_{B_2(0) \backslash \mathcalU}$ can be covered by a collection of line segments with $l^\infty$-lengths at most $2^{-n}$ started from $\{x_i\}_{1 \leq i \leq T}$. Combining this with the second inequality in \eqref{eq:lem5.10-cond}, we obtain
\begin{equation}
\label{eq:lem5.10-1}
    {\rm len}(\widetilde P|_{B_2(0) \backslash \mathcalU}; D_n) \leq \sum_{i=1}^T C 2^{-n} e^{\xi h_n(x_i)},
\end{equation}
where the constant $C$ depends only on $L$. Furthermore, by the definition of the set $X$, for each $1 \leq i \leq T$, the path $P$ must cross the hypercubic shell $B_{2 \cdot 2^{-n}}(x_i) \backslash B_{2^{-n}}(x_i)$. In addition, each point on $P$ is contained in at most $5^d$ such hypercubic shells. Therefore,
\begin{equation*}
\begin{aligned}
    {\rm len}(P; D_{n+k}) &\geq 5^{-d} \sum_{i=1}^T D_{n+k}(\mbox{across }B_{2 \cdot 2^{-n}}(x_i) \backslash B_{2^{-n}}(x_i))\,.
\end{aligned}
\end{equation*}
Combining this with \eqref{eq:lem5.10-cond}, we obtain
\begin{equation}
\label{eq:lem5.10-2}
\begin{aligned}
    {\rm len}(P; D_{n+k}) &\geq C \sum_{i=1}^T e^{\xi h_n(x_i)} D_{n,n+k}(\mbox{across }B_{2 \cdot 2^{-n}}(x_i) \backslash B_{2^{-n}}(x_i))\\
    &\geq C 2^{-n - (1-\xi Q+\epsilon)k} \sum_{i=1}^T e^{\xi h_n(x_i)}. 
    \end{aligned}
\end{equation}
Combining \eqref{eq:lem5.10-1} and \eqref{eq:lem5.10-2} yields the desired inequality in the first claim. Therefore, the first claim holds with the path $\widetilde P$.

The second claim can be proved in a similar way. Let $P':[0,t] \rightarrow B_2(0) \backslash \mathcalU$ be a continuous path with $|P'(0) - P'(t)|_\infty > 10\cdot 2^{-n}$. We will first use a discrete path on $2^{-n} \mathbb{Z}^d$ to approximate $P'$, and then use conditions~\ref{cond-a1-new-1} and \ref{cond-a1-new-2} from Proposition~\ref{prop:cover} to construct a path of typical $D_{n+k}$-length along the discrete path. Define the set
\begin{equation*}
    X' := \{ x \in \mathscrY_0 \cap B_2(0) \backslash \mathcalU : P' \cap \overline{B_{2^{-n}}(x)} \neq \emptyset \}\,.
\end{equation*}
Similar to \eqref{eq:lem5.10-cond}, using conditions~\ref{cond-a1-new-1} and \ref{cond-a1-new-2} from Proposition~\ref{prop:cover}, we can show that for all $x \in X'$:
\begin{equation}
    \label{eq:lem5.10-cond2}
    \begin{aligned}
    &{\rm Diam}_{n,n+k}(B_{2^{-n}}(x); B_{2 \cdot 2^{-n}}(x)) < L2^{-n-(1-\xi Q - \epsilon) k} \quad \mbox{and} \\
    &\sup_{y \in B_{2 \cdot 2^{-n}}(x) \cap B_2(0)} 2^{-n}|\nabla h_n(y)|_\infty \leq L+1 \,.
    \end{aligned}
\end{equation}

Similarly to before, according to the definition of $X'$ and the fact that $P' \subset B_2(0) \backslash \mathcalU$, there exists a self-avoiding path $(w_1, \ldots, w_{J'})$ on $X'$ such that 
\begin{align*}
    &|w_1 - P'(0)|_\infty \leq 2^{-n}, \quad |w_{J'} - P'(t)|_\infty \leq 2^{-n}, \quad \mbox{and} \\
    &|w_i - w_{i+1}|_1 = 2^{-n} \quad \forall 1 \leq i \leq J'-1\,.
\end{align*}
Using \eqref{eq:lem5.10-cond2}, we can connect $w_i$ to $w_{i+1}$ with a path of $D_{n+k}$-length at most $C 2^{-n-(1-\xi Q - \epsilon) k} e^{\xi h_n(w_i)}$ within the box $B_{2 \cdot 2^{-n}}(w_i)$. Here, $C$ is a constant that depends only on $L$. By concatenating these paths, we obtain a continuous path $\widetilde P'$ from $B_{2^{-n}}(P'(0))$ to $B_{2^{-n}}(P'(t))$ in the box $B_2(0)$.\footnote{Careful readers may worry that the path $\widetilde P'$ could leave $B_2(0)$ when $\mathfrak{d}(w_i, \partial B_2(0)) = 2^{-n}$. To address this issue, we can introduce an additional condition: ${\rm Diam}_{n,n+k}(w, w+ \sigma e_i; B_{2 \cdot 2^{-n}}(w) \cap B_2(0)) < L 2^{-n - (1-\xi Q - \epsilon)k}$ for all such $w$ and each $\sigma \in \{-1,1\}$ and $1 \leq i \leq d$ such that $w+\sigma e_i \in B_2(0)$. This condition will be included in condition~\ref{cond-a1-new-2} of Proposition~\ref{prop:cover} and also in condition~\ref{condition-a1-nice-b} of Definition~\ref{def:nice-boxes}. Note that, with this additional condition, Lemma~\ref{lem:est-q1} still holds by using Lemma~\ref{lem:distance-any-point}, and the rest of the proof remains unchanged.} By the construction, we have
\begin{equation*}
   {\rm len}(\widetilde P'; D_{n+k}) \leq C \sum_{x \in X'} 2^{-n-(1-\xi Q - \epsilon) k} e^{\xi h_n(x)}. 
\end{equation*}
Furthermore, the path $P'$ must cross the hypercubic shell $B_{2 \cdot 2^{-n}}(x) \backslash B_{2^{-n}}(x)$ for all $x \in X'$. Moreover, each point on $P$ is contained in at most $5^d$ such hypercubic shells. Combining these with \eqref{eq:lem5.10-cond2}, we obtain
\begin{equation*}
\begin{aligned}
    {\rm len}(P'; D_n) &\geq 5^{-d} \sum_{x \in X'} D_n(\mbox{across }B_{2 \cdot 2^{-n}}(x) \backslash B_{2^{-n}}(x)) \geq  C \sum_{x \in X'} 2^{-n} e^{\xi h_n(x)}.
\end{aligned}
\end{equation*}
Combining the above two inequalities yields the desired inequality in the second claim. Therefore, the second claim holds with the path $\widetilde P'$.
\end{proof}

As a consequence of Lemmas~\ref{lem:bypassU} and \ref{lem:modify-outU}, we conclude that
\begin{proposition}
\label{prop:compare-1}
    There exists a constant $C$ that depends only on $L$ and $K$ such that, on the event $\mathcalJ_1 \cap \mathcalJ_2$, the following inequalities hold for all $z,w \in B_2(0)$:
    \begin{equation*}
    \begin{aligned}
        &D_n( B_{30 \cdot 2^{-n+\mathsf{a}_\mm}}(z), B_{30 \cdot 2^{-n+\mathsf{a}_\mm}}(w) ; B_2(0)) \leq C 2^{(1 - \xi Q +\epsilon)k} D_{n+k}(z,w; B_2(0))\,, \\
        &D_{n+k}( B_{30 \cdot 2^{-n+\mathsf{a}_\mm}}(z), B_{30 \cdot 2^{-n+\mathsf{a}_\mm}}(w) ; B_2(0)) \leq C 2^{-(1 - \xi Q -\epsilon)k} D_n(z,w; B_2(0))\,.
    \end{aligned}
    \end{equation*}
\end{proposition}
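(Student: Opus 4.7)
The plan is to derive the two inequalities by composing Lemmas~\ref{lem:bypassU} and~\ref{lem:modify-outU} in the two natural orders. As a preliminary reduction, observe that if $|z-w|_\infty < 60 \cdot 2^{-n+\mathsf{a}_\mm}$, then the two balls $B_{30 \cdot 2^{-n+\mathsf{a}_\mm}}(z)$ and $B_{30 \cdot 2^{-n+\mathsf{a}_\mm}}(w)$ overlap, so both left-hand sides vanish and the inequalities are trivial. I may therefore assume $|z-w|_\infty \geq 60 \cdot 2^{-n+\mathsf{a}_\mm}$, which (since $\mathsf{a}_\mm \geq 1$) exceeds both separation thresholds $10 \cdot 2^{-n}$ and $10 \cdot 2^{-n+\mathsf{a}_\mm}$ required by the two lemmas.

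For the first inequality, I would fix $\delta > 0$ and pick a path $P$ in $B_2(0)$ from $z$ to $w$ with $D_{n+k}$-length at most $(1+\delta) D_{n+k}(z,w;B_2(0))$. The first claim of Lemma~\ref{lem:modify-outU} yields a path $\widetilde P$ in $B_2(0)$ from $z$ to $w$ with
\begin{equation*}
{\rm len}(\widetilde P|_{B_2(0)\setminus \mathcalU}; D_n) \leq C \cdot 2^{(1-\xi Q + \epsilon)k}\,{\rm len}(P; D_{n+k}).
\end{equation*}
Applying Lemma~\ref{lem:bypassU} to $\widetilde P$ then produces a path $\widehat P \subset B_2(0) \setminus \mathcalU$ that joins $B_{10 \cdot 2^{-n+\mathsf{a}_\mm}}(z)$ to $B_{10 \cdot 2^{-n+\mathsf{a}_\mm}}(w)$, with ${\rm len}(\widehat P; D_n) \leq C \cdot {\rm len}(\widetilde P|_{B_2(0)\setminus \mathcalU}; D_n)$. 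Since these endpoint balls are contained in the larger balls of radius $30 \cdot 2^{-n+\mathsf{a}_\mm}$, the path $\widehat P$ is admissible on the left-hand side; combining the two bounds and letting $\delta \to 0$ yields the first inequality.

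For the second inequality the order is reversed. Starting from a near-optimal path $P$ for $D_n(z,w;B_2(0))$, Lemma~\ref{lem:bypassU} gives a path $\widehat P \subset B_2(0) \setminus \mathcalU$ connecting $B_{10 \cdot 2^{-n+\mathsf{a}_\mm}}(z)$ to $B_{10 \cdot 2^{-n+\mathsf{a}_\mm}}(w)$ with ${\rm len}(\widehat P; D_n) \leq C \cdot {\rm len}(P|_{B_2(0) \setminus \mathcalU}; D_n) \leq C \cdot {\rm len}(P; D_n)$. Since the endpoints of $\widehat P$ satisfy $|\widehat P(0) - \widehat P(t)|_\infty \geq |z-w|_\infty - 20 \cdot 2^{-n+\mathsf{a}_\mm} \geq 40 \cdot 2^{-n+\mathsf{a}_\mm} > 10 \cdot 2^{-n}$, I may apply the second claim of Lemma~\ref{lem:modify-outU} to $\widehat P$, obtaining a path $\widetilde P$ in $B_2(0)$ from $B_{2^{-n}}(\widehat P(0))$ to $B_{2^{-n}}(\widehat P(t))$ with ${\rm len}(\widetilde P; D_{n+k}) \leq C \cdot 2^{-(1-\xi Q - \epsilon)k}\,{\rm len}(\widehat P; D_n)$. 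The endpoints of $\widetilde P$ lie in $B_{2^{-n}+10\cdot 2^{-n+\mathsf{a}_\mm}}(z) \subset B_{11 \cdot 2^{-n+\mathsf{a}_\mm}}(z) \subset B_{30 \cdot 2^{-n+\mathsf{a}_\mm}}(z)$ and symmetrically for $w$, so $\widetilde P$ is admissible on the left-hand side, proving the second inequality.

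There is essentially no technical obstacle at this stage: this proposition is pure bookkeeping that assembles the two preceding lemmas into the desired comparison. The only care needed is ensuring that the composition keeps the endpoints inside the balls of radius $30 \cdot 2^{-n+\mathsf{a}_\mm}$; the generous radius $30$ is chosen precisely to absorb the additive $10 \cdot 2^{-n+\mathsf{a}_\mm}$ slack from Lemma~\ref{lem:bypassU} and the $2^{-n}$ slack from Lemma~\ref{lem:modify-outU} with plenty of room to spare. All real difficulty has already been concentrated in the coarse-graining construction of $\mathcalU$ (Proposition~\ref{prop:cover}) and in the rerouting arguments used to prove Lemmas~\ref{lem:bypassU} and~\ref{lem:modify-outU}.
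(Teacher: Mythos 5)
Your proposal is correct and follows essentially the same route as the paper: near-optimal path, then compose Lemma~\ref{lem:modify-outU} (claim 1) with Lemma~\ref{lem:bypassU} for the first bound, and Lemma~\ref{lem:bypassU} with Lemma~\ref{lem:modify-outU} (claim 2) for the second, with the same endpoint bookkeeping (the paper lands in $B_{11\cdot 2^{-n+\mathsf{a}_\mm}}$ just as you do). The only cosmetic difference is the trivial-case threshold ($60\cdot 2^{-n+\mathsf{a}_\mm}$ versus the paper's $30\cdot 2^{-n+\mathsf{a}_\mm}$), which does not affect the argument.
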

\begin{proof}
    Fix $z,w \in B_2(0)$. We assume that $|z-w|_\infty > 30 \cdot 2^{-n+\mathsf{a}_\mm}$; otherwise, the proposition becomes trivial. We start with the first inequality. By definition, there exists a continuous path $P$ that connects $z$ and $w$ in $B_2(0)$ with ${\rm len}(P; D_{n+k}) \leq 2 D_{n+k}(z,w; B_2(0))$. Applying the first claim in Lemma~\ref{lem:modify-outU}, we obtain a continuous path $P'$ that connects $z$ and $w$ in $B_2(0)$ with $${\rm len}(P'|_{B_2(0) \backslash \mathcalU}; D_n) \leq C 2^{(1-\xi Q+\epsilon)k} {\rm len}(P; D_{n+k}) \leq  C 2^{(1-\xi Q+\epsilon)k} D_{n+k}(z,w; B_2(0)).$$ Then, applying Lemma~\ref{lem:bypassU}, we obtain a continuous path $P''$ that connects $B_{10 \cdot 2^{-n+\mathsf{a}_\mm}}(z)$ and $B_{10 \cdot 2^{-n+\mathsf{a}_\mm}}(w)$ in $B_2(0)$ with $${\rm len}(P''; D_n) \leq C \cdot {\rm len}(P'|_{B_2(0) \backslash \mathcalU}; D_n) \leq  C 2^{(1-\xi Q+\epsilon)k} D_{n+k}(z,w; B_2(0)).$$ This confirms the first inequality.

    The second inequality can be proved in a similar way. By definition, there exists a continuous path $\widehat P$ that connects $z$ and $w$ in $B_2(0)$ with ${\rm len}(\widehat P; D_n) \leq 2 D_n(z,w; B_2(0))$. Applying Lemma~\ref{lem:bypassU}, we obtain a continuous path $\widehat P'$ that connects $B_{10 \cdot 2^{-n+\mathsf{a}_\mm}}(z)$ and $B_{10 \cdot 2^{-n+\mathsf{a}_\mm}}(w)$ in $B_2(0) \backslash \mathcalU$ with $${\rm len}(\widehat P'; D_n) \leq C \cdot {\rm len}(\widehat P; D_n) \leq C \cdot D_n(z,w; B_2(0)).$$ Then, applying the second claim in Lemma~\ref{lem:modify-outU} and using the fact that $|z-w|_\infty > 30 \cdot 2^{-n+\mathsf{a}_\mm}$, we obtain a continuous path $\widehat P''$ that connects $B_{11 \cdot 2^{-n+\mathsf{a}_\mm}}(z)$ and $B_{11 \cdot 2^{-n+\mathsf{a}_\mm}}(w)$ in $B_2(0)$ with $${\rm len}(\widehat P''; D_{n+k}) \leq C 2^{-(1-\xi Q - \epsilon)k} {\rm len}(\widehat P'; D_n) \leq C 2^{-(1-\xi Q - \epsilon)k} D_n(z,w; B_2(0)) .$$ This confirms the second inequality.
\end{proof}

In order to prove Proposition~\ref{prop:compare}, it remains to show that, for $l \in \{n, n+k\}$, $D_l(z,w ; B_2(0))$ can be upper-bounded by $D_l( B_{30 \cdot 2^{-n+\mathsf{a}_\mm}}(z), B_{30 \cdot 2^{-n+\mathsf{a}_\mm}}(w) ; B_2(0))$ when $z$ and $w$ are not too close. This claim is justified in the following lemma.

\begin{lemma}
    \label{lem:compare-j3}
    There exists a constant $C>0$ such that for all integers $n \geq 1$ and $0 \leq k \leq n/C$, with probability greater than $1 - C2^{-n/C}$, the following bound holds simultaneously for each pair of points $z,w \in B_1(0)$ with $|z-w|_\infty \geq 2^{-n/C}$:
    \begin{equation}
    \label{eq:lem-j3}
        D_{n+k}(z,w; B_2(0)) \leq 3 \cdot D_{n+k}( B_{30 \cdot 2^{-n+\mathsf{a}_\mm}}(z), B_{30 \cdot 2^{-n+\mathsf{a}_\mm}}(w) ; B_2(0))\,.
    \end{equation}
\end{lemma}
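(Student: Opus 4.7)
Set $r := 30 \cdot 2^{-n + \mathsf{a}_\mm}$, so by \eqref{eq:def-am} we have $r \leq 2^{-\lambdaC n + O(1)}$. For any $z' \in B_r(z)$ and $w' \in B_r(w)$, the triangle inequality gives
\begin{equation*}
D_{n+k}(z, w; B_2(0)) \leq {\rm Diam}_{n+k}(B_r(z); B_2(0)) + D_{n+k}(z', w'; B_2(0)) + {\rm Diam}_{n+k}(B_r(w); B_2(0)).
\end{equation*}
Minimizing in $(z', w')$ reduces the lemma to proving, on a single event of probability $\geq 1 - C 2^{-n/C}$, a \emph{uniform} upper bound ${\rm Diam}_{n+k}(B_r(x); B_{2r}(x)) \leq 2^{-(1-\xi Q)(n+k) - c_1 n}$ and a \emph{uniform} lower bound $D_{n+k}(B_r(x), B_r(y); B_2(0)) \geq 2^{-(1-\xi Q)(n+k) - c_2 n}$, with $c_1 > c_2 > 0$, valid for all $x, y$ on a discretizing grid of spacing $\asymp r$ with $|x-y|_\infty \geq 2^{-n/C}$; arbitrary $z, w$ are then handled via $B_r(z) \subset B_{2r}(z^*)$ for the nearest grid point $z^*$.

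\textbf{Upper bound on the diameter.} I would apply Lemma~\ref{lem:basic-LFPP} with $m = n - \mathsf{a}_\mm$, so that ${\rm Diam}_{n-\mathsf{a}_\mm, n+k}(B_r(x); B_{2r}(x)) \overset{d}{=} 2^{-(n-\mathsf{a}_\mm)} {\rm Diam}_{k + \mathsf{a}_\mm}(B_{30}(0); B_{60}(0))$, and derive a super-exponential diameter estimate ${\rm Diam}_{k+\mathsf{a}_\mm}(B_{30}(0); B_{60}(0)) \leq 2^{-(1-\xi Q)(k+\mathsf{a}_\mm) + \epsilon n}$ with failure probability $\leq C e^{-n \log n}$ by a percolation argument parallel to Lemma~\ref{lem:super-exponential-1}: cover $B_{60}(0)$ by boxes at scale $2^{-n/(\log n)^2}$, declare each box open if its $D_{k+\mathsf{a}_\mm}$-diameter is bounded by the typical value (probability via Lemma~\ref{lem:diamater-rough-concentration}), and then use percolation (Lemma~\ref{lem:percolation-cluster}) to chain diameters of small boxes along a path through the open cluster. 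Multiplying by $e^{\xi \sup_{B_3(0)} h_{0, n-\mathsf{a}_\mm}} \leq 2^{\xi\sqrt{2d}(n-\mathsf{a}_\mm) + \epsilon n}$ (Borell-TIS, Claims (3)--(4) of Lemma~\ref{lem:field-estimate}) and using the algebraic identity
\begin{equation*}
\xi\sqrt{2d}(n-\mathsf{a}_\mm) - (n-\mathsf{a}_\mm) - (1-\xi Q)(k+\mathsf{a}_\mm) = -(1-\xi Q)(n+k) - \xi(Q-\sqrt{2d})(n-\mathsf{a}_\mm),
\end{equation*}
together with $n - \mathsf{a}_\mm \geq \lambdaC n$, yields $c_1 := \tfrac12 \xi(Q-\sqrt{2d})\lambdaC > 0$ for $\epsilon$ small.

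\textbf{Lower bound on the distance between balls.} For $\sigma := |x-y|_\infty \geq 2^{-n/C_0}$, any path from $B_r(x)$ to $B_r(y)$ must cross the shell $B_{\sigma/4}(x) \setminus B_{\sigma/8}(x)$, since $r \ll \sigma$. Setting $s := \lceil \log_2(8/\sigma) \rceil \leq n/C_0 + O(1)$, I would apply Lemma~\ref{lem:basic-LFPP} to rescale $D_{s, n+k}$ of this shell to $D_{n+k-s}$ at unit scale, invoke Lemma~\ref{lem:super-exponential-2} on the rescaled shell, and combine with the Borell-TIS lower bound $\inf_{B_\sigma(x)} h_{0, s} \geq -(\sqrt{2d}\log 2) s - \epsilon n$. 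A short computation gives $c_2 := \xi(\sqrt{2d}+Q)/C_0 + O(\epsilon)$, which can be made strictly smaller than $c_1$ by taking $C_0$ large and $\epsilon$ small.

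\textbf{Union bound and main obstacle.} Both bounds have failure probability $\leq C e^{-n \log n}$ per grid point or pair, so a union bound over the grid $\mathscrY_\mm \cap B_1(0)$ (at most $2^{2d\lambdaC n}$ points, hence $\leq 2^{4d\lambdaC n}$ pairs) yields the claim on a single event of probability $\geq 1 - C 2^{-n/C}$. The main technical obstacle is securing \emph{super-exponential} concentration of the diameter in the upper-bound step: Lemma~\ref{lem:diamater-rough-concentration} alone does not suffice when $Q$ is close to $\sqrt{2d}$, since the available gap $c_1 \propto (Q-\sqrt{2d})\lambdaC$ vanishes quadratically as $Q \downarrow \sqrt{2d}$ (through $\lambdaC \propto Q-\sqrt{2d}$), while its exponential-concentration width is $O(1)$ in $\xi, d$. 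This forces the percolation-plus-small-scale adaptation of Lemma~\ref{lem:super-exponential-1} described above to get the required tail on the diameter.
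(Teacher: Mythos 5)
Your overall architecture (triangle inequality, uniform small-ball diameter upper bound versus uniform shell-crossing lower bound with a gap in the exponents, union bound over a grid of spacing $\asymp 2^{-n+\mathsf{a}_\mm}$) matches the paper, and your lower-bound step is essentially the paper's claim~\eqref{eq:lem5.12-2}: shell crossing at an intermediate scale, rescaling via Lemma~\ref{lem:basic-LFPP}, the super-exponential bound of Lemma~\ref{lem:super-exponential-2}, a field lower bound, and a union bound. The problem is the diameter step. Your key intermediate claim — that ${\rm Diam}_{k+\mathsf{a}_\mm}(B_{30}(0);B_{60}(0)) \leq 2^{-(1-\xi Q)(k+\mathsf{a}_\mm)+\epsilon n}$ fails with probability at most $Ce^{-n\log n}$ — is false. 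A single near-maximal value of the field at the finest scale already violates the bound: if $h_{N}\geq (Q+\epsilon/\xi)N\log 2$ on a box of side $2^{-N}$ (with $N=k+\mathsf{a}_\mm$), which by Claims (1)--(2) of Lemma~\ref{lem:field-estimate} has probability at least $2^{-CN}\geq 2^{-Cn}$, then the diameter already exceeds the threshold. So the best possible tail is exponential, not super-exponential, and no percolation argument can change this. Moreover, the mechanism of Lemma~\ref{lem:super-exponential-1} does not transfer to diameters: across/around distances only require one favorable path in the open cluster, whereas the diameter requires connecting every pair of points, including points sitting inside closed $*$-clusters; at a single coarse scale $2^{-N/(\log N)^2}$ the trivial bound (closed-cluster size times $e^{\xi\sup h_N}$) exceeds $2^{-(1-\xi Q)N+\epsilon N}$ unless $\xi(Q-\sqrt{2d})$ is of order one, and handling this is precisely what the multi-scale chaining of Propositions~\ref{prop:chain} and~\ref{prop:chain-2} was built for. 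Your closing paragraph correctly diagnoses that Lemma~\ref{lem:diamater-rough-concentration} is too weak near $Q=\sqrt{2d}$, but the proposed substitute does not exist; with only the true (exponential) tails, your union bound would require a sharp moderate-deviation estimate with exponent close to $2\alphaC N$ per box, which the paper's toolbox does not provide and which you do not prove.

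The paper sidesteps the union bound over boxes entirely for this term: it applies Proposition~\ref{prop:chain-2} at scale $2^{-r}\asymp 2^{-n+\mathsf{a}_\mm}$ (so $r\geq \lambdaC n$ up to constants), on the single event $(\cap_{r\leq p\leq n+k-1}\mathscrF_{p,n+k})\cap\mathscrG_{n+k}$, whose probability is $1-C2^{-n/C}$ because only scales $p\geq r\gtrsim n$ are involved and $\mathbb{P}[\mathscrF_{p,n+k}^c]\leq Ce^{-p/C}$; this yields the uniform bound \eqref{eq:lem5.12-1} ${\rm Diam}_{n+k}(B_{30\cdot 2^{-n+\mathsf{a}_\mm}}(x);B_2(0))\leq C\sum_{p\geq r}p^{2d}2^{-(1-\xi Q+\eta)p}a_{n+k-p}^{(q)}\leq 2^{-(1-\xi Q+u)(n+k)}$ simultaneously for all $x\in B_1(0)$, after which only the shell-crossing lower bound needs a union bound. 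To repair your proof, replace your percolation-based diameter step by this chaining estimate (or by an argument giving a genuinely sharp exponential tail for the diameter, which would amount to redoing that chaining plus a union bound); the rest of your outline then goes through.
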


\begin{proof}
Using the triangle inequality, we obtain
\begin{equation}
\label{eq:j3-1}
    \begin{aligned}
        D_{n+k}(z,w; B_2(0)) 
        &\leq  {\rm Diam}_{n+k}(B_{30 \cdot 2^{-n + \mathsf{a}_\mm}}(z); B_2(0))\\
        &\qquad+ D_{n+k}(B_{30 \cdot 2^{-n + \mathsf{a}_\mm}}(z), B_{30 \cdot 2^{-n + \mathsf{a}_\mm}}(w); B_2(0))\\
        &\qquad\qquad + {\rm Diam}_{n+k}(B_{30 \cdot 2^{-n + \mathsf{a}_\mm}}(z); B_2(0))\,.
    \end{aligned}
\end{equation}
In order to prove~\eqref{eq:lem-j3}, it suffices to show that the diameter terms on the right-hand side of \eqref{eq:j3-1} are smaller than the distance term, with high probability.

We first show that there exists $u>0$ such that all integers $n \geq k \geq 0$:
    \begin{equation}
    \label{eq:lem5.12-1}
    \begin{aligned}
        &\mathbb{P}\Big[ {\rm Diam}_{n+k}(B_{30 \cdot 2^{-n + \mathsf{a}_\mm}}(x); B_2(0)) < 2^{-(1-\xi Q + u)(n+k)} \quad \forall x \in B_1(0) \Big]\\
        &\quad \geq 1-C2^{-n/C}.
    \end{aligned}
    \end{equation}
    This result is a consequence of Proposition~\ref{prop:chain-2} and the estimates from Lemma~\ref{lem:est-e-m}. Let $r$ be an integer such that $60 \cdot 2^{-n + \mathsf{a}_\mm} < 2^{-r} \leq 120 \cdot 2^{-n + \mathsf{a}_\mm}$. Applying Proposition~\ref{prop:chain-2} with $\eta = \xi (Q-\sqrt{2d})/2 $ and $n+k$ in place of $n$, we obtain that for any $q \in (0,1)$, on the event $(\cap_{r \leq p \leq n +k -1} \mathscrF_{p,n+k}) \cap \mathscrG_{n+k}$ (recall from Subsection~\ref{subsec:chaining}) \footnote{The box $B_{30 \cdot 2^{-n + \mathsf{a}_\mm}}(x)$ may not be entirely contained in $B_1(0)$. However, the argument in Proposition~\ref{prop:chain-2} holds for any $x,y \in B_{3/2}(0)$ with $|x-y|_\infty \leq 2^{-r}$.}
    \begin{equation}
    \label{eq:lem5.12-1-1}
        {\rm Diam}_{n+k}(B_{30 \cdot 2^{-n + \mathsf{a}_\mm}}(x); B_2(0)) \leq C_3 \sum_{p = r}^{n+k} p^{2d} 2^{-(1-\xi Q +\eta)p} a_{n+k-p}^{(q)} \quad \forall x \in B_1(0)\,.
    \end{equation}
    By Claim (1) of Lemma~\ref{lem:est-e-m}, we can choose some $q \in (0,1)$ such that \begin{equation}
    \label{eq:lem5.12-1-2}
        \mathbb{P}[(\cap_{r \leq p \leq n+k -1} \mathscrF_{p,n+k}) \cap \mathscrG_{n+k}] \geq 1-C2^{-n/C}.
    \end{equation} Fix such $q$. By Proposition~\ref{prop:exponent}, Lemma~\ref{lem:compare-median-p}, and the facts that $n \geq k$ and $r \geq cn$, we can choose some $u>0$ such that
    \begin{equation}
    \label{eq:lem5.12-1-3}
        C_3 \sum_{p = r}^{n+k} p^{2d} 2^{-(1-\xi Q +\eta)p} a_{n+k-p}^{(q)} < C 2^{-(1-\xi Q + u)(n+k)}.
    \end{equation}
    Combining \eqref{eq:lem5.12-1-1} and \eqref{eq:lem5.12-1-3} with the estimates \eqref{eq:lem5.12-1-2}, we conclude \eqref{eq:lem5.12-1}. 

    We now claim that there exists $\delta \in (0, \lambdaC)$ such that for all integers $n \geq 1$ and $0 \leq k \leq \delta n$:
    \begin{equation}
    \label{eq:lem5.12-2}
    \begin{aligned}
    &\mathbb{P}\Big[D_{n+k}(\mbox{across }B_{4\cdot 2^{-\delta n}}(x) \backslash B_{2 \cdot 2^{-\delta n}}(x)) \geq 2^{-(1-\xi Q + u)(n+k)},  \forall x \in B_1(0) \cap 2^{-\delta n} \mathbb{Z}^d \Big] \\
    &\qquad\qquad\qquad \geq 1-C2^{-n/C}.
    \end{aligned}
    \end{equation}

    Let us first prove the lemma assuming this claim. Fix any $z,w \in B_1(0)$ with $|z-w|_\infty \geq 60 \cdot 2^{-n + \mathsf{a}_\mm} + 8 \cdot 2^{-\delta n}$. There exists an $x \in B_1(0) \cap 2^{-\delta n} \mathbb{Z}^d$ such that $|x-z|_\infty \leq 2^{-\delta n}$. Recalling from \eqref{eq:def-am}, we have $-n + \mathsf{a}_\mm \leq - \lambdaC n$. Consequently, we have $30 \cdot 2^{-n + \mathsf{a}_\mm} < 2^{-\delta n}$ for all sufficiently large $n$. It then follows that, for all sufficiently large $n$, any path connecting $B_{30 \cdot 2^{-n + \mathsf{a}_\mm}}(z)$ and $B_{30 \cdot 2^{-n + \mathsf{a}_\mm}}(w)$ must enter the box $B_{2 \cdot 2^{-\delta n}}(x)$ and cross $B_{4 \cdot 2^{-\delta n}}(x) \backslash B_{2 \cdot 2^{-\delta n}}(x)$. Therefore,
    \begin{equation*}
    \begin{aligned}
        &D_{n+k}(B_{30 \cdot 2^{-n + \mathsf{a}_\mm}}(z), B_{30 \cdot 2^{-n + \mathsf{a}_\mm}}(w); B_2(0)) \\
        &\qquad \qquad \geq D_{n+k}(\mbox{across }B_{4 \cdot 2^{-\delta n}}(x) \backslash B_{2 \cdot 2^{-\delta n}}(x))\,.
    \end{aligned}
    \end{equation*}
    Combining this with the claim~\eqref{eq:lem5.12-2} yields that for all $n \geq 1$ and $0 \leq k \leq \delta n$, with probability greater than $1-C2^{-n/C}$, the following bound holds for all $z,w \in B_1(0)$ with $|z-w|_\infty \geq 60 \cdot 2^{-n + \mathsf{a}_\mm} + 8 \cdot 2^{-\delta n}$:
    \begin{equation*}
        D_{n+k}(B_{30 \cdot 2^{-n + \mathsf{a}_\mm}}(z), B_{30 \cdot 2^{-n + \mathsf{a}_\mm}}(w); B_2(0)) \geq 2^{-(1-\xi Q + u)(n+k)}.
    \end{equation*}
    This result, combined with \eqref{eq:lem5.12-1} and \eqref{eq:j3-1}, concludes the lemma by choosing a sufficiently large $C$.

    Next, we will prove the claim~\eqref{eq:lem5.12-2}, which is a consequence of the scaling property from Lemma~\ref{lem:basic-LFPP} and the concentration bound from Lemma~\ref{lem:super-exponential-2}. Let $\delta>0$ and $v>0$ be two small constants to be chosen. Let $n \geq 1$ and $0 \leq k \leq \delta n$ be integers, and define $l = \lfloor \delta n \rfloor$. Since $h_{n+k} = h_l + h_{l,n+k}$, we obtain 
    \begin{equation*}
    \begin{aligned}
&D_{n+k}(\mbox{across }B_{4 \cdot 2^{-\delta n}}(0) \backslash B_{2 \cdot 2^{-\delta n}}(0)) \\
&\qquad \geq \exp\big( \xi \inf_{y \in B_{4 \cdot 2^{-l}}(0)} h_l(y) \big)  D_{l, n+k}(\mbox{across }B_{4 \cdot 2^{-\delta n}}(0) \backslash B_{2 \cdot 2^{-\delta n}}(0))\,.
\end{aligned}
    \end{equation*}
    Therefore, 
    \begin{equation}
    \label{eq:lem5.14-relation}
    \begin{aligned}
        & \big{\{} D_{n+k}(\mbox{across }B_{4 \cdot 2^{-\delta n}}(0) \backslash B_{2 \cdot 2^{-\delta n}}(0)) < 2^{-(1-\xi Q + u)(n+k)} \big{\}}\\
        &\qquad \subset\big{\{}\inf_{y \in B_{4 \cdot 2^{-l}}(0)} h_l(y) < s \big{\}} \\
        &\qquad \qquad\cup \big{\{} D_{l, n+k}(\mbox{across }B_{4 \cdot 2^{-\delta n}}(0) \backslash B_{2 \cdot 2^{-\delta n}}(0)) < 2^{-l + (-1 +\xi Q - v)(n+k-l)} \big{\}}\,,
    \end{aligned}
    \end{equation}
    where $s = \frac{\log 2}{\xi} [v(n + k-l)+\xi Q l - u(n+k)] $. Applying the scaling property from Lemma~\ref{lem:basic-LFPP}, with $(l,n+k)$ in place of $(m,n)$, and the super-exponential concentration bound from Lemma~\ref{lem:super-exponential-2}, we have
    \begin{equation*}
    \begin{aligned}
        & \mathbb{P}\Big[D_{l, n+k}(\mbox{across }B_{4 \cdot 2^{-\delta n}}(0) \backslash B_{2 \cdot 2^{-\delta n}}(0)) < 2^{-l + (-1 +\xi Q - v)(n+k-l)} \Big] \\
        &\qquad= \mathbb{P}\Big[D_{n+k-l}(\mbox{across }B_{4 \cdot 2^{l-\delta n}}(0) \backslash B_{2 \cdot 2^{l-\delta n}}(0)) < 2^{(-1 +\xi Q - v)(n+k-l)} \Big] \\
        &\qquad \qquad \leq Ce^{-n \log n}.
    \end{aligned}
    \end{equation*}
    By the definitions of $s$ and $l$, we can choose sufficiently small $\delta$ and $v$ such that for all sufficiently large $n$ and $0 \leq k \leq \delta n$, the inequality $s < ( - 1- 2 \sqrt{d} \log 2)l $ holds. Combining this inequality with Claim (5) from Lemma~\ref{lem:field-estimate} for $n=l$, we obtain
    \begin{equation*}
        \mathbb{P}\Big[\inf_{y \in B_{4 \cdot 2^{-l}}(0)} h_l(y) < s \Big] \leq C 4^{-dl}.
    \end{equation*}
    Combining the relation~\eqref{eq:lem5.14-relation} with the above two estimates, we conclude that
    \begin{equation*}
        \mathbb{P}\Big[D_{n+k}(\mbox{across }B_{4\cdot 2^{-\delta n}}(0) \backslash B_{2 \cdot 2^{-\delta n}}(0)) < 2^{-(1-\xi Q + u)(n+k)} \Big] \leq C 4^{-dl}.
    \end{equation*}
    This, combined with the fact that $|B_1(0) \cap 2^{-\delta n} \mathbb{Z}^d| \leq C2^{dl}$, implies \eqref{eq:lem5.12-2}. 
\end{proof}

We now complete the proof of Proposition~\ref{prop:compare}.

\begin{proof}[Proof of Proposition~\ref{prop:compare}]
Proposition~\ref{prop:compare} follows by combining Proposition~\ref{prop:compare-1} with Lemma~\ref{lem:compare-j3} and the estimates of $\mathbb{P}[\mathcalJ_1 \cap \mathcalJ_2]$ from Lemma~\ref{lem:est-j12}.
\end{proof}

\section{Proof of Theorem~\ref{thm:tightness}}
\label{sec:final-proof}

In this section, we complete the proof of Theorem~\ref{thm:tightness}. We continue to assume that $Q(\xi)>\sqrt{2d}$. In Subsection~\ref{subsec:tightness}, we establish the tightness of $D_n$ when normalized by $a_n^{(q)}$ (recall from \eqref{def:quantile}) for $q$ close to one. Subsequently, in Subsection~\ref{subsec:non-degenerate}, we extend the tightness result to $\lambda_n$ and demonstrate that every subsequential limit is a metric on $\mathbb{R}^d$.

\subsection{Tightness of exponential metrics}
\label{subsec:tightness}
In this subsection, we establish the tightness of the exponential metric $D_n$ when normalized by $a_n^{(q)}$ for $q$ close to one. The main inputs are Proposition~\ref{prop:chain-2} and Corollary~\ref{cor:an-compare}. We first work with $q$ close to one because the bound in Proposition~\ref{prop:chain-2}, which is based on Lemma~\ref{lem:est-e-m}, holds with high probability only in this case. However, once we show that any subsequential limit is a metric, it follows that $a_n^{(q)}$ are up-to-constants equivalent for different $q \in (0,1)$.

\begin{lemma}
    \label{lem:internal-B_2(0)}
    For any $0< \beta < \xi (Q - \sqrt{2d})$, there exists a constant $q_0 = q_0(\beta) \in (0,1)$ such that 
    \begin{equation*}
        \Big{\{}\sup_{x,y \in B_1(0)} \frac{D_n(x,y; B_2(0))}{a_n^{(q_0)} |x-y|_\infty^\beta} \Big{\}}_{n \geq 0} \quad \mbox{is tight}\,.
    \end{equation*} 
\end{lemma}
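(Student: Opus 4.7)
The strategy is to combine the chaining estimate from Proposition~\ref{prop:chain-2} with the quantile comparison from Corollary~\ref{cor:an-compare}, and to handle the microscopic and macroscopic scales separately from the intermediate Hölder scale where Proposition~\ref{prop:chain-2} directly applies.

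Fix $\beta \in (0, \xi(Q-\sqrt{2d}))$, and choose $\eta, \epsilon > 0$ with $\beta + \epsilon < \eta < \xi(Q-\sqrt{2d})$. Take $q \in (c_4(\eta),1)$ from Lemma~\ref{lem:est-e-m}, and fix some $q_0 \in (q, 1)$. Applying Corollary~\ref{cor:an-compare} with these $\epsilon, q$, there exist $c > 0$ and $N_0$ such that $q + 2^{-cn} \leq q_0$ for all $n \geq N_0$, hence
\[
a_{n-m}^{(q)} \leq C \cdot 2^{(1-\xi Q +\epsilon)m} a_n^{(q_0)} \qquad \forall\, n \geq N_0, \ 1 \leq m \leq n.
\]
For a large constant $r_0 > 100$ to be chosen, set $\mathcal{A}_n := (\cap_{r_0 \leq m \leq n-1} \mathscrF_{m,n}) \cap \mathscrG_n$. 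On $\mathcal{A}_n$, Proposition~\ref{prop:chain-2} applied at each scale $r_0 \leq r \leq n$, combined with the display above, yields
\[
\sup_{x,y \in B_1(0),\, |x-y|_\infty \leq 2^{-r}} D_n(x,y; B_2(0)) \leq C a_n^{(q_0)} \sum_{m=r}^n m^{2d} 2^{-(\eta-\epsilon)m} \leq C r^{2d} 2^{-(\eta-\epsilon)r} a_n^{(q_0)}.
\]
For any pair with $2^{-r-1} < |x-y|_\infty \leq 2^{-r}$, $r_0 \leq r \leq n$, dividing by $|x-y|_\infty^\beta \geq 2^{-(r+1)\beta}$ gives $D_n(x,y; B_2(0))/(|x-y|_\infty^\beta a_n^{(q_0)}) \leq C r^{2d} 2^{-(\eta - \epsilon - \beta) r}$, which is bounded uniformly in $r \geq r_0$ since $\eta - \epsilon - \beta > 0$.

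For the macroscopic range $|x-y|_\infty > 2^{-r_0}$, applying Proposition~\ref{prop:chain} with the same quantile comparison bounds the full diameter ${\rm Diam}_n(B_1(0); B_2(0))$ by $C a_n^{(q_0)}$, so the ratio is at most $C 2^{r_0 \beta}$. For the microscopic range $|x-y|_\infty \leq 2^{-n}$, I use the straight-line bound $D_n(x,y; B_2(0)) \leq \sqrt{d}\,|x-y|_\infty \cdot \exp(\xi \sup_{z \in B_2(0)} h_n(z))$. On the event $\mathcal{B}_n := \{\sup_{z \in B_2(0)} h_n(z) \leq (\sqrt{2d}+\delta)\, n \log 2\}$, which has probability at least $1 - Ce^{-n/C}$ by Claims~(3) and (4) of Lemma~\ref{lem:field-estimate}, and using the lower bound $a_n^{(q_0)} \geq c\, 2^{-(1-\xi Q + \eta')n}$ from Proposition~\ref{prop:exponent}, the ratio is at most $C\, 2^{n(\beta - \xi(Q-\sqrt{2d}-\delta) + \eta')}$, which decays in $n$ once $\delta, \eta' > 0$ are chosen small enough (using $\beta < \xi(Q-\sqrt{2d})$).

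Combining the three regimes, on $\mathcal{A}_n \cap \mathcal{B}_n$ the supremum in the statement is bounded by a constant $K$ independent of $n \geq N_0$. Lemma~\ref{lem:est-e-m} gives $\mathbb{P}[\mathcal{A}_n] \geq 1 - Ce^{-r_0/C}$ uniformly in $n$, so $\mathbb{P}[\mathcal{A}_n \cap \mathcal{B}_n]$ can be made arbitrarily close to $1$ by enlarging $r_0$; the finitely many $n < N_0$ are absorbed by further enlarging $K$, using continuity of $D_n$ and compactness of $\overline{B_1(0)}$. The main technical obstacle is coordinating these three scale regimes on a single high-probability event and tracking the quantile perturbation $q \mapsto q + 2^{-cn}$ in Corollary~\ref{cor:an-compare} so that a single fixed $q_0 < 1$ serves uniformly in $n$.
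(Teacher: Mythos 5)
Your overall route matches the paper's: Proposition~\ref{prop:chain-2} on the event $\mathcal{A}_n=(\cap_{r_0\le m\le n-1}\mathscrF_{m,n})\cap\mathscrG_n$, conversion of $a_{n-m}^{(q)}$ into a fixed $a_n^{(q_0)}$ via Corollary~\ref{cor:an-compare} with $q_0$ slightly above $q$ (this is exactly what the paper means by ``increasing the value of $q$''), summation of the series using $\eta>\beta+\epsilon$, and a straight-line bound against the field supremum for $|x-y|_\infty\le 2^{-n}$. Those two regimes and your tracking of the quantile perturbation $q\mapsto q+2^{-cn}$ are handled correctly.

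The gap is in the macroscopic regime. You invoke Proposition~\ref{prop:chain} to bound ${\rm Diam}_n(B_1(0);B_2(0))$ on $\mathcal{A}_n$, but Proposition~\ref{prop:chain} is only stated on the event $(\cap_{1\le m\le n-1}\mathscrF_{m,n})\cap\mathscrG_n$, which involves the small-scale events $\mathscrF_{m,n}$ for $m<r_0$ that are not contained in $\mathcal{A}_n$. These cannot simply be adjoined: with $q$ fixed, Claim (1) of Lemma~\ref{lem:est-e-m} only gives $\mathbb{P}[\mathscrF_{m,n}^c]\le Ce^{-m/C}$, whose sum over all $m\ge 1$ is not small, so the full intersection has probability bounded below by a constant but not tending to $1$; pushing it to $1-\delta$ requires taking $q$ close to $1$ depending on $\delta$ via Claim (2). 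That would force the normalization to involve $a_n^{(q(\delta))}$ rather than the single fixed $a_n^{(q_0)}$, and at this point in the paper different quantiles of $D_n(0,e_1;B_2(0))$ are only comparable up to $e^{C\sqrt n}$ (Lemma~\ref{lem:compare-median-p}), not up to constants; the up-to-constants comparison is itself a consequence of the tightness result, so appealing to it here would be circular. The fix is short and is what the paper leaves implicit: for $|x-y|_\infty>2^{-r_0}$, connect $x$ to $y$ by $O(2^{r_0})$ points on the segment $[x,y]\subset B_1(0)$ with consecutive gaps at most $2^{-r_0}$, and apply to each consecutive pair the scale-$2^{-r_0}$ bound you have already established on $\mathcal{A}_n$. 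This costs only an $r_0$-dependent constant, which is harmless since both $r_0$ and the tightness threshold are allowed to depend on the target probability $\delta$, while $q_0$ stays fixed.
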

\begin{proof}
    Fix $0< \beta < \xi(Q-\sqrt{2d})$. Let $q \in (0,1)$ be a constant to be chosen. Let $\eta$ and $\epsilon$ be two positive constants satisfying
    \begin{equation*}
        0 < \eta < \xi(Q-\sqrt{2d}) \quad \mbox{and} \quad \eta > \beta + \epsilon\,.
    \end{equation*}
    Applying Proposition~\ref{prop:chain-2} with the above choices of $\eta$ and $q$, we obtain that for all integers $n \geq r>100$, on the event $(\cap_{r \leq m \leq n -1} \mathscrF_{m,n}) \cap \mathscrG_n$ (recall from Subsection~\ref{subsec:chaining}),
    \begin{equation*}
    \begin{aligned}
        \sup_{\substack{x,y \in B_1(0)\\ |x-y|_\infty \leq 2^{-k}}} D_n(x,y;B_2(0)) \leq C_3 \sum_{m = k}^n m^{2d} 2^{-(1-\xi Q +\eta)m} a_{n-m}^{(q)} \quad \forall r \leq k \leq n\,.
    \end{aligned}
    \end{equation*}
    Combining this with Corollary~\ref{cor:an-compare} yields that on the event $(\cap_{r \leq m \leq n -1} \mathscrF_{m,n}) \cap \mathscrG_n$, it holds for all $r \leq k \leq n$ that
    \begin{equation}
    \label{eq:lem6.1-1}
    \sup_{\substack{x,y \in B_1(0)\\ |x-y|_\infty \leq 2^{-k}}} D_n(x,y;B_2(0))\leq C \sum_{m=k}^n m^{2d} 2^{(-\eta +\epsilon)m} a_{n}^{(q + 2^{-cn})} \leq C 2^{-\beta k }a_{n}^{(q + 2^{-cn})}  .
    \end{equation}
    In the last inequality, we used the fact that $\eta > \beta +\epsilon$ and increased the constant $C$. Moreover, on the event $\mathscrG_n$ from~\eqref{def:e-n-n} with the above choice of $\eta$, we have that for all $x,y \in B_1(0)$ with $|x-y|_\infty \leq 2^{-n}$:
    \begin{equation}
    \label{eq:lem6.1-2}
        D_n(x,y; B_2(0)) \leq d  2^{(\xi Q - \eta)n} |x-y|_\infty \leq d  2^{-n(1- \xi Q + \eta - \beta)} |x-y|_\infty^\beta \leq C  a_n^{(q)} |x-y|_\infty^\beta \,.
    \end{equation}
    In the second inequality, we used the fact that $|x-y|_\infty \leq 2^{-n}$ and $\beta < 1$, which follows from Lemma~\ref{lem:Q-lower} and the assumption that $\beta < \xi (Q -\sqrt{2d})$. In the last inequality, we used the fact that $\eta > \beta +\epsilon$ and $a_n^{(q)} = 2^{-(1- \xi Q )n + o(n)}$, which follows from Proposition~\ref{prop:exponent} and Lemma~\ref{lem:compare-median-p}.
    
    By Claim (1) in Lemma~\ref{lem:est-e-m}, we can select a $q \in (0,1)$ such that
    \begin{equation*}
        \lim_{r \rightarrow \infty} \sup_{n > r} \mathbb{P}[(\cap_{r \leq m \leq n -1} \mathscrF_{m,n}) \cap \mathscrG_n] = 1\,.
    \end{equation*}
    Combining this estimate with \eqref{eq:lem6.1-1} and \eqref{eq:lem6.1-2}, and increasing the value of $q$, yields the desired lemma.
\end{proof}

We now extend Lemma~\ref{lem:internal-B_2(0)} to the internal metric $D_{m,n}$ of an arbitrary open set for any integer $m \geq 0$.

\begin{lemma}
    \label{lem:internal-B_N(0)}
    Fix a connected open set $U$ and a bounded connected open set $V$ such that $\overline{V} \subset U$. Let $0<\beta < \xi (Q - \sqrt{2d})$, and let $q_1 = (q_0+1)/2$, where $q_0$ is the constant from Lemma~\ref{lem:internal-B_2(0)} with the same choice of $\beta$. Then, for any fixed integer $m \geq 0$, we have
    \begin{equation*}
        \Big{\{}\sup_{x,y \in V} \frac{D_{m,n}(x,y; U)}{a_n^{(q_1)} |x-y|_\infty^\beta} \Big{\}}_{n \geq 0} \quad \mbox{is tight}\,.
    \end{equation*} 
\end{lemma}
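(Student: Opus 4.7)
The plan is to first establish the statement for $m = 0$ with arbitrary $V$, $U$ satisfying $\overline V \subset U$, and then reduce the case $m \geq 1$ to the $m = 0$ case on the rescaled pair $(2^m V, 2^m U)$ via the scaling relation of Lemma~\ref{lem:basic-LFPP}(2). The main technical step is extending Lemma~\ref{lem:internal-B_2(0)} (which only applies directly to the pair $(B_1(0), B_2(0))$) to a general pair $(V, U)$ in the $m = 0$ case.

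For the $m = 0$ case, I would choose an integer $J \geq 1$ with $2^{1-J} \leq \delta := \mathfrak d_\infty(\overline V, U^c)>0$, and cover $\overline V$ by finitely many boxes $B_{2^{-J-1}}(z_i)$ with $z_i \in \overline V$; by construction each $B_{2^{1-J}}(z_i) \subset U$. For $x,y \in V$ with $|x - y|_\infty \leq 2^{-J-1}$, both lie in some $B_{2^{-J}}(z_i)$, so $D_n(x,y;U) \leq D_n(x,y; B_{2^{1-J}}(z_i))$. Using the decomposition $h_n = h_J + h_{J,n}$ (Definition~\ref{def:log-field}) and Definition~\ref{def:LFPP},
\begin{equation*}
D_n(x,y; B_{2^{1-J}}(z_i)) \leq \exp\!\Big(\xi \sup_{z \in B_{2^{1-J}}(z_i)} h_J(z)\Big) \cdot D_{J,n}(x,y; B_{2^{1-J}}(z_i)),
\end{equation*}
and by Lemma~\ref{lem:basic-LFPP}(2) combined with translation invariance,
\begin{equation*}
\big(D_{J,n}(x,y; B_{2^{1-J}}(z_i))\big)_{x,y} \eqD \big(2^{-J} D_{n-J}(2^J(x-z_i),\, 2^J(y-z_i);\, B_2(0))\big)_{x,y},
\end{equation*}
with the rescaled arguments in $B_1(0)$. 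Applying Lemma~\ref{lem:internal-B_2(0)} at index $n-J$ gives tightness of $D_{n-J}/(a_{n-J}^{(q_0)} |\cdot|_\infty^\beta)$ on $B_1(0)$. Combining with the Gaussian tail of $\sup_{B_{2^{1-J}}(z_i)} h_J$ (Claim~(2) of Lemma~\ref{lem:field-estimate}, for the fixed $J$) and the bound $a_{n-J}^{(q_0)} \leq C(J)\, a_n^{(q_1)}$ valid for large $n$ (via Corollary~\ref{cor:an-compare} with $q_1 = (q_0+1)/2$, which ensures $q_0 + 2^{-cn} \leq q_1$ for large $n$), yields tightness of $D_n(x,y;U)/(a_n^{(q_1)} |x-y|_\infty^\beta)$ for close pairs. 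For $x,y \in V$ with $|x-y|_\infty > 2^{-J-1}$, chain through a bounded number of intermediate points in $V$ spaced by at most $2^{-J-1}$ to reduce to the close-pair case, using the deterministic lower bound $|x-y|_\infty^\beta \geq (2^{-J-1})^\beta$.

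For general $m \geq 1$, Lemma~\ref{lem:basic-LFPP}(2) yields
\begin{equation*}
\sup_{x,y \in V} \frac{D_{m,n}(x,y;U)}{a_n^{(q_1)} |x-y|_\infty^\beta} \eqD 2^{-m(1-\beta)} \cdot \frac{a_{n-m}^{(q_1)}}{a_n^{(q_1)}} \cdot \sup_{x',y' \in 2^m V} \frac{D_{n-m}(x',y';\, 2^m U)}{a_{n-m}^{(q_1)} |x'-y'|_\infty^\beta},
\end{equation*}
where the last supremum is tight by the $m=0$ case applied to the rescaled pair $(2^m V, 2^m U)$, and the prefactor is bounded uniformly in $n$ for the fixed $m$ by Corollary~\ref{cor:an-compare}. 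The principal obstacle is the $m=0$ case in the regime where the moat $\delta$ is small, since then Lemma~\ref{lem:internal-B_2(0)} cannot be applied via a covering by unit box pairs $(B_1(z_i),B_2(z_i)) \subset U$; the resolution is the rescaling trick using $h_n = h_J + h_{J,n}$, which reduces the small-scale problem to the reference box $B_2(0)$ at the cost of the extra (tight, for fixed $J$) factor $e^{\xi \sup_{B_{2^{1-J}}(z_i)} h_J}$ and a deterministic rescaling that is absorbed by Corollary~\ref{cor:an-compare}.
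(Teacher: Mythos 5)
Your treatment of the $m=0$ case is essentially the paper's own argument: cover $\overline V$ by small boxes whose doubles lie in $U$, transfer to the reference pair $(B_1(0),B_2(0))$ by the scaling relation of Lemma~\ref{lem:basic-LFPP} at the cost of the smooth coarse field (here $h_J$, in the paper $h_{0,k}$), invoke Lemma~\ref{lem:internal-B_2(0)}, absorb the index/quantile shift via Corollary~\ref{cor:an-compare} together with $q_0+2^{-cn}\leq q_1$, and chain far-apart pairs through boundedly many close pairs (the uniform bound on the number of chain steps holds for any bounded connected open $V$ by a compactness argument, and is asserted at the same level of detail in the paper). That part is fine.

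The $m\geq 1$ reduction is where you diverge from the paper, and as written it has a gap. After rescaling you need the deterministic prefactor $a_{n-m}^{(q_1)}/a_n^{(q_1)}$ to be bounded uniformly in $n$, but Corollary~\ref{cor:an-compare} with $q=q_1$ only yields $a_{n-m}^{(q_1)}\leq \frac1c\,2^{(1-\xi Q+\epsilon)m}\,a_n^{(q_1+2^{-cn})}$, i.e.\ the quantile on the right is bumped \emph{above} $q_1$; at this stage of the paper there is no constant-factor comparison between $a_n^{(q_1+2^{-cn})}$ and $a_n^{(q_1)}$ (Lemmas~\ref{lem:gauss-concentration} and \ref{lem:compare-median-p} only give an $e^{C\sqrt n}$ factor, and the up-to-constants equivalence of quantiles comes later, via Proposition~\ref{prop:non-degenerate}, which itself relies on this lemma). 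Two repairs are available. Either run your $m=0$ argument with an intermediate quantile $q'\in(q_0,q_1)$ (this costs nothing, since the only requirement there is $q_0+2^{-cn}\leq q'$ for large $n$), so that in the $m\geq1$ step Corollary~\ref{cor:an-compare} gives $a_{n-m}^{(q')}\leq C(m)\,a_n^{(q'+2^{-cn})}\leq C(m)\,a_n^{(q_1)}$; or, as the paper does, avoid rescaling altogether and compare $D_{m,n}$ with $D_n$ on a bounded connected $U'$ with $\overline V\subset U'\subset\overline{U'}\subset U$ through the a.s.\ finite factor $e^{\xi\sup_{U'}|h_m|}$ (for the fixed $m$), which preserves the normalization $a_n^{(q_1)}$ with no quantile bookkeeping at all.
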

\begin{proof}
    We first prove the case where $m=0$. By Lemma~\ref{lem:internal-B_2(0)}, we have that $\sup_{x,y \in B_1(0)} \frac{D_n(x,y; B_2(0))}{a_n^{(q_0)} |x-y|_\infty^\beta}$ is tight for $n \geq 1$. By the scaling relation from Lemma~\ref{lem:basic-LFPP}, we obtain that for any integer $k \geq 1$
    \begin{equation*}
        (D_n(x,y; B_2(0)))_{x,y \in B_1(0)} \overset{d}{=} 2^k D_{k,n+k}(2^{-k}x,2^{-k}y; B_{2^{-k+1}}(0))_{x,y \in B_1(0)}\,.
    \end{equation*}
    Together with the fact that for all $x,y \in B_{2^{-k}}(0)$
    \begin{equation}
    \label{eq:lem6.2-1}
     \inf_{z \in B_{2^{-k+1}}(0)}e^{ \xi h_{0,k}(z)}\leq \frac{D_{n+k}(x,y; B_{2^{-k+1}}(0))}{D_{k,n+k}(x,y; B_{2^{-k+1}}(0))}\leq \sup_{z \in B_{2^{-k+1}}(0)}e^{ \xi h_{0,k}(z)},
    \end{equation}
    we obtain that for any fixed integer $k \geq 1$
    \begin{equation}
    \label{eq:lem6.2-2}
        \Big{\{}\sup_{x,y \in B_{2^{-k}}(0)} \frac{D_n(x,y; B_{2^{-k+1}}(0)))}{a_n^{(q_1)} |x-y|_\infty^\beta} \Big{\}}_{n \geq 0} \quad \mbox{is tight}\,.
    \end{equation}
    Here we also used the fact that for some $c>0$ (depending on $k$), the inequality $a_{n+k}^{(q_1)} \geq c a_{n}^{(q_0)}$ holds for all sufficiently large $n$, which follows from Corollary~\ref{cor:an-compare}.
    
    Given the sets $U$ and $V$ as stated in the lemma, we can choose a sufficiently large integer $k$ and cover $V$ with a finite number of boxes $\{ B_{2^{-k}}(x_i) \}_{i \geq 1}$. These boxes are connected, and for each $i$, the box $B_{2^{-k+1}}(x_i)$ is contained within $U$. This allows us to connect any two points in $V$ within $U$ via paths in these $B_{2^{-k+1}}(x_i)$ boxes. This observation, combined with \eqref{eq:lem6.2-2}, yields the lemma in the case where $m=0$. The case where $m \geq 1$ follows directly from the case where $m=0$ and the following fact:
    \begin{equation*}
     \inf_{z \in U}e^{-\xi h_m(z)} \leq \frac{D_{m,n}(x,y;U)}{D_n(x,y;U)} \leq \sup_{z \in U}e^{-\xi h_m(z)} \quad \forall x,y \in U\,. \qedhere
    \end{equation*}
\end{proof}

As a direct consequence of the above lemma, we deduce the tightness of the exponential metric when normalized by $a_n^{(q)}$ for $q$ close to 1. Recall that a \textit{metric} is a symmetric non-negative function $D: \mathbb{R}^d \times \mathbb{R}^d \to [0,\infty)$ that satisfies the triangle inequality, $D(x,x) = 0$, and $D(x,y)>0$ for all $x \neq y$. A \textit{pseudo-metric} is a symmetric non-negative function that satisfies the triangle inequality and $D(x,x) = 0$ but allows $D(x,y) = 0$ for some $x \neq y$.
\begin{proposition}
\label{prop:tightness}
    There exists $q_2 \in (0,1)$ such that for any open connected set $U \subset \mathbb{R}^d$ (including $U = \mathbb{R}^d$), $D_n(\cdot,\cdot;U)/a_n^{(q_2)}$ is tight with respect to the topology of uniform convergence on compact subsets of $U \times U$. Moreover, any subsequantial limit is a.s.\ a pseudo-metric on $U$ and is locally H\"older continuous.
\end{proposition}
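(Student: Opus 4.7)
The plan is to deduce tightness from the H\"older-type equicontinuity estimate already provided by Lemma~\ref{lem:internal-B_N(0)} via an Arzel\`a--Ascoli argument, and then to observe that the pseudo-metric axioms and local H\"older continuity are closed conditions which automatically pass to subsequential limits. The main substance of this proposition has already been absorbed into Lemma~\ref{lem:internal-B_N(0)} (and hence into Sections~\ref{sec:exponent}--\ref{sec:compare}), so what remains is a soft argument.

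First I would fix any $\beta \in (0,\xi(Q-\sqrt{2d}))$ and set $q_2 := q_1(\beta)$ from Lemma~\ref{lem:internal-B_N(0)}. For a given connected open set $U \subset \mathbb{R}^d$, I would exhaust $U$ by bounded connected open sets $V_1,V_2,\ldots$ with $\overline{V_k} \subset V_{k+1} \subset U$ and $\bigcup_k V_k = U$. Applying Lemma~\ref{lem:internal-B_N(0)} with $m=0$ to each pair $(V_k,U)$ shows that the random variable
\[
M_n^{(k)} := \sup_{\substack{x,y \in V_k\\ x\neq y}}\frac{D_n(x,y;U)}{a_n^{(q_2)}\,|x-y|_\infty^{\beta}}
\]
is tight in $n$. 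Combined with $D_n(x,x;U)=0$ and the boundedness of $V_k$, this gives a uniform bound and $\beta$-H\"older equicontinuity for the maps $(x,y)\mapsto D_n(x,y;U)/a_n^{(q_2)}$ on $V_k\times V_k$, with high probability uniformly in $n$. Since every compact $K\subset U$ is contained in some $V_k$, the Arzel\`a--Ascoli theorem yields tightness of $\{D_n(\cdot,\cdot;U)/a_n^{(q_2)}|_{K\times K}\}_{n}$ in $C(K\times K)$ with the uniform topology; a standard diagonal extraction along $\{V_k\}$ then produces, along any subsequence, a further sub-subsequence which converges in law to some random continuous function $D$ on $U\times U$ in the topology of local uniform convergence.

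For any such subsequential limit $D$, the equalities $D_n(x,x;U)=0$, $D_n(x,y;U)=D_n(y,x;U)$, and the triangle inequality $D_n(x,z;U)\leq D_n(x,y;U)+D_n(y,z;U)$ are closed conditions preserved under pointwise (and hence locally uniform) limits, so $D$ is almost surely a pseudo-metric on $U$. Similarly, passing to the limit the bound $D_n(x,y;U)/a_n^{(q_2)}\leq M_n^{(k)}|x-y|_\infty^{\beta}$ on $V_k$ yields $D(x,y)\leq M^{(k)}|x-y|_\infty^{\beta}$ for an almost surely finite random constant $M^{(k)}$, so $D$ is almost surely locally $\beta$-H\"older continuous on $U$. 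I do not foresee any genuine obstacle in this step, as all the hard estimates have been carried out earlier; the remaining work is the routine diagonal extraction that upgrades the per-$V_k$ tightness into tightness in the local uniform topology on $U\times U$.
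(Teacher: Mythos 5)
Your proposal is correct and follows essentially the same route as the paper: the paper also fixes $\beta$ in $(0,\xi(Q-\sqrt{2d}))$, applies Lemma~\ref{lem:internal-B_N(0)} together with the Arzel\`a--Ascoli theorem for tightness, and then passes the pseudo-metric axioms and the H\"older bound to subsequential limits (the paper invokes Skorokhod's representation theorem at this last step, which is the same closedness argument you use, just phrased via an a.s.\ coupling).
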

\begin{proof}
    Taking $\beta = \xi (Q - \sqrt{2d}) / 2$ in Lemma~\ref{lem:internal-B_N(0)} and using the Arzel\`a-Ascoli theorem, we obtain the tightness part. According to Skorohod's representation theorem, each subsequential limit can be interpreted as the limit of almost sure convergence under some coupling. This implies that each subsequential limit is a pseudo-metric. Furthermore, combining Skorohod's representation theorem with Lemma~\ref{lem:internal-B_N(0)}, we obtain the local H\"older continuity property for the limiting pseudo-metric.
\end{proof}

\subsection{Non-degeneracy of the subsequential limit}
\label{subsec:non-degenerate}
In this subsection, our main result is Proposition~\ref{prop:non-degenerate}. It has two consequences: Firstly, for any fixed $q \in (0,1)$, the quantile ratio $  a_n^{(q)} / \lambda_n$ is bounded above and below by constants which depend on $q$ but not on $n$. This, combined with Proposition~\ref{prop:tightness}, directly implies the tightness part of Theorem~\ref{thm:tightness}. Secondly, the subsequential limit in Theorem~\ref{thm:tightness} is a metric on $\mathbb{R}^d$, thereby completing its proof.

Throughout this subsection, we let $q_2$ be as in Proposition~\ref{prop:tightness}. Recall from Subsection~\ref{subsec:cross} the definition of $D_n(\mbox{across }A)$.

\begin{proposition}
    \label{prop:non-degenerate}
    For any $q \in (q_2,1)$ and $r_1 > r_2 > 0$, the following inequality holds:
    \begin{equation*}
        \lim_{\epsilon \rightarrow 0} \liminf_{n \rightarrow \infty} \mathbb{P}[D_n(\mbox{across } B_{r_1}(0) \backslash B_{r_2}(0)) > \epsilon a_n^{(q)}] = 1\,.
    \end{equation*}
\end{proposition}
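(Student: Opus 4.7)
Fix $q \in (q_2,1)$ and a hypercubic shell $A = B_{r_1}(0)\setminus B_{r_2}(0)$. The strategy is to reduce to a statement about subsequential limits of $D_n/a_n^{(q)}$ and then combine an FKG-type positive-probability bound with a zero-one law. By Proposition~\ref{prop:tightness}, the family $\{D_n(\cdot,\cdot;B_2(0))/a_n^{(q)}\}_{n\geq 1}$ is tight in the topology of local uniform convergence, and every subsequential limit is a.s.\ a pseudo-metric on $B_2(0)$. Set $X_n := D_n(\mbox{across}\,A)/a_n^{(q)}$; it suffices to prove that every subsequential limit $X$ of $X_n$ is strictly positive almost surely, since otherwise one could extract a subsequence with $\mathbb{P}[X_n\leq \epsilon_n]\geq \delta$ for some $\epsilon_n \downarrow 0$, and passing to a further jointly convergent subsequence would force $\mathbb{P}[X=0]\geq \delta$, contradicting $X>0$ a.s.

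The first main step is an FKG-based positive-probability bound, namely $\mathbb{P}[D_n(\mbox{across}\,A)>\epsilon_0 a_n^{(q)}]\geq c_0>0$ uniformly in $n$ for some fixed $\epsilon_0,c_0>0$. Since $\KK\geq 0$ makes the covariance kernel~\eqref{eq:covariance} pointwise nonnegative, the field $h_n$ is positively correlated, and any finite collection of increasing events in $h_n$ is positively associated. Cover $\partial B_{r_2}(0)$ and $\partial B_{r_1}(0)$ by finitely many small boxes $\{B_\delta(x_i^{\mathrm{in}})\}$ and $\{B_\delta(y_j^{\mathrm{out}})\}$ for $\delta$ small; since every path crossing $A$ runs from some $B_\delta(x_i^{\mathrm{in}})$ to some $B_\delta(y_j^{\mathrm{out}})$ inside $B_{r_1}(0)$, we have
\begin{equation*}
    D_n(\mbox{across}\,A)\geq \min_{i,j} D_n\big(B_\delta(x_i^{\mathrm{in}}),B_\delta(y_j^{\mathrm{out}});B_{r_1}(0)\big).
\end{equation*}
Each distance on the right is increasing in $h_n$, so FKG gives
\begin{equation*}
    \mathbb{P}\bigg[\bigcap_{i,j}\{D_n(B_\delta(x_i^{\mathrm{in}}),B_\delta(y_j^{\mathrm{out}});B_{r_1}(0))>\epsilon_0 a_n^{(q)}\}\bigg]\geq \prod_{i,j}\mathbb{P}\big[D_n(B_\delta(x_i^{\mathrm{in}}),B_\delta(y_j^{\mathrm{out}});B_{r_1}(0))>\epsilon_0 a_n^{(q)}\big].
\end{equation*}
The essential input is that for $\epsilon_0$ small enough (depending on $\delta$), each factor on the right-hand side is $\geq 1-\alpha$ uniformly in $n$ for $\alpha$ as small as desired; this should follow by combining the defining identity $\mathbb{P}[D_n(0,e_1;B_2(0))\leq a_n^{(q)}]=q$, translation/rotation invariance of the field (Lemma~\ref{lem:basic-LFPP}), the quantile/scaling information of Lemma~\ref{lem:distance-any-point}, and the Gaussian concentration estimate of Lemma~\ref{lem:gauss-concentration} to propagate the quantile bound for $D_n(0,e_1;B_2(0))$ to every pair $(x_i^{\mathrm{in}},y_j^{\mathrm{out}})$ at macroscopic separation $r_1-r_2$.

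The second main step is to upgrade the positive probability to probability one using a zero-one law. Along any subsequence on which $D_n/a_n^{(q)}$ converges to a pseudo-metric $D$, Step 1 gives $\mathbb{P}[D(\mbox{across}\,A)>0]\geq c_0$. The event $\{D(\mbox{across}\,A)>0\}$ is invariant under adding any continuous function to the underlying field $h$, since such an addition only multiplies $D$ pointwise by a factor bounded above and below on compact sets, and is therefore determined by the asymptotic small-scale behavior of $h$ inside $B_{r_1}(0)$. Lemma~\ref{lem:zero-one}, which encodes this tail behavior as a zero-one law using the locality of $D_n$ (i.e.\ $D_n(\cdot,\cdot;U)$ is measurable with respect to $h_n|_{\overline U}$) together with the independent-scale decomposition $h=h_{0,m}+h_{m,\infty}$, then gives $\mathbb{P}[D(\mbox{across}\,A)>0]\in\{0,1\}$, hence $=1$, finishing the proof. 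The principal obstacle is the uniform-in-$n$ lower-tail estimate required in Step 1: direct quantile information is available only for the single distance $D_n(0,e_1;B_2(0))$, Gaussian concentration controls fluctuations of $\log D_n$ only on the $\sqrt{n}$ scale, and medians of different distances differ a priori by $2^{o(n)}$ factors (from Lemma~\ref{lem:distance-any-point} and Proposition~\ref{prop:diameter}); a careful translation-invariance and covering argument is needed to transfer the quantile statement to the full finite family $\{D_n(B_\delta(x_i^{\mathrm{in}}),B_\delta(y_j^{\mathrm{out}}))\}$ while keeping $\delta$ and $\epsilon_0$ independent of $n$.
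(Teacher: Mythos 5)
Your overall architecture (a uniform positive-probability lower bound for the crossing distance obtained via positive association, upgraded to probability one by the zero-one law of Lemma~\ref{lem:zero-one} applied to a subsequential limit) is the same as the paper's, but your Step 1 has a genuine gap at exactly the point you flag as the ``principal obstacle''. You need, for some fixed $\epsilon_0>0$ and uniformly in $n$, a positive lower bound on each factor $\mathbb{P}\big[D_n(B_\delta(x_i^{\mathrm{in}}),B_\delta(y_j^{\mathrm{out}});B_{r_1}(0))>\epsilon_0 a_n^{(q)}\big]$, and the tools you propose cannot deliver it: Lemma~\ref{lem:distance-any-point} and Lemma~\ref{lem:gauss-concentration} only identify medians up to $2^{o(n)}$ multiplicative factors and control fluctuations of $\log D_n$ on the $\sqrt n$ scale, so a priori the median of a box-to-box distance could be smaller than $a_n^{(q)}$ by a factor like $2^{-\sqrt n}$, in which case the factor tends to $0$ for every fixed $\epsilon_0$. (Your stronger demand that each factor be $\geq 1-\alpha$ with $\alpha$ arbitrarily small is also unnecessary: the number of box pairs is finite and independent of $n$, so a constant positive lower bound per factor suffices after FKG; but even that constant bound is not obtainable from the cited results.)

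The missing idea, which is how the paper closes this gap, is to run positive association in the \emph{opposite} direction on the exact quantile identity: connect $0$ to $e_1$ inside $B_2(0)$ by a bounded number $J\leq C$ of translated/rotated copies of the triple $(x,y,B_{3r}(0))$, so that $D_n(0,e_1;B_2(0))\leq\sum_i D_n(z_i,z_{i+1};B_{3r}(x_i))$; applying the FKG inequality to the \emph{decreasing} events $\{D_n(z_i,z_{i+1};B_{3r}(x_i))\leq a_n^{(q)}/J\}$ and using $\mathbb{P}[D_n(0,e_1;B_2(0))\leq a_n^{(q)}]=q<1$ gives $\mathbb{P}[D_n(x,y;B_{3r}(0))\leq a_n^{(q)}/C]\leq q^{1/C}<1$, i.e.\ a uniform-in-$n$ lower bound $1-q^{1/C}>0$ for the point-to-point distance (this is Lemma~\ref{lem:non-degenerate-pt-pt}). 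One then passes from points to your boundary boxes by subtracting the diameters ${\rm Diam}_n(B_\delta(\cdot);B_{3r}(0))$, which are $o(1)\cdot a_n^{(q)}$ with high probability by the tightness statement (Proposition~\ref{prop:tightness}), before applying FKG over the finite covering family (Lemma~\ref{lem:non-degenerate-cross}); only then does your product argument yield $c_0>0$. Without this (or an equivalent mechanism) Step 1 is an assertion, not a proof. A secondary remark: the zero-one law you invoke is not a Kolmogorov-type tail statement in the scale decomposition $h=h_{0,m}+h_{m,\infty}$ (the paper explicitly cannot assert that $\widetilde D$ is a measurable function of $h$); it is a spatial-independence argument comparing the zero-distance sets of the inner and outer boundaries. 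Since you use Lemma~\ref{lem:zero-one} as a black box this does not affect correctness, but your justification of it is not accurate.
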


We first prove Theorem~\ref{thm:tightness} assuming this proposition.
\begin{proof}[Proof of Theorem~\ref{thm:tightness}]
For any fixed $q \in (1/2,1)$, using Proposition~\ref{prop:non-degenerate} and noting that $D_n(0,e_1;B_2(0)) \geq D_n(\mbox{across } B_{1/2}(0) \backslash B_{1/4}(0))$, we obtain
\begin{equation}
\label{eq:thm1.1-proof-0}
    c a_n^{(q)} \leq \lambda_n \leq a_n^{(q)}.
\end{equation}
Combining this with Lemma~\ref{lem:internal-B_N(0)}, we obtain that the sequence $\{\lambda_n^{-1} D_n(\cdot, \cdot)\}_{n \geq 1}$ is tight with respect to the local uniform topology. Moreover, each subsequential limit is a pseudo-metric on $\mathbb{R}^d$ and locally H\"older continuous, which can be proved similarly to Proposition~\ref{prop:tightness}. 

We now prove that each subsequential limit is a metric on $\mathbb{R}^d$ using Proposition~\ref{prop:non-degenerate}. Suppose that $\widetilde D$ is the weak limit of $\{\lambda_{n_k}^{-1} D_{n_k}(\cdot, \cdot)\}_{k \geq 1}$ for an increasing sequence $\{n_k\}_{k \geq 1}$ with respect to the local uniform topology. Then, for any $x \in \mathbb{R}^d$, $r_0>r_1>r_2>0$, and $\epsilon>0$:
\begin{equation*}
\begin{aligned}
&\mathbb{P}\Big[\inf_{z \in B_{r_2}(x), w \in B_{r_0}(x) \backslash B_{r_1}(x)} \widetilde D(z,w) \geq \epsilon\Big]  \\
&\qquad\geq \limsup_{k \rightarrow \infty} \mathbb{P}\Big[\inf_{z \in B_{r_2}(x), w \in B_{r_0}(x) \backslash B_{r_1}(x)} D_{n_k}(z,w) \geq \epsilon \lambda_{n_k}\Big]\,.
\end{aligned}
\end{equation*}
This is because the event in the bracket is closed with respect to the uniform topology on $B_{r_0}(x)$. Since $D_{n_k}$ is a continuous length metric for any $k \geq 1$, we have
\begin{equation*}
   \inf_{z \in B_{r_2}(x), w \in B_{r_0}(x) \backslash B_{r_1}(x)} D_{n_k}(z,w) =  D_{n_k}(\mbox{across }B_{r_1}(x) \backslash B_{r_2}(x))\,.
\end{equation*}
Combining the above two inequalities and then taking $r_0$ to infinity yields 
\begin{align*}
    &\mathbb{P}\Big[\inf_{z \in B_{r_2}(x), w \in \mathbb{R}^d \backslash B_{r_1}(x)} \widetilde D(z,w) \geq \epsilon\Big] \\
    &\qquad \qquad \geq \limsup_{k \rightarrow \infty} \mathbb{P}\Big[D_{n_k}(\mbox{across }B_{r_1}(x) \backslash B_{r_2}(x)) \geq \epsilon \lambda_{n_k}\Big].
\end{align*}
Combining this with Proposition~\ref{prop:non-degenerate} and \eqref{eq:thm1.1-proof-0}, we obtain that for any $x \in \mathbb{R}^d$ and $r_1>r_2>0$:
\begin{equation}
\label{eq:thm1.1-euclidean-1}
\begin{aligned}
    &  \mathbb{P}\Big[\inf_{z \in B_{r_2}(x), w \in \mathbb{R}^d \backslash B_{r_1}(x)} \widetilde D(z,w) >0 \Big] = \lim_{\epsilon \rightarrow 0} \mathbb{P}\Big[\inf_{z \in B_{r_2}(x), w \in \mathbb{R}^d \backslash B_{r_1}(x)} \widetilde D(z,w) \geq \epsilon \Big]\\
    &\qquad\geq \lim_{\epsilon \rightarrow 0} \limsup_{k \rightarrow \infty} \mathbb{P}[D_{n_k}(\mbox{across } B_{r_1}(x) \backslash B_{r_2}(x)) \geq \epsilon \lambda_{n_k}] = 1\,.
\end{aligned}
\end{equation}
Hence, a.s.\ for each $x\in\mathbb Q^d$ and each $r_1,r_2 \in \mathbb Q$ with $r_1 > r_2 > 0$, 
\begin{equation} \label{eq:rational-dist-pos}
 \inf_{z \in B_{r_2}(x), w \in \mathbb{R}^d \backslash B_{r_1}(x)} \widetilde D(z,w) >0 \,.
\end{equation}
This implies that $\widetilde D$ is a metric on $\mathbb{R}^d$ (not just a pseudo-metric). By the local H\"older continuity, we know that the identity mapping from $\mathbb{R}^d$, equipped with the Euclidean metric, to $\mathbb R^d$, equipped with the metric $\wt D$, is continuous. To see that the inverse of this map is also continuous, consider a sequence of points $z_n\in\mathbb R^d$ and a point $z\in\mathbb R^d$ such that $\wt D(z_n,z) \to 0$. 
Let $\ep > 0$ and choose $x \in \mathbb Q^d$ and $r_1,r_2 \in \mathbb Q$ with $r_1 > r_2 > 0$ such that $z \in B_{r_2}(x)$ and $B_\ep(z) \subset B_{r_1}(x)$. By~\eqref{eq:rational-dist-pos}, a.s.\ there exists a random $\delta >0$ (depending on $z$ and $\ep$) such that $\wt D(z,w) \geq \delta$ for each $w \in \mathbb R^d \setminus B_\ep(z)$. Hence $z_n \in B_\ep(z)$ for each sufficiently large $n$, i.e., $|z_n-z|_\infty \to 0$.
Therefore, $\widetilde D$ induces the Euclidean topology. This completes the proof of Theorem~\ref{thm:tightness}.
\end{proof}
In the remaining part of this subsection, we will finish the proof of Proposition~\ref{prop:non-degenerate}. In Lemmas~\ref{lem:non-degenerate-pt-pt} and \ref{lem:non-degenerate-cross}, we will show that the crossing distance of a fixed hypercubic shell, when normalized by $a_n^{(q)}$, remains bounded away from zero with positive probability. The proof relies on the definition of $a_n^{(q)}$ and the positive association property of $h_n$ (see e.g.\ \cite[Theorem 2.3]{df-lqg-metric}). Lemma~\ref{lem:zero-one} then gives a zero-one law, establishing that $\mathbb{P}[\widetilde D(\mbox{across})= 0] \in \{0,1\}$ where $\widetilde D(\mbox{across})$ is the distance across this hypercubic shell. Combining these two results, we can conclude Proposition~\ref{prop:non-degenerate}. 

\begin{lemma}
    \label{lem:non-degenerate-pt-pt}
    For any $q \in (0,1)$ and $r \in (0,1/10)$, there exists a constant $c = c(q, r)>0$ such that
    \begin{equation*}
        \liminf_{n \rightarrow \infty} \inf_{\substack{x \in \partial B_r(0)\\ y \in \partial B_{2r}(0)}}  \mathbb{P}[D_n(x,y; B_{3r}(0)) > c a_n^{(q)}] > 0\,.
    \end{equation*}
\end{lemma}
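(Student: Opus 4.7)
The approach combines a scaling argument (Lemma~\ref{lem:basic-LFPP}) with the positive association (FKG) of the Gaussian field $h_n$ and the definition of $a_n^{(q)}$. Broadly, the plan is to first establish the bound for a single reference pair and then upgrade to a uniform bound over $(x,y)\in\partial B_r(0)\times\partial B_{2r}(0)$.

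First, I would reduce the uniformity over $(x,y)$ to a finite family of reference pairs. Since $\partial B_r(0)\times\partial B_{2r}(0)$ is compact, cover it with finitely many Euclidean $\delta$-balls for some small $\delta>0$. By the H\"older-type upper bound of Lemma~\ref{lem:internal-B_N(0)} (applied in $B_{3r}(0)$ with some small $\beta>0$ and some $q_0\in(q_2,1)$), for each pair $(x,y)$ close to a representative $(x_i,y_i)$, the triangle inequality gives $D_n(x,y;B_{3r}(0))\ge D_n(x_i,y_i;B_{3r}(0))-D_n(x,x_i;B_{3r}(0))-D_n(y,y_i;B_{3r}(0))$, and the last two terms are at most $C\delta^\beta a_n^{(q_0)}$ with high probability. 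Choosing $\delta$ small enough (and $q_0$ so that $a_n^{(q_0)}$ is comparable to $a_n^{(q)}$, via Lemma~\ref{lem:compare-median-p}), the per-pair lower bound transfers to a uniform one, with only a constant loss in $c$ and $c_0$.

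Second, for a single reference pair $(x_0,y_0)=(re_1,2re_1)$, I would use scaling. Let $k\in\mathbb{N}$ be such that $r\asymp 2^{-k}$ (assume $r=2^{-k}$ for simplicity). Lemma~\ref{lem:basic-LFPP} gives
\[
D_{k,n}(x_0,y_0;B_{3r}(0))\ \stackrel{d}{=}\ r\cdot D_{n-k}(2^kx_0,2^ky_0;B_3(0)),
\]
with $(2^kx_0,2^ky_0)\in\partial B_1(0)\times\partial B_2(0)$. Since $h_k$ and $h_{k,n}$ are independent (disjoint time-scales of the driving white noise $W$) and $\KK\ge 0$, we have
\[
D_n(x_0,y_0;B_{3r}(0))\ \ge\ e^{\xi\inf_{B_{3r}(0)}h_k}\,D_{k,n}(x_0,y_0;B_{3r}(0)),
\]
and for fixed $k$ the law of $\inf_{B_{3r}(0)}h_k$ is fixed in $n$, hence exceeds any preassigned constant with positive probability depending only on $r$. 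By Corollary~\ref{cor:an-compare} (applied with fixed $k$; the $\epsilon$-loss in the exponent and the $2^{-cn}$ shift in the quantile index are negligible for $n$ large), $a_n^{(q)}\asymp 2^{-(1-\xi Q)k}a_{n-k}^{(q)}$. Combining, the problem reduces to the unit-scale statement $\mathbb{P}[D_{n-k}(\tilde x_0,\tilde y_0;B_3(0))>c' a_{n-k}^{(q)}]\ge c_0$ uniformly in $n$, for each $(\tilde x_0,\tilde y_0)$ in the finite family.

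Third, for the unit-scale statement I would invoke the FKG inequality. The event in question is increasing in $W$ (since $D_n$ is an increasing functional of $W$), as is the reference event $E=\{D_{n-k}(0,e_1;B_2(0))>a_{n-k}^{(q)}\}$ of probability $1-q$ by definition. Using translation invariance of $h$'s law to shift the reference pair to a suitable location inside $B_3(0)$, together with Lemma~\ref{lem:distance-any-point} (whose $o(n)$ error is uniform in $(\tilde x,\tilde y)$ given the fixed shape parameters), and the Gaussian concentration of Lemma~\ref{lem:gauss-concentration}, positive association yields the desired uniform positive-probability lower bound.

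\noindent\textbf{Main obstacle.} The delicate step is the unit-scale comparison in the third paragraph: both the median of $D_{n-k}(\tilde x_0,\tilde y_0;B_3(0))$ and $a_{n-k}^{(q)}$ are of order $2^{-(1-\xi Q)(n-k)+o(n-k)}$, but the $o(n-k)$ errors could a priori differ by a sub-exponential factor, so a direct distributional comparison of marginals is insufficient. The FKG inequality sidesteps this by coupling the two distances (both being increasing functionals of $W$) and leveraging the exact quantile identity $\mathbb{P}[E]=1-q$ to extract a uniform positive-probability lower bound from the joint distribution rather than from the imprecisely controlled marginal quantiles.
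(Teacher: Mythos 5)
Your plan correctly identifies the ingredients (the quantile identity, FKG, translation/rotation invariance), but the step that is supposed to close the argument is missing: your ``unit-scale statement'' in Step 3 is exactly as hard as the lemma itself, and positive association alone does not prove it. FKG applied to the two increasing events $A=\{D_m(\tilde x_0,\tilde y_0;B_3(0))>c'a_m^{(q)}\}$ and $E=\{D_m(0,e_1;B_2(0))>a_m^{(q)}\}$ only gives $\mathbb{P}[A\cap E]\ge \mathbb{P}[A]\,\mathbb{P}[E]$, which yields no lower bound on $\mathbb{P}[A]$; and, as you acknowledge in your ``main obstacle'' paragraph, comparing marginals via Lemma~\ref{lem:distance-any-point} and Lemma~\ref{lem:gauss-concentration} loses $2^{o(n)}$ and $e^{C\sqrt n}$ factors, which is fatal at constant order. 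The missing idea, which is how the paper argues, is a geometric chaining in the opposite direction: since $r<1/10$, one connects $0$ to $e_1$ inside $B_2(0)$ by points $0=z_1,\dots,z_J=e_1$ with $J\le C$ such that each triple $(z_i,z_{i+1},B_{3r}(x_i))$ is a rigid motion of $(x,y,B_{3r}(0))$ with $B_{3r}(x_i)\subset B_2(0)$. Then $D_n(0,e_1;B_2(0))\le\sum_i D_n(z_i,z_{i+1};B_{3r}(x_i))$, so $q=\mathbb{P}[D_n(0,e_1;B_2(0))\le a_n^{(q)}]\ge\mathbb{P}\bigl[\bigcap_i\{D_n(z_i,z_{i+1};B_{3r}(x_i))\le a_n^{(q)}/J\}\bigr]$, and FKG applied to these \emph{decreasing} events, plus invariance, bounds this below by $\mathbb{P}[D_n(x,y;B_{3r}(0))\le a_n^{(q)}/J]^{J-1}$. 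Hence $\mathbb{P}[D_n(x,y;B_{3r}(0))>a_n^{(q)}/C]\ge 1-q^{1/C}>0$, uniformly in $x,y,n$, with no scaling step and no covering argument; the uniformity over $(x,y)$ comes for free because the chain exists for every such pair with $J\le C$ uniform. It is this subadditivity-plus-FKG-on-complements mechanism, not positive correlation of the two increasing events, that converts $\mathbb{P}[E^c]=q<1$ into the desired bound.

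Two further steps in your reduction would also fail as written. In Step 1 you invoke Lemma~\ref{lem:compare-median-p} to make $a_n^{(q_0)}$ comparable to $a_n^{(q)}$, but that lemma only gives comparability up to $e^{C\sqrt n}$; constant-order comparability between quantiles (equivalently, anti-concentration of $D_n(0,e_1;B_2(0))$) is precisely what Proposition~\ref{prop:non-degenerate}, which this lemma feeds into, is designed to establish, so using it here would be circular, and the error term $C\delta^\beta a_n^{(q_0)}$ cannot be absorbed into $c\,a_n^{(q)}$. In Step 2, Corollary~\ref{cor:an-compare} carries the quantile shifts $q\pm 2^{-cn}$, which likewise cannot be removed at constant order at this stage; and even granting the comparison, the rescaling only transforms the lemma into an equivalent statement at unit scale, so it does not make progress. (A minor point: ``increasing in $W$'' should be replaced by positive association for the field $h_n$, which holds because its covariance kernel is pointwise nonnegative, as in the reference the paper cites.)
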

\begin{proof}
    Fix $0 < q < 1$ and $0<r<1/10$. By the definition of $a_n^{(q)}$ from \eqref{def:quantile}, we have
    \begin{equation}
    \label{eq:lem6.4-1}
       \mathbb{P}[D_n(0, e_1; B_2(0)) \leq a_n^{(q)}] = q < 1\,. 
    \end{equation}
    Fix $x \in \partial B_r(0)$ and $y \in \partial B_{2r}(0)$. Since $r<1/10$, we can connect the points $0$ and $e_1$ in $B_2(0)$ using a sequence of points $0 = z_1,z_2,\ldots, z_J = e_1$ such that $|z_i - z_{i+1}|_2 = |x-y|_2$ for each $1 \leq i \leq J-1$. For any $i$, we define the point $x_i$ such that the triple $(z_i,z_{i+1},B_{3r}(x_i))$ can be obtained from the triple $(x,y,B_{3r}(0))$ through translation and rotation (see Figure~\ref{fig:3} for an illustration of a similar case). Furthermore, we can require that $B_{3r}(x_i) \subset B_2(0)$ and $J \leq C$, where $C=C(r)$ is independent of $x,y$ but may depend on the fixed value of $r$. Therefore,
    \begin{equation*}
       D_n(0, e_1; B_2(0)) \leq  \sum_{i=1}^{J-1} D_n(z_i,z_{i+1}; B_2(0)) \leq \sum_{i=1}^{J-1} D_n(z_i,z_{i+1}; B_{3r}(x_i))\,.  
    \end{equation*}
    Combining this with the positive association property of $h_n$ (see e.g.\ \cite[Theorem 2.3]{df-lqg-metric}), we obtain\footnote{Note that the event $D_n(z_i,z_{i+1}; B_{3r}(x_i)) \leq a_n^{(q)}/J $ is a decreasing event of $h_n$. \cite[Theorem 2.3]{df-lqg-metric} is stated for a Gaussian field defined via finite-dimensional marginals; however, we can apply it to our case by approximating $h_n$ using step functions; see e.g. the proof of Lemma~\ref{lem:gauss-concentration}.}
    \begin{equation*}
    \begin{aligned}
        \mathbb{P}\Big[D_n(0, e_1; B_2(0)) \leq a_n^{(q)}\Big] &\geq 
        \mathbb{P}\Big[ \bigcap_{i=1}^{J-1} \big{\{} D_n(z_i,z_{i+1}; B_{3r}(x_i)) \leq a_n^{(q)}/J \big{\}} \Big] \\
        &\geq \prod_{i=1}^{J-1} \mathbb{P}\Big[ D_n(z_i,z_{i+1}; B_{3r}(x_i)) \leq a_n^{(q)}/J \Big] \,.
    \end{aligned}
    \end{equation*}
    Using the fact that $J \leq C$ and the translation and rotational invariance of $D_n$ as stated in Lemma~\ref{lem:basic-LFPP}, we can deduce
    \begin{equation*}
        \mathbb{P}\Big[D_n(0, e_1; B_2(0)) \leq a_n^{(q)}\Big] \geq \mathbb{P}\Big[D_n(x,y;B_{3r}(0)) \leq a_n^{(q)}/C\Big]^C.
    \end{equation*}
    Combining this with \eqref{eq:lem6.4-1} yields the desired result.
\end{proof}

\begin{lemma}
    \label{lem:non-degenerate-cross}
    Recall the constant $q_2$ from Proposition~\ref{prop:tightness}. For any $q \in (q_2,1)$ and $r \in (0, 1/10)$, there exists a constant $c = c(q, r)>0$ such that
    \begin{equation*}
        \liminf_{n \rightarrow \infty} \mathbb{P}[D_n(\partial B_r(0), \partial B_{2r}(0)) > c a_n^{(q)}] > 0\,.
    \end{equation*}
\end{lemma}
\begin{proof}
    Fix $q \in (q_2, 1)$ and $r \in (0, 1/10)$. Let $c'$ be the constant from Lemma~\ref{lem:non-degenerate-pt-pt}. Define the constant
    \begin{equation*}
        A := \liminf_{n \rightarrow \infty} \inf_{\substack{x \in \partial B_r(0)\\ y \in \partial B_{2r}(0)}} \mathbb{P}[D_n(x,y; B_{3r}(0)) > c' a_n^{(q)}] > 0
    \end{equation*}
    Applying Proposition~\ref{prop:tightness} with $U = B_{3r}(0)$, we can select a sufficiently large integer $m$ such that
    \begin{equation*}
        \mathbb{P}\Big[{\rm Diam}_n(B_{r2^{-m}}(z); B_{3r}(0))/a_n^{(q)} > \frac{c'}{4} \Big] < \frac{A}{4} \quad \forall z \in \partial B_r(0) \cup \partial B_{2r}(0)\,.
    \end{equation*}
    Combining this with the triangle inequality
    \begin{equation*}
    \begin{aligned}
        &\quad D_n(B_{r2^{-m}}(x), B_{r2^{-m}}(y); B_{3r}(0)) \\
        &\geq D_n(x, y; B_{3r}(0)) - {\rm Diam}_n(B_{r2^{-m}}(x); B_{3r}(0)) - {\rm Diam}_n(B_{r2^{-m}}(y) ; B_{3r}(0))\,,
    \end{aligned}
    \end{equation*}
    we obtain that for all sufficiently large $n$:
    \begin{equation}
    \label{eq:lem6.6-1}
        \mathbb{P}\Big[ D_n(B_{r2^{-m}}(x), B_{r2^{-m}}(y); B_{3r}(0)) / a_n^{(q)}  > \frac{c'}{2} \Big] > \frac{A}{2} \quad \forall x \in \partial B_r(0), y \in \partial B_{2r}(0)\,.
    \end{equation}
    Define the sets $X$ and $Y$ as $X = \partial B_r(0) \cap (r2^{-m}) \mathbb{Z}^d$ and $Y = \partial B_{2r}(0) \cap  (r2^{-m}) \mathbb{Z}^d$. Since the families of boxes $\{B_{r2^{-m}}(x)\}_{x \in X} $ and $\{B_{r2^{-m}}(y)\}_{y \in Y} $ cover $\partial B_r(0)$ and $\partial B_{2r}(0)$ respectively, it follows that
     \begin{equation*}
     \begin{aligned}
        &\mathbb{P}\Big[D_n(\partial B_r(0), \partial B_{2r}(0))/a_n^{(q)} > \frac{c'}{2}\Big] \\
        &\qquad \geq \mathbb{P}\Big[\bigcap_{x \in X, y \in Y} D_n(B_{r2^{-m}}(x), B_{r2^{-m}}(y); B_{3r}(0))/a_n^{(q)} > \frac{c'}{2}\Big]\,.
        \end{aligned}
    \end{equation*}
    Using the positive associativity property (see e.g.\ \cite[Theorem 2.3]{df-lqg-metric}), we obtain 
    \begin{equation*}
    \begin{aligned}
    &\mathbb{P}\Big[D_n(\partial B_r(0), \partial B_{2r}(0)) /a_n^{(q)} > \frac{c'}{2} \Big] \\
    &\qquad\geq \prod_{x \in X, y \in Y} \mathbb{P}\Big[ D_n(B_{r2^{-m}}(x), B_{r2^{-m}}(y); B_{3r}(0))/a_n^{(q)} > \frac{c'}{2} \Big]\,.
    \end{aligned}
    \end{equation*}
    Combining this with \eqref{eq:lem6.6-1} yields the desired result.
\end{proof}

In Lemma~\ref{lem:zero-one}, we present a zero-one argument showing that for each subsequential limit, the distance across a hypercubic shell is positive with probability either 0 or 1. First, we need two auxiliary results. The first result will eventually imply that for any subsequential limiting metric $\wt D$, the $\wt D$-distance from any compact set to $\infty$ is infinite.

\begin{lemma}
    \label{lem:dist-to-infinity}
    For any $q \in (q_2,1)$, $r>0$, and $T>0$, the following inequality holds:
    \begin{equation*}
        \lim_{R \rightarrow \infty} \liminf_{n \rightarrow \infty} \mathbb{P}\big[D_n(\mbox{across } B_R(0) \backslash B_r(0)) > T a_n^{(q)}\big] = 1\,.
    \end{equation*}
\end{lemma}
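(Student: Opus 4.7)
The plan is to sandwich $D_n(\text{across } B_R(0) \setminus B_r(0))$ from below by the maximum of internal crossing distances of a family of disjoint, nested hypercubic shells at exponentially increasing scales, far enough apart to be probabilistically independent. Specifically, I define $\widetilde C_k := B_{2^{2k+1}r}(0) \setminus B_{2^{2k}r}(0)$ for $k_* \leq k \leq K$, choosing $k_*$ so that the $l^\infty$-gap $2^{2k+1}r$ between consecutive shells exceeds $2\rr$ for every $k \geq k_*$. Lemma~\ref{lem:dist-independent} then gives mutual independence of the internal metrics $D_n(\cdot,\cdot;\overline{\widetilde C_k})$ across different $k$. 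For $R > 2^{2K+1}r$, any path from $\partial B_r(0)$ to $\partial B_R(0)$ must cross each $\widetilde C_k$, so its portion inside $\overline{\widetilde C_k}$ contributes $D_n$-length at least the internal crossing distance, which I denote $D_n(\text{internal across }\widetilde C_k) := D_n(\partial B_{2^{2k+1}r}(0),\partial B_{2^{2k}r}(0);\overline{\widetilde C_k})$. Consequently,
\[
D_n(\text{across } B_R(0) \setminus B_r(0)) \;\geq\; \max_{k_* \leq k \leq K} D_n(\text{internal across }\widetilde C_k)\,.
\]

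The crucial probabilistic input is the following: for every $T>0$ there exist $k_0(T)$ and $p>0$ such that $\mathbb{P}[D_n(\text{internal across }\widetilde C_k) \geq T a_n^{(q)}] \geq p$ for all $k \geq k_0(T)$ and all sufficiently large $n$. To prove it, I will first apply the scaling identity from Lemma~\ref{lem:basic-h}(4) (change of variable $x = 2^{2k}y$) to obtain
\[
D_n(\text{internal across }\widetilde C_k) \overset{d}{=} 2^{2k}\,D_{2k, n+2k}(\text{internal across } B_{2r}(0) \setminus B_r(0))\,.
\]
The decomposition $h_{0, n+2k} = h_{0, 2k} + h_{2k, n+2k}$ then yields $D_{2k, n+2k} \geq e^{-\xi\,\sup_{B_{2r}(0)} h_{0,2k}}\,D_{n+2k}$, with Lemma~\ref{lem:field-estimate} controlling $\sup_{B_{2r}(0)} h_{0,2k} \leq 2k\sqrt{2d}\log 2 + O(k^{3/4})$ with probability tending to one. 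Since the internal crossing distance in the small shell dominates the free-across distance, Lemma~\ref{lem:non-degenerate-cross} applied at a quantile $q' \in (q,1)$ gives $D_{n+2k}(\text{across } B_{2r}(0)\setminus B_r(0)) \geq c_{q'} a_{n+2k}^{(q')}$ with probability $\geq p_{q'} > 0$, and Corollary~\ref{cor:an-compare} converts this into $a_{n+2k}^{(q')} \gtrsim 2^{(\xi Q - 1 - \epsilon)\cdot 2k}\,a_n^{(q)}$. Multiplying these factors, the combined exponent $2^{2k\xi(Q-\sqrt{2d}) - 2k\epsilon - O(k^{3/4})}$ tends to $+\infty$ with $k$ under the hypothesis $Q > \sqrt{2d}$ (taking $\epsilon$ small), so for $k \geq k_0(T)$ the claimed lower bound holds.

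Combining the uniform positive-probability lower bound with the mutual independence, I conclude
\[
\mathbb{P}\big[D_n(\text{across } B_R(0) \setminus B_r(0)) < T a_n^{(q)}\big] \;\leq\; (1-p)^{K - \max\{k_*, k_0(T)\} + 1}\,,
\]
which vanishes as $K \to \infty$, equivalently as $R \to \infty$, finishing the proof.

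The main obstacle I expect is the bookkeeping between quantile levels: Corollary~\ref{cor:an-compare} only relates $a_n^{(q)}$ to $a_{n+2k}^{(q \pm 2^{-cn})}$, while Lemma~\ref{lem:non-degenerate-cross} needs a fixed quantile in $(q_2, 1)$. I plan to sidestep this by fixing an auxiliary quantile $q' \in (q,1)$ satisfying $q' > q + 2^{-c(n+2k)}$ for all $n$ sufficiently large, and then using monotonicity of $a_{n+2k}^{(\cdot)}$ to chain the two estimates into the single inequality $a_{n+2k}^{(q')} \gtrsim 2^{(\xi Q - 1 - \epsilon)\cdot 2k}\,a_n^{(q)}$ needed above. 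Everything else—the scaling identity, the Gaussian tail of $\sup_{B_{2r}(0)} h_{0,2k}$, and the joint independence of finitely many internal metrics on pairwise $2\rr$-separated domains—is a direct application of the tools already developed earlier in the paper.
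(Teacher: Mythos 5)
Your proof is correct and follows essentially the same route as the paper's: the per-shell lower bound is obtained exactly as in the paper's key claim, via the scaling relation of Lemma~\ref{lem:basic-LFPP}, the decomposition $h_{n+2k}=h_{2k}+h_{2k,n+2k}$ together with the Gaussian bound on $\sup_{B_{2r}(0)}h_{2k}$, Lemma~\ref{lem:non-degenerate-cross} at an auxiliary quantile $q'$, and Corollary~\ref{cor:an-compare}, while independence of the well-separated shells comes from Lemma~\ref{lem:dist-independent}. The only real difference is the aggregation step: the paper keeps a fixed constant $c$ per dyadic shell and sums the crossing distances of many independent shells to exceed $T a_n^{(q)}$, whereas you let the scaling gain $2^{2k\xi(Q-\sqrt{2d})-o(k)}$ beat $T$ on a single shell of large index and use independence only to boost the probability from $p$ to $1$; both work, and your max-of-shells version slightly simplifies the final counting. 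One small point to patch: Lemma~\ref{lem:non-degenerate-cross} is stated only for shells $B_{2r}(0)\setminus B_r(0)$ with $r\in(0,1/10)$, so for general $r$ you need either the easy covering-plus-FKG extension (which the paper's own proof invokes parenthetically) or a further rescaling so that the inner radius of the rescaled shell falls below $1/10$.
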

\begin{proof}
    Fix $q \in (q_2,1)$. We claim that there exists a constant $c = c(q)>0$ such that
    \begin{equation}
    \label{eq:lem-dist-inf-0}
        \mathbb{P}\big[ D_n( \mbox{across } B_{2^{k+1}}(0) \backslash B_{2^k}(0)) \geq c a_n^{(q)} \big] \geq c\quad \forall n, k \geq 1/c\,.
    \end{equation}
    According to Lemma~\ref{lem:dist-independent}, for positive integers $k' > k$, $D_n( \mbox{across } B_{2^{k+1}}(0) \backslash B_{2^k}(0))$ and $D_n(\mbox{across } B_{2^{k'+1}}(0) \backslash B_{2^{k'}}(0))$ are independent whenever $|2^{k'} - 2^{k+1}| \geq 2 \rr$, which happens whenever $k' \geq k + 2$ and $k \geq \lfloor \log \rr \rfloor + 10$. Combining this with \eqref{eq:lem-dist-inf-0} and the inequality 
    \begin{equation*}
    D_n(\mbox{across } B_R(0) \backslash B_r(0)) \geq \sum_{k=\lfloor \log r \rfloor +1}^{\lfloor \log R \rfloor - 1} D_n( \mbox{across } B_{2^{k+1}}(0) \backslash B_{2^k}(0)) , 
    \end{equation*}
    we can conclude the lemma. 

    In the remainder of the proof, we prove claim~\eqref{eq:lem-dist-inf-0}. Since $Q> \sqrt{2d}$, we can choose a small constant $\epsilon > 0$ such that $(\xi + \log 2)\epsilon< \xi \log 2 \cdot (Q - \sqrt{2d}) $. Applying the scaling property from Lemma~\ref{lem:basic-LFPP}, with $(k,n+k)$ in place of $(m,n)$, we obtain
    \begin{equation}
    \label{eq:lem-dist-inf-1}
        D_n( \mbox{across } B_{2^{k+1}}(0) \backslash B_{2^k}(0)) \overset{d}{=} 2^k D_{k,n+k}( \mbox{across } B_2(0) \backslash B_1(0))\,.
    \end{equation}
    
    We have
    \begin{equation*}
        D_{k,n+k}( \mbox{across } B_2(0) \backslash B_1(0)) \geq e^{-\xi \sup_{x \in B_2(0)} h_k(x)} D_{n+k}( \mbox{across } B_2(0) \backslash B_1(0))\,.
    \end{equation*}
    We now lower-bound the two terms on the right-hand side separately. Using Claims (3) and (4) from Lemma~\ref{lem:field-estimate}, we have 
    \begin{equation*}
        \mathbb{P}\Big[\sup_{x \in B_2(0)} h_k(x) > (\sqrt{2d} \log 2 +\epsilon) k\Big] \leq C e^{-k/C}.
    \end{equation*}
    Let $q' = (q+1)/2$. Applying Lemma~\ref{lem:non-degenerate-cross} with $q'$ in place of $q$, we know that there exists $A = A(q)>0$ such that for all sufficiently large $n+k$:
    \begin{equation}
        \mathbb{P}\big[D_{n+k}( \mbox{across } B_2(0) \backslash B_1(0)) \geq A a_{n+k}^{(q')} \big] \geq A\,.
    \end{equation}
    (Lemma~\ref{lem:non-degenerate-cross} is stated for $B_{2r}(0) \backslash B_r(0)$ with $r \in (0,1/10)$, but we can easily extend this result to $B_2(0) \backslash B_1(0)$ by covering $B_2(0) \backslash B_1(0)$ with hypercubic shells $B_{2r}(0) \backslash B_r(0)$ and then applying the positive association property.) Combining the above three inequalities, we obtain that for all sufficiently large $k$:
    \begin{equation}
    \label{eq:lem-dist-inf-2}
        \mathbb{P} \big[ D_{k,n+k}( \mbox{across } B_2(0) \backslash B_1(0)) \geq e^{-\xi(\sqrt{2d} \log 2 +\epsilon) k} \cdot Aa_{n+k}^{(q')} \big] \geq A/2 \,.
    \end{equation}
    
    Applying Corollary~\ref{cor:an-compare} with our chosen values of $(\epsilon,q)$, and with $(n+k,k)$ in place of $(n,k)$, we deduce that $a_n^{(q)} \leq \frac{1}{c} 2^{(1-\xi Q + \epsilon)k} a_{n+k}^{(q +2^{-c(n+k)})} \leq \frac{1}{c} 2^{(1-\xi Q + \epsilon)k} a_{n+k}^{(q')}$ for all sufficiently large $n$, where the constant $c$ depends only on $\epsilon$ and $q$. Combining this result with \eqref{eq:lem-dist-inf-1} and \eqref{eq:lem-dist-inf-2}, we obtain that for all sufficiently large $n$ and $k$, with probability at least $A/2$,
    \begin{equation*}
    \begin{aligned}
        2^{-k} D_n( \mbox{across } B_{2^{k+1}}(0) \backslash B_{2^k}(0)) 
        &\geq e^{-\xi(\sqrt{2d} \log 2 +\epsilon) k} \cdot A a_{n+k}^{(q')} \\
        &\geq c 2^{-k} e^{[\xi \log 2  \cdot (Q-\sqrt{2d}) - (\xi + \log 2)\epsilon]k } a_n^{(q)}.
        \end{aligned}
    \end{equation*}
    By the choice of $\epsilon$, we get $ e^{\xi \log 2  \cdot (Q-\sqrt{2d}) - (\xi + \log 2)\epsilon } \geq 1$. This implies claim~\eqref{eq:lem-dist-inf-0} and thus completes the proof of the lemma.
\end{proof}

\begin{lemma}
\label{lem:set-connect}
   Fix $q \in (q_2,1)$ and $r>0$. For each subsequential limit $\widetilde D$ of $\{ D_n(\cdot,\cdot)/ a_n^{(q)} \}_{n \geq 1}$, the set $\{x \in \mathbb R^d : \widetilde D(x, \partial B_r(0)) = 0 \}$ is a.s.\ a closed, bounded, and connected set.
\end{lemma}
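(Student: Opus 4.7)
The plan is to verify the three properties---closedness, boundedness, and connectedness---separately. Throughout, I would fix a subsequential limit $\wt D$ and work under a Skorokhod coupling along the defining subsequence $\{n_k\}$, so that $D_{n_k}/a_{n_k}^{(q)} \to \wt D$ almost surely, uniformly on compact subsets of $\mathbb{R}^d \times \mathbb{R}^d$ (via Proposition~\ref{prop:tightness}).

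The closedness and boundedness should follow routinely from earlier results. For closedness, Proposition~\ref{prop:tightness} yields that $\wt D$ is a.s.\ locally H\"older continuous; combined with the triangle inequality $|\wt D(x_1, \partial B_r(0)) - \wt D(x_2, \partial B_r(0))| \leq \wt D(x_1, x_2)$, this makes $x \mapsto \wt D(x, \partial B_r(0))$ Euclidean-continuous, so its zero set is closed. For boundedness, the key input is Lemma~\ref{lem:dist-to-infinity}: since $D_n(\mbox{across } B_R(0) \setminus B_r(0))$ is the infimum over the two compact boundary spheres of $D_n$-distances, uniform convergence gives $D_{n_k}(\mbox{across})/a_{n_k}^{(q)} \to \wt D(\mbox{across } B_R(0) \setminus B_r(0))$ a.s., and Lemma~\ref{lem:dist-to-infinity} then yields $\mathbb{P}[\wt D(\mbox{across } B_R(0) \setminus B_r(0)) \geq T] \to 1$ as $R \to \infty$ for every $T > 0$. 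A countable intersection over integer $T$ and $R$, together with monotonicity in $R$, gives a.s.\ $\wt D(\mbox{across } B_R(0) \setminus B_r(0)) \to \infty$, so the zero set is contained in some bounded ball.

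For connectedness, I would argue by contradiction. Suppose $Z := \{x : \wt D(x, \partial B_r(0)) = 0\}$ splits as $A \sqcup B$ with both pieces non-empty, closed, and disjoint; boundedness then forces $A, B$ compact, so $\ep := \mathfrak d_\infty(A, B) > 0$. Since $\partial B_r(0) \subset Z$ is connected, it lies in one piece, say $\partial B_r(0) \subset A$. Fix $x \in B$ and form the compact tube $W := \{y \in \mathbb{R}^d : \mathfrak d_\infty(y, B) \leq \ep/3\}$, which contains $B$ in its interior and is disjoint from $A$. Since each $D_{n_k}$ is a length metric by construction and $D_{n_k}(x, \partial B_r(0))/a_{n_k}^{(q)} \to 0$, I can choose for each $k$ a continuous path $\gamma_k$ from $x$ to some $y_k \in \partial B_r(0)$ of $D_{n_k}$-length at most $2D_{n_k}(x, \partial B_r(0))$. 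Because $\gamma_k$ starts in the interior of $W$ and terminates outside $W$, it must cross $\partial W$ at some point $z_k$, and the near-geodesic bound gives $D_{n_k}(z_k, \partial B_r(0)) \leq D_{n_k}(z_k, y_k) \leq 2 D_{n_k}(x, \partial B_r(0)) = o(a_{n_k}^{(q)})$. By Euclidean compactness of $\partial W$, I pass to a further subsequence with $z_k \to z_\infty \in \partial W$; uniform convergence of $D_{n_k}/a_{n_k}^{(q)}$ on the relevant compact set, together with H\"older continuity of $\wt D$, then forces $\wt D(z_\infty, \partial B_r(0)) = 0$, so $z_\infty \in Z = A \cup B$. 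However, by construction $\mathfrak d_\infty(z_\infty, B) = \ep/3$ and $\mathfrak d_\infty(z_\infty, A) \geq 2\ep/3$, contradicting $z_\infty \in A \cup B$.

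The hard part will be the connectedness step. Its crux is that, although $\wt D$ is at this stage only known to be a pseudo-metric (so I have no direct access to geodesics in the limit), a path-connectivity argument can still be carried out by exploiting the length-metric structure of the finite-$n$ approximations: near-geodesics exist at each level $n_k$, any such path from inside the neighborhood $W$ of $B$ to the set $A$ must cross the Euclidean tube $\partial W$, and Euclidean compactness lets me extract a limit point that is still a zero of $\wt D(\cdot, \partial B_r(0))$ yet lies in the Euclidean gap between $A$ and $B$, giving the contradiction.
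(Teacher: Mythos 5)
Your connectedness argument is correct, and it takes a genuinely different route from the paper's. The paper argues directly: for each $x$ in the zero set $Z$, it takes near-geodesic paths from $x$ to $\partial B_r(0)$ at level $n_k$ (along which every point is within $\epsilon a_{n_k}^{(q)}$ of $\partial B_r(0)$), truncates them at the exit from a large box, and extracts a Hausdorff-subsequential limit, which is a compact connected subset of $Z$ joining $x$ to $\partial B_r(0)$; connectedness of $Z$ follows since $\partial B_r(0)$ is connected. You instead argue by contradiction with a separation $Z = A \sqcup B$, a tube $W$ of width $\ep/3$ around $B$, and the observation that a finite-$n$ near-geodesic from $x\in B$ to $\partial B_r(0)\subset A$ must cross $\partial W$; a single crossing point $z_k$, Euclidean compactness of $\partial W$, uniform convergence on compacts, and the H\"older continuity of $\wt D$ then produce a zero of $\wt D(\cdot,\partial B_r(0))$ in the Euclidean gap between $A$ and $B$. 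Both proofs exploit exactly the same two ingredients (the length-metric property of $D_{n_k}$ before the limit, plus Skorokhod coupling and compactness); yours is somewhat more elementary in that it avoids Hausdorff convergence of sets and only needs one crossing point, while the paper's construction yields the slightly stronger conclusion that every point of $Z$ lies in a continuum inside $Z$ meeting $\partial B_r(0)$, which is closer in spirit to the length-space heuristic stated before the lemma.

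One step in your boundedness argument needs more care. You pass only the boundary-to-boundary crossing distance $D_{n_k}(\partial B_R(0),\partial B_r(0))/a_{n_k}^{(q)}$ to the limit and then conclude that ``the zero set is contained in some bounded ball,'' and you also invoke monotonicity in $R$ of $\wt D(\mbox{across }B_R(0)\setminus B_r(0))$. Since at this stage $\wt D$ is only a pseudo-metric and is not known to be a length metric, positivity of the boundary-to-boundary distance does not by itself prevent a point $x$ far outside $B_R(0)$ from satisfying $\wt D(x,\partial B_r(0))=0$, nor is the monotonicity in $R$ automatic for $\wt D$. The correct route (which the paper takes) is to use the length-space property at finite $n$ to write $\inf_{x\in B_{R'}(0)\setminus B_R(0)} D_{n_k}(x,\partial B_r(0)) \geq D_{n_k}(\mbox{across }B_R(0)\setminus B_r(0))$, pass this to the limit over the compact region $B_{R'}(0)\setminus B_R(0)$ using uniform convergence, and then send $R'\to\infty$; alternatively, your own tube-crossing argument from the connectedness step (a near-geodesic from an exterior zero to $\partial B_r(0)$ must cross $\partial B_R(0)$, producing a boundary point at $o(a_{n_k}^{(q)})$ distance) fixes it. So this is a repairable imprecision rather than a fatal flaw, but as written the inference ``crossing distance large $\Rightarrow$ no exterior zeros'' is not justified for a pseudo-metric.
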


Given what we have proven so far, the non-trivial part of Lemma~\ref{lem:set-connect} is the connectedness.
If we knew \textit{a priori} that $\wt D$ were a metric (not just a pseudo-metric), then by standard metric space arguments (see, e.g.,~\cite[Exercise 2.4.19]{bbi-metric-geometry}) and the fact that each $D_n$ is a length metric, we would get that also $\wt D$ is a length metric, which implies that $\wt D$-neighborhoods of connected sets are connected.  The connectedness part of Lemma~\ref{lem:set-connect} is a weaker version of this property.

\begin{proof}[Proof of Lemma~\ref{lem:set-connect}]
    Let 
    \[
    A = \{x  \in \mathbb R^d : \widetilde D(x, \partial B_r(0)) = 0 \} . 
    \]
    Since $\widetilde D$ is continuous, the set $A$ is closed. Boundedness of $A$ follows from Lemma~\ref{lem:dist-to-infinity}. This is because for any $\epsilon>0$, by Lemma~\ref{lem:dist-to-infinity}, there exists $R>0$ such that
    \begin{equation*}
    \begin{aligned}
    &\liminf_{n \rightarrow \infty} \mathbb{P}\big[\inf_{x \in \mathbb{R}^d \backslash B_R(0)} D_n(x, \partial B_r(0)) > a_n^{(q)}\big] \\
    &\qquad=  \liminf_{n \rightarrow \infty} \mathbb{P}\big[D_n(\mbox{across } B_R(0) \backslash B_r(0)) > a_n^{(q)}\big] \geq 1-\epsilon\,.
    \end{aligned}
    \end{equation*}
    The first equality follows from the length space property of $D_n$. Since $\widetilde D$ is the subsequential limit of $\{ D_n(\cdot,\cdot)/ a_n^{(q)} \}_{n \geq 1}$ with respect to local uniform topology, we have
    \begin{equation*}
        \mathbb{P}\big[\inf_{x \in \mathbb{R}^d \backslash B_R(0)}\widetilde D(x, \partial B_r(0)) \geq 1 \big] \geq 1-\epsilon\,.
    \end{equation*}
    (We first prove the case for $x \in B_{R'}(0) \backslash B_R(0)$ for all $R' > 0$, and then take $R'$ to infinity.) Therefore, $\mathbb{P}\big[ A \subset B_R(0)\big] \geq 1-\epsilon$. Since this holds for any $\epsilon>0$, we get that $A$ is bounded.
    
    Finally, we show that $A$ is connected. Suppose that $\widetilde D$ is the weak limit of $\{ D_{n_k}(\cdot, \cdot) / a_{n_k}^{(q)} \}_{k \geq 1}$ for an increasing sequence $\{n_k\}_{k \geq 1}$. Using Skorohod's representation theorem, we can assume that under some coupling, $\{ D_{n_k}(\cdot, \cdot) / a_{n_k}^{(q)} \}_{k \geq 1}$ converges to $\widetilde D$ almost surely. Let $x \in A$. Then we have $D_{n_k}(x,\partial B_r(0)) / a_{n_k}^{(q)} \to 0$ a.s. So, for any $\epsilon>0$, there exists an integer $k = k(\epsilon)\geq 1/\epsilon $ and a continuous path $P^{(\epsilon)}$ connecting $x$ to $\partial B_r(0)$ such that $\sup_{y \in P^{(\epsilon)}} D_{n_k}(y, \partial B_r(0)) < \epsilon a_{n_k}^{(q)}$. Let $R>0$, and consider the segment $P_R^{(\epsilon)}$ of $P^{(\epsilon)}$ stopped at its first exit time from the box $B_R(0)$. If $P^{(\epsilon)}$ does not exit the box $B_R(0)$, then $P_R^{(\epsilon)}$ is just $P^{(\epsilon)}$. By definition, $P_R^{(\epsilon)}$ contains a path connecting either $x$ or $\partial B_R(0)$ to $\partial B_r(0)$. After passing to a (random) subsequence $\{\epsilon_m \}_{m \geq 1}$, we can arrange that $P_R^{(\epsilon_m)}$ converges to a compact, connected set $P_R$ with respect to the Euclidean Hausdorff distance as $\epsilon_m$ tends to zero. Moreover, the set $P_R$ intersects $\partial B_r(0)$ and also either contains $x$ or intersects $\partial B_R(0)$. By the almost sure convergence of $\{D_{n_k}(\cdot, \cdot) / a_{n_k}^{(q)} \}_{k \geq 1}$ to $\widetilde D$, we see that $P_R \subset A$. By taking $R$ to infinity and using the boundedness of $A$, we obtain that there is a connected subset of $A$ which intersects $\partial B_r(0)$ and contains $x$. Since $x \in A$ is arbitrary and $\partial B_r(0)$ is connected, this confirms that $A$ is connected.
\end{proof}

We now present the zero-one argument.

\begin{lemma}
    \label{lem:zero-one}
    Fix $q \in (q_2,1)$ and $r>0$. For each subsequential limit $\widetilde D$ of $\{ D_n(\cdot,\cdot)/ a_n^{(q)} \}_{n \geq 1}$, we define the event $\mathcal{Z} := \{ \widetilde{D}(\mbox{across }B_{2r}(0) \backslash B_r(0)) > 0 \}$. Then,
    \begin{equation*}
        \mathbb{P}[\mathcal{Z}] \in \{0,1\}.
    \end{equation*}
\end{lemma}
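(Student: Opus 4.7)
The strategy is to show that $\mathcal Z$ lies, up to a null set, in the tail $\sigma$-algebra of a natural scale decomposition of the driving white noise, and then invoke Kolmogorov's 0--1 law. Decompose the space-time white noise as $W = \sum_{j \geq 1} W_j$, where $W_j$ is the restriction of $W$ to $\mathbb R^d \times [2^{-j}, 2^{-j+1})$; the $W_j$ are independent. Let
\[
H_j(x) := \int_{\mathbb R^d} \int_{2^{-j}}^{2^{-j+1}} \KK\Big(\frac{y-x}{t}\Big)\, t^{-(d+1)/2}\, W_j(dy, dt).
\]
Then the fields $H_j$ are independent, each a.s.\ smooth (by the same argument as in Lemma~\ref{lem:field-estimate}), and $h_n = \sum_{j=1}^n H_j$. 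Setting $\mathcal G_M := \sigma(H_j : j \geq M)$ and $\mathcal G_\infty := \bigcap_{M \geq 1} \mathcal G_M$, Kolmogorov's 0--1 law applied to the independent sequence $(H_j)_{j \geq 1}$ gives $\mathbb P[A] \in \{0,1\}$ for every $A \in \mathcal G_\infty$, so it suffices to prove that $\mathcal Z$ agrees almost surely with a $\mathcal G_M$-measurable event for every fixed $M$.

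Fix $M \geq 1$ and set $F_M := \sum_{j=1}^{M-1} H_j$ and $h_n^{(M)} := h_n - F_M = \sum_{j=M}^n H_j$. Let $D_n^{(M)}$ be the exponential metric from~\eqref{eq:metric-def} associated with $h_n^{(M)}$, which is by construction $\mathcal G_M$-measurable. Since $F_M$ does not depend on $n$ and is a.s.\ continuous, a direct path-by-path comparison in~\eqref{eq:metric-def} yields the deterministic sandwich
\[
e^{\xi \inf_{B_{2r}(0)} F_M}\, D_n^{(M)}(z,w;B_{2r}(0)) \leq D_n(z,w;B_{2r}(0)) \leq e^{\xi \sup_{B_{2r}(0)} F_M}\, D_n^{(M)}(z,w;B_{2r}(0))
\]
for all $z,w \in B_{2r}(0)$, and consequently the analogous sandwich for the across-distances $D_n(\text{across }B_{2r}(0)\setminus B_r(0))$ and $D_n^{(M)}(\text{across }B_{2r}(0)\setminus B_r(0))$, since by~\eqref{eq:def-box-cross} these depend only on the internal metrics inside $B_{2r}(0)$. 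As $F_M$ is a finite sum of smooth Gaussian fields, $\sup_{B_{2r}(0)} |F_M| < \infty$ almost surely, so the sandwich together with Proposition~\ref{prop:tightness} shows that $\{D_n^{(M)}(\cdot,\cdot;B_{2r}(0))/a_n^{(q)}\}_n$ is also tight.

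Starting from the subsequence along which $D_n/a_n^{(q)} \to \widetilde D$ in the local uniform topology, extract a further subsequence $(n_k)$ along which $D_{n_k}^{(M)}(\cdot,\cdot;B_{2r}(0))/a_{n_k}^{(q)}$ also converges locally uniformly on $B_{2r}(0) \times B_{2r}(0)$ to a pseudo-metric $\widetilde D^{(M)}$; by construction $\widetilde D^{(M)}$ is $\mathcal G_M$-measurable. Passing to the limit in the across-distance sandwich along $(n_k)$ and using that $0 < e^{\xi F_M} < \infty$ on the compact set $\overline{B_{2r}(0)}$ almost surely, we obtain
\[
\{\widetilde D(\text{across }B_{2r}(0)\setminus B_r(0)) > 0\} = \{\widetilde D^{(M)}(\text{across }B_{2r}(0)\setminus B_r(0)) > 0\} \quad \text{a.s.}
\]
The right-hand event belongs to $\mathcal G_M$, so $\mathcal Z$ agrees a.s.\ with a $\mathcal G_M$-measurable event for every $M$, and therefore $\mathcal Z \in \mathcal G_\infty$ modulo null sets; Kolmogorov's 0--1 law gives $\mathbb P[\mathcal Z] \in \{0,1\}$. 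The main technical point will be the joint subsequential convergence together with passing the sandwich through to the limiting pseudo-metrics; both rely on the fact that $F_M$ is an a.s.\ continuous, bounded perturbation of the field on the compact set $\overline{B_{2r}(0)}$, so the modified metrics inherit the tightness of Proposition~\ref{prop:tightness} and the deterministic sandwich survives in the limit.
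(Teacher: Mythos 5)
Your route is genuinely different from the paper's: instead of exploiting the finite range of dependence of $h_{m,n}$ in space (the paper's Steps 2--4, with the zero-distance sets $E_1,E_2$, dyadic domains, and the inequality $\mathbb P[\mathcal Z]\le\mathbb P[\mathcal Z]^2$), you exploit independence across scales and aim for tail triviality. The ingredients are familiar ones: your $h_n^{(M)}$ is exactly $h_{M-1,n}$ and $D_n^{(M)}=D_{M-1,n}$, and your sandwich is the standard comparison the paper uses repeatedly. However, as written there is a genuine gap at the pivotal step, namely the sentence ``extract a further subsequence $(n_k)$ along which $D^{(M)}_{n_k}(\cdot,\cdot;B_{2r}(0))/a_{n_k}^{(q)}$ also converges \dots; by construction $\widetilde D^{(M)}$ is $\mathcal G_M$-measurable.'' All the convergences available to you are convergences in distribution: tightness along a further subsequence produces a limit \emph{in law}, not a random variable defined on the probability space carrying the white noise, so there is no construction that makes $\widetilde D^{(M)}$ (or the limiting across-distance) $\mathcal G_M$-measurable, and the asserted a.s.\ identity $\{\widetilde D(\mbox{across})>0\}=\{\widetilde D^{(M)}(\mbox{across})>0\}$ has no meaning until $\widetilde D$, $\widetilde D^{(M)}$ and the fields $(H_j)$ are realized on a common space. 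This is precisely the difficulty flagged in the footnote in the paper's proof of Lemma~\ref{lem:zero-one}: $\widetilde D$ is not known to be a measurable function of $h$, which is why the authors avoid a naive Kolmogorov argument. Your closing remark identifies ``joint subsequential convergence'' as the technical point but treats it as passing an inequality to the limit; the real issue is measurability with respect to $\mathcal G_M$, which your write-up assumes rather than proves.

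The gap is fixable, but only with the coupling machinery you have omitted, set up for all $M$ simultaneously. Concretely: (i) by a diagonal argument extract a \emph{single} further subsequence along which the joint law of $\big((H_j)_{j\ge1},\, D_{n_k}/a_{n_k}^{(q)},\, (D_{M-1,n_k}(\mbox{across }B_{2r}(0)\setminus B_r(0))/a_{n_k}^{(q)})_{M\ge1}\big)$ converges; the subsequence must not depend on $M$, because the zero-one argument needs one event on one space agreeing a.s.\ with a $\sigma(H_j:j\ge M)$-measurable event for \emph{every} $M$. (ii) Apply Skorokhod's representation and transfer through the coupling both the sandwich inequalities and the a.s.\ identities expressing $D_{M-1,n_k}$ as a measurable functional of $(H_j)_{M\le j\le n_k}$; only then are the limits $X^{(M)}$ measurable with respect to the completion of $\sigma(\hat H_j:j\ge M)$. (Include the scalar across-distances in the coupling, or work on a slightly larger box, since local uniform convergence on $B_{2r}(0)\times B_{2r}(0)$ does not by itself control the boundary sets appearing in the across-distance; for the approximating length metrics the across-distance is the infimum of point-to-point distances over the two boundaries, so it does pass to the limit.) (iii) The limiting event then only agrees a.s.\ with a $\mathcal G_M$-measurable event, so you need the completed form of the zero-one law: show the event is independent of $\sigma(H_j:j<M)$ for each $M$, hence of $\sigma(H_j:j\ge1)$, while agreeing a.s.\ with an event in that $\sigma$-algebra, giving $\mathbb P=\mathbb P^2$; finally use that the coupled limit has the same law as $\widetilde D$ to transfer the conclusion to $\mathcal Z$. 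With these steps supplied, your argument becomes a legitimate alternative proof, arguably lighter than the paper's since it bypasses Lemma~\ref{lem:set-connect} and the dyadic-domain bookkeeping, at the cost of a coupling construction analogous to the paper's Step 1.
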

\begin{proof}
    Suppose that $\widetilde D$ is the weak limit of $\{D_{n_k}(\cdot,\cdot)/ a_{n_k}^{(q)}\}_{k \geq 1}$ under the local uniform topology, where $\{n_k\}_{k \geq 1}$ is an increasing sequence. 
    
    Here is a heuristic argument for why the lemma is true, which we will make precise below. Define
    \begin{equation} \label{eq:zero-one-sets}
    E_1 = \{ x \in \mathbb{R}^d: \widetilde D(x, \partial B_r(0)) = 0 \} \mbox{ and } 
    E_2 = \{ x \in \mathbb{R}^d: \widetilde D(x, \partial B_{2r}(0)) = 0 \}
    \end{equation}
    Then $\mathcal Z = \{E_1\cap E_2 = \emptyset\}$. Let $U_1,U_2\subset \mathbb{R}^d$ be bounded open sets lying at positive Euclidean distance from each other such that $\partial B_r(0) \subset U_1$ and $\partial B_{2r}(0)\subset U_2$. We will argue that the events $\{E_1\subset U_1\}$ and $\{E_2\subset U_2\}$ are independent. To explain why this is true, let us make the simplifying assumption that $\wt D$ is a limit in probability (instead of just in law), so that $\wt D$ is a function of $h$ (in the actual proof, we will need to pass back and forth between $D_{n_k}$ and $\wt D$ to get around the lack of convergence in probability). The event $\{E_1\subset U_1\}$ depends only on the internal metric of $\wt D$ on $U_1$. Furthermore, adding a continuous function $f$ to $h$ has the effect of scaling internal distances in $U_1$ by a a factor of at most $\exp( \sup_{z\in U_1} \xi |f(z)|)$, so does not change which points in $U_1$ lie at zero $\wt D(\cdot,\cdot;U_1)$-distance from $\bdy B_r(0)$. From this, we get that the event $\{E_1\subset U_1\}$ depends only on the restriction to $U_1$ of the fields $\{h_{m,n}\}_{n\geq m}$, for any $m\in\mathbb N$.\footnote{If we actually knew that $\wt D$ was a measurable function of $h$, at this point we could use the Kolmogorov's zero-one law to say that $E_1$ is a.s.\ equal to some fixed deterministic set. We instead explain a less direct argument which is easier to adapt to our setting, where we do not know that $\wt D$ is a measurable function of $h$. Alternatively, one could try to directly show that the subsequential limit is a measurable function of $h$ following the two-dimensional arguments in~\cite{local-metrics, lqg-metric-estimates} without assuming it is a metric. We expect the arguments to be delicate, and there are also some technical difficulties---for instance the assumption in \cite[Theorem 1.6]{local-metrics} is hard to verify.} The analogous statement is also true for $\{E_2\subset U_2\}$. Since $h_{m,n}$ has range of dependence at most a constant times $2^{-m}$, we get that $\{E_1\subset U_1\}$ and $\{E_2\subset U_2\}$ are independent. Summing over a suitable countable collection of possible choices of $U_1$ and $U_2$ then shows that
    \[
    \mathbb P[\mathcal Z] = \mathbb P[E_1\cap E_2 =\emptyset] \leq \mathbb P[ E_1 \cap B_{2r}(0) = \emptyset] \mathbb P[E_2\cap B_r(0) = \emptyset] = \mathbb P[\mathcal Z]^2 
    \]
    which implies that $\mathbb P[ \mathcal Z] \in \{0,1\}$.
    
    Let us now proceed with the details. For any integer $j \geq 1$, an open domain $U$ is called \textit{$(r,j)$-dyadic} if it is bounded, connected, and can be written as the union $\cup_{i \geq 1} B_{2^{-j} r}(x_i)$, where $\{ x_i \}_{i \geq 1} \subset 2^{-j} r \mathbb{Z}^d$. An open domain is called \textit{$r$-dyadic} if it is $(r,j)$-dyadic for some integer $j \geq 1$. The proof will consist of four steps.
    \medskip

    \noindent\textit{Step 1: Joint convergence of the internal metrics in $r$-dyadic domains.} By Lemma~\ref{lem:internal-B_N(0)} and Proposition~\ref{prop:tightness}, we know that for all integer $m \geq 0$ and $r$-dyadic domain $U$ (and $U = \mathbb{R}^d$), the internal metric $\{D_{m,n}(\cdot,\cdot;U)/ a_n^{(q)}\}_{n \geq 1}$ is tight with respect to the local uniform topology on $U$. Since the number of $r$-dyadic domains is countable, we can apply a diagonal argument to select a subsequence $\{n_k'\}_{k \geq 1}$ from $\{n_k\}_{k \geq 1}$ such that 
    \begin{equation*}
    \begin{aligned}
        &\mbox{the internal metrics } \big{\{} D_{m, n_k'}(\cdot,\cdot; U)/a_{n_k'}^{(q)} : U \mbox{ is }r\mbox{-dyadic or }\mathbb{R}^d, \mbox{ } m \geq 0 \big{\}}_{k \geq 1} \\
        &\mbox{ jointly converge in distribution}\,.
        \end{aligned}
    \end{equation*}
    According to Skorohod's representation theorem, we can assume that they jointly converge almost surely under some coupling. For each $U$ and $m$, the limit is a pseudo-metric on $U$, denoted as $\widetilde D_{m,U}$. When $m=0$, we will abbreviate $\widetilde D_{0,U}$ as $\widetilde D_U$\footnote{We do not claim here that $\widetilde D_U$ is consistent with the internal metric induced by $\widetilde D_{\mathbb{R}^d}$ on $U$. However, this will be a consequence of Proposition~\ref{prop:non-degenerate} and Lemma~\ref{lem:dist-to-infinity}, since according to these results, as $n$ becomes large, the geodesic between two closely located points under $D_n$ will not get far away from these two points with high probability.}. With an abuse of notation, we will still use $\widetilde D$ to represent $\widetilde D_{\mathbb{R}^d}$ (i.e., the almost sure limit of $\{ D_{m, n_k'}(\cdot,\cdot)/a_{n_k'}^{(q)} \}_{k \geq 1}$).
    \medskip

    \noindent\textit{Step 2: Definitions and basic properties of $E_1,E_2,E_1^j$, and $E_2^j$.} 
    We refer to Figure~\ref{fig:zero-one} for an illustration. 
    Define the sets $E_1$ and $E_2$ as in~\eqref{eq:zero-one-sets}. By definition, $\partial B_r(0) \subset E_1$ and $\partial B_{2r}(0) \subset E_2$. According to Lemma~\ref{lem:set-connect}, both $E_1$ and $E_2$ are closed, bounded, and connected sets. Moreover, we have  
    \begin{equation}
    \label{eq:lem6.7-equivalence}
    \begin{aligned}
        \mathcal{Z} = \{ \widetilde D(\partial B_r(0), \partial B_{2r}(0))>0 \} 
        &=  \{E_1 \cap \partial B_{2r}(0) = \emptyset  \}  \\
        &= \{E_2 \cap \partial B_r(0) = \emptyset  \} = \{ E_1 \cap E_2 = \emptyset \}\,.
        \end{aligned}
    \end{equation}
    The first three equivalences follow directly from the definitions of $\mathcal{Z}$, $E_1$, and $E_2$. The last equivalence follows from the triangle inequality $\widetilde D(\partial B_r(0), \partial B_{2r}(0)) \leq \widetilde D(\partial B_r(0), x) + \widetilde D(x, \partial B_{2r}(0))$ for any $x \in \mathbb{R}^d$.

    For each integer $j \geq 1$, we further define $E_1^j$ as the open domain which contains all the $B_{2^{-j} r}(x)$ boxes for any $x \in (r 2^{-j})  \mathbb{Z}^d$ with $\mathfrak d_\infty(x,E_1) < 2r \cdot 2^{-j}$. Since $E_1$ is bounded and connected, $E_1^j$ is also bounded and connected, and thus $(r,j)$-dyadic. Similarly, we define the $(r,j)$-dyadic domain $E_2^j$ associated with $E_2$. By definition, we know that $E_1 \subset E_1^j$ and $E_2 \subset E_2^j$.

    \begin{figure}[H]
\centering
\includegraphics[scale = 0.5]{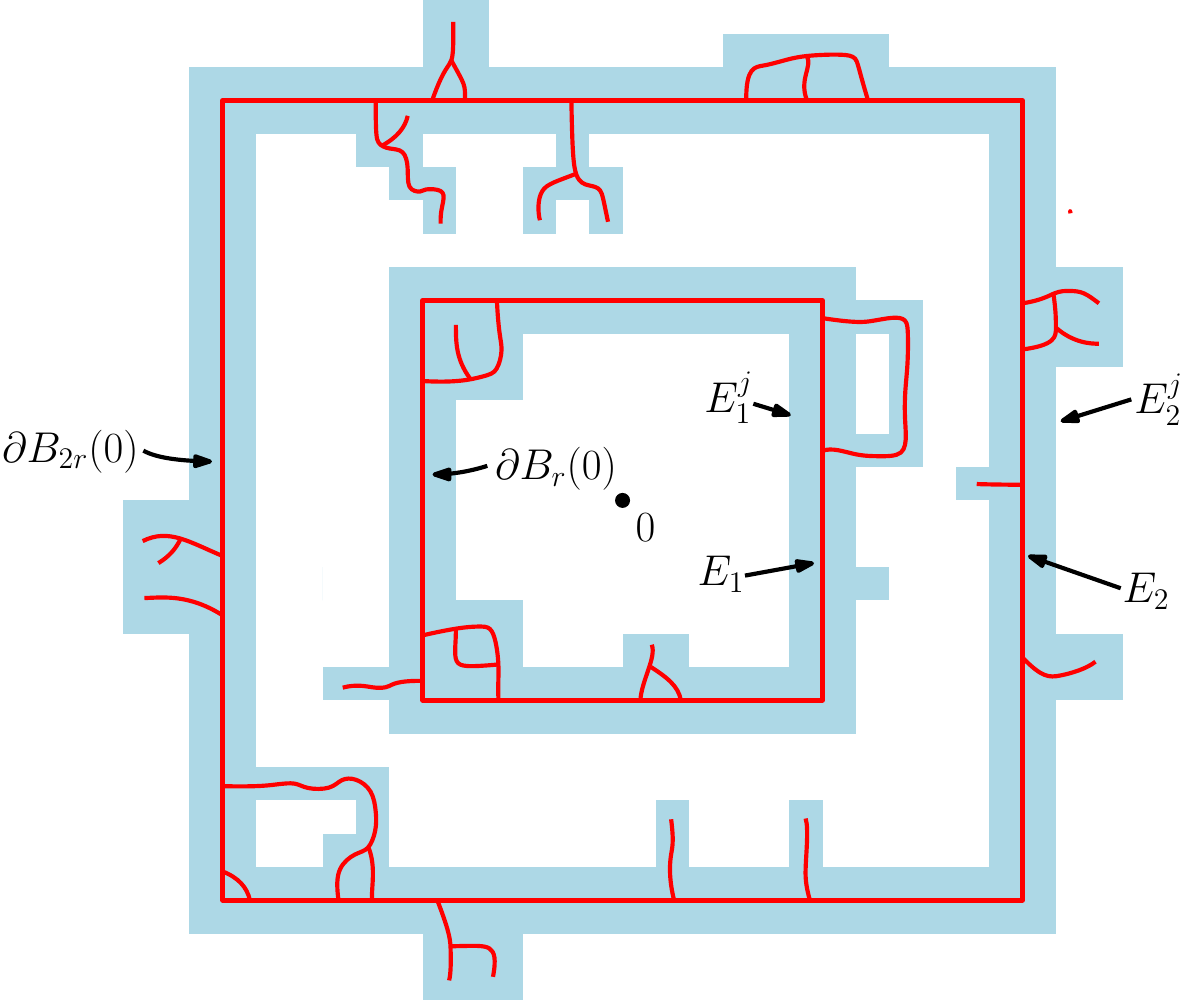}
\caption{Illustration of the sets $E_1,E_1^j,E_2$, and $E_2^j$ on the event $\mathcal{Z}$. The red sets represent $E_1$ and $E_2$, and the blue domains represent $E_1^j$ and $E_2^j$.}
\label{fig:zero-one}
\end{figure}

    \noindent\textit{Step 3: Independence of $\{E_1^j = U_1\}$ and $\{E_2^j = U_2\}$.} Given two disjoint $(r,j)$-dyadic domain $U_1$ and $U_2$ with $\mathfrak d_\infty (U_1,U_2) > r2^{-j}$, we now show that 
    \begin{equation}
    \label{eq:lem6.7-ind}
        \mathbb{P}[E_1^j = U_1, E_2^j = U_2] =  \mathbb{P}[E_1^j = U_1] \cdot \mathbb{P}[E_2^j = U_2]\,.
    \end{equation}
    
    We assume that $\partial B_r(0) \subset U_1$ and $\partial B_{2r}(0) \subset U_2$; otherwise, both sides of \eqref{eq:lem6.7-ind} would be 0. By definition, we also know that both $U_1$ and $U_2$ are open, bounded, and connected sets. For integer $m \geq 0$, define the sets 
    \begin{equation*}
    \begin{aligned}
        \widehat E_1^{m} &:= \mbox{closure of the connected component of }\{ x \in U_1: \widetilde D_{m, U_1}(x, \partial B_r(0)) = 0 \} \\
        &\qquad \mbox{ containing } \partial B_r(0)\,,\\
         \widehat E_2^{m} &:= \mbox{closure of the connected component of }\{ x \in U_2: \widetilde D_{m, U_2}(x, \partial B_{2r}(0)) = 0 \}\\
         &\qquad \mbox{ containing } \partial B_{2r}(0)  \,.
    \end{aligned}
    \end{equation*}
    Then, both $\widehat E_1^m$ and $\widehat E_2^m$ are closed, bounded, and connected sets. Similar to the definitions of $E_1^j$ and $E_2^j$, we also define $\widehat E_1^{m,j}$ and $\widehat E_2^{m,j}$ as the $(r,j)$-dyadic domains associated with $\widehat E_1^m$ and $\widehat E_2^m$, respectively. 
    
    Note that $D_{0,n_k'}(\cdot, \cdot; U_1)$ can be bounded both from above and below by $D_{m,n_k'}(\cdot,\cdot; U_1)$ up to a (random) constant. By the joint almost sure convergence of $\{D_{0,n_k'}(\cdot, \cdot; U_1) / a_{n_k'}^{(q)} \}_{k \geq 1}$ to $\widetilde D_{0,U_1}$ and $\{D_{m,n_k'}(\cdot, \cdot; U_1) / a_{n_k'}^{(q)} \}_{k \geq 1}$ to $\widetilde D_{m,U_1}$, we obtain that $\widetilde D_{0,U_1}$ can also be bounded both from above and below by $\widetilde D_{m, U_1}$ up to a (random) constant. This implies that $\widehat E_1^{m} = \widehat E_1^{0}$ for any integer $m \geq 0$. Similarly, we have $\widehat E_2^{m} = \widehat E_2^{0}$ for any integer $m \geq 0$. Therefore,
    \begin{equation}
    \label{eq:lem6.7-ind-11}
        \widehat E_1^{m,j} = \widehat E_1^{0,j} \quad \mbox{and} \quad \widehat E_2^{m,j} = \widehat E_2^{0,j} \quad \forall m \geq 0\,.
    \end{equation}

    According to Lemma~\ref{lem:dist-independent} and the fact that $\mathfrak d_\infty (U_1,U_2) > r2^{-j}$, for sufficiently large $m$, the internal metrics $D_{m,n_k'}(\cdot, \cdot; U_1)$ and $D_{m,n_k'}(\cdot, \cdot; U_2)$ are independent. By joint almost sure convergence, the pseudo-metrics $\widetilde D_{m,U_1}$ and $\widetilde D_{m,U_2}$ are also independent. This implies the independence of the sets $\widehat E_1^m$ and $\widehat E_2^m$. Therefore, for sufficiently large $m$, we have
    \begin{equation*}
        \mathbb{P}[\widehat E_1^{m, j} = U_1, \widehat E_2^{m, j} = U_2] =  \mathbb{P}[\widehat E_1^{m, j} = U_1] \cdot \mathbb{P}[\widehat E_2^{m, j} = U_2]\,.
    \end{equation*}
    Combining this with \eqref{eq:lem6.7-ind-11} yields
    \begin{equation}
    \label{eq:lem6.7-ind-12}
        \mathbb{P}[\widehat E_1^{0, j} = U_1, \widehat E_2^{0, j} = U_2] =  \mathbb{P}[\widehat E_1^{0, j} = U_1] \cdot \mathbb{P}[\widehat E_2^{0, j} = U_2]\,.
    \end{equation}

    In order to prove \eqref{eq:lem6.7-ind}, it suffices to show that the corresponding probabilities in \eqref{eq:lem6.7-ind} and \eqref{eq:lem6.7-ind-12} are equal. We now prove that even the corresponding events in \eqref{eq:lem6.7-ind} and \eqref{eq:lem6.7-ind-12} are identical. Since $D_{n_k'}(\cdot,\cdot; U_1) \geq D_{n_k'}(\cdot,\cdot)$, by the joint almost sure convergence, we have $\widetilde D_{U_1} \geq \widetilde D$. Hence, $\widehat E_1^0 \subset E_1$ and $\widehat E_1^{0,j} \subset E_1^j$. Recall that $U_1$ is an open, bounded, and connected set containing $\partial B_r(0)$. We first show that
    \begin{equation}
    \label{eq:lem6.7-ind-subset}
    \begin{aligned}
        \{ \widehat E_1^0 \subset U_1\} \subset \{ \widetilde D(\partial U_1, \partial B_r(0))>0\} \subset \{ \widehat E_1^0 = E_1 \}\,.
    \end{aligned}
    \end{equation}
    The first relation is proven similarly to Lemma~\ref{lem:set-connect}. Indeed, if $\widetilde D(\partial U_1, \partial B_r(0))=0$, then for any $\epsilon>0$, there exists an integer $k = k(\epsilon) \geq 1/\epsilon$ and a continuous path $P^{(\epsilon)}$ within the domain $U_1$ connecting $\partial U_1$ to $\partial B_r(0)$ such that $\sup_{y \in P^{(\epsilon)}} D_{n_k'}(y, \partial B_r(0); U_1) <\epsilon a_{n_k'}^{(q)}$. After passing to a (random) subsequence $\{\epsilon_m \}_{m \geq 1}$, we can arrange that $P_k$ converges to a compact, connected set $P$ with respect to the Euclidean Hausdorff distance as $\epsilon_m$ tends to zero. Moreover, the set $P$ connects $\partial U_1$ and $\partial B_r(0)$ within the domain $U_1$. By the almost sure convergence of the sequence $\{D_{n_k'}(\cdot, \cdot; U_1) / a_{n_k'}^{(q)} \}_{k \geq 1}$ to $\widetilde D_{U_1}$, we see that $\widetilde D(y, \partial B_r(0); U_1) = 0$ for every $y \in P$. Consequently, $P \subset \widehat E_1^0$ and $\widehat E_1^0 \not \subset U_1$. This yields the first relation in \eqref{eq:lem6.7-ind-subset}. 
    
    The second relation can be derived from the length space property of $D_{n_k'}$. For any $k \geq 1$ and $x \in U_1$, we have 
    \[
    D_{n_k'}(x, \partial B_r(0)) \geq \min \{ D_{n_k'}(x, \partial B_r(0); U_1), D_{n_k'}(\partial U_1, \partial B_r(0))\}.
    \]
    This is because for each $\epsilon>0$, there exists a path $P$ connecting $x$ to $\partial B_r(0)$ with $D_{n_k'}$-length at most $D_{n_k'}(x, \partial B_r(0)) + \epsilon$. If $P$ is entirely contained in $U_1$, then we have ${\rm len}(P; D_{n_k'}) \geq D_{n_k'}(x, \partial B_r(0); U_1)$; otherwise, we know that $P$ must connect $\partial U_1$ and $\partial B_r(0)$, hence ${\rm len}(P; D_{n_k'}) \geq D_{n_k'}(\partial U_1, \partial B_r(0))$. Sending $\epsilon$ to $0$ yields the above inequality.
    Sending $k$ to infinity and using the almost sure convergence, we obtain 
    \begin{equation*}
        \widetilde D(x, \partial B_r(0)) \geq \min \big{\{} \widetilde D_{U_1}(x, \partial B_r(0)), \widetilde D(\partial U_1, \partial B_r(0)) \big{\}} \quad \forall x \in U_1\,.
    \end{equation*}
    Similarly, for all $x \not \in U_1$, we have $D_{n_k'}(x, \partial B_r(0)) \geq D_{n_k'}(\partial U_1, \partial B_r(0))$, which implies that 
    \begin{equation*}
    \widetilde D(x, \partial B_r(0)) \geq \widetilde D(\partial U_1, \partial B_r(0)) \quad \forall x \not \in U_1 \,.
    \end{equation*}
   By the above two inequalities and the fact that $\widetilde D_{U_1} \geq \widetilde D$, we know that, on the event $\{\widetilde D(\partial U_1, \partial B_r(0)) > 0\}$, the equality $\widetilde D(x, \partial B_r(0)) = 0$ holds if and only if $\widetilde D_{U_1}(x, \partial B_r(0)) = 0$. Furthermore, this is possible only when $x \in U_1$. Therefore, on the event $\{\widetilde D(\partial U_1, \partial B_r(0)) > 0\}$, we have $\{ x \in \mathbb{R}^d: \widetilde D(x, \partial B_r(0)) = 0 \} = \{ x \in U_1: \widetilde D_{U_1}(x, \partial B_r(0)) = 0 \}$, hence $E_1 = \widehat E_1^0$ (recall that $E_1$ is a closed, bounded, and connected set).
   This establishes the second relation in \eqref{eq:lem6.7-ind-subset}. 
    
    We now prove $\{\widehat E_1^{0,j} = U_1\} = \{E_1^j = U_1\}$ using the claim~\eqref{eq:lem6.7-ind-subset}. This is because whenever $\widehat E_1^{0,j} = U_1$ or $E_1^j = U_1$, we have $\widehat E_1^0 \subset U_1$ (recall that $\widehat E_1^0 \subset \widehat E_1^{0,j}$ and $\wh E_1^0 \subset E_1 \subset E_1^j$). According to \eqref{eq:lem6.7-ind-subset}, we further have $\widehat E_1^0 = E_1$, and thus $\widehat E_1^{0,j} = E_1^j = U_1$. Similarly, we can show that $\{\widehat E_2^{0,j} = U_2\} = \{E_2^j = U_2\}$. By taking their intersections, we obtain $\{\widehat E_1^{0,j} = U_1, \widehat E_2^{0,j} = U_2\} = \{E_1^j = U_1, E_2^j = U_2\}$. Combining these relations with \eqref{eq:lem6.7-ind-12}, we conclude \eqref{eq:lem6.7-ind}.
    \medskip

    \noindent\textit{Step 4: Zero-one argument.} We are now ready to prove that $\mathbb{P}[\mathcal{Z}] \in \{0,1\}$ using \eqref{eq:lem6.7-equivalence} and \eqref{eq:lem6.7-ind}. By \eqref{eq:lem6.7-equivalence}, on the event $\mathcal{Z}$, we have $E_1 \cap \partial B_{2r}(0) = E_2 \cap \partial B_r(0) = \emptyset$. For a sufficiently large integer $j$ (which may depend on the realization of $\widetilde D$), we further have $E_1^j \cap \partial B_{2r}(0) = E_2^j \cap \partial B_r(0) = \emptyset$.
    Therefore, summing over all possible realizations of $E_1^j$ and $E_2^j$, which are $(r,j)$-dyadic domains and are countable, we obtain
    \begin{equation}
    \label{eq:lem6.7-ind-2} 
        \mathbb{P}[\mathcal{Z}] = \lim_{j \rightarrow \infty} \sum_{\substack{U \mbox{ }{\rm is}\mbox{ }(r,j)\mbox{-}{\rm dyadic}\\ U \cap \partial B_{2r}(0) = \emptyset}} \mathbb{P}[E_1^j = U] =\lim_{j \rightarrow \infty} \sum_{\substack{U \mbox{ }{\rm is}\mbox{ }(r,j)\mbox{-}{\rm dyadic}\\ U \cap \partial B_r(0) = \emptyset}} \mathbb{P}[E_2^j = U]\,. 
    \end{equation}
    Similarly, by \eqref{eq:lem6.7-equivalence}, on the event $\mathcal{Z}$, we have $\mathfrak d_\infty(E_1^j, E_2^j) > r2^{-j}$ for a sufficiently large integer $j$ (which may depend on $\widetilde D$). Therefore, 
    \begin{equation*}
    \begin{aligned}
        \mathbb{P}[\mathcal{Z}] 
        &= \lim_{j \rightarrow \infty} \sum_{\substack{U_1,U_2 \mbox{ }{\rm are}\mbox{ }(r,j)\mbox{-}{\rm dyadic}\\ \mathfrak d_\infty(U_1,U_2) > r2^{-j} }} \mathbb{P}[E_1^j = U_1, E_2^j = U_2]\\
        & = \lim_{j \rightarrow \infty} \sum_{\substack{U_1,U_2 \mbox{ }{\rm are}\mbox{ }(r,j)\mbox{-}{\rm dyadic}\\ \mathfrak d_\infty(U_1,U_2) > r2^{-j} }} \mathbb{P}[E_1^j = U_1] \cdot \mathbb{P}[E_2^j = U_2]\,.
        \end{aligned}
    \end{equation*}
    The second equality is due to \eqref{eq:lem6.7-ind}. By definition, we have $\partial B_r(0) \subset E_1 \subset E_1^j$ and $\partial B_{2r}(0) \subset E_2 \subset E_2^j$. Therefore, for admissible pairs of $(U_1, U_2)$ in the above sum (that is, $\mathbb{P}[E_1^j = U_1] \cdot \mathbb{P}[E_2^j = U_2] \neq 0$), we have $\partial B_r(0) \subset U_1$ and $\partial B_{2r}(0) \subset U_2 $, and thus $U_1 \cap \partial B_{2r}(0) = \emptyset$ and $U_2 \cap \partial B_r(0) = \emptyset$. Combining this fact with \eqref{eq:lem6.7-ind-2} yields 
    \begin{equation*}
        \mathbb{P}[\mathcal{Z}] \leq \lim_{j \rightarrow \infty} \sum_{\substack{U_1,U_2 \mbox{ }{\rm are}\mbox{ }(r,j)\mbox{-}{\rm dyadic}\\ U_1 \cap \partial B_{2r}(0) = \emptyset, U_2 \cap \partial B_r(0) = \emptyset  }} \mathbb{P}[E_1^j = U_1] \cdot \mathbb{P}[E_2^j = U_2] = \mathbb{P}[\mathcal{Z}]^2.
    \end{equation*}
    This implies that $\mathbb{P}[\mathcal{Z}] = 0$ or $1$. \qedhere
\end{proof}

We now complete the proof of Proposition~\ref{prop:non-degenerate} by combining Lemmas~\ref{lem:non-degenerate-cross} and \ref{lem:zero-one}.

\begin{proof}[Proof of Proposition~\ref{prop:non-degenerate}]
Fix $q \in (q_2,1)$ and any $r \in (0, 1/10)$. We will show that 
\begin{equation}
\label{eq:prop6.4-0}
    \lim_{\epsilon \rightarrow 0} \liminf_{n \rightarrow \infty} \mathbb{P}[D_n(\mbox{across } B_{2r}(0) \backslash B_{r}(0)) > \epsilon a_n^{(q)}] = 1\,.
\end{equation}
Suppose that the above inequality does not hold. Then, there exists a constant $\delta>0$ such that for any $\epsilon>0$, there exists an increasing sequence $\{n_k^{(\epsilon)}\}_{k \geq 1}$ satisfying
\begin{equation}
\label{eq:prop6.4-1}
    \mathbb{P}\Big[D_{n_k^{(\epsilon)}}(\mbox{across } B_{2r}(0) \backslash B_{r}(0))/ a_{n_k^{(\epsilon)}}^{(q)} > \epsilon \Big] < 1 - \delta \quad \forall k \geq 1\,.
\end{equation}
Furthermore, we can require that for any $0 < \epsilon' < \epsilon$, the sequence $\{n_k^{(\epsilon')}\}_{k \geq 1}$ is a subsequence of $\{n_k^{(\epsilon)}\}_{k \geq 1}$.

We apply Proposition~\ref{prop:tightness} with $U = \mathbb{R}^d$ and use the dyadic argument to the family of sequence $\{n_k^{(\epsilon)}\}_{k \geq 1}$ for $\epsilon$ being the inverse of an integer. This allows us to select an increasing sequence $\{n_k \}_{k \geq 1}$ such that $\{D_{n_k}(\cdot,\cdot)/a_{n_k}^{(q)}\}_{n_k \geq 1}$ converges in distribution to a pseudo-metric $\widetilde D$ on $\mathbb{R}^d$, and this sequence satisfies \eqref{eq:prop6.4-1} for any $\epsilon>0$. Recall from Lemma~\ref{lem:zero-one} that $\mathcal{Z} = \{\widetilde{D}(\mbox{across }B_{2r}(0) \backslash B_r(0))>0\}$ and $\mathbb{P}[\mathcal{Z}] = 0$ or $1$. 

Using \eqref{eq:prop6.4-1}, we obtain that for any $\epsilon>0$
\begin{align*}
    &\mathbb{P}[\widetilde{D}(\mbox{across }B_{2r}(0) \backslash B_r(0)) >\epsilon ] \\
    &\qquad \qquad\leq \liminf_{k \rightarrow \infty} \mathbb{P}[D_{n_k}(\mbox{across }B_{2r}(0) \backslash B_r(0)) / a_{n_k}^{(q)} >\epsilon ] \leq  1-\delta\,.
\end{align*}
Therefore, $\mathbb{P}[\mathcal{Z}] \leq 1-\delta$. Furthermore, applying Lemma~\ref{lem:non-degenerate-cross} with the constant $c$ therein, we obtain
\begin{equation*}
\begin{aligned}
&\mathbb{P}[\mathcal{Z}] \geq \mathbb{P}[\widetilde{D}(\mbox{across }B_{2r}(0) \backslash B_r(0)) \geq c ]\\
&\qquad\qquad \geq \limsup_{k \rightarrow \infty} \mathbb{P}[D_{n_k}(\mbox{across }B_{2r}(0) \backslash B_r(0))/a_{n_k}^{(q)} \geq c] > 0\,.
\end{aligned}
\end{equation*}
This contradicts our zero-one law for $\mathbb{P}[\mathcal{Z}]$ from Lemma~\ref{lem:zero-one}. Hence, Equation~\eqref{eq:prop6.4-0} holds for any $r \in (0,1/10)$. Proposition~\ref{prop:non-degenerate} follows directly from this fact.
\end{proof}

\section{Open problems}
\label{sec:open-problem} 

Here, we list some open problems and potential future directions concerning exponential metrics associated with log-correlated Gaussian fields. For potential relations with other models, we refer to Section~\ref{subsec:potential-relation}.
\subsubsection*{Uniqueness of limiting metrics}

\begin{prob} \label{prob:uniqueness}
    Prove the uniqueness of the subsequential limiting metric in Theorem~\ref{thm:tightness}.
\end{prob}

A natural approach for Problem~\ref{prob:uniqueness} is to adapt the arguments used to prove uniqueness of the LQG metric in dimension two in~\cite{gm-uniqueness, dg-uniqueness}. Probably, one would want to follow the argument in~\cite{dg-uniqueness}, which does not use confluence of geodesics, since it is unclear whether this property holds in dimension $d\geq 3$ (see Problem~\ref{prob:confluence} below). The uniqueness proof in~\cite{dg-uniqueness} does not appear to use two-dimensionality in a way that is as fundamental as the proof of tightness from~\cite{dddf-lfpp}, but we expect that nevertheless non-trivial ideas would be required to adapt the argument.  
We note that in the recent paper \cite{dfh-long-range-perc}, the authors established the uniqueness of the metric associated with long-range percolation in arbitrary dimensions using techniques inspired by \cite{gm-uniqueness, dg-uniqueness}.

\subsubsection*{Properties of $Q(\xi)$}

We know relatively little about the properties of $Q(\xi)$. Recall from Lemma~\ref{lem:Q-lower} that $\xi \mapsto Q(\xi)$ is a continuous and non-increasing function. However, it remains uncertain whether $Q$ is always positive. In the two-dimensional case, the positivity of $Q$ for every $\xi>0$ is established in \cite{dg-supercritical-lfpp}, building on the result from \cite{lfpp-pos}, when $h$ is a two-dimensional Gaussian free field or a minor variant thereof. The method in \cite{lfpp-pos} crucially relies on the Markov property of the Gaussian free field and some properties of its level sets. It remains unclear whether the methods of that paper can be extended to higher dimensions.

\begin{prob}
\label{prob:positive-Q}
    Prove or disprove that $Q(\xi)>0$ for every $\xi>0$.
\end{prob}

\medskip
\noindent \textit{Update:} It is shown in~\cite[Theorem 1.7]{dgz-thick-pts} that for each $\xi > 0$, the set-to-set distance exponent for the exponential metric is negative for each sufficiently large $d$. That is, for any fixed $A  , \xi > 0$, for any fixed disjoint compact sets $K_1,K_2 \subset \mathbb R^d$ with non-empty interiors, it holds for each sufficiently large $d \in \mathbb N$ that $\BB P[ D_n(K_1,K_2) \leq 2^{-\xi A n} ] \to 1$ as $n \to \infty$. With some technical work, it should be possible to deduce from this that for each fixed $\xi > 0$, we have $Q(\xi) = 0$ for all large enough $d \in \mathbb N$. 
\medskip

One of the most fundamental open problems in the theory of the LQG metric is to determine the relation between $\xi$ and $Q$. Indeed, in the subcritical case when $Q(\xi) >2$, this is equivalent to computing the Hausdorff dimension of the LQG metric space. We refer to Problem 5.1 of \cite{ddg-metric-survey} for the state of the art. The explicit value of $Q$ is not known except in the case $Q(1/\sqrt{6}) = 5/\sqrt{6}$, which corresponds to the fact that the Hausdorff dimension of the $\sqrt{8/3}$-LQG metric space is 4. It is natural to ask about the relation between $\xi$ and $Q$ in higher dimensions and whether we can determine the value of $Q$ for any specific choices of $\xi$ (and log-correlated Gaussian field).

\begin{prob} \label{prob:special}
Is there any special value of $\xi$ (and a log-correlated Gaussian field) for which the value of $Q(\xi)$ can be explicitly calculated?
\end{prob}

In a different flavour, it is also natural to ask about the asymptotic behavior of $Q(\xi)$ when $\xi$ is small. In \cite{ding-goswami-watabiki}, a lower bound for the Hausdorff dimension of the $\gamma$-LQG metric is derived when $\gamma$ is small.  Equivalently, they established that $1-\xi Q \geq c \xi^{4/3}/\log(\xi^{-1})$ for small $\xi$, where the exponent $4/3$ is expected to be sharp. It is natural to ask about the analog of this asymptotic in higher dimensions.

\begin{prob}
    Derive the asymptotics of $1- \xi Q$ as $\xi$ tends to zero.
\end{prob}

\subsubsection*{Metric and geodesic properties}
 
We now list some open problems about the properties of the limiting metric. 

Let $\wt D$ be a subsequential limit of the metrics $\lambda_n^{-1} D_n$, as in Theorem~\ref{thm:tightness}.
As a consequence of Theorem~\ref{thm:tightness}, Proposition~\ref{prop:non-degenerate}, and Lemma~\ref{lem:dist-to-infinity}, together with a straightforward Arz\'ela-Ascoli argument, we can establish that $\wt D$ is a geodesic metric space, i.e., for any two points $z,w \in \mathbb R^d$, there exists a path from $z$ to $w$ of $\wt D$-length equal to $\wt D(z,w)$.  In the two-dimensional case, when the underlying field is a two-dimensional Gaussian free field or a variant thereof, it has been established in \cite{gm-confluence, dg-confluence} that the geodesics of the LQG metric satisfy the \textit{confluence property}. Namely, for any fixed $x\in \mathbb{R}^2$ and any arbitrary $y,z\in\mathbb R^2$, the geodesics from $x$ to $y$ and from $x$ to $z$ coincide for a non-trivial initial interval of time.

\begin{prob}
\label{prob:confluence} 
Prove or disprove that the geodesics for $\wt D$ satisfy the confluence property. Does the answer depend on $d$?
\end{prob}

It is also natural to ask about possible \textit{geodesic networks} which can arise for the subsequential limiting metric $\wt D$, i.e., the possible topologies of the set of geodesics from $x$ to $y$ for distinct points $x,y\in\mathbb R^d$. See \cite{akm-geodesics,mq-strong-confluence} for results concerning geodesic networks for the $\sqrt{8/3}$-LQG metric (i.e., the Brownian map) and~\cite{gwynne-geodesic-network} for results concerning geodesic networks for the $\gamma$-LQG metric with general $\gamma\in(0,2)$.

\begin{prob}
    What can be said about geodesic networks for the subsequential limiting metric $\wt D$? Are there differences depending on the dimension? Are there any interesting behaviors when the dimension is sufficiently high?
\end{prob}

As was already alluded to in the introduction, the Hausdorff dimension of $\mathbb{R}^2$ with respect to the LQG metric in the subcritical case $Q(\xi) >2$ is equal to $\gamma/\xi$, where $\gamma \in (0,2)$ satisfies $Q = 2/\gamma + \gamma/2$~\cite[Corollary 1.7]{gp-kpz}. 

\begin{prob} \label{prob:dimension}
Give a formula for the Hausdorff dimension of $\mathbb R^d$ with respect to $\wt D$ in terms of $\xi$ and $Q$.
\end{prob}
 
In the two-dimensional case, there are also a number of additional results concerning Hausdorff dimensions of random fractals associated with the LQG metric.  
For example, the Hausdorff dimension of the boundary of an LQG metric ball is computed (in terms of $\xi$ and $Q$) in~\cite{gwynne-ball-bdy}. Moreover, from~\cite{gp-kpz}, one has a version of the KPZ formula~\cite{kpz-scaling} which relates the Hausdorff dimensions of a deterministic set $X \subset\mathbb R^2$ (or a random set sampled independently from the field) with respect to the LQG metric and with respect to the Euclidean metric. 

\begin{prob} \label{prob:more-dimension}
Compute the Hausdorff dimensions of interesting fractals associated with $\wt D$ (with respect to both $\wt D$ itself and the Euclidean metric), e.g., metric ball boundaries, geodesics, and sets of the form $\{z \in\mathbb R^d : \wt D(z,x) = \wt D(z,y)\}$ for fixed $x,y\in\mathbb R^d$.
\end{prob}

\begin{prob} \label{prob:kpz}
Is there a version of the KPZ formula for the metric $\wt D$, i.e., a formula relating the Hausdorff dimensions of a deterministic set $X\subset\mathbb R^d$ with respect to $\wt D$ and with respect to the Euclidean metric?
\end{prob}

\subsubsection*{Hypersurfaces}

Continue to let $\wt D$ be a subsequential limiting metric as in Theorem~\ref{thm:tightness}. 
A novel feature of random metrics on $\mathbb R^d$ for $d\geq 3$, which one does not see for $d\leq 2$, is the presence of hypersurfaces. We expect that if $M\subset \mathbb R^d$ is a deterministic topological submanifold of dimension $k \leq d-1$, then the internal metric of $\wt D$ on $M$ is a.s.\ infinite (see~\cite[Proposition 4.1]{lqg-metric-estimates} for a result along these lines in dimension 2). However, one could ask about random fractal submanifolds. 

\begin{prob} \label{prob:submanifold}
Let $k\leq d-1$. Are there any natural random sets $M  \subset \mathbb R^d$ with the topology of a $k$-dimensional manifold which are $\wt D$-rectifiable, in the sense that the internal metric $\wt D(\cdot,\cdot;M)$ is finite? 
\end{prob}

A possibly related problem is the following. 
Recall that if $g$ is a smooth Riemannian metric on $\mathbb R^d$ and $k\leq d-1$, a \textit{minimal hypersurface} for $g$ is a $k$-dimensional submanifold of $\mathbb R^d$ which (at least locally) has minimal $k$-dimensional volume among all other $k$-dimensional submanifolds with the same boundary. Note that a 1-dimensional minimal hypersurface is a geodesic.

\begin{prob} \label{prob:minimal}
Is there a notion of minimal surfaces with respect to the metric $\wt D$? If so, what can be said about their geometry (e.g., Hausdorff dimension, properties of the internal metric, interaction between different minimal hypersurfaces)?
\end{prob}

\subsubsection*{Connection to discrete models}

It would be of substantial interest to find a discrete model which is related to exponential metrics for log-correlated Gaussian fields in higher dimensions in a similar manner to how random planar maps are (at least conjecturally) related to the LQG metric. See Section~\ref{subsec:potential-relation} for additional discussion and references.

\begin{prob}
Find a natural discrete random geometry whose scaling limit is described by the exponential metrics as in Theorem~\ref{thm:tightness}.
\end{prob}

\subsubsection*{Supercritical case}

In this paper, we consider the case where $Q(\xi) > \sqrt{2d}$ and prove that each subsequential limit of the metric is indeed a metric. For $Q(\xi) \in (0, \sqrt{2d})$, we expect that if a limiting metric exists, it will have similar behavior to the supercritical LQG metric considered in \cite{dg-supercritical-lfpp, dg-uniqueness} (see also Remark~\ref{remark-critical}).

\begin{prob} \label{prob:supercritical}
   When $\xi$ satisfies $Q(\xi) \in (0,\sqrt{2d}]$, do the renormalized exponential metrics $\lambda_n^{-1} D_n$ converge to a limiting metric with respect to some topology? What properties does this limiting metric have?
\end{prob}
 
In the two-dimensional case, it is proven in~\cite{dg-critical-lqg} that in the critical case $Q(\xi) = 2$, the limiting metric induces the Euclidean topology on $\mathbb R^2$. It is unclear whether the same is true in higher dimensions in the case when $Q(\xi) =\sqrt{2d}$. 
As mentioned in Problem~\ref{prob:positive-Q}, it remains uncertain whether $Q$ is always positive. The case where $Q(\xi) \leq 0$ seems more mysterious. Our best guess is that if there exist values of $\xi$ for which $Q(\xi) < 0$, then it is not possible to extract any limiting metric for these values of $\xi$.

\appendix

\section{Index of notation}
\label{appendix:index}
 
Here we record some commonly used symbols in the paper, along with their meaning and the location where they are first defined. Local notations will not be included.

\begin{multicols}{2}
\begin{itemize}
\item $d$: dimension; Subsection~\ref{subsec:intro-1}.
\item $\KK(x)$: convolution kernel; Subsection~\ref{subsec:intro-1}.
\item $\rr$: support radius of the convolution kernel; Subsection~\ref{subsec:intro-1}.
\item $\xi$: parameter in the exponential metric; Subsection~\ref{subsec:intro-1}.
\item $\lambda_n$: median of point-to-point distance; \eqref{eq:intro-lambda}.
\item $\mathscrL_n$: subset of rescaled lattice; \eqref{eq:def-rescaled-lattice}.
\item $h_n$, $h_{m,n}$: approximations of log-correlated Gaussian field; \eqref{eq:field-def} and Definition~\ref{def:log-field}.
\item $D_n$, $D_{m,n}$: exponential metric; Definition~\ref{def:LFPP}.
\item $e_i$: $i$-th standard basis vector; \eqref{eq:def-ei}.
\item $a_n^{(p)}$: quantile of point-to-point distance; \eqref{def:quantile}.
\item $Q(\xi)$: decay exponent of $\lambda_n$; Proposition~\ref{prop:exponent}.
\item ${\rm Diam}_n$, ${\rm Diam}_{m,n}$: diameter; \eqref{def:diameter-1}.
\item $\mathscrE_{m,n}$, $\mathscrG_n$, and $\mathscrF_{m,n}$: events that are used to bound diameter; \eqref{def:e-m-n}, \eqref{def:e-n-n}, and \eqref{eq:def-f-m-n}.
\item $D_{m,n}(\mbox{across})$: distance across a hypercubic shell; \eqref{eq:def-box-cross}.
\item $D_{m,n}(\mbox{around})$: distance around a hypercubic shell; \eqref{eq:def-around}.
\item $\alphaC$, $\lambdaC$, and $\RC$: fixed constants in coarse-graining argument; \eqref{eq:def-lambda}.
\item $\mathsf{a}_i$: coarse-graining scale; \eqref{eq:def-ai}.
\item $\mathscrY_i$, $\mathscrY_0$: sets in $\mathbb{Z}^d$; \eqref{eq:def-am}
\item $\qq_j$: probability of bad boxes; \eqref{eq:def-qj}.
\item $\mathcalJ_1$, $\mathcalJ_2$: events used to construct a covering; \eqref{eq:def-j12}.
\item $\mathcalX_j$: centers of boxes in the covering; Proposition~\ref{prop:cover}.
\item $\mathcalU$: domain covered by boxes centered at points of $\mathcalX_j$; \eqref{eq:def-mathcal-U}.
\item $\widetilde D$: subsequential limit of the exponential metrics; Subsection~\ref{subsec:non-degenerate}.

\end{itemize}
\end{multicols}

\bibliographystyle{alpha}
\bibliography{theta,cibib}

\end{document}